\def\@currentlabel{2.1}\label{e:dispaa}
\def\@currentlabel{2.21}\label{e:dispau}
\def\@currentlabel{2.22}\label{e:dispav}
\def\@currentlabel{2.23}\label{e:dispaw}
\def\@currentlabel{2.24}\label{e:dispax}
\def\theequation{\thesection.\@arabic\c@equation}
\let\oldbibliography\thebibliography
\renewcommand{\thebibliography}[1]{%
\oldbibliography{#1}%
\setlength{\itemsep}{0pt}%
}
\renewcommand{\theequation}{\thesection.\arabic{equation}}
\newtheorem{lemma}{Lemma}[section]
\newtheorem{definition}{Definition}
\newtheorem{proposition}{Proposition}[section]
\newtheorem{corollary}{Corollary}[section]
\newtheorem{remark}{Remark}[section]
\newtheorem{conjecture}{Conjecture}[section]
\newcommand{\bremark}{\begin{remark} \em}
\newcommand{\eremark}{\end{remark} }
\newtheorem{numerical/experimental results}{Numerical/Experimental results}[section]
\newtheorem{theorem}{Theorem}[section]
\newcommand{\BE}{\begin{equation}}
\newcommand{\BEN}{\begin{equation*}}
\newcommand{\EE}{\end{equation}}
\newcommand{\EEN}{\end{equation*}}
\newcommand{\BL}{\begin{lemma}}
\newcommand{\EL}{\end{lemma}}
\newcommand{\BT}{\begin{theorem}}
\newcommand{\ET}{\end{theorem}}
\newcommand{\BP}{\begin{proposition}}
\newcommand{\EP}{\end{proposition}}
\newcommand{\BC}{\begin{corollary}}
\newcommand{\EC}{\end{corollary}}
\begin{document}


\title[Optimal Competing Lattices]{\bf On minima of  sum of theta functions and  Mueller-Ho Conjecture}

\author{Senping Luo}
\author{Juncheng Wei}

\address[S.~Luo]{Department of Mathematics, Jiangxi Normal University, Nanchang, 330022, China}
\address[S.~Luo]{Department of Mathematics,  University of Cincinnati, OH, 45221, USA}

\address[J.~Wei]{Department of Mathematics, University of British Columbia, Vancouver, B.C., Canada, V6T 1Z2}

\email[S.~Luo]{luosp1989@163.com or luosg@ucmail.uc.edu}

\email[J.~Wei]{jcwei@math.ubc.ca}

\begin{abstract}
Let  $z=x+iy \in \mathbb{H}:=\{z= x+ i y\in\mathbb{C}: y>0\}$  and $
\theta (s;z)=\sum_{(m,n)\in\mathbb{Z}^2 } e^{-s \frac{\pi }{y }|mz+n|^2}$ be the theta function associated with the lattice $\Lambda ={\mathbb Z}\oplus z{\mathbb Z}$. In this paper we consider the following pair of minimization problems
$$ \min_{  \mathbb{H} } \theta (2;\frac{z+1}{2})+\rho\theta (1;z),\;\;\rho\in[0,\infty),$$
$$ \min_{   \mathbb{H} } \theta (1; \frac{z+1}{2})+\rho\theta (2; z),\;\;\rho\in[0,\infty),$$
 where  the parameter $\rho\in[0,\infty)$  represents the competition of two intertwining  lattices.
 We find that as $\rho$ varies the optimal lattices admit a novel pattern: they move from rectangular (the ratio of long and short side changes from $\sqrt3$ to 1), square, rhombus (the angle changes from $\pi/2$ to $\pi/3$) to hexagonal; furthermore, there exists a closed interval of $\rho$ such that
 the optimal lattices is always square lattice. This is in sharp contrast to optimal lattice shapes  for single theta function ($\rho=\infty$ case), for which the hexagonal lattice prevails.  As a consequence, we give a partial answer to optimal lattice arrangements of vortices in competing systems of  Bose-Einstein condensates as conjectured (and numerically and experimentally verified) by Mueller-Ho \cite{Mue2002}.

\end{abstract}

\maketitle


\section{Introduction and Statement of Main Results}
\setcounter{equation}{0}

Let $ z\in \mathbb{H}:=\{z= x+ i y\in\mathbb{C}: y>0\}$ and $\Lambda ={\mathbb Z}\oplus z{\mathbb Z}$ be the lattice in $ \mathbb{R}^2$. The theta function associated with the lattice $ \Lambda$ is defined as
 \begin{equation}
 \label{thetas}
\theta (s; z)=\sum_{(m,n)\in\mathbb{Z}^2} e^{-s \frac{\pi }{y }|mz+n|^2}.
\end{equation}

In 1988, Montgomery \cite{Mon1988} proved the following celebrated result:
\begin{theorem} \label{Theorem4} For all $s> 0$ and $z\in \mathbb{H}$,
\begin{equation}\aligned
\theta (s; z) \geq \theta (s; z_0)
\endaligned\end{equation}
where $z_0=\frac{1}{2}+i \frac{\sqrt{3}}{2}$ (the triangular lattice, or called hexagonal lattice).  Equality holds if and only if $z=z_0$ (up to the group $\mathcal{G}_1$ (See \eqref{GroupG1}, Section 3)).
\end{theorem}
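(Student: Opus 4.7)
The plan is to combine the large symmetry group of $\theta(s;\cdot)$ with a monotonicity argument to reduce the two-variable optimization to a one-variable one on the boundary of a fundamental domain, then identify $z_0$ by direct analysis. As a first step I would verify the invariances
\begin{equation*}
\theta(s;z+1)=\theta(s;z),\qquad \theta(s;-1/z)=\theta(s;z),\qquad \theta(s;-\bar z)=\theta(s;z),
\end{equation*}
the first being trivial, the second a consequence of Poisson summation, and the third coming from $(m,n)\mapsto(m,-n)$. Together they descend $\theta(s;\cdot)$ to a function on $SL_2(\mathbb Z)\backslash\mathbb H$ which is additionally symmetric under $x\mapsto -x$, so the search reduces to the half fundamental domain
\begin{equation*}
\mathcal D_0=\{z=x+iy\in\mathbb H : 0\le x\le\tfrac12,\ x^2+y^2\ge 1\},
\end{equation*}
on whose boundary $z_0$ lies.

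The central step is to fix $y$ and prove that $x\mapsto \theta(s;x+iy)$ is strictly decreasing on $(0,\tfrac12)$. Applying Poisson summation in $n$ and regrouping by the product $j=km$ yields
\begin{equation*}
\theta(s;z)=\sqrt{y/s}\Bigl[C(s,y)+4\sum_{j\ge 1}c_j(s,y)\cos(2\pi j x)\Bigr],\qquad c_j(s,y)=\sum_{d\mid j}e^{-\pi y(s(j/d)^2+d^2/s)}.
\end{equation*}
All coefficients $c_j$ are positive, but the formal sine series for $\partial_x\theta$ contains terms of every sign, so term-by-term estimates fail. The resolution, following Montgomery, is to factor the underlying double series through Jacobi theta functions and apply the triple product identity
\begin{equation*}
\vartheta_3(w;q)=\prod_{n\ge 1}(1-q^{2n})\bigl(1+2q^{2n-1}\cos 2\pi w + q^{4n-2}\bigr),
\end{equation*}
combined with a pairing argument, to reveal a hidden positivity that gives the monotonicity uniformly in $s,y$. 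This pins the minimum on the boundary segment $\{z=\tfrac12+iy : y\ge\sqrt 3/2\}$.

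On that segment I would split the $m$-sum according to parity to obtain
\begin{equation*}
\theta(s;\tfrac12+iy)=\vartheta_3(e^{-4\pi sy})\vartheta_3(e^{-\pi s/y})+\vartheta_2(e^{-4\pi sy})\vartheta_2(e^{-\pi s/y}),
\end{equation*}
and minimize in $y\in[\sqrt 3/2,\infty)$. Observe that $z_0$ is a fixed point of the order-six modular element $z\mapsto 1-1/z$; since its derivative at $z_0$ is a rotation by $-2\pi/3$, invariance of $\theta(s;\cdot)$ forces $z_0$ to be a critical point of $\theta(s;\cdot)$ for every $s>0$. Combining this structural critical-point information with the heat equation $\partial_\tau\vartheta_j=\tfrac{i}{4\pi}\partial_w^2\vartheta_j$ and standard monotonicity of the Jacobi theta constants in their nomes, I would conclude that $y\mapsto\theta(s;\tfrac12+iy)$ is strictly increasing on $[\sqrt 3/2,\infty)$. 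Hence $z_0$ is the unique minimizer on $\mathcal D_0$, and the full equality case on $\mathbb H$ follows by unwinding the $\mathcal G_1$-symmetry reduction.

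The central obstacle is the $x$-monotonicity step. The naive Fourier expansion of $\partial_x\theta$ is sign-oscillating with no obvious majorant, and extracting the correct multiplicative structure via the triple product demands careful bookkeeping that must hold uniformly in $(s,y)$. Once the monotonicity in $x$ is available, the remaining analysis on the ray $x=\tfrac12$ is one-dimensional and comparatively routine, though it still requires care with the two conjugate nomes $e^{-4\pi s y}$ and $e^{-\pi s/y}$, which are not related by a simple modular transformation away from the special value $y=\sqrt 3/2$.
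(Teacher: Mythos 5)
You are sketching Montgomery's original argument, and it is worth noting that the paper itself never proves Theorem \ref{Theorem4}: it is quoted directly from \cite{Mon1988}, and wherever the paper needs ingredients of this type (the $x$-monotonicity in Section 4, or the fact that $y\mapsto\theta(s;\frac{yi+1}{2})$ has critical points exactly at $\frac{\sqrt3}{3},1,\sqrt3$ in Lemma \ref{E111}) it either cites \cite{Mon1988,BP} or carries out lengthy expansions with explicit error control. Measured against that standard, your outline has the right skeleton — modular invariance reducing to the half fundamental domain, $\partial_x\theta<0$ there via the one-dimensional theta factorization \eqref{PPP3} and triple-product positivity (this is exactly Montgomery's Lemma 1, reproduced and extended in the paper's Theorem \ref{Lemma2}), then a one-variable analysis on the boundary ray $x=\frac12$, $y\ge\frac{\sqrt3}{2}$ — but two of the three steps are only gestured at rather than proved.

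The genuine gap is the final step. Knowing that $z_0$ is a fixed point of $z\mapsto 1-\frac1z$ only tells you that $z_0$ is a critical point of $\theta(s;\cdot)$; it says nothing about monotonicity of $y\mapsto\theta(s;\tfrac12+iy)=\vartheta_3(4sy)\vartheta_3(\tfrac{s}{y})+\vartheta_2(4sy)\vartheta_2(\tfrac{s}{y})$ on $[\frac{\sqrt3}{2},\infty)$, and the appeal to the heat equation together with ``standard monotonicity of the Jacobi theta constants in their nomes'' cannot close it: as $y$ increases one nome decreases while the other increases, so each product is a competition between an increasing and a decreasing factor, and the sign of the $y$-derivative is precisely the nontrivial content of Montgomery's boundary lemma (equivalently of Lemma \ref{E111}). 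This is the same kind of delicate balance that forces the paper, for the closely related quotients $\mathcal{Y}'/\mathcal{X}'$ and $\mathcal{B}'/\mathcal{A}'$, into the approximate-part/error-part machinery of Theorems \ref{QXY}, \ref{QAB} and Appendix 2; it cannot be waved through with soft structural remarks. Similarly, the $x$-monotonicity step is attributed to ``a pairing argument'' revealing ``hidden positivity'' without supplying the actual bounds (the analogue of Lemma \ref{LemmaTTT} and the estimates \eqref{Ea}--\eqref{Ebb}). So the route is the correct one, but as written the proposal defers exactly the two quantitative lemmas that constitute the proof, and the justification offered for the second of them is not merely incomplete but would not work as stated.
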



For the higher dimensional cases, the corresponding minimization problems on lattices was first investigated in Sarnak and Strombergsson \cite{Sar2006} and recently by Cohn-Kumar-Miller-Radchenko-Viazovska \cite{Coh2017, Coh2019}. For relations with sphere packing problems, see Viazovska \cite{Via2017} and Cohn-Kumar-Miller-Radchenko-Viazovska \cite{Coh2017} and the references therein. We mention that minimization problems for Dedekind eta function (equivalent to the theta function \eqref{thetas} via Melin transform) also arise in the extremal determinants of Laplace-Beltrami Operators. See  Osgood-Phillips-Sarnak \cite{Osg1988}, Faulhuber \cite{Fau2020} and the reference therein.

The celebrated Theorem \ref{Theorem4} has laid foundations in many optimal lattice problems in number theory and has been frequently  used in applied matthematical and physical models such as  crystallizations of particle interactions (Blanc-Lewin \cite{Bla2015}, B$\acute{e}$termin \cite{Bet2016,Bet2018}, B$\acute{e}$termin-Zhang \cite{Bet2015}),  Ginzburg-Landau theory in superconductors (Abrikosov \cite{Abr}, Sandier-Serfaty \cite{Serfaty2010,Serfaty2012}, Serfaty \cite{Serfaty2014}), Ohta-Kawasaki models in di-block copolymers (Chen-Oshita \cite{Che2007}, Goldman-Muratov-Serfaty \cite{GMS2013}, Ren-Wei \cite{RW2015}), minimal frame operator norms (Faulhuber \cite{Fau2018}) and many others. The related minimization of theta fucntions/eta functions on lattices has application to Gross-Pitaeskii theory in superfluids or  Bose-Einstein condensates (Aftalion-Blanc-Nier \cite{ABN}, Aftalion-Serfacty \cite{AS}), Ohta-Kawasaki models triblock copolymers (Luo-Ren-Wei \cite{Luo2019}) and many others.

\medskip

In this paper, we consider a minimization problem with sum of {\bf two}  theta functions, which represent {\bf two} intertwining  lattices, one lattice lying at the center of the other lattice. See Figure 1 and the physical explanation in the next section.
\begin{figure}
\centering
 \includegraphics[scale=0.08]{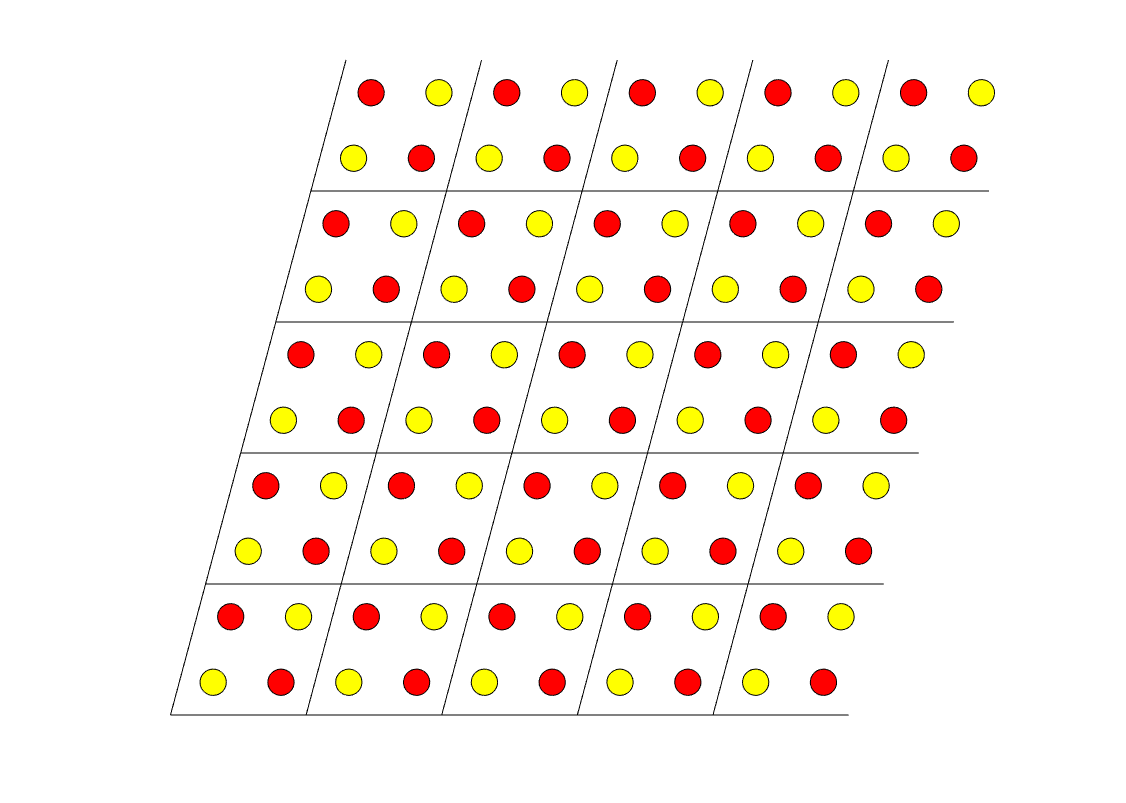}\includegraphics[scale=0.08]{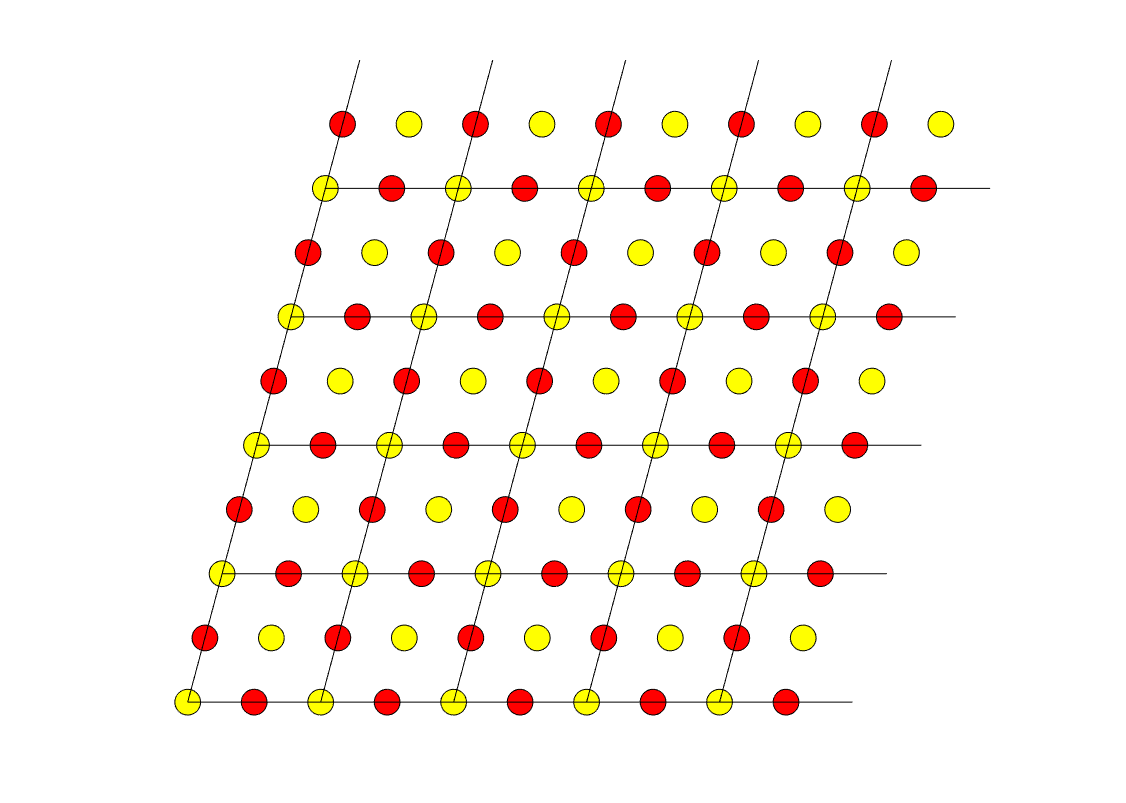}
 \caption{Two lattices with  centers at the lattice points and the half lattice
   points.}
\label{f-assemblies}
\end{figure}

Let $ \rho>0$ denote the relative strength of the two lattices. Consider the following functional
\begin{equation}
\label{W1rho}
\mathcal{W}_{1,\rho}(z):= \theta (2;\frac{z+1}{2}) + \rho \theta (1;z).
\end{equation}

It is easy to see that $\mathcal{W}_{1,\rho} (z)$ is invariant under the group (see Section 3)
\begin{equation}\aligned\label{Group2}
\mathcal{G}_2: \hbox{the group generated by} \;\;z\mapsto -\frac{1}{z},\;\; z\mapsto z+2,\;\;z\mapsto -\overline{z}.
\endaligned\end{equation}

The new minimization problem we consider is the following
\begin{equation}\aligned\label{Min1}
\min_{z\in\mathbb{H}} \mathcal{W}_{1, \rho} (z),\;\;\rho\in[0,\infty).
\endaligned\end{equation}

Our  main result is the following theorem which gives a complete characterization of the minimization problem \eqref{Min1}, as $\rho$ varies:

\begin{theorem}\label{Th1} The minimization problem \eqref{Min1} admits a unique minimizer $z_{1, \rho}$ which moves continuously on a special curve as the parameter $\rho$ varies (up to the group $\mathcal{G}_2$).
The trajectory curve of the minimizer, denoted by $\Omega_e$ (see Figure 2), is given by
\begin{equation}\aligned\label{Curve1}
&\Omega_e:=\Omega_{ea}\cup\Omega_{eb},\\
&\Omega_{ea}:=\{z: x=0,1\leq y\leq\sqrt3\},\\
&\Omega_{eb}:=\{z: |z|=1,0\leq x<\frac{1}{2}\}.
\endaligned\end{equation}
More precisely, there exist two thresholds $\sigma_{1,a}=0.04016\cdots<\sigma_{1,b}=0.83972\cdots$ such that
\begin{itemize}
  \item [(1)] if $\rho$ varies in $[0,\sigma_{1,a}]$, the minimizer $z_{1,\rho}$ moves from top to bottom along the vertical line segment $\Omega_{ea}$;
  \item [(2)] if $\rho\in[\sigma_{1,a},\sigma_{1,b}]$, the minimizer $z_{1,\rho}$ stays fixed on the corner of the curve $\Omega_e$, i.e.,
    \begin{equation}\aligned\nonumber
z_{1,\rho}\equiv i,\;\;\hbox{if}\;\;\rho\in[\sigma_{1,a},\sigma_{1,b}];
\endaligned\end{equation}
  \item [(3)] if $\rho$ varies in $[\sigma_{1,b},\infty)$, the minimizer $z_{1,\rho}$ moves from $i$   to $\frac{1}{2}+i\frac{\sqrt3}{2}$ along the unit arc, $\Omega_{eb}$. Moreover \begin{equation}\aligned\nonumber
  \mbox{as} \ \rho\rightarrow\infty, \ \;\;z_{1,\rho}\rightarrow\frac{1}{2}+i\frac{\sqrt3}{2}\;\;\hbox{from left hand side of}\;\;\Omega_{eb}.
  \endaligned\end{equation}

\end{itemize}

\end{theorem}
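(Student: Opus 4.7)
The plan is to reduce to a fundamental domain of $\mathcal{G}_2$ and then carry out two one-parameter analyses on the arcs $\Omega_{ea}$ and $\Omega_{eb}$. The segment $\Omega_{ea}$ is the pointwise fixed set of the involution $z\mapsto -\bar z \in \mathcal{G}_2$, while the arc $\Omega_{eb}$ lies in the fixed set of $z\mapsto 1/\bar z = (-\bar z)\circ(-1/z)\in\mathcal{G}_2$, so $\Omega_e$ is a union of natural symmetry axes of $\mathcal{W}_{1,\rho}$. By the $\mathcal{G}_2$-invariance of $\mathcal{W}_{1,\rho}$ we may restrict the search for minimizers to a closed fundamental domain $D$ of $\mathcal{G}_2$ in $\mathbb{H}$, and the main structural claim is that every minimizer in $D$ lies on $\Omega_e$.

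The first main step is to exclude minimizers on $D\setminus\Omega_e$. I would compute $\partial_x \mathcal{W}_{1,\rho}(x+iy)$ from the theta-series expansion \eqref{thetas} and prove a uniform sign property: on the portion of $D$ with $0<x<1/2$ off the unit arc, $\partial_x \mathcal{W}_{1,\rho}$ has a strict definite sign for every $\rho\ge 0$. Combined with the reflections in $\mathcal{G}_2$, this confines any minimizer to $\Omega_e$. Once the problem is reduced to $\Omega_e$, define $f_a(y;\rho):=\mathcal{W}_{1,\rho}(iy)$ for $y\in[1,\sqrt 3]$ and $f_b(x;\rho):=\mathcal{W}_{1,\rho}(x+i\sqrt{1-x^2})$ for $x\in[0,1/2]$. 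By studying $\partial_y f_a$ and $\partial_x f_b$ through their theta-series expansions, I would show that for each $\rho$ in the appropriate regime there is a unique critical point, yielding a continuous trajectory: for $\rho\in[0,\sigma_{1,a}]$ the critical $y$ moves from $\sqrt 3$ (at $\rho=0$ Theorem~\ref{Theorem4} applied to $\theta(2;(z+1)/2)$ forces $(z+1)/2=\frac{1}{2}+i\frac{\sqrt 3}{2}$, i.e.\ $z=\sqrt 3\,i$) down to $1$; for $\rho\in[\sigma_{1,b},\infty)$ the critical $x$ moves from $0$ upward, with the limit $x\to 1/2$ as $\rho\to\infty$ pinned by Theorem~\ref{Theorem4} applied to the dominant term $\rho\theta(1;z)$.

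The thresholds are then uniquely determined by the endpoint conditions
\[
\partial_y f_a(1;\sigma_{1,a})=0,\qquad \partial_x f_b(0;\sigma_{1,b})=0,
\]
both linear in $\rho$ with explicit theta-value coefficients evaluated at $z=i$. For $\rho\in[\sigma_{1,a},\sigma_{1,b}]$ the same identities give $\partial_y f_a(1;\rho)\ge 0$ and $\partial_x f_b(0;\rho)\ge 0$, so $z=i$ is a local minimum along $\Omega_e$ and, by the reduction above, the unique global minimizer. The main obstacle is the reduction step: theta series are not convex in $x$, so a pointwise uniform-in-$\rho$ sign for $\partial_x \mathcal{W}_{1,\rho}$ over the two-dimensional region $D\setminus\Omega_e$ must be obtained by isolating the leading harmonics of the two theta sums and controlling the tails by geometric bounds. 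This uniformity is especially delicate because the two summands of $\mathcal{W}_{1,\rho}$ are minimized at different lattices, so any term-by-term estimate has to dominate the whole parameter family simultaneously.
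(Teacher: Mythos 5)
Your proposal has two genuine gaps, and the first is fatal to the reduction step as you state it. The claimed uniform sign property — that $\partial_x\mathcal{W}_{1,\rho}$ has a strict definite sign on $\{0<x<\tfrac12,\ |z|>1\}$ \emph{for every} $\rho\ge 0$ — is false. By Montgomery's lemma \eqref{MonLLL}, $\partial_x\theta(1;z)<0$ strictly at interior points of that region, while by Theorem \ref{Lemma2} $\partial_x\theta(2;\tfrac{z+1}{2})>0$ there; hence $\partial_x\mathcal{W}_{1,\rho}=\partial_x\theta(2;\tfrac{z+1}{2})+\rho\,\partial_x\theta(1;z)$ is positive for small $\rho$ but becomes negative at fixed interior points once $\rho$ is large, and in the intermediate range it changes sign inside the region, so no term-by-term tail estimate can rescue a single-sign statement. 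This is exactly why the paper proves the positivity only for $\rho\le\rho_*=\tfrac1{20}$ (Proposition \ref{Trans1}) and then covers $\rho>\rho_*$ through the conjugacy $\mathcal{W}_{1,\rho}(\tau)=\rho\,\mathcal{W}_{2,1/\rho}(w)$ with $w=\tfrac{\tau-1}{\tau+1}\in\mathcal{G}_2$ (Lemma \ref{ThWW}), using the companion bound for $\mathcal{W}_{2,1/\rho}$ valid for $1/\rho\le 20$ (Proposition \ref{Trans2}). Without this duality your confinement of minimizers to $\Omega_e$ fails precisely in the regime $\rho>\sigma_{1,b}$, where the minimizer lies on the open arc.

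Second, your threshold equations are vacuous. Since $z\mapsto -1/z$ and $z\mapsto-\bar z$ belong to $\mathcal{G}_2$, one has $\mathcal{W}_{1,\rho}(i/y)=\mathcal{W}_{1,\rho}(iy)$, so $\partial_y f_a(1;\rho)=0$ for \emph{every} $\rho$, and likewise $\partial_x f_b(0;\rho)=\partial_x\mathcal{W}_{1,\rho}(i)=0$ identically by the reflection symmetry; neither condition is "linear in $\rho$" and neither determines $\sigma_{1,a}$ or $\sigma_{1,b}$. The correct thresholds are bifurcation conditions at the corner $z=i$ involving \emph{second} derivatives: $\sigma_{1,a}=\rho_1$ with $\partial_y^2\mathcal{W}_{1,\rho_1}(iy)\big|_{y=1}=0$, i.e. $\rho_1=-\mathcal{Y}''(1)/\mathcal{X}''(1)$ (Proposition \ref{YYY1}), and $\sigma_{1,b}=1/\rho_2$ with $\rho_2=-1-\mathcal{B}''(1)/\mathcal{A}''(1)$, obtained from the conjugate functional on the $y$-axis (Proposition \ref{YYY2}) after transferring the arc $\Omega_{eb}$ to the axis by $w=\tfrac{z-1}{z+1}$ (Lemma \ref{Reduce}); your plan of analyzing $f_b$ directly on the arc bypasses this transfer but then has no mechanism to produce these constants. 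Finally, the step you summarize as "show there is a unique critical point by studying the theta-series expansions" is the analytic core of the theorem: uniqueness of the nontrivial critical point on $(1,\sqrt3)$ and its monotone motion in $\rho$ rest on the quotient-monotonicity Theorems \ref{QXY} and \ref{QAB} (that $\mathcal{Y}'/\mathcal{X}'$ and $\mathcal{B}'/\mathcal{A}'$ have $y=1$ as their only critical point), and without a result of that strength the asserted unique, continuous trajectory along $\Omega_e$ is not established.
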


\begin{figure}
\centering
 \includegraphics[scale=0.28]{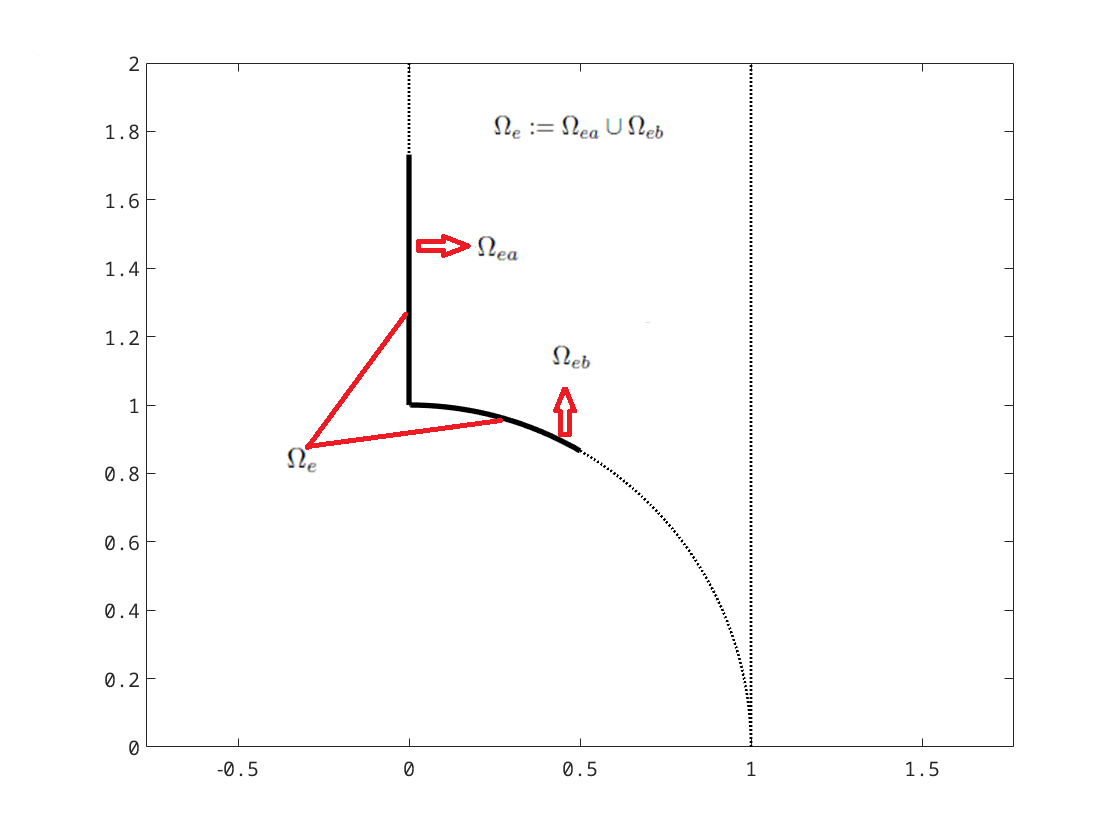}
 \caption{The curve $\Omega_e$}
\label{f-W}
\end{figure}

\begin{remark}  In \cite{Luo2019}, with X. Ren, we have studied another minimization problem
\begin{equation}\aligned\label{Minetat100}
\min_{z\in \mathbb{H}}-\Big((1-b)\big(\frac{1}{2}\log(\sqrt{y}|\eta(z)|^2\big)+b\big(\frac{1}{2}\log(\sqrt{y}|\eta(\frac{z+1}{2})|^2\big)\Big), z=x+iy, \;\;b\in[0,1],
\endaligned\end{equation}
 where $\eta$ is the Dedekind eta function
\begin{equation}\aligned
\eta(z)=e^{\frac{\pi}{3}\pi i}\prod_{n=1}^\infty (1-e^{2\pi nz i})^4.
\endaligned\end{equation}
When $b=0$, this is the minimization problem studied by Chen-Oshita \cite{Che2007} and  Sandier-Serfaty \cite{Serfaty2012}. While Chen and Oshita used analytical method to prove that the triangular lattice is the optimal, Sander and Serfaty  made use of a relation between the Dedekind eta function
and the Epstein zeta function (Melin transform), and  then Theorem \ref{Theorem4} to arrive at the same conclusion. When $ 0<b<1$, we have showed a similar transition phenomenon  from rectangle lattice to hexagonal lattice to Theorem \ref{Th1} in \cite{Luo2019} for the functional in \eqref{Minetat100}.
\end{remark}

We also consider another minimization problem, which can be viewed as a "conjugate" problem to \eqref{Min1}

\begin{equation}\aligned\label{Min2}
\min_{z\in\mathbb{H}} \mathcal{W}_{2, \rho} (z),\;\;\rho\in[0,\infty), \ \ \mbox{where} \ \mathcal{W}_{2,\rho}(z):=\theta (1; \frac{z+1}{2})+\rho \theta (2;z).
\endaligned\end{equation}

The precise relation between $\mathcal{W}_{1,\rho}$ and $ \mathcal{W}_{2, \rho}$ can be found in Lemma \ref{ThWW}. The minimizers of \eqref{Min2} can be characterized as follows:

\begin{theorem}\label{Th2} The minimization problem \eqref{Min2} admits a unique minimizer $z_{2, \rho}$ which lies on  the curve $\Omega_e$  \eqref{Curve1} (up to the group $\mathcal{G}_2$\eqref{Group2}). There exist two thresholds $\sigma_{2,a}=1.190861337\cdots,\;\;\sigma_{2,b}=24.89618074\cdots$  such that

\begin{itemize}
  \item [(1)] if $\rho$ varies from left to right on $[0,\sigma_{2,a}]$, the minimizer $z_{2,\rho}$ moves from top to bottom on the vertical line segment $\Omega_{ea}$;
  \item [(2)] if $\rho \in [\sigma_{2,a},\sigma_{2,b}]$, the minimizer $z_{2,\rho}$ stays fixed on the corner of curve \eqref{Curve1}, i.e. $\;z_{2,\rho}\equiv i$;

  \item [(3)] if $\rho$ moves from left to right on $[\sigma_{2,a},\infty)$, the minimizer $z_{2,\rho}$ moves from left to right along the unit curve $\Omega_{eb}$. Furthermore
      \begin{equation}\aligned\nonumber
\mbox{as}\  \rho\rightarrow\infty, \ z_{2,\rho}\rightarrow\frac{1}{2}+i\frac{\sqrt3}{2}\;\;\hbox{from left hand side of}\;\;\Omega_{eb}.
  \endaligned\end{equation}
\end{itemize}

\end{theorem}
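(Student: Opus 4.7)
The plan is to reduce Theorem \ref{Th2} directly to the already-established Theorem \ref{Th1} by exploiting the functional duality supplied by Lemma \ref{ThWW}. A convincing quantitative hint that such a reduction is sharp is the \emph{exact reciprocity} of the thresholds across the two theorems: to the precision displayed one has $\sigma_{1,a}\sigma_{2,b}=1$ and $\sigma_{1,b}\sigma_{2,a}=1$. This can only happen if the relation in Lemma \ref{ThWW} exchanges the two parameters via $\rho\leftrightarrow 1/\rho$ and conjugates the argument $z$ by some involution $T$ that normalizes $\mathcal{G}_2$; in particular $T$ must fix $i$, swap the top endpoint $i\sqrt 3$ of $\Omega_{ea}$ with the hexagonal point $\tfrac12+i\tfrac{\sqrt 3}{2}$, and interchange the two arcs $\Omega_{ea}$ and $\Omega_{eb}$ setwise.

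Concretely, I would proceed in three steps. First, apply Lemma \ref{ThWW} to write $\mathcal{W}_{2,\rho}(z)=\lambda(\rho)\,\mathcal{W}_{1,1/\rho}(Tz)$ (modulo a term independent of $z$), from which $\mathcal{G}_2$-invariance of $\mathcal{W}_{2,\rho}$ is inherited from that of $\mathcal{W}_{1,1/\rho}$, giving existence and uniqueness of the minimizer as well as the restriction to a fundamental domain. Second, translate each conclusion of Theorem \ref{Th1} into the corresponding one for $\mathcal{W}_{2,\rho}$ via $z_{2,\rho}=T^{-1}z_{1,1/\rho}$: continuity of the trajectory, the three-phase description on $\Omega_e=\Omega_{ea}\cup\Omega_{eb}$, and the locking at the corner $z=i$ all carry over, with $T$ mapping $\Omega_{ea}$ onto $\Omega_{eb}$ and reversing orientation so that the "top-to-bottom" motion on $\Omega_{ea}$ at small $\rho$ in Theorem \ref{Th1} becomes the "left-to-right" motion on $\Omega_{eb}$ at large $\rho$ in Theorem \ref{Th2}. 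Third, read off $\sigma_{2,a}=1/\sigma_{1,b}$ and $\sigma_{2,b}=1/\sigma_{1,a}$, which match the declared numerical values; the limit $\rho\to\infty$ corresponds to $1/\rho\to 0$, so $z_{2,\infty}=T^{-1}z_{1,0}=T^{-1}(i\sqrt 3)=\tfrac12+i\tfrac{\sqrt 3}{2}$, consistent with the fact that the dominant term $\rho\,\theta(2;z)$ is minimized at the hexagonal lattice by Montgomery's Theorem \ref{Theorem4}, and the approach from the left of $\Omega_{eb}$ is inherited from the top-to-bottom descent on $\Omega_{ea}$ in Theorem \ref{Th1}.

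The main obstacle is pinning down the precise form of the duality in Lemma \ref{ThWW}, i.e.\ identifying $T$ and $\lambda(\rho)$ explicitly. The natural starting ingredients are the Poisson inversion $\theta(s;z)=s^{-1}\theta(1/s;z)$, which interchanges $s=2$ and $s=1/2$, together with the index-two coset decomposition of $\mathbb{Z}\oplus\tfrac{z+1}{2}\mathbb{Z}$ over $\mathbb{Z}\oplus z\mathbb{Z}$, which yields an identity of the shape $\theta(s;\tfrac{z+1}{2})=\theta(2s;z)+(\text{shifted theta})$. The delicate part is absorbing the shifted-theta pieces into a modular change of variable $T$ so that $\mathcal{W}_{2,\rho}$ becomes an exact positive multiple of $\mathcal{W}_{1,1/\rho}(Tz)$; once this algebraic step is complete, every remaining assertion of Theorem \ref{Th2}, including the monotone trajectory behaviour and the precise numerical thresholds, follows purely mechanically from Theorem \ref{Th1} without any fresh variational analysis.
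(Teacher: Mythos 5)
Your proposal is correct in outline and is essentially the paper's own argument: Theorem \ref{Th2} is deduced there exactly as you suggest, from Theorem \ref{Th1} combined with Lemma \ref{ThWW}, which supplies the duality in precisely the form you hypothesize — $\mathcal{W}_{2,\rho}(\tau)=\rho\,\mathcal{W}_{1,1/\rho}(w)$ with $w=T(\tau)=\frac{\tau-1}{\tau+1}$ and no additive term — so that $T$ fixes $i$, carries $\Omega_{ea}$ onto $\Omega_{eb}$ via $iy\mapsto\frac{y^2-1}{y^2+1}+i\frac{2y}{y^2+1}$, and forces $\sigma_{2,a}=1/\sigma_{1,b}=\rho_2$, $\sigma_{2,b}=1/\sigma_{1,a}=1/\rho_1$ as recorded in Theorem \ref{Th3}. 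The only place your sketch goes astray is the anticipated derivation of this duality: no Poisson inversion in $s$ and no index-two coset splitting is needed (that mechanism is the one behind Lemma \ref{Cw3}); instead, with $w=\frac{\tau-1}{\tau+1}$ one has $\frac{\tau+1}{2}=\frac{1}{1-w}$ and $w\mapsto\frac{1}{1-w}\in\mathcal{G}_1$, so $\theta(s;\frac{\tau+1}{2})=\theta(s;w)$ for each fixed $s$ by Lemma \ref{G111}, and the exact identity (hence the whole transfer, including uniqueness up to $\mathcal{G}_2$, since $\mathcal{W}_{2,\rho}$ is itself $\mathcal{G}_2$-invariant by Lemma \ref{Thgamma}) follows immediately.
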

\begin{remark} The values of $\sigma_{1,a}, \sigma_{1, b}, \sigma_{2,a} $ and $\sigma_{2, b}$ are given explicitly in terms of Jacobi Theta functions. See Theorem 1.4  below.
\end{remark}
\begin{remark} The minimizers of the minimization problems \ref{Min1} and \ref{Min2} admit a novel pattern: they bond together in a very special way and form a nice geometric shape and move with the parameter in a monotone way. The optimal lattices have richer structures than that of Theorem \ref{Theorem4}.
\end{remark}

\medskip

There are some hidden connections revealed later between the two minimization problems \eqref{Min1} and \eqref{Min2}.
They are like "a pair" as shown in Table 1 below. The following theorem gives more  qualitative behaviors of minimizers in Theorem \ref{Th1} and Theorem \ref{Th2}.

\begin{theorem}\label{Th3} Let $ z_{1,\rho}$ and $z_{2,\rho}$ be  the minimizers of \eqref{Min1} and \eqref{Min2} respectively.
\begin{itemize}
\item [(1)] Minimizers of \eqref{Min1} and \eqref{Min2} for each $\rho\in[0,\infty)$ are given in the following Table 1.
{\bf
\begin{table}[!htbp]\label{Table0}
\caption{Minimizers of $\mathcal{W}_{1,\rho}(z),\mathcal{W}_{2,\rho}(z)$ for parameter $\rho\in[0,\infty)$}
\label{demo1}
\centering
\begin{tabular}{|c|c|c|c|}
\hline

{$\mathcal{W}_{1,\rho}(z)$}& {$\mathcal{W}_{1,\rho}(z)$}&{$\mathcal{W}_{2,\rho}(z)$}& {$\mathcal{W}_{2,\rho}(z)$}
\\

\hline
Domain of $\rho$ & Minimizer &  Domain of $\rho$ & Minimizer\\

\hline
$\rho\in [\rho_{1},1/\rho_{2}]$ & $z_{1,\rho}\equiv i$ & $\rho\in [\rho_{2},1/\rho_{1}]$ & $z_{2, \rho}\equiv i$ \\

\hline
 $\rho\in(1/\rho_{2},\infty)$
  & $z_{1,\rho}=\frac{y_{2,1/\rho}^2-1}{y_{2,1/\rho}^2+1}+i\frac{2y_{2,1/\rho}}{y_{2,1/\rho}^2+1}$  & $\rho\in(0,\rho_{2})$ & $z_{2,\rho}=iy_{2,\rho}\in \Omega_{ea}$ \\
  \hline
 $\rho\in(0,\rho_{1})$ & $z_{1,\rho}=iy_{1,\rho}\in \Omega_{ea}$ &$\rho\in(1/\rho_{1},\infty)$ & $z_{2,\rho}=\frac{y_{1, 1/\rho}^2-1}{y_{1,1/\rho}^2+1}+i\frac{2y_{1,1/\rho}}{y_{1,1/\rho}^2+1}$  \\
  \hline
\end{tabular}
\end{table}
}
  \item [(2)]  The thresholds in Theorems \ref{Th1} and \ref{Th2} are given by

  \begin{equation}\aligned\nonumber
  \sigma_{1,a}=\frac{1}{\sigma_{2,b}}=\rho_1,\;\;\sigma_{1,b}=\frac{1}{\sigma_{2,a}}=\frac{1}{\rho_2},
  \endaligned\end{equation}
  where $\rho_{1}$ and $ \rho_{{2}}$  are determined explicitly by
\begin{equation}\aligned\nonumber
\rho_{1}=-\frac{\mathcal{Y}''(1)}{\mathcal{X}''(1)}, \;\;\rho_{2}=-1-\frac{\mathcal{B}''(1)}{\mathcal{A}''(1)}.
 \endaligned\end{equation}
Here
\begin{equation}\aligned\label{XYAB}
\mathcal{X}(y):&=\vartheta_3(y)\vartheta_3(\frac{1}{y}), \;\;\;\;\;\;\;\;\;\;\mathcal{Y}(y):=2\big(
\vartheta_3(4y)\vartheta_3(\frac{4}{y})+\vartheta_2(4y)\vartheta_2(\frac{4}{y})
\big)\\
 \mathcal{A}(y):&=\sqrt2\vartheta_3(2y)\vartheta_3(\frac{2}{y}),\;\;\mathcal{B}(y):=\sqrt 2\vartheta_2(2y)\vartheta_2(\frac{2}{y})
\endaligned\end{equation}
 and the Jacobi Theta functions are defined as
 \begin{equation}
 \label{XYAB2}\aligned
\vartheta_2(y)=\sum_{n\in \mathbb{Z}} e^{-\pi(n-\frac{1}{2})^2 y},\;\;\vartheta_3(y)=\sum_{n\in \mathbb{Z}} e^{-\pi n^2 y},\;\; \vartheta_4(y)=\sum_{n\in \mathbb{Z}} (-1)^n e^{-\pi n^2 y}.
 \endaligned\end{equation}

  \item [(3)] The $y_{1, 1/\rho}$ and $ y_{2,1/\rho}$ in the Table 1   are implicitly determined by
\begin{equation}\aligned\label{Ygamma}
&y_{1,1/\rho} \;\;\hbox{is the unique solution of}\;\;\frac{\mathcal{Y}'(y)}{\mathcal{X}'(y)}+1/\rho=0,\\
&y_{2,1/\rho} \;\;\hbox{is the unique solution of}\;\;1+\frac{\mathcal{B}'(y)}{\mathcal{A}'(y)}+1/\rho=0.
 \endaligned\end{equation}

Furthermore, there holds
\begin{equation}\aligned\nonumber
\frac{d}{d\rho}y_{1, \rho }<0,\;\;
\frac{d}{d\rho}y_{2, \rho } <0.
 \endaligned\end{equation}

\end{itemize}

\end{theorem}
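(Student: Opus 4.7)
With Theorems~\ref{Th1} and~\ref{Th2} already in hand, the minimizer of each problem is known to lie on $\Omega_e=\Omega_{ea}\cup\Omega_{eb}$, so Theorem~\ref{Th3} reduces to making the dependence on $\rho$ explicit along this one--dimensional trajectory. The strategy is to parametrize $\Omega_{ea}$ by $y\mapsto iy$ and $\Omega_{eb}$ by the M\"obius map $y\mapsto w(y):=\frac{iy-1}{iy+1}$, in both cases with $y\in[1,\sqrt 3]$ and the common endpoint $y=1$ corresponding to the corner $z=i$. On each piece the two lattice sums defining $\theta(s;\cdot)$ and $\theta(s;(\cdot+1)/2)$ split, according to the parity of one summation index, into products of one--dimensional Jacobi thetas; the resulting functions of $y$ are exactly the combinations $\mathcal{X},\mathcal{Y},\mathcal{A},\mathcal{B}$ defined in \eqref{XYAB}.

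\textbf{Reductions to one variable.} A direct computation on $\Omega_{ea}$ gives
\begin{equation*}
\theta(1;iy)=\mathcal{X}(y),\quad \theta\!\left(2;\tfrac{iy+1}{2}\right)=\tfrac12\mathcal{Y}(y),\quad \theta(2;iy)=\tfrac{1}{\sqrt 2}\mathcal{A}(y),\quad \theta\!\left(1;\tfrac{iy+1}{2}\right)=\tfrac{1}{\sqrt 2}\bigl(\mathcal{A}(y)+\mathcal{B}(y)\bigr),
\end{equation*}
and on $\Omega_{eb}$, after the change of index $(p,q)=(m+n,n-m)\in\mathbb{Z}^2$ with $p\equiv q\pmod 2$, analogous identities swap the roles of $\{\mathcal{X},\mathcal{Y}\}$ and $\{\mathcal{A},\mathcal{B}\}$. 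Hence each of the four restrictions $\mathcal{W}_{1,\rho}|_{\Omega_{ea}},\mathcal{W}_{2,\rho}|_{\Omega_{ea}},\mathcal{W}_{1,\rho}|_{\Omega_{eb}},\mathcal{W}_{2,\rho}|_{\Omega_{eb}}$ becomes a pencil of two of $\mathcal{X},\mathcal{Y},\mathcal{A},\mathcal{B}$ with a single effective ratio depending on $\rho$. Crucially, the way $\rho$ enters on $\Omega_{eb}$ is as $1/\rho$ relative to $\Omega_{ea}$, which is exactly what produces the reciprocal coupling $\sigma_{1,a}=1/\sigma_{2,b}$, $\sigma_{1,b}=1/\sigma_{2,a}$ and the appearance of $y_{j,1/\rho}$ in Table~\ref{Table0}.

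\textbf{Thresholds, critical points, and monotonicity.} Each of $\mathcal{X},\mathcal{Y},\mathcal{A},\mathcal{B}$ is invariant under $y\mapsto 1/y$, so all four have vanishing first derivative at $y=1$; therefore the corner $z=i$ is a critical point for every $\rho$, and the transition between ``interior minimum in $\Omega_{ea}$,'' ``minimum at $i$,'' and ``interior minimum in $\Omega_{eb}$'' is necessarily a second--order event. Setting the second derivative of the reduced pencil at $y=1$ equal to zero and using the normalizations of the previous step gives
\begin{equation*}
\rho_1=-\frac{\mathcal{Y}''(1)}{\mathcal{X}''(1)},\qquad \rho_2=-1-\frac{\mathcal{B}''(1)}{\mathcal{A}''(1)}.
\end{equation*}
For $\rho$ outside $[\rho_1,1/\rho_2]$ (respectively $[\rho_2,1/\rho_1]$), the interior critical point $y_{j,\rho}\in(1,\sqrt 3)$ is the unique solution of the first--order condition \eqref{Ygamma}; implicit differentiation in $\rho$ produces an expression of the form $\frac{d}{d\rho}y_{j,\rho}=-N/D$, where the numerator $N$ is $\mathcal{X}'(y_{j,\rho})$ or $\mathcal{A}'(y_{j,\rho})$ (both positive on $(1,\sqrt 3)$) and the denominator $D$ is precisely the second--order minimality condition at $y_{j,\rho}$ (positive at a strict local minimum), so $\frac{d}{d\rho}y_{j,\rho}<0$.

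\textbf{Main obstacle.} The genuine analytical content has already been absorbed into the proofs of Theorems~\ref{Th1} and~\ref{Th2}: namely, (i) ruling out competitors off $\Omega_e$, and (ii) producing the strict sign information on $\mathcal{X}''(1),\mathcal{Y}''(1),\mathcal{A}''(1),\mathcal{B}''(1)$ and on the ratios $\mathcal{Y}'/\mathcal{X}',\mathcal{B}'/\mathcal{A}'$ along $(1,\sqrt 3)$ that is required to guarantee positivity of $\rho_1,\rho_2$ and uniqueness of the interior critical point. Once those inputs are granted, Theorem~\ref{Th3} is a clean repackaging of the one--variable analysis sketched above, via the explicit Jacobi theta reductions together with implicit--function calculus.
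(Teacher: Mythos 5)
Your proposal is correct and follows essentially the same route as the paper's own argument: localization of the minimizer to $\Omega_e$ via Theorems \ref{Th1}--\ref{Th2}, the Jacobi--theta product reductions on the imaginary axis (as in \eqref{LemmaE12} and Lemma \ref{LemmaE21Gamma}), the duality $w=\frac{\tau-1}{\tau+1}$ which exchanges the two pencils with $\rho\mapsto 1/\rho$ (Lemma \ref{ThWW}), thresholds obtained from the vanishing second derivative at the symmetric point $y=1$ (Propositions \ref{YYY1} and \ref{YYY2}), and uniqueness of $y_{j,\rho}$ together with $\frac{d}{d\rho}y_{j,\rho}<0$ supplied by the quotient monotonicity of Theorems \ref{QXY} and \ref{QAB}. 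Two minor points: with your (correct) normalization $\theta(2;\frac{iy+1}{2})=\frac12\mathcal{Y}(y)$ the bifurcation condition is $\frac12\mathcal{Y}''(1)+\rho\,\mathcal{X}''(1)=0$, so the stated formula $\rho_1=-\mathcal{Y}''(1)/\mathcal{X}''(1)$ only comes out if one drops the same factor the paper elides in \eqref{W1yyy}, and the positivity of the denominator in your implicit-differentiation step should be justified by that same quotient monotonicity (a strict local minimum alone does not guarantee a nonvanishing second derivative).
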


The existence and uniqueness of $y_{1,1/\rho}, y_{2,1/\rho}$ in the Theorems \ref{Th1} and \ref{Th2} are consequences of  the following theorem whose proof will be  given by Theorem \ref{QXY} and \ref{QAB}. (Here $\mathcal{X}(y),\mathcal{Y}(y)$ and $\mathcal{A}(y),\mathcal{B}(y)$  are defined in \eqref{XYAB}.)
\begin{theorem}
\label{Th4}
\begin{itemize}

  \item The function $y\mapsto \frac{\mathcal{Y}'(y)}{\mathcal{X}'(y)}, y>0$ has only one critical point at $y=1$, and it holds that
  \begin{equation}\nonumber
\Big(\frac{\mathcal{Y}'(y)}{\mathcal{X}'(y)}\Big)'<0, \;\;y \in(0,1) \  \mbox{and}  \ \Big(\frac{\mathcal{Y}'(y)}{\mathcal{X}'(y)}\Big)'>0, \;\;y \in(1,\infty).
 \end{equation}

  \item The function $y\mapsto \frac{\mathcal{B}'(y)}{\mathcal{A}'(y)}, y>0$ has only one critical point at $y=1$, and it holds that
  \begin{equation}\nonumber
\Big(\frac{\mathcal{B}'(y)}{\mathcal{A}'(y)}\Big)'<0, \;\;y \in(0,1) \ \ \mbox{and}\ \Big(\frac{\mathcal{B}'(y)}{\mathcal{A}'(y)}\Big)'>0, \;\;y \in(1,\infty).
 \end{equation}

\end{itemize}

\end{theorem}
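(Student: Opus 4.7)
The plan has three steps: exploit the $y\mapsto 1/y$ symmetry of $\mathcal{X},\mathcal{Y},\mathcal{A},\mathcal{B}$ to pin a critical point at $y=1$, reduce the sign statement to the half-line $y>1$, and then analyze the sign of an explicit numerator.

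\emph{Step 1 (symmetry).} Each of the functions in \eqref{XYAB} is manifestly invariant under $y\mapsto 1/y$; for instance $\mathcal{X}(1/y)=\vartheta_3(1/y)\vartheta_3(y)=\mathcal{X}(y)$, and the same holds for $\mathcal{Y},\mathcal{A},\mathcal{B}$ by the product structure. Differentiating gives
\begin{equation*}
f'(y)=-\frac{1}{y^{2}}\,f'(1/y),\qquad f\in\{\mathcal{X},\mathcal{Y},\mathcal{A},\mathcal{B}\},
\end{equation*}
so $\mathcal{X}'(1)=\mathcal{Y}'(1)=\mathcal{A}'(1)=\mathcal{B}'(1)=0$. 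Setting $\phi_1:=\mathcal{Y}'/\mathcal{X}'$ and $\phi_2:=\mathcal{B}'/\mathcal{A}'$, these ratios extend continuously across $y=1$ by L'H\^opital, with values $\mathcal{Y}''(1)/\mathcal{X}''(1)$ and $\mathcal{B}''(1)/\mathcal{A}''(1)$. From $\phi_i(1/y)=\phi_i(y)$ one gets $\phi_i'(1/y)=-y^{2}\phi_i'(y)$, so the sign statement on $(0,1)$ is equivalent to that on $(1,\infty)$, and it suffices to prove $\phi_i'(y)>0$ for all $y>1$.

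\emph{Step 2 (reduction to a numerator inequality).} Combined with the fact (a direct Gaussian-sum computation) that $\mathcal{X}'(y)>0$ and $\mathcal{A}'(y)>0$ for $y>1$, we obtain
\begin{equation*}
\mathrm{sgn}\,\phi_i'(y)=\mathrm{sgn}\,N_i(y),\qquad N_1:=\mathcal{Y}''\mathcal{X}'-\mathcal{Y}'\mathcal{X}'',\quad N_2:=\mathcal{B}''\mathcal{A}'-\mathcal{B}'\mathcal{A}'',
\end{equation*}
so the task is to prove $N_i(y)>0$ on $(1,\infty)$.

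\emph{Step 3 (main obstacle: positivity of $N_i$).} For $N_1$ the bisection identity $\vartheta_3(y)=\vartheta_3(4y)+\vartheta_2(4y)$ (from the decomposition $\mathbb{Z}=2\mathbb{Z}\cup(2\mathbb{Z}+1)$) yields
\begin{equation*}
2\mathcal{X}(y)=\mathcal{Y}(y)+2\mathcal{Z}(y),\qquad \mathcal{Z}(y):=\vartheta_3(4y)\vartheta_2(4/y)+\vartheta_2(4y)\vartheta_3(4/y),
\end{equation*}
and substituting $\mathcal{Y}=2\mathcal{X}-2\mathcal{Z}$ collapses $N_1=-2\bigl(\mathcal{Z}''\mathcal{X}'-\mathcal{Z}'\mathcal{X}''\bigr)$, so that $N_1>0$ is equivalent to $\mathcal{Z}'/\mathcal{X}'$ being strictly decreasing on $(1,\infty)$. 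I would then expand $\mathcal{X}$ and $\mathcal{Z}$ as double Gaussian sums, differentiate termwise, and regroup by the common exponential $e^{-\pi(\alpha y+\beta/y)}$; each orbit of the involution $(m,n)\leftrightarrow(n,m)$ (representing $y\leftrightarrow 1/y$) should contribute a combination of definite sign for $y>1$. The main technical obstacle is carrying this cancellation through cleanly on the mixed lattice $\mathbb{Z}^{2}\cup(\mathbb{Z}+\tfrac12)^{2}$ appearing in $\mathcal{Z}$, and controlling the remaining tail uniformly as $y\to\infty$. The analysis of $N_2$ should follow the same outline and be somewhat simpler, since $\mathcal{A}$ and $\mathcal{B}$ are each a single product of Jacobi theta series.
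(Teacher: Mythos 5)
Your Steps 1 and 2 coincide with the paper's reduction: the $y\mapsto 1/y$ invariance gives $\mathcal{X}'(1)=\mathcal{Y}'(1)=\mathcal{A}'(1)=\mathcal{B}'(1)=0$ and the antisymmetry of the quotients' derivatives, and the known monotonicity $\mathcal{X}'>0$, $\mathcal{A}'>0$ on $(1,\infty)$ reduces everything to the positivity of $N_1=\mathcal{Y}''\mathcal{X}'-\mathcal{Y}'\mathcal{X}''$ and $N_2=\mathcal{B}''\mathcal{A}'-\mathcal{B}'\mathcal{A}''$ on $(1,\infty)$. Your bisection identity $2\mathcal{X}=\mathcal{Y}+2\mathcal{Z}$ with $\mathcal{Z}(y)=\vartheta_3(4y)\vartheta_2(4/y)+\vartheta_2(4y)\vartheta_3(4/y)$, which collapses $N_1$ to $-2\bigl(\mathcal{Z}''\mathcal{X}'-\mathcal{Z}'\mathcal{X}''\bigr)$, is a genuinely nice observation not used in the paper and would reduce bookkeeping. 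But the heart of the theorem is exactly the positivity you leave as "the main technical obstacle," and your plan for it is not a proof: you only assert that, after regrouping the four-fold Gaussian sum by orbits of $(m,n)\leftrightarrow(n,m)$, "each orbit should contribute a combination of definite sign."

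That heuristic is in serious doubt precisely where the statement is delicate, namely near $y=1$. Since $\mathcal{X}'(1)=\mathcal{Y}'(1)=0$, one has $N_1(1)=0$ and also $N_1'(1)=\mathcal{Y}'''(1)\mathcal{X}'(1)-\mathcal{Y}'(1)\mathcal{X}'''(1)=0$, so $N_1$ (equivalently your Wronskian of $\mathcal{Z}$ and $\mathcal{X}$) vanishes to second order at $y=1$. An orbit combination compatible with the antisymmetry $N_1(1/y)=-y^6N_1(y)$ generically vanishes only to first order there; if every orbit contribution were nonnegative on $(1,\infty)$ while the linear terms sum to zero, each orbit would itself have to vanish to second order at $y=1$ -- a structural claim you nowhere verify and which there is no reason to expect. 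Crude termwise bounds likewise cannot work near $y=1$ because the quantity being bounded is itself $O((y-1)^2)$. This degenerate regime is where the paper's proof does its real work: it splits $(1,\infty)$ into $(1,1.1]$ and $[1.1,\infty)$, uses the functional-equation identities $\mathcal{X}'''(1)=-3\mathcal{X}''(1)$, $\mathcal{Y}'''(1)=-3\mathcal{Y}''(1)$ to pass to the fourth-order Wronskian $\mathcal{Y}''''\mathcal{X}''-\mathcal{Y}''\mathcal{X}''''$ near $y=1$, and then establishes sign via explicit approximate/error decompositions, weighted monotonicity, and decay estimates (Theorems 6.2 and 7.2 and Appendix 2); the tail as $y\to\infty$, which you flag, is the easy part. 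Finally, your second bullet ($\mathcal{B}'/\mathcal{A}'$) is dismissed with "should be simpler" and no analogue of the bisection identity or of the near-$y=1$ analysis is offered, whereas the paper needs a full parallel argument in Section 7. As it stands, the proposal sets up the problem correctly but does not prove the key inequality.
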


Theorem \ref{Th1} has  direct applications to the Mueller-Ho functional and Mueller-Ho Conjecture in vortices arrangements for competing systems of Bose-Einstein condensates, as we explain in the next section.

\section{Applications to Mueller-Ho conjecture}
\setcounter{equation}{0}

As we have mentioned in Section 1, the problem of finding optimal lattice shapes arise in many physical models. Besides those examples we mentioned in Section 1, another example is the so-called vortices in Bose-Einstein condensates.
 Vortices in Bose-Einstein condensates are also called topological
defects, correspond to a zero of the order parameter with a circulation
of the phase. When they get numerous, these vortices arrange themselves on a lattice. In fact, in rotating Bose Einstein condensates (BEC), vortices were first observed in two component BEC's  (Matthews etc \cite{Mat1999}): it is observed experimentally that the shape of the lattice can be either hexagonal or square depending on the rotational velocity of the condensate. Since then, following the pioneering work of Mueller-Ho \cite{Mue2002}, many authors have investigated the lattice shape in two component BEC's and for instance  Kasamatsu etc \cite{KTU1,KTU2};  related works include Keeli--Oktel \cite{KO} who numerically calculate the elastic coefficients of the lattice, Aftalion-Mason-Wei \cite{AMW} who study the system describing the vortex/spike and derive an interaction term. In Kuokanportti etc \cite{Kuop}, the authors investigate the case of different masses and attractive interactions.

The ground state of a two component condensate is well described by a Gross Pitaevskii energy depending on the wave functions of each component which are coupled by an interaction term.
The construction of the Bose-Einstein condensates with large number of vortices was deduced in Ho \cite{Ho2001} (one-component case) and Mueller-Ho \cite{Mue2002} (two-component case), with the potential energy given by
$$ \mathcal{V}= \frac{1}{2} g_1 |\Psi_1|^4+ \frac{1}{2} g_2 |\Psi_2|^4+ g_{12} |\Psi_1|^2 |\Psi_2|^2$$
where $ g_{12}$ represents the competing strength between the two components of Bose gas. We omit the details of the construction of the model here.
 In Mueller-Ho \cite{Mue2002} they  have reduced the minimization problems on lattices to the minimization problems for the  Mueller-Ho functional
 \medskip

\begin{equation}
\aligned
\min_{z\in\mathbb{H}, (a, b) } {\mathcal E}_{MH} (z; a, b), \alpha\in[-1,1], \ \mbox{where}\  {\mathcal E}_{MH} (z):=\theta (1; z)+\alpha\mathcal{J}(z;a,b).
\endaligned
\end{equation}

\medskip

Here $\Lambda ={\mathbb Z}\oplus z{\mathbb Z}$ denotes the lattice of one component Bose gas A, and the theta function $\theta (1;z)$ (defined at \eqref{thetas})
represents  the self-interaction part of single component  of A or B, i.e., the so-called Abrikosov energy. (See Abrikosov \cite{Abr}.) The  functional
\begin{equation}\aligned
\mathcal{J}(z;a,b)=\sum_{(m,n)\in\mathbb{Z}^2}e^{-\frac{\pi}{y}|mz-n|^2}\cos(2\pi(ma+nb)).
\endaligned
\end{equation}
characterizes  the competing strength of two components $A$ and $B$. $\alpha=\frac{g_{12}}{\sqrt{g_1 g_2}}$ represents  the strength of competition between two competing components $A$ and $B$. The vector $(a,b)$ characterizes  the relative position of the these lattice shape. See Figure 1 when $ (a, b)=(\frac{1}{2},\frac{1}{2})$.

 It is interesting to compare the two-component case with the single-component case. In the latter system, energy minimization reduces to minimizing $\theta (1; z)$ whose only local minimum is the triangular lattice, where $ z=z_0= e^{i\frac{\pi}{3}}$ and $\theta (1; z_0) = 1.1596$ (by Theorem \ref{Theorem4}); the square lattice $z=i$ is a saddle point with $\theta (1;i)= 1.1803$. For two-component case, the minimum of $\mathcal{E}_{MH} (z;a,b)$ depends on the relative strength $\alpha$ and the relative position of the lattices, as conjectured by Mueller-Ho \cite{Mue2002} (supported by numerical computations and experimental results):

\medskip

\noindent
{\bf Mueller-Ho Conjecture:} For a two-component Bose gas, the most favorable lattice minimizing $\theta (1;z)+\alpha\mathcal{J}(z;a,b)$ are
\begin{itemize}
  \item [(a)] $\alpha<0$: the vortices of the two components coincide with each other $(a=b=0)$ to form  a triangular lattice $(z=e^{i\frac{\pi}{3}})$.
  \item [(b)] $0<\alpha<0.172$: the vortex lattice in each component remains triangular. However one lattice is displaced to the center of the triangle of the other $a=b=\frac{1}{3}$. The lattice type $($characterized by $z=z_0=e^{i\frac{\pi}{3}}$$)$ remains constant within this interval.
  \item [(c)] $0.172<\alpha<0.373$: $(a,b)$ jumps from the center of the triangle (i.e., half of the unit cell) to the center of the rhombic unit cell $a=b=\frac{1}{2}$. The angle jumps from $60^o$ to $67.95^o$ at $\alpha=0.172$, and increases continuously to $90^o$ as $\alpha$ increases to $0.372$. As a result, the lattice shape type is no longer fixed and the unit cell is rhombus. The modulus $\frac{b}{a}$, however, remains fixed across this region.
  \item [(d)] $0.373<\alpha<0.926$: the two lattices are "mode locked" into a centered square structure throughout the entire interval $(z=i,a=b=\frac{1}{2})$.
  \item [(e)] $0.926<\alpha<1$: the lattice type again varies continuously with interaction $\alpha$. Each component's vortex lattice has a rectangular unit cell $($angle$=\frac{\pi}{2})$ whose aspect ratio $|z|$ increases with $\alpha$. At $\alpha=1$, the aspect ratio is $\sqrt3$.

\end{itemize}
\begin{remark} Both $Rb^{87}$ and $Na^{23}$ have interaction parameters with the range $(d)$, i.e., $0.373<\alpha<0.926$.
\end{remark}

\medskip

For more on the vortex shape and Bose-Einstein condensates, including the construction of theoretical models and numerical and experimental results, we refer to \cite{Mat1999,Kas2005,Kas2003} and the references  therein.  In \cite{Kea2006} the authors considered Tkachenko modes and verified the same numerical results as in Mueller-Ho Conjecture. It seems that the Mueller-Ho conjecture is a universal phenomenon, as commented by
B$\acute{e}$termin \cite{Bet2019}  that {\em "the same phenomenon in Mueller-Ho results is also expected in other physical and biological models involving infinite lattices and competitive interactions"}. See also numerical computations in  B$\acute{e}$termin-Faulhuber-Kn$\ddot{u}$pfer \cite{Bet2020}.

\medskip

To study the minimizer of the Muller-Ho functional
$\mathcal{E}_{MH} (z;a,b)=\theta (1; z)+\alpha\mathcal{J}(z;a,b)$ with respect to $(z;a,b)$,
 we first need to identify the  critical points of $\mathcal{E}_{MH}$ which satisfy
 \begin{equation}\aligned\label{Equ1}
\nabla_z\theta (1;z)+\alpha \nabla_z\mathcal{J}(z;a,b)=0,
\endaligned\end{equation}
\begin{equation}\aligned\label{Equ2}
\nabla_{(a,b)}\mathcal{J}(z;a,b)=0.
\endaligned\end{equation}

To consider the  global minimum of $\theta (1;z)+\alpha\mathcal{J}(z;a,b)$, a necessary condition is that $(a,b)$ must be a minimum of $\mathcal{J}(z;a,b)$. Thus
we first focus on critical point equation \eqref{Equ2}.

For the function $\mathcal{J}(z;a,b)$ with respect to $(a,b)$, one sees clearly that
\begin{equation}\aligned
\mathcal{J}(z;a+1,b)=\mathcal{J}(z;a,b),\;\;\mathcal{J}(z;a,b+1)=\mathcal{J}(z;a,b)
\endaligned\end{equation}
\begin{equation}\aligned
\mathcal{J}(z;1-a,1-b)=\mathcal{J}(z;a,b).
\endaligned\end{equation}

The periodicity and symmetry imply that $\mathcal{J}(z;a,b)$ with respect to $(a,b)$ has four universal critical points, which are denoted by
\begin{equation}\aligned
w_0:=(0,0), w_1:=(\frac{1}{2},0), w_2:=(0,\frac{1}{2}), w_3:=w_1+w_2=(\frac{1}{2},\frac{1}{2}).
\endaligned\end{equation}
We call "universal" here since they are independent of the lattice structures i.e., $z$.
Clearly, the critical point $w_0$ is the global maxima of $\mathcal{J}(z;a,b)$ with respect to $(a,b)$. For critical points $w_1,w_2,w_3$, we have the following partial classification result (the proof will be given in Section 9):
\begin{lemma}\label{Point1} Let $z= iy, y>0$. There holds:
\begin{itemize}
  \item $w_1, w_2$ are the saddle points of $\mathcal{J}(z;a,b)$ with respect to $(a,b)$. Explicitly, the Hessian at each point can be expressed by
  \begin{equation}\aligned\nonumber
D^2J(z;a,b)\mid_{\{z=iy,(a,b)=w_1\}}&=16\pi^2\vartheta_3(\frac{1}{y})\vartheta_3'(\frac{1}{y})\vartheta_4(y)\vartheta_4'(y)<0\\
D^2J(z;a,b)\mid_{\{z=iy,(a,b)=w_2\}}&=16\pi^2\vartheta_3(y)\vartheta_3'(y)\vartheta_4(\frac{1}{y})\vartheta_4'(\frac{1}{y})<0.
\endaligned\end{equation}
\item $w_3$ is the local minimum of $\mathcal{J}(z;a,b)$ with respect to $(a,b)$. Explicitly, one has the Hessian expression
  \begin{equation}\aligned\nonumber
D^2J(z;a,b)\mid_{\{z=iy,(a,b)=w_3\}}&=16\pi^2\vartheta_4(y)\vartheta_4'(y)\vartheta_4(\frac{1}{y})\vartheta_4'(\frac{1}{y})>0.
\endaligned\end{equation}
\end{itemize}

\end{lemma}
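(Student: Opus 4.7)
The approach is to exploit the fact that the restriction $z=iy$ makes the double sum defining $\mathcal{J}$ separate in $m$ and $n$. Substituting $z=iy$ gives $|mz-n|^2=m^2y^2+n^2$, and expanding $\cos(2\pi(ma+nb))=\cos(2\pi ma)\cos(2\pi nb)-\sin(2\pi ma)\sin(2\pi nb)$, the sine--sine contribution vanishes after summation by the $m\mapsto -m$ antisymmetry. One therefore obtains the clean factorization
\[
\mathcal{J}(iy;a,b) \;=\; \Theta(a,y)\,\Theta(b,1/y), \qquad \Theta(a,y):=\sum_{m\in\mathbb{Z}} e^{-\pi m^2 y}\cos(2\pi m a),
\]
with the key evaluations $\Theta(0,y)=\vartheta_3(y)$ and $\Theta(1/2,y)=\vartheta_4(y)$.

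The remainder is mechanical differentiation. Termwise, $\Theta_a(a,y)=-2\pi\sum_m m\,e^{-\pi m^2 y}\sin(2\pi m a)$ vanishes at $a\in\{0,1/2\}$, so each $w_k$ is automatically a critical point, and the mixed partial $\partial_a\partial_b\mathcal{J}=\Theta_a(a,y)\Theta_a(b,1/y)$ also vanishes there, making the Hessian diagonal at each $w_k$. A second differentiation yields
\[
\Theta_{aa}(0,y)=-4\pi^2\!\sum_{m}m^2 e^{-\pi m^2 y}=4\pi\,\vartheta_3'(y), \qquad \Theta_{aa}(1/2,y)=4\pi\,\vartheta_4'(y),
\]
and multiplying the two diagonal entries of the Hessian at $w_1,w_2,w_3$ produces exactly the three displayed product formulas.

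The last step is to pin down the signs. Positivity of $\vartheta_3(y)$ and negativity of $\vartheta_3'(y)$ are immediate from the series. For $\vartheta_4$ the only mildly delicate input is the Jacobi triple-product identity
\[
\vartheta_4(y) \;=\; \prod_{n\geq 1}\bigl(1-e^{-2\pi n y}\bigr)\bigl(1-e^{-(2n-1)\pi y}\bigr)^2,
\]
whose logarithmic derivative is a sum of strictly positive terms, giving $\vartheta_4(y)>0$ and $\vartheta_4'(y)>0$ for all $y>0$. Combining: at $w_1$ the factor $\vartheta_3'(1/y)$ is negative while $\vartheta_3(1/y),\vartheta_4(y),\vartheta_4'(y)$ are positive, so the Hessian determinant is negative and the two diagonal entries carry opposite signs, confirming $w_1$ is a saddle; $w_2$ is handled identically via the $y\leftrightarrow 1/y$ swap. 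At $w_3$ all four factors are positive, both diagonal entries are positive, and the point is a strict local minimum. The only minor obstacle is verifying $\vartheta_4'(y)>0$ for every $y>0$, and the triple-product identity gives this at once.
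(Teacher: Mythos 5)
Your proof is correct, and since the paper simply omits this computation as ``elementary'' (Section 9), your separation-of-variables factorization $\mathcal{J}(iy;a,b)=\Theta(a,y)\Theta(b,1/y)$ with $\Theta(0,y)=\vartheta_3(y)$, $\Theta(1/2,y)=\vartheta_4(y)$, $\Theta_{aa}=4\pi\vartheta_{3/4}'$ is exactly the intended verification, correctly reading $D^2J$ as the Hessian determinant (the off-diagonal entries vanish) and settling the signs via $\vartheta_3'<0$ and $\vartheta_4,\vartheta_4'>0$ from the triple product.
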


For $(a, b)= (0, 0),
\mathcal{J}(z;0,0)=\theta (1; z)$. Combining Theorem \ref{Theorem4} and using the fact that $w_0$ is the global maxima of $\mathcal{J}(z;a,b)$, we have the following proposition which confirms the (a) part of Mueller-Ho Conjecture:
\begin{proposition}\label{Lstable}  For $\alpha\in[-1,0]$, the minimizer of the functional $ \mathcal{E}_{MH} (z;a,b)=
\theta (1; z)+\alpha \mathcal{J}(z;a,b) $
is achieved at $z_0=\frac{1}{2}+i\frac{\sqrt3}{2}$ and $(a,b)=(0,0)$.
\end{proposition}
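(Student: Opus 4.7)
The plan is to obtain the proposition as a short chain of inequalities, relying on two ingredients already at our disposal: Montgomery's theorem (Theorem \ref{Theorem4}) and the fact, recorded in the paragraph preceding the statement, that $w_0 = (0, 0)$ is a global maximum of $(a, b) \mapsto \mathcal{J}(z; a, b)$ for every fixed $z \in \mathbb{H}$.

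First I would establish the elementary identity $\mathcal{J}(z; 0, 0) = \theta(1; z)$: setting $a = b = 0$ removes the cosine factor, and reindexing $n \mapsto -n$ in the resulting sum turns $|mz - n|^2$ into $|mz + n|^2$, thereby recovering the definition of $\theta(1; z)$. Combined with the global-maximum property, this gives the pointwise bound $\mathcal{J}(z; a, b) \leq \theta(1; z)$ on all of $\mathbb{H} \times \mathbb{R}^2$.

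Since $\alpha \in [-1, 0]$ is non-positive, multiplying this inequality by $\alpha$ reverses the sign and yields
\begin{equation*}
\mathcal{E}_{MH}(z; a, b) = \theta(1; z) + \alpha \mathcal{J}(z; a, b) \geq (1 + \alpha)\, \theta(1; z).
\end{equation*}
Because $1 + \alpha \geq 0$, Theorem \ref{Theorem4} may now be applied to the right-hand side, giving
\begin{equation*}
\mathcal{E}_{MH}(z; a, b) \geq (1 + \alpha)\, \theta(1; z_0) = \mathcal{E}_{MH}(z_0; 0, 0),
\end{equation*}
with equality realized at $(z, a, b) = (z_0, 0, 0)$, as required.

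There is no serious obstacle in this argument; its content is precisely the compatibility of the two sign conditions $\alpha \leq 0$ (which aligns the $\mathcal{J}$-bound with $\mathcal{E}_{MH}$) and $1 + \alpha \geq 0$ (which permits Montgomery to be invoked in the correct direction). These two conditions together single out $[-1, 0]$ as the natural range for this direct argument, and they also clarify why the positive-$\alpha$ regimes in parts (b)--(e) of the Mueller-Ho conjecture require the far more delicate analysis developed in the remainder of the paper.
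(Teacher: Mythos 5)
Your argument is correct and is exactly the one the paper intends: it combines the identity $\mathcal{J}(z;0,0)=\theta(1;z)$, the global-maximality of $(a,b)=(0,0)$ for $\mathcal{J}(z;\cdot,\cdot)$, and Theorem \ref{Theorem4}, with the two sign conditions $\alpha\leq 0$ and $1+\alpha\geq 0$ used precisely as in the paper's one-line justification. Your write-up simply makes the chain of inequalities explicit (and correctly refrains from claiming uniqueness, which degenerates at $\alpha=-1$), so there is nothing to add.
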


Besides the above 4 universal critical points, there may be  other additional pair critical points. (Note that by symmetry if $(a,b)$ is a critical point then $(1-a, 1-b)$ is also a critical point.) We have

\begin{lemma}\label{Lemma1313} If $z=i$, then $(a,b)=(\frac{1}{3},\frac{1}{3})$ is not a critical point of $\mathcal{J}(z;a,b)$;while $(a,b)=(\frac{1}{3},\frac{1}{3})$  (and $(a,b)=\frac{2}{3},\frac{2}{3}$) is a critical point of $\mathcal{J}(z;a,b)$ if $z=\frac{1}{2}+i\frac{\sqrt3}{2}$.
\end{lemma}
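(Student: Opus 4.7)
\emph{Plan.} Differentiating the defining series of $\mathcal{J}$ term-by-term (justified by Gaussian decay of the kernel), the critical-point condition at $(a,b)=(\tfrac13,\tfrac13)$ reduces to $S_a=S_b=0$, where
\[
S_a:=\!\!\sum_{(m,n)\in\mathbb Z^{2}}\!\! m\,e^{-\frac{\pi}{y}|mz-n|^{2}}\sin\!\bigl(\tfrac{2\pi(m+n)}{3}\bigr),\qquad S_b:=\!\!\sum_{(m,n)\in\mathbb Z^{2}}\!\! n\,e^{-\frac{\pi}{y}|mz-n|^{2}}\sin\!\bigl(\tfrac{2\pi(m+n)}{3}\bigr).
\]
The sine factor takes values in $\{0,\pm\tfrac{\sqrt 3}{2}\}$ according to the class of $m+n$ modulo $3$, which is what links the combinatorics of the cubic character to the geometry of the lattice.

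For the hexagonal case $z=z_0=\tfrac12+i\tfrac{\sqrt 3}{2}$, I exploit the $6$-fold symmetry of the quadratic form $Q(m,n):=|mz_0-n|^{2}=m^{2}-mn+n^{2}$. Let $\phi:\mathbb Z^{2}\to\mathbb Z^{2}$ be the $\mathbb Z$-linear bijection $\phi(m,n)=(n,n-m)$ (corresponding to rotation by $\pi/3$ in the complex plane). A direct check gives $Q\circ\phi=Q$ and $\phi^{2}(m,n)=(n-m,-m)$, while the linear form $L(m,n):=m+n$ satisfies $L\circ\phi\equiv -L\pmod 3$ and $L\circ\phi^{2}\equiv L\pmod 3$. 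Substituting $(m,n)\mapsto\phi(m,n)$ in the absolutely convergent series for $S_a$ replaces the factor $m$ by $n$ and flips the sine, yielding $S_a=-S_b$; substituting $(m,n)\mapsto\phi^{2}(m,n)$ leaves the sine fixed but replaces $m$ by $n-m$, yielding $S_a=S_b-S_a$. Together these force $S_a=S_b=0$, so $(\tfrac13,\tfrac13)$ is a critical point. The critical-point property at $(\tfrac23,\tfrac23)\equiv(-\tfrac13,-\tfrac13)\pmod 1$ then follows from the $\mathbb Z^{2}$-periodicity and the evenness of $\mathcal J$ in $(a,b)$.

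For the square case $z=i$, the symmetry group of $m^{2}+n^{2}$ on $\mathbb Z^{2}$ (the dihedral group of order $8$) contains no element that shifts $m+n$ non-trivially modulo $3$, so no analogous cancellation is available and I verify $S_a\neq 0$ by an explicit Gaussian tail estimate. Pairing $(m,n)$ with $(-m,-n)$ collapses the series to $S_a=2\sum_{m\geq 1,\,n\in\mathbb Z} m\,e^{-\pi(m^{2}+n^{2})}\sin\!\bigl(\tfrac{2\pi(m+n)}{3}\bigr)$; the single contribution from $(m,n)=(1,0)$ equals $\sqrt 3\,e^{-\pi}$, while the modulus of the remaining terms is dominated by $2\sqrt 3\sum_{m\geq 1,\,m^{2}+n^{2}\geq 2} m\,e^{-\pi(m^{2}+n^{2})}$, which a crude Gaussian estimate shows to be a small multiple of $e^{-2\pi}$. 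Since $e^{-\pi}/e^{-2\pi}=e^{\pi}\approx 23$, the leading term dominates, so $S_a>0$ and $(\tfrac13,\tfrac13)$ is not a critical point; the reflection $(m,n)\leftrightarrow (n,m)$ moreover gives $\partial_b\mathcal J(i;\tfrac13,\tfrac13)=\partial_a\mathcal J(i;\tfrac13,\tfrac13)\neq 0$. The only delicate step in the entire argument is this tail bound, but the rapid Gaussian decay makes it routine.
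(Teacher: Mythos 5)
Your proof is correct, and for the square case it is essentially the paper's argument: pair $(m,n)$ with $(-m,-n)$, isolate the $(1,0)$ term $e^{-\pi}\sin(2\pi/3)$, and beat the rest by a Gaussian tail bound (the paper likewise reduces to showing $\sum_{m>0,\,m+n\equiv 2\,(3)}m\,e^{-\pi(m^2+n^2)}<e^{-\pi}$); your constants are loose but the margin, roughly $\sqrt3\,e^{-\pi}$ against about $7e^{-2\pi}$, is ample, and the $m\leftrightarrow n$ reflection for $\partial_b$ matches the paper. Where you genuinely diverge is the hexagonal case. The paper fixes $m$, completes the square $m^2-mn+n^2=\tfrac34 m^2+\tfrac14(2n-m)^2$, and kills the inner sum over $n$ by the odd symmetry of $u\mapsto e^{-\frac{x}{4}u^2}\sin(\tfrac{\pi u}{3})$ in $u=2n-m$; this is a one-dimensional cancellation applied to the $m$-weighted sum (with the $n$-weighted sum handled by the evident $m\leftrightarrow n$ symmetry). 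You instead exploit the lattice automorphism $\phi(m,n)=(n,n-m)$ preserving $Q=m^2-mn+n^2$, with $L=m+n$ satisfying $L\circ\phi\equiv-L$ and $L\circ\phi^2\equiv L\pmod 3$, and re-index the absolutely convergent series to get the linear relations $S_a=-S_b$ and $S_a=S_b-S_a$, forcing $S_a=S_b=0$ simultaneously. Your route is structurally more illuminating: it treats both partial derivatives at once, isolates exactly which hexagonal symmetry produces the cancellation, and explains (as your aside about the dihedral group of $m^2+n^2$ indicates) why no such mechanism exists at $z=i$; the paper's computation is more elementary and self-contained but case-by-case. The handling of $(\tfrac23,\tfrac23)$ via $\mathcal J(z;1-a,1-b)=\mathcal J(z;a,b)$ is the same in both. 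For a fully written-out version you should display the explicit tail estimate (e.g. $\sum_{m\ge1,\,(m,n)\neq(1,0)}m\,e^{-\pi(m^2+n^2)}\le 2e^{-2\pi}+O(e^{-4\pi})<\tfrac12 e^{-\pi}$), but that is routine, as you note.
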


The proof of Lemma \ref{Lemma1313} will be given in Appendix 1.


On the critical point equation \eqref{Equ2}, the numerical simulation suggests the following conjecture:
\begin{conjecture} The function  $\mathcal{J}(z;a,b)$ with respect to the $a,b$ has either 4 or 6 critical points depending on modulus of the tori $z$. Let $\Omega_4 ($resp. $\Omega_6)$ be the subset of $\mathbb{H}$ which corresponds to tori $z$ having four $($resp. six$)$ critical points.
There holds
\begin{itemize}
  \item [a]: Alterative:
  \begin{equation}\aligned\nonumber
\mathbb{H}=\Omega_4\cup\Omega_6,\;\;\Omega_4\cap\Omega_6=\emptyset.
\endaligned\end{equation}
  \item [b]: Rectangular tori has only four critical points and the hexagonal one has six.
  \begin{equation}\aligned\nonumber
i \in \{z\mid: \Re(z)=0, \Im(z)>0\} \subset \Omega_4, \frac{1}{2}+i\frac{\sqrt3}{2} \in \Omega_6.
\endaligned\end{equation}

  \item [c]: Invariance:
 \begin{equation}\aligned\nonumber
z\in\Omega_4\Rightarrow \Gamma(z)\in\Omega_4; z\in\Omega_6\Rightarrow \Gamma(z)\in\Omega_6.
\endaligned\end{equation}

 Here the modular group is
\begin{equation}\aligned\label{modular}
\Gamma:=SL_2(\mathbb{Z})=\{
\left(
  \begin{array}{cc}
    a & b \\
    c & d \\
  \end{array}
\right), ad-bc=1, a, b, c, d\in\mathbb{Z}
\}.
\endaligned\end{equation}

\end{itemize}

\end{conjecture}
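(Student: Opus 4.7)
The plan is to split the conjecture into its three parts, treating (b) and (c) as concrete computations and (a) as the main obstacle requiring a global bifurcation analysis on the $(a,b)$-torus.

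For part (c), the modular invariance, the approach is to prove that the generators $T: z \mapsto z+1$ and $S: z \mapsto -1/z$ of $SL_2(\mathbb{Z})$ each induce an explicit unimodular linear change of variables on the $(a,b)$-torus carrying critical sets to critical sets. Direct reindexing of the lattice sum $(m,n) \in \mathbb{Z}^2$ in the definition of $\mathcal{J}$ yields
\begin{equation*}
\mathcal{J}(z+1; a, b) = \mathcal{J}(z; a+b, b), \qquad \mathcal{J}(-1/z; a, b) = \mathcal{J}(z; b, -a).
\end{equation*}
Since $(a,b) \mapsto (a+b, b)$ and $(a,b) \mapsto (b, -a)$ are diffeomorphisms of $\mathbb{R}^2/\mathbb{Z}^2$, they induce bijections on critical sets, and since $S,T$ generate $SL_2(\mathbb{Z})$, part (c) follows.

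For part (b), the rectangular case $z = iy$ simplifies because $|m(iy)-n|^2/y = m^2 y + n^2/y$, so the exponent factorizes; killing the sine cross-terms via the $(m,n) \mapsto (-m,-n)$ symmetry gives
\begin{equation*}
\mathcal{J}(iy; a, b) = \Bigl(\sum_{m\in\mathbb{Z}} e^{-\pi m^2 y} \cos(2\pi m a)\Bigr) \Bigl(\sum_{n\in\mathbb{Z}} e^{-\pi n^2/y} \cos(2\pi n b)\Bigr).
\end{equation*}
The Jacobi triple product identity expresses each factor as a positive infinite product of terms $1 + 2q^{2k-1}\cos(2\pi a) + q^{4k-2}$ with $q = e^{-\pi y}$, which is a strictly monotone function of $u := \cos(2\pi a)$; hence each factor has exactly two critical points on $\mathbb{R}/\mathbb{Z}$, namely $a \in \{0, 1/2\}$, and multiplying gives exactly the four product critical points $w_0, w_1, w_2, w_3$. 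For the hexagonal case, at $z_0$ the function $\mathcal{J}(z_0; \cdot)$ is invariant under (i) the order-three torus rotation induced via (c) by the stabilizer of $z_0$ in $PSL_2(\mathbb{Z})$, and (ii) the global involution $(a,b) \mapsto (-a,-b)$. Any point fixed by (i) carries a rotation-by-$120^\circ$ linear action on its tangent space with no invariant direction, so must be a critical point; this produces $(0,0), (1/3, 1/3), (2/3, 2/3)$, consistent with Lemma \ref{Lemma1313}. Similarly each fixed point of (ii), namely $w_0, \ldots, w_3$, is automatically critical. The union gives six candidate critical points, and excluding further ones (which by symmetry must come in $\mathbb{Z}/6\mathbb{Z}$-orbits of size six) would be tackled either by a direct Jacobi-theta-function analysis of $\nabla \mathcal{J}(z_0; \cdot)$ or as a corollary of part (a).

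The main obstacle is part (a), the global dichotomy $\mathbb{H} = \Omega_4 \sqcup \Omega_6$. The plan is to view $\nabla_{(a,b)}\mathcal{J}(z;\cdot,\cdot)$ as a smooth $z$-parameterized vector field on $T^2$ and to analyze its bifurcation locus $\mathcal{B} \subset \mathbb{H}$, where some critical point becomes non-Morse. Away from $\mathcal{B}$ structural stability keeps the critical count locally constant. A refinement of Lemma \ref{Point1} to general $z$, expressing the Hessians of $w_0, \ldots, w_3$ explicitly via Jacobi theta functions, should show that these four universal critical points are always nondegenerate, so bifurcations can only involve non-universal critical points; by the (c)-invariance combined with the involution these come in paired orbits, so the count jumps by $\pm 2$ across $\mathcal{B}$, and combined with (b) this confines it to $\{4, 6\}$. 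The most delicate step will be ruling out higher-codimension degeneracies (simultaneous birth of multiple orbits of critical points) and computing $\mathcal{B}$ explicitly; I expect $\mathcal{B}$ to be a single modular-invariant curve characterized by a vanishing Jacobi-theta Hessian determinant, dividing the fundamental domain into $\Omega_4$ and $\Omega_6$.
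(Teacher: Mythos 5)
You should first note where this statement sits in the paper: it is stated as a \emph{conjecture}, supported only by numerical simulation, and the paper proves no part of it beyond the fragments in Lemma \ref{Point1} (Hessians at $w_1,w_2,w_3$ for $z=iy$ only) and Lemma \ref{Lemma1313} (the point $(\frac13,\frac13)$ is critical at the hexagonal torus but not at $z=i$). So there is no paper proof to compare against, and your proposal has to stand on its own as a complete argument; it does not, although parts of it are genuinely correct and go beyond what the paper verifies. The transformation identities $\mathcal{J}(z+1;a,b)=\mathcal{J}(z;a+b,b)$ and $\mathcal{J}(-1/z;a,b)=\mathcal{J}(z;b,-a)$ are correct (reindexing $n\mapsto n+m$, resp. $(m,n)\mapsto(n,-m)$ together with $\Im(-1/z)=y/|z|^2$), and since these are unimodular torus diffeomorphisms they do give part (c). Likewise your factorization $\mathcal{J}(iy;a,b)=\big(\sum_m e^{-\pi m^2y}\cos 2\pi ma\big)\big(\sum_n e^{-\pi n^2/y}\cos 2\pi nb\big)$ is correct, each factor is strictly positive and has exactly the two critical points $0,\tfrac12$ per period by the triple-product representation, and since for a product $F(a)G(b)$ with $F,G>0$ the critical set is exactly $\{F'=0\}\times\{G'=0\}$, you do get exactly four critical points for every rectangular torus; this is a clean proof of the rectangular half of (b), which the paper itself does not establish. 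Your symmetry argument at $z_0=\frac12+i\frac{\sqrt3}{2}$ (fixed points of the order-three torus automorphism and of $(a,b)\mapsto(-a,-b)$ are automatically critical) correctly produces the six expected points.

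The genuine gaps are in the hexagonal half of (b) and in all of (a). For the hexagonal torus you only obtain six critical points as a \emph{lower} bound; the exclusion of further ones is explicitly deferred ("direct theta analysis or a corollary of (a)"), i.e.\ not done, and it is exactly the hard step. For (a), the bifurcation plan contains two concrete non sequiturs. First, knowing that non-universal critical points come in involution pairs and that the count is $4$ on the imaginary axis and $6$ at $z_0$ does not confine the count to $\{4,6\}$ on all of $\mathbb{H}$: nothing in your argument excludes regions with $8$ or more critical points unless you actually prove that the bifurcation locus $\mathcal{B}$ is a single modular-invariant curve across which exactly one pair is created, and that is essentially the content of the conjecture itself, not a consequence of structural stability. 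Second, the step "bifurcations can only involve non-universal critical points" rests on the unproved assertion that $w_0,\dots,w_3$ are nondegenerate for \emph{every} $z$; the paper only computes these Hessians for $z=iy$, and nondegeneracy may well fail precisely on $\mathcal{B}$ if the extra pair is born at a $2$-torsion point (a pitchfork at $w_3$ is fully compatible with your parity count), in which case this cornerstone of the plan collapses. Finally, the transversality/codimension claims about $\mathcal{B}$ would require quantitative estimates on the theta sums defining $\nabla_{(a,b)}\mathcal{J}$ and its Hessian, none of which are supplied. As it stands, you have proved (c) and the rectangular statement in (b); the hexagonal exactness and part (a) remain open, consistent with the paper's own assessment that the Mueller--Ho conjecture is proved only modulo this critical-point conjecture.
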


\begin{remark} This conjecture has some similarity to  the discovery in Lin-Wang \cite{Lin2010}, in which they showed  surprisingly that the Green function on the two dimensional torus has either 3 or 5 critical points.
\end{remark}
In summary, we  we see that  $(a,b)=(\frac{1}{3},\frac{1}{3})$
is not always a critical point of $\mathcal{J}(z;a,b)$ for $z \in\mathbb{H}$, while
$(a,b)=(\frac{1}{2},\frac{1}{2})$ is always the critical point of $\mathcal{J}(z;a,b)$ for all $z \in\mathbb{H}$. Moreover $(a,b)=(\frac{1}{2}, \frac{1}{2})$ is a local minimum at least for $z= i y, y>0$.

When $(a,b)=w_3=(\frac{1}{2},\frac{1}{2})$ we can simplify the Mueller-Ho functional using the following (whose proof will be given in Section 9)
\begin{lemma}\label{Cw3}
\begin{equation}\aligned\nonumber
\mathcal{J}(z;\frac{1}{2},\frac{1}{2}) =2\theta (2, \frac{z+1}{2})-\theta (1; z).
  \endaligned\end{equation}
\end{lemma}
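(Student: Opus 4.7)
The plan is to expand both sides from their definitions and identify them via a parity partition of the summation index $(m,n)\in\mathbb{Z}^2$. First, I would substitute $a=b=\tfrac{1}{2}$ directly into the definition of $\mathcal{J}(z;a,b)$. The cosine factor becomes $\cos(\pi(m+n)) = (-1)^{m+n}$. Since replacing $n\mapsto -n$ in the lattice sum leaves both $|mz-n|^2$ and the sign $(-1)^{m+n}$ invariant (as a sum over $\mathbb{Z}^2$), one obtains
\begin{equation*}
\mathcal{J}(z;\tfrac12,\tfrac12) \;=\; \sum_{(m,n)\in\mathbb{Z}^2} (-1)^{m+n}\, e^{-\tfrac{\pi}{y}|mz+n|^2}.
\end{equation*}

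Next I would split $\theta(1;z)$ according to the parity of $m+n$. Denote by $S_0$ the sum over the sublattice $\{m+n \text{ even}\}$ and by $S_1$ the sum over $\{m+n \text{ odd}\}$ of the common summand $e^{-\tfrac{\pi}{y}|mz+n|^2}$. Then trivially
\begin{equation*}
\theta(1;z) \;=\; S_0 + S_1, \qquad \mathcal{J}(z;\tfrac12,\tfrac12) \;=\; S_0 - S_1.
\end{equation*}
Thus it suffices to show $\theta\!\left(2;\tfrac{z+1}{2}\right) = S_0$.

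For this identification, I would substitute $w = \tfrac{z+1}{2}$ (so $\Im w = y/2$) directly into \eqref{thetas}. The exponent becomes
\begin{equation*}
-\frac{2\pi}{y/2}\Bigl|m\,\tfrac{z+1}{2} + n\Bigr|^2 \;=\; -\frac{\pi}{y}\bigl|mz + (m+2n)\bigr|^2.
\end{equation*}
Setting $k := m+2n$, for each fixed $m\in\mathbb{Z}$ the integer $k$ ranges bijectively over integers of the same parity as $m$; consequently $m+k$ is always even and each pair $(m,k)$ with $m+k$ even is hit exactly once. Hence $\theta\!\left(2;\tfrac{z+1}{2}\right) = S_0$, and combining yields
\begin{equation*}
2\,\theta\!\left(2;\tfrac{z+1}{2}\right) - \theta(1;z) \;=\; 2S_0 - (S_0+S_1) \;=\; S_0 - S_1 \;=\; \mathcal{J}(z;\tfrac12,\tfrac12),
\end{equation*}
as claimed.

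I do not anticipate a serious obstacle: the whole argument is a bookkeeping identification of the index-$2$ sublattice $\{m+n\text{ even}\} \subset \mathbb{Z}^2$ with the rescaled lattice $\mathbb{Z}\oplus\tfrac{z+1}{2}\mathbb{Z}$, which is exactly the content of the parameter change from $s=1$ to $s=2$ combined with the argument shift $z\mapsto \tfrac{z+1}{2}$. The only point requiring mild care is checking that the map $(m,n)\mapsto(m,m+2n)$ furnishes a bijection between $\mathbb{Z}^2$ and $\{(m,k)\in\mathbb{Z}^2: m+k\text{ even}\}$, which is immediate.
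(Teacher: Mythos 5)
Your proposal is correct and follows essentially the same route as the paper: both arguments reduce the cosine factor $\cos(\pi(m+n))=(-1)^{m+n}$ to a parity decomposition of $\mathbb{Z}^2$ and then identify the sum over the index-two sublattice $\{m+n\ \text{even}\}$ with $\theta\!\left(2;\tfrac{z+1}{2}\right)$ via the change of index $k=m+2n$ (the paper writes this as $n=2k-m$ together with $|m(z+1)-2k|^2=4\,|m\tfrac{z+1}{2}-k|^2$). The only cosmetic difference is that the paper encodes the parity split through the identity $1+\cos((m+n)\pi)=2\cos^2\!\bigl(\tfrac{(m+n)\pi}{2}\bigr)$ rather than through your $S_0\pm S_1$ bookkeeping, which is the same computation.
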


As a consequence the Mueller-Ho functional becomes
\begin{equation}\label{Formula}
{\mathcal E}_{MH} (z; \frac{1}{2},\frac{1}{2})=(1-\alpha)\theta (1; z)+2\alpha \theta (2, \frac{z+1}{2}).
\end{equation}

Applying Theorem \ref{Th1} with $\rho= \frac{1-\alpha}{2\alpha}$, we have the following
\begin{theorem}\label{ThBEC} For the Mueller-Ho functional $\mathcal{E}_{MH} (z; \frac{1}{2}, \frac{1}{2})$, there exists thresholds $\alpha_1 \sim 0.3732155067\cdots <\alpha_2 \sim 0.9256496973\cdots$  such that
\begin{enumerate}
  \item [(1)] for $\alpha\in[0,\alpha_1]$, the minimizer is rhombic lattice $z=e^{i\theta_\alpha}$ given by
   $$\theta_\alpha=\arctan(\frac{2y_{2,\frac{1-\alpha}{2\alpha}}}{y_{2,\frac{1-\alpha}{2\alpha}}^2-1}),$$
   and the angle increases from $\frac{\pi}{3}$ to $\frac{\pi}{2}$;
  \item [(2)] for $\alpha\in[\alpha_1,\alpha_2]$, the minimizer is square lattice;
  \item [(3)] for $\alpha\in[\alpha_2,1]$, the minimizer is rectangular lattice $(iy_{1,\frac{1-\alpha}{2\alpha}})$ and the ratio of long side and short side increases from 1 to $\sqrt3$.
  \end{enumerate}

\end{theorem}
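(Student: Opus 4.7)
The plan is to reduce the minimization of $\mathcal{E}_{MH}(z; 1/2, 1/2)$ over $z \in \mathbb{H}$ to a direct application of Theorem \ref{Th1}, by recognizing the Mueller--Ho functional, up to a positive multiplicative constant, as the functional $\mathcal{W}_{1,\rho}$ for a suitable $\rho = \rho(\alpha)$.

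First, substituting the identity of Lemma \ref{Cw3}, namely $\mathcal{J}(z; 1/2, 1/2) = 2\theta(2; \frac{z+1}{2}) - \theta(1; z)$, into the definition of $\mathcal{E}_{MH}$ yields equation \eqref{Formula}, and for $\alpha \in (0,1]$ one may factor out $2\alpha > 0$ to obtain
$$
\mathcal{E}_{MH}(z; 1/2, 1/2) \;=\; 2\alpha \, \mathcal{W}_{1, \rho(\alpha)}(z), \qquad \rho(\alpha) \;:=\; \frac{1-\alpha}{2\alpha}.
$$
Thus the $z$-minimizers of $\mathcal{E}_{MH}(\,\cdot\,; 1/2, 1/2)$ coincide with those of $\mathcal{W}_{1, \rho(\alpha)}$. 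Since $\rho(\alpha)$ is a strictly decreasing bijection from $(0,1]$ onto $[0, \infty)$, the thresholds $\sigma_{1,a} < \sigma_{1,b}$ of Theorem \ref{Th1} pull back to thresholds $\alpha_1 < \alpha_2$ defined by $\rho(\alpha_1) = \sigma_{1,b}$ and $\rho(\alpha_2) = \sigma_{1,a}$, i.e.\ $\alpha_1 = (2\sigma_{1,b} + 1)^{-1}$ and $\alpha_2 = (2\sigma_{1,a} + 1)^{-1}$; substituting the numerical values of $\sigma_{1,a}, \sigma_{1,b}$ from Theorem \ref{Th1} produces the stated $\alpha_1 \approx 0.3732$ and $\alpha_2 \approx 0.9256$.

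Next I would translate the three regimes of Theorem \ref{Th1} back to $\alpha$. The interval $\alpha \in [\alpha_2, 1]$ maps to $\rho \in [0, \sigma_{1,a}]$, so Theorem \ref{Th1}(1) gives a rectangular minimizer $z = i y_{1, 1/\rho} \in \Omega_{ea}$; since $y_{1, 1/\rho}$ moves from $\sqrt{3}$ (at $\alpha = 1$, $\rho = 0$) down to $1$ (at $\alpha = \alpha_2$, $\rho = \sigma_{1,a}$), the aspect ratio grows from $1$ to $\sqrt{3}$ as $\alpha$ runs from $\alpha_2$ to $1$, which is case (3). The interval $\alpha \in [\alpha_1, \alpha_2]$ maps to $\rho \in [\sigma_{1,a}, \sigma_{1,b}]$, giving the square lattice $z \equiv i$ by Theorem \ref{Th1}(2), which is case (2). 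Finally, $\alpha \in (0, \alpha_1]$ maps to $\rho \in [\sigma_{1,b}, \infty)$, so Theorem \ref{Th1}(3) together with the Table~1 parametrization $z_{1, \rho} = \frac{y_{2, 1/\rho}^2 - 1}{y_{2, 1/\rho}^2 + 1} + i \frac{2 y_{2, 1/\rho}}{y_{2, 1/\rho}^2 + 1}$ places the minimizer on the unit arc $\Omega_{eb}$; writing $z = e^{i \theta_\alpha}$ and reading off the argument gives $\theta_\alpha = \arctan\!\bigl(2 y_{2, 1/\rho}/(y_{2, 1/\rho}^2 - 1)\bigr)$, which is case (1). The monotonic increase of $\theta_\alpha$ from $\pi/3$ to $\pi/2$ follows from $\frac{d}{d\rho} y_{2, \rho} < 0$ (Theorem \ref{Th3}) combined with $\rho'(\alpha) < 0$; the boundary value $\alpha = 0$ is handled directly via Montgomery's Theorem \ref{Theorem4} applied to $\mathcal{E}_{MH}(z; 1/2, 1/2)|_{\alpha = 0} = \theta(1; z)$, which is continuous with the $\rho \to \infty$ limit.

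The essential content is therefore already in Theorem \ref{Th1}; the remaining work is bookkeeping, consisting of the algebraic reduction, the threshold inversion $\alpha \leftrightarrow \rho$, the angle parametrization, and the endpoint $\alpha = 0$. There is no real mathematical obstacle; a minor notational point is that the index on $y_2$ in case (1) as stated reads $(1-\alpha)/(2\alpha) = \rho$, whereas the reduction above naturally produces $y_{2, 1/\rho} = y_{2, 2\alpha/(1-\alpha)}$, a surface discrepancy to reconcile when writing out the proof.
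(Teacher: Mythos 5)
Your proposal is correct and follows essentially the same route as the paper: the paper's own proof of Theorem \ref{ThBEC} is precisely the observation that, after the reduction \eqref{Formula} via Lemma \ref{Cw3}, one applies Theorems \ref{Th1}--\ref{Th3} with $\rho=\frac{1-\alpha}{2\alpha}$, which is exactly your factorization $\mathcal{E}_{MH}(z;\tfrac12,\tfrac12)=2\alpha\,\mathcal{W}_{1,\rho(\alpha)}(z)$ and threshold inversion $\alpha_{1}=(2\sigma_{1,b}+1)^{-1}$, $\alpha_{2}=(2\sigma_{1,a}+1)^{-1}$. You merely spell out the bookkeeping the paper leaves implicit, and your remark about the index on $y_{2,\cdot}$ in case (1) (which should match the $y_{2,1/\rho}$ parametrization of Table 1) is a fair observation about the statement's notation rather than a gap in the argument.
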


Proposition \ref{Lstable} and Theorem \ref{ThBEC} give a partial answer to the (a), (c), (d) and (e) part of Mueller-Ho Conjecture. Theorem \ref{ThBEC} shows that as the competition strength between the two Bose gases increases the lattice structures moves from
hexgonal, rhombus, square to rectangular. See Figure 3.

\begin{figure}
\centering
 \includegraphics[scale=0.11]{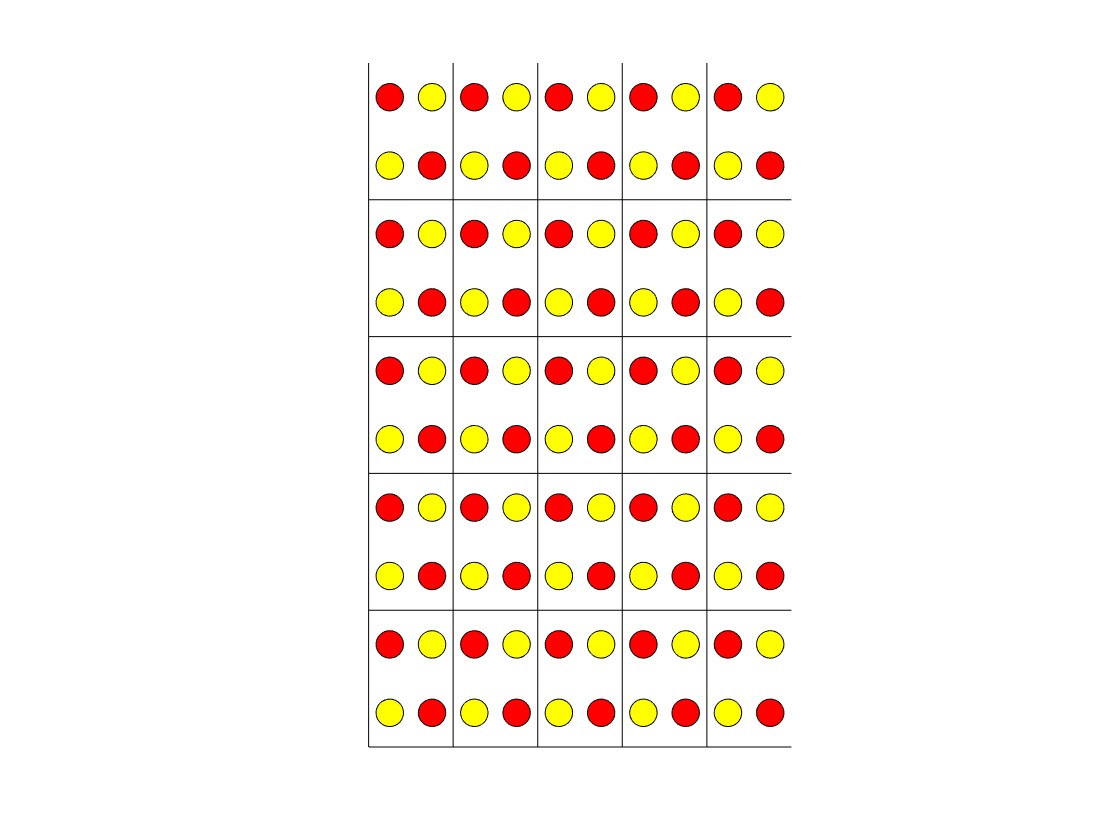}
 \includegraphics[scale=0.11]{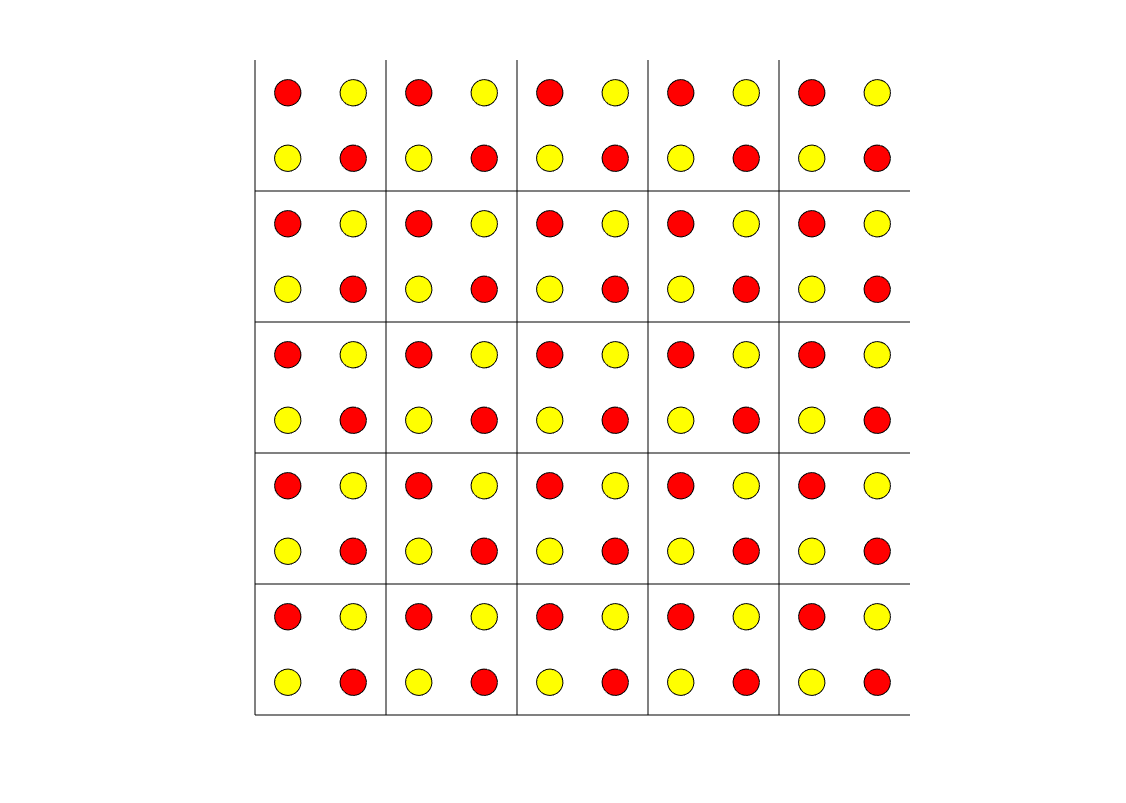}
 \includegraphics[scale=0.08]{rhombus.png}
 \includegraphics[scale=0.08]{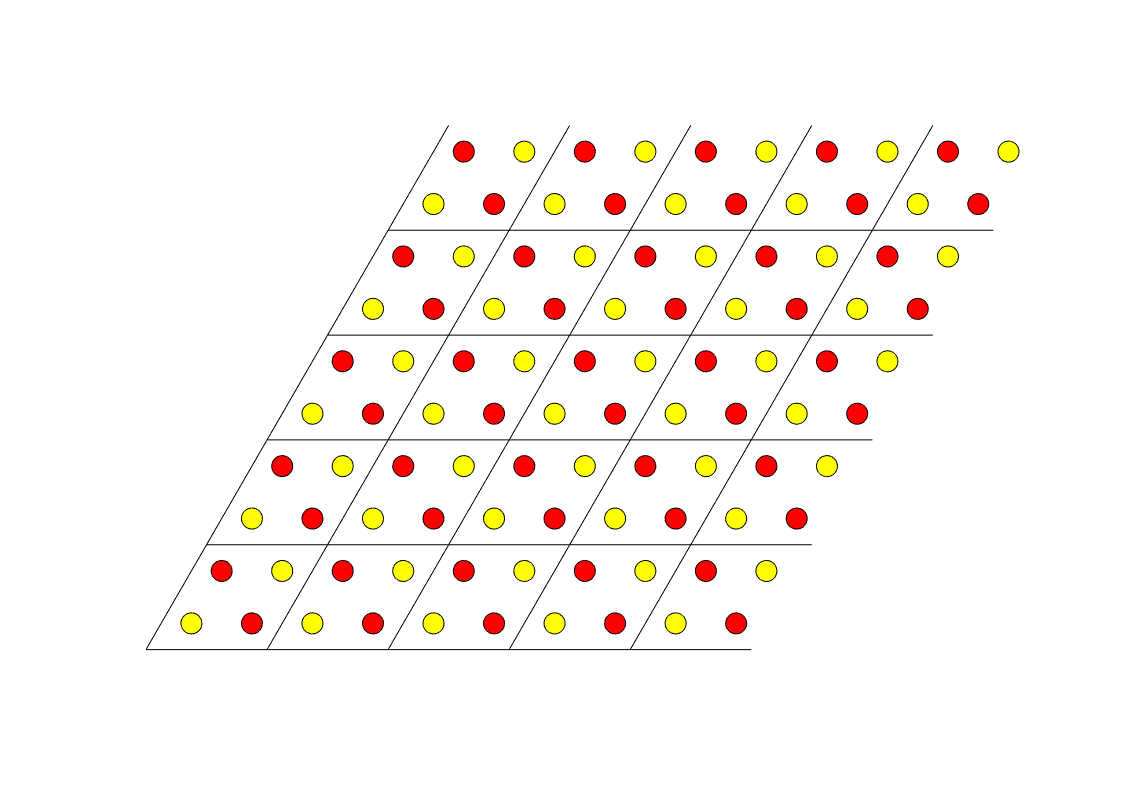}
 \caption{Two-component Bose gas in lattices.
   First row from left to right: a rectangular lattice and a square lattice.
   Second row from left to right: a rhombic lattice and a hexagonal lattice.}
\label{f-lattices}
\end{figure}

Finally we discuss the (b) part of Mueller-Ho Conjecture.
In the Mueller-Ho Conjecture, the expected lattice structure when $\alpha$ is small is triangular lattice, and the relative position of the two components $A,B$ is characterized by $(a,b)=(\frac{1}{3},\frac{1}{3})$.  To see this, there a clear competition between
$\theta (1; z)+\alpha\mathcal{J}(z;\frac{1}{2}, \frac{1}{2})$ and $\theta (1; z)+\alpha\mathcal{J}(z;\frac{1}{3},\frac{1}{3})$ when $\alpha$ is small.  Thus the upper bound of $\alpha$ preserving the triangular lattice structure is determined by the
\begin{equation}\aligned
\alpha_0:=\max_{\alpha\in[0,1]} \{ \alpha \ | \ \theta (1; \frac{1}{2}+i\frac{\sqrt3}{2})+\alpha\mathcal{J}(\frac{1}{2}+i\frac{\sqrt3}{2};\frac{1}{3},\frac{1}{3})
\leq\min_{z\in\mathbb{H}}\big(\theta (1;z)+\alpha\mathcal{J}(z;\frac{1}{2}, \frac{1}{2})\big) \}.
\endaligned
\end{equation}
To find $\alpha_0$, one first uses $\min_{z\in\mathbb{H}}\big(\theta (1;z)+\alpha\mathcal{J}(z;w_3)\big)\leq\theta (1;i)+\alpha\mathcal{J}(i;\frac{1}{2}, \frac{1}{2})$ to obtain a rough bound
\begin{equation}\aligned\label{a0ine}
\alpha_0\leq\frac{\theta(1;i)-\theta (1;\frac{1}{2}+i\frac{\sqrt3}{2})}{\mathcal{J}(\frac{1}{2}+i\frac{\sqrt3}{2};\frac{1}{2},\frac{1}{2})-\mathcal{J}(i;\frac{1}{2},\frac{1}{2})}
:=0.2419435012\cdots.
\endaligned
\end{equation}
By Theorem \ref{ThBEC}, one deduces that
\begin{equation}\aligned
\max_{\alpha\in[0,1]} \{ \alpha \ | \ \theta(1;\frac{1}{2}+i\frac{\sqrt3}{2})+\alpha\mathcal{J}(\frac{1}{2}+i\frac{\sqrt3}{2};\frac{1}{3},\frac{1}{3})
\leq\big(\theta (1; e^{i\theta_\alpha})+\alpha\mathcal{J}(e^{i\theta_\alpha};\frac{1}{2}, \frac{1}{2})\big)\}.
\endaligned
\end{equation}
In view of \eqref{a0ine}, the upper bound $\alpha_0$ satisfies the equation
\begin{equation}\aligned\label{inea}
\theta (1; \frac{1}{2}+i\frac{\sqrt3}{2})+\alpha\mathcal{J}(\frac{1}{2}+i\frac{\sqrt3}{2};\frac{1}{3},\frac{1}{3})
=\theta (1; e^{i\theta_\alpha})+\alpha\mathcal{J}(e^{i\theta_\alpha};\frac{1}{2}, \frac{1}{2}).
\endaligned
\end{equation}
Equation \eqref{inea} gives the upper bound in $($b$)$ of Mueller-Ho Conjecture which is
\begin{equation}\aligned
\alpha_0=0.1726645\cdots,
\theta_{\alpha_0}=1.186248384\cdots.
\endaligned
\end{equation}

In summary we have a complete proof of Mueller-Ho Conjecture as long as the conjecture on the critical points is proved.

The rest of the paper is organized as follows:  In Section 3, we collect some basic invariance properties of the functionals $\mathcal{W}_{1,\rho}(z)$ and $\mathcal{W}_{2,\rho}(z)$ and discuss the intricate relations between these two functionals. In Section 4, we prove a fundamental monotonicity  property of the theta function $\theta (s; \frac{z+1}{2})$. The conjugate monotonicity of $\mathcal{W}_{1,\rho}(z)$ and $\mathcal{W}_{2,\rho}(z)$ are established in Section 5. In Sections 6 and 7, we classify the shape of $\mathcal{W}_{1,\rho}(z)$ and $\mathcal{W}_{2,\rho}(z)$ on the
$y-$axis for all $\rho\in[0,\infty)$ respectively. In Section 8, we prove Theorems \ref{Th1}, \ref{Th2} and \ref{Th3}, the method of the proof relies on the
properties established in Sections 3-7. In Section 9, we prove the properties on Mueller-Ho functional and Theorem \ref{ThBEC}.

In the remaining part of the paper we use  the  common notation $
\sum_{m,n}:=\sum_{(m,n)\in\mathbb{Z}^2}$ so that the theta function becomes $
\theta (s;z)=\sum_{(m,n)} e^{-s \pi\frac{1}{y}|mz+n|^2}$. We also use  the notation:
\begin{equation}\aligned\label{Notation1}
\pi=\left(
      \begin{array}{cc}
        a & b \\
        c & d \\
      \end{array}
    \right)
\Leftrightarrow \pi(\tau)=\frac{a\tau+b}{c\tau+d}.
\endaligned\end{equation}

\section{Some preliminaries}
\setcounter{equation}{0}

In this section we present some simple symmetries of the two theta functions  $\theta (s; z)$ and $\theta (s; \frac{z+1}{2})$  and the associated fundamental domains.  As a result we establish the precise  connection between $\mathcal{W}_{1,\rho}(z)$ and $\mathcal{W}_{2,\rho}(z)$.

Let
$
\mathbb{H}
$
 denote the upper half plane and  $\Gamma $ denote the modular group (defined at \eqref{modular}).

We use the following definition of fundamental domain which is slightly different from the classical definition (see \cite{Mon1988}):
\begin{definition} ([page 108, \cite{Eva1973}]
The fundamental domain associated to group $G$ is a connected domain $\mathcal{D}$ satisfies
\begin{itemize}
  \item For any $z\in\mathbb{H}$, there exists $\pi(z)\in G$ such that $\pi(z)\in\overline{\mathcal{D}}$;
  \item Suppose $z_1,z_2\in\mathcal{D}$ and $\pi(z_1)=z_2$ for some $\pi\in G$, then $z_1=z_2$ and $\pi=\pm Id$.
\end{itemize}
\end{definition}

By Definition 1, the fundamental domain to modular group $\Gamma$ is
\begin{equation}\aligned\label{Fd1}
\mathcal{D}_\Gamma:=\{
z\in\mathbb{H}: |z|>1,\; -\frac{1}{2}<x<\frac{1}{2}
\}.
\endaligned\end{equation}
which is open.  Note that the fundamental domain can be open. (See [page 30, \cite{Apo1976}].)

Next we we introduce another two groups related  to the functionals $\mathcal{W}_{1,\rho}$ and $ \mathcal{W}_{2, \rho}$. The generators of these groups are given by
\begin{equation}\aligned\label{GroupG1}
\mathcal{G}_1: \hbox{the group generated by} \;\;\tau\mapsto -\frac{1}{\tau},\;\; \tau\mapsto \tau+1,\;\;\tau\mapsto -\overline{\tau},
\endaligned\end{equation}
\begin{equation}\aligned\label{GroupG2}
\mathcal{G}_2: \hbox{the group generated by} \;\;\tau\mapsto -\frac{1}{\tau},\;\; \tau\mapsto \tau+2,\;\;\tau\mapsto -\overline{\tau}.
\endaligned\end{equation}
It is easy to see that
the fundamental domains to group $\mathcal{G}_j,j=1,2$ denoted by $\mathcal{D}_{\mathcal{G}_1},\mathcal{D}_{\mathcal{G}_2}$ are
\begin{equation}\aligned\label{Fd3}
\mathcal{D}_{\mathcal{G}_1}:=\{
z\in\mathbb{H}: |z|>1,\; 0<x<\frac{1}{2}
\}
\endaligned\end{equation}
\begin{equation}\aligned\label{Fd4}
\mathcal{D}_{\mathcal{G}_2}:=\{
z\in\mathbb{H}: |z|>1,\; 0<x<1
\}.
\endaligned\end{equation}
Clearly we have that
\begin{equation}\aligned\nonumber
\mathcal{G}_1\supseteq\mathcal{G}_2,\;\;\mathcal{D}_{\mathcal{G}_1}\subseteq\mathcal{D}_{\mathcal{G}_2}.
\endaligned\end{equation}

As in \cite{Mon1988}, the fundamental domain for the single theta function $\theta (s; z)$ is $\mathcal{D}_{\mathcal{G}_1}$. As we will show in this section the fundamental domain for the sum of two theta functions $\mathcal{W}_{1, \rho}, \mathcal{W}_{2, \rho}$ is $\mathcal{D}_{\mathcal{G}_2}$, which is larger.

The follow lemma characterizes the basic symmetries of the theta functions $\theta (s; z)$ and $\theta (s; \frac{z+1}{2})$. The proof is trivial so we omit it.
\begin{lemma}\label{G111}
\begin{itemize}
\item For any $s>0$, any $\gamma\in \mathcal{G}_1$ and $z\in\mathbb{H}$,
$\ \theta (s; \gamma(z))=\theta (s;z)$.
 \item For any $s>0$, any $\gamma\in \mathcal{G}_2$ and $z\in\mathbb{H}$, $\
\theta (s; \frac{\gamma(z)+1}{2})=\theta (s; \frac{z+1}{2}). $
\end{itemize}
\end{lemma}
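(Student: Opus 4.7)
The plan is to reduce each claim to invariance under the three generators of the relevant group, since both functions in question are well-defined on $\mathbb{H}$ and invariance under a group reduces to invariance under any generating set.

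For part (1), invariance of $\theta(s;z)$ under $\mathcal{G}_1$ is classical modular-type invariance and follows from simple bijective changes of summation variables. Under $z \mapsto z+1$ the imaginary part is unchanged, and the bijection $(m,n) \mapsto (m, n-m)$ of $\mathbb{Z}^2$ returns the original summand. Under $z \mapsto -1/z$ one has the new imaginary part $y/|z|^2$ together with $|m(-1/z)+n|^2 = |nz - m|^2/|z|^2$, so the exponent $s\pi|mz'+n|^2/y'$ becomes $s\pi|nz - m|^2/y$, and the bijection $(m,n) \mapsto (-n, m)$ closes the argument. Under $z \mapsto -\bar z$ the identity $|m(-\bar z)+n|^2 = |mz - n|^2$ together with $n \mapsto -n$ suffices.

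For part (2), set $w = (z+1)/2$ and verify the three generators of $\mathcal{G}_2$ one at a time. The shift $\tau \mapsto \tau+2$ sends $w$ to $w + 1$, and the reflection $\tau \mapsto -\bar\tau$ sends $w$ to $-\bar w + 1$; both are immediate consequences of part (1) applied at the level of $w$. The only substantive case is the inversion $\tau \mapsto -1/\tau$. Using $z = 2w - 1$, a direct computation gives
\begin{equation*}
\frac{-1/z + 1}{2} = \frac{z-1}{2z} = \frac{w-1}{2w-1},
\end{equation*}
which is the M\"obius action of the matrix $\left(\begin{smallmatrix} 1 & -1 \\ 2 & -1 \end{smallmatrix}\right) \in SL_2(\mathbb{Z})$. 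Since part (1) already establishes $\theta(s;\cdot)$-invariance under all of $PSL_2(\mathbb{Z})$ (generated by $S$ and $T$), this case follows for free.

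There is no real obstacle; the entire statement amounts to bookkeeping on the defining series. The one observation worth isolating is that a single $z$-inversion translates into a nontrivial $SL_2(\mathbb{Z})$-action at the $w$-level whose matrix happens to have determinant one, which is exactly why the natural invariance group for $\theta\bigl(s;(z+1)/2\bigr)$ is the strictly smaller $\mathcal{G}_2$ rather than $\mathcal{G}_1$.
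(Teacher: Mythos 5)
Your proof is correct, and it is exactly the routine generator-by-generator verification the paper has in mind when it omits the proof as trivial; the only nontrivial case (the inversion acting at the level of $w=\frac{z+1}{2}$ via the matrix $\left(\begin{smallmatrix}1 & -1\\ 2 & -1\end{smallmatrix}\right)\in SL_2(\mathbb{Z})$) is handled by the same device the paper itself uses in the proof of Lemma \ref{ThWW}, namely recognizing the induced M\"obius map as an element under which $\theta(s;\cdot)$ is already known to be invariant.
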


  A corollary of Lemma \ref{G111} yields
\begin{lemma} \label{Thgamma}
For any $\rho\in\mathbb{R}$, $\gamma\in \mathcal{G}_2$ and $z\in\mathbb{H}$,
\begin{equation}\nonumber
\mathcal{W}_{1,\rho}(\gamma(z))=\mathcal{W}_{1,\rho}(z), \
\mathcal{W}_{2,\rho}(\gamma(z))=\mathcal{W}_{2,\rho}(z).
\end{equation}
\end{lemma}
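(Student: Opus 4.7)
The plan is to read Lemma \ref{Thgamma} off directly from Lemma \ref{G111}, which is the workhorse at hand. The functional $\mathcal{W}_{1,\rho}(z)=\theta(2;\tfrac{z+1}{2})+\rho\,\theta(1;z)$ is a weighted sum of a ``shifted'' theta term and a ``plain'' theta term, and Lemma \ref{G111} is designed to give exactly the invariance statements for each of these two pieces under the appropriate group. The same comment applies to $\mathcal{W}_{2,\rho}$.

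More concretely, I would fix $\rho\in\mathbb{R}$ and $\gamma\in\mathcal{G}_2$, and split the argument into the two summands. For the shifted pieces $\theta(2;\tfrac{\gamma(z)+1}{2})$ and $\theta(1;\tfrac{\gamma(z)+1}{2})$, the second bullet of Lemma \ref{G111} applies verbatim, since $\gamma\in\mathcal{G}_2$, giving equality with $\theta(2;\tfrac{z+1}{2})$ and $\theta(1;\tfrac{z+1}{2})$ respectively. For the plain pieces $\theta(1;\gamma(z))$ and $\theta(2;\gamma(z))$, I would invoke the inclusion $\mathcal{G}_2\subseteq\mathcal{G}_1$, which is explicitly noted in the excerpt just after \eqref{Fd4}; since every $\gamma\in\mathcal{G}_2$ is also an element of $\mathcal{G}_1$, the first bullet of Lemma \ref{G111} gives $\theta(s;\gamma(z))=\theta(s;z)$ for every $s>0$. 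Combining the two identities and weighting by $\rho$ produces both of the claimed equalities.

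There is essentially no obstacle here: the inclusion $\mathcal{G}_2\subseteq\mathcal{G}_1$ is transparent from comparing the generator lists \eqref{GroupG1} and \eqref{GroupG2}, because $\tau\mapsto\tau+2$ is just $\tau\mapsto\tau+1$ applied twice, while the other two generators $\tau\mapsto-1/\tau$ and $\tau\mapsto-\overline\tau$ are shared. Consequently the only ``work'' in the proof is this bookkeeping remark together with the direct substitution into the definitions of $\mathcal{W}_{1,\rho}$ and $\mathcal{W}_{2,\rho}$. The lemma is therefore an immediate corollary of Lemma \ref{G111}, and the one-line takeaway is that $\mathcal{D}_{\mathcal{G}_2}$ is a valid fundamental domain for the minimization problems \eqref{Min1} and \eqref{Min2}, as anticipated in the discussion surrounding \eqref{Fd3}–\eqref{Fd4}.
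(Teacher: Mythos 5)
Your proposal is correct and follows exactly the route the paper intends: the paper states Lemma \ref{Thgamma} as an immediate corollary of Lemma \ref{G111}, and your argument simply makes explicit the two ingredients (the second bullet of Lemma \ref{G111} for the shifted theta terms, and the inclusion $\mathcal{G}_2\subseteq\mathcal{G}_1$ together with the first bullet for the plain theta terms). Nothing is missing.
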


Next, we introduce the nonlinear connection between the two functionals $\mathcal{W}_{1,\rho}(\tau)$ and $ \mathcal{W}_{2,\rho}(\tau)$.

Let  $w\in \mathcal{G}_2$ be  $w: \tau\mapsto\frac{\tau-1}{\tau+1}$ and its the inverse  be $\tau: w\mapsto\frac{1+w}{1-w}.$
We have

\begin{lemma}\label{ThWW}
\begin{equation}\aligned\label{G333}
\theta (s; \frac{\tau+1}{2})=\theta (s; w),\;\;\theta (s;\tau)=\theta (s; \frac{w+1}{2}).
\endaligned\end{equation}
\begin{equation}
\label{311}
\mathcal{W}_{1,\rho}(\tau)=\rho\cdot \mathcal{W}_{2,1/\rho}(w), \
\mathcal{W}_{2,\rho}(\tau)=\rho\cdot \mathcal{W}_{1,1/\rho}(w).
\end{equation}
Or equivalently,
\begin{equation}
\label{312}
\mathcal{W}_{1,\rho}(w)=\rho\cdot \mathcal{W}_{2,1/\rho}(\tau), \
\mathcal{W}_{2,\rho}(w)=\rho\cdot \mathcal{W}_{1,1/\rho}(\tau).
\end{equation}
\end{lemma}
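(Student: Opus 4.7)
The plan is to deduce Lemma \ref{ThWW} from the $\mathcal{G}_1$-invariance of the single theta function (first bullet of Lemma \ref{G111}). The two identities in \eqref{G333} are equivalent to the assertion that the pairs $\{\frac{\tau+1}{2},w\}$ and $\{\tau,\frac{w+1}{2}\}$ each lie in a common $\mathcal{G}_1$-orbit; once \eqref{G333} is established, \eqref{311}--\eqref{312} follow by one-line substitutions into the definitions of $\mathcal{W}_{1,\rho}$ and $\mathcal{W}_{2,\rho}$.

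To prove \eqref{G333}, I first use the relation $\tau=\frac{1+w}{1-w}$, $w=\frac{\tau-1}{\tau+1}$ to compute
\[
\frac{\tau+1}{2}=\frac{1}{1-w},\qquad \frac{w+1}{2}=\frac{\tau}{\tau+1},
\]
and then transport each of these to its counterpart through the generators $z\mapsto -1/z$ and $z\mapsto z+1$ of $\mathcal{G}_1$. Explicitly, $-1/z$ sends $\frac{1}{1-w}$ to $w-1$, and then $z+1$ sends $w-1$ to $w$; together with Lemma \ref{G111} this gives $\theta(s;\frac{\tau+1}{2})=\theta(s;\frac{1}{1-w})=\theta(s;w)$. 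The second identity in \eqref{G333} is analogous: the chain $-1/z$, then $z+1$, then $-1/z$ carries $\frac{\tau}{\tau+1}$ to $-\frac{\tau+1}{\tau}$, then to $-\frac{1}{\tau}$, and finally to $\tau$, so $\theta(s;\tau)=\theta(s;\frac{w+1}{2})$.

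With \eqref{G333} in hand, the first identity of \eqref{311} follows directly:
\[
\mathcal{W}_{1,\rho}(\tau)=\theta(2;\tfrac{\tau+1}{2})+\rho\,\theta(1;\tau)=\theta(2;w)+\rho\,\theta(1;\tfrac{w+1}{2})=\rho\,\mathcal{W}_{2,1/\rho}(w),
\]
by applying \eqref{G333} with $s=2$ and $s=1$ respectively. The companion identity in \eqref{311} is the same substitution applied to $\mathcal{W}_{2,\rho}(\tau)$, and \eqref{312} is obtained from \eqref{311} by swapping the roles of $\tau$ and $w$, which is legitimate because the pair of identities in \eqref{G333} is itself invariant under $\tau\leftrightarrow w$ (the first identity swaps with the second). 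There is no serious obstacle; the only ancillary verification is that the intermediate arguments $\frac{1}{1-w}$, $w-1$, $\frac{\tau}{\tau+1}$, $-\frac{\tau+1}{\tau}$, $-\frac{1}{\tau}$ remain in $\mathbb{H}$, which is automatic since the holomorphic generators of $\mathcal{G}_1$ preserve $\mathbb{H}$ and $\Im(\frac{1}{1-w})=\Im(w)/|1-w|^2>0$.
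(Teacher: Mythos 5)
Your proof is correct and follows essentially the same route as the paper: compute $\frac{\tau+1}{2}=\frac{1}{1-w}$ and $\frac{w+1}{2}=\frac{\tau}{\tau+1}$, invoke the $\mathcal{G}_1$-invariance of $\theta(s;\cdot)$ (you merely write out the generator words that the paper leaves implicit when it says these maps lie in $\mathcal{G}_1$), and then get \eqref{311}--\eqref{312} by direct substitution. One small remark: \eqref{312} follows most cleanly from \eqref{311} by replacing $\rho$ with $1/\rho$ and rearranging; your ``swap $\tau\leftrightarrow w$'' justification implicitly changes the relation to $\tau=\frac{w-1}{w+1}$, which differs from $\frac{1+w}{1-w}$ by $z\mapsto-1/z$ and hence needs an extra invariance observation, though the conclusion is unaffected.
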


\begin{proof}

We check that $\theta (s; \frac{\tau+1}{2})=\theta (s; \frac{\frac{1+w}{1-w}+1}{2})=\theta (s; \frac{1}{1-w})=\theta (s; w)$ since the map $w\mapsto\frac{1}{1-w}\in \mathcal{G}_1$. Similarly $
\theta (s; \frac{w+1}{2})=\theta (s; \frac{\frac{\tau-1}{\tau+1}+1}{2})=\theta (s; \frac{\tau}{\tau+1})=\theta (s; \tau)$ since the map $ \tau\mapsto\frac{\tau}{1+\tau}\in \mathcal{G}_1$. This proves \eqref{G333}.

\eqref{311} and \eqref{312} follows from \eqref{G333}.

\end{proof}

Lemma \ref{ThWW} builds  a connection between the two functionals $\mathcal{W}_{1,\rho}(\tau)$ and $ \mathcal{W}_{2,\rho}(\tau)$
via a special element in $\mathcal{G}_2$. As an application of Lemma \ref{ThWW}, we have the following lemma which transfers the computations on unit circles to straight lines.
\begin{lemma}\label{Reduce} Suppose $|w|=1, w=w_1+iw_2$. There holds
\begin{equation}\aligned\nonumber
\frac{\partial}{\partial w_1}\mathcal{W}_{p,\rho}(w)
&=\rho\frac{\sqrt{1-w_1^2}}{1-w_1}\frac{\partial}{\partial \tau_2}\mathcal{W}_{q,1/\rho}(i\frac{\sqrt{1-w_1^2}}{1-w_1})\\
\frac{\partial}{\partial w_2}\mathcal{W}_{p,\rho}(w)
&=-\rho\frac{w_1}{1-w_1}\frac{\partial}{\partial \tau_2}\mathcal{W}_{q,1/\rho}(i\frac{\sqrt{1-w_1^2}}{1-w_1}),
\endaligned\end{equation}
where $p\neq q\in\{1,2\}$.

\end{lemma}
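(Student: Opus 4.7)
My plan is to differentiate the identity $\mathcal{W}_{p,\rho}(w) = \rho\,\mathcal{W}_{q,1/\rho}(\tau)$ from Lemma \ref{ThWW} (equation (3.12)) using the chain rule, where $\tau = \tau(w) := \frac{1+w}{1-w}$ is holomorphic in $w$, and then exploit the $\mathcal{G}_2$-symmetry of $\mathcal{W}_{q,1/\rho}$ to kill the $\tau_1$-derivative. Writing $\tau = \tau_1 + i\tau_2$ and applying the chain rule yields
$$\frac{\partial \mathcal{W}_{p,\rho}}{\partial w_j}(w) = \rho\left[\frac{\partial \tau_1}{\partial w_j}\frac{\partial \mathcal{W}_{q,1/\rho}}{\partial \tau_1}(\tau) + \frac{\partial \tau_2}{\partial w_j}\frac{\partial \mathcal{W}_{q,1/\rho}}{\partial \tau_2}(\tau)\right],\quad j = 1,2,$$
so the proof reduces to three short computations: locating $\tau(w)$ on $|w|=1$, vanishing of $\partial_{\tau_1}\mathcal{W}_{q,1/\rho}$ there, and evaluation of the Jacobian $\partial\tau_2/\partial w_j$.

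First I would check that $\tau$ maps the upper unit semicircle onto the positive imaginary axis. Multiplying numerator and denominator of $\tau(w)$ by $\overline{1-w}$ and using $w_1^2+w_2^2=1$ collapses the real part to zero and gives $\tau = i\,w_2/(1-w_1) = i\sqrt{1-w_1^2}/(1-w_1)$, the argument appearing in the stated formulas. Next, since $\tau\mapsto -\bar\tau$ is one of the generators of $\mathcal{G}_2$, Lemma \ref{Thgamma} forces $\mathcal{W}_{q,1/\rho}(\tau_1,\tau_2) = \mathcal{W}_{q,1/\rho}(-\tau_1,\tau_2)$, so $\mathcal{W}_{q,1/\rho}$ is even in $\tau_1$. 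In particular $\partial_{\tau_1}\mathcal{W}_{q,1/\rho}$ vanishes identically on $\{\tau_1 = 0\}$, eliminating the first term of the chain-rule sum.

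Finally I would compute $\partial\tau_2/\partial w_j$ on $|w|=1$ via holomorphicity: $\tau'(w) = 2/(1-w)^2$, and a one-line expansion using $|w|^2=1$ gives $(1-w)^2 = -2(1-w_1)w$, so
$$\tau'(w) = \frac{-1}{(1-w_1)w} = \frac{-\bar w}{1-w_1} = \frac{-w_1 + iw_2}{1-w_1}.$$
The Cauchy-Riemann equations then yield $\partial\tau_2/\partial w_1 = \operatorname{Im}\tau'(w) = w_2/(1-w_1) = \sqrt{1-w_1^2}/(1-w_1)$ and $\partial\tau_2/\partial w_2 = \operatorname{Re}\tau'(w) = -w_1/(1-w_1)$. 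Substituting these into the reduced chain-rule expression produces exactly the two identities asserted in the lemma.

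No individual step is an obstacle; the only subtlety is the orientation choice $w_2 = +\sqrt{1-w_1^2}$, which is needed to ensure $\tau$ lands in the upper half-plane so that Lemma \ref{ThWW} applies. Everything else is routine chain-rule bookkeeping combined with one symmetry observation.
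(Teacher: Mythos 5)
Your proof is correct and takes essentially the same route as the paper: differentiate the identity of Lemma \ref{ThWW} by the chain rule, use the $\tau\mapsto-\overline{\tau}$ invariance from Lemma \ref{Thgamma} to make $\partial_{\tau_1}\mathcal{W}_{q,1/\rho}$ vanish on the imaginary axis, and evaluate $\tau_2$ and $\partial\tau_2/\partial w_j$ on $|w|=1$. The only cosmetic difference is that you obtain the Jacobian entries via holomorphy of $\tau(w)=\frac{1+w}{1-w}$ and the Cauchy--Riemann equations, whereas the paper differentiates the explicit expression $\tau_2=\frac{2w_2}{(1-w_1)^2+w_2^2}$ directly; both give the same values.
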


\begin{proof} Let $\tau=\tau_1+i\tau_2, w=w_1+iw_2$. Then we have
\begin{equation}\aligned\nonumber
\tau_1=\frac{1-w_1^2-w_2^2}{(1-w_1)^2+w_2^2},\;\;\tau_2=\frac{2w_2}{(1-w_1)^2+w_2^2}.
\endaligned\end{equation}
Differentiating the identities in Lemma \ref{ThWW}, we get
\begin{equation}\aligned\label{L331}
\frac{\partial}{\partial w_j}\mathcal{W}_{p,\rho}(w)=\rho\sum_{k=1}^2\frac{\partial}{\partial \tau_k}\mathcal{W}_{q,1/\rho}(\tau)\frac{\partial \tau_k}{\partial w_j}, j=1,2.
\endaligned\end{equation}
On the other hand, for $|w|=1$, calculations show
\begin{equation}\aligned\label{L332}
\tau_1=0,\;\; \tau_2=\frac{\sqrt{1-w_1^2}}{1-w_1}
\endaligned\end{equation}
and
\begin{equation}\aligned\label{L333}
\frac{\partial \tau_2}{\partial w_1}=\frac{\sqrt{1-w_1^2}}{1-w_1},\;\;\frac{\partial \tau_2}{\partial w_2}=-\frac{w_1}{1-w_1}.
\endaligned\end{equation}
From Theorem \ref{Thgamma},
$
\mathcal{W}_{p,\rho}(-\overline{\tau})=\mathcal{W}_{p,\rho}(\tau), p=1,2.
$
It follows that
\begin{equation}\aligned\label{L334}
\frac{\partial}{\partial \tau_1}\mathcal{W}_{p,\rho}(i \tau_2)
=0,\;\; \forall \tau_2\;\in\mathbb{R}, p=1,2.
\endaligned\end{equation}
Plugging \eqref{L332}, \eqref{L333} and \eqref{L334} into \eqref{L331}, one gets the result.

\end{proof}

\section{Monotonicity of  $\theta (s; \frac{z+1}{2})$}

\setcounter{equation}{0}


The main purpose of this section is to establish the monotonicity of the functional $\theta(s;\frac{z+1}{2})$ on its fundamental domain $\mathcal{D}_{\mathcal{G}_2}$ (defined at \eqref{GroupG2}), which is the following

\begin{theorem} \label{Lemma2}
\begin{itemize}
  \item  For any $s>0$, there holds
\begin{equation}\aligned\nonumber
\frac{\partial}{\partial x}\theta(s;\frac{z+1}{2})>0, \;\;\forall \;z\in \mathcal{D}_{\mathcal{G}_2}.
\endaligned\end{equation}
  \item Or equivalently, via the map $ z \mapsto \frac{z+1}{2}$, for any $s>0$,
$$
\frac{\partial}{\partial x}\theta(s;z)<0,\;\;\forall \;z\in\Omega_{\mathcal{C}_1}.
$$
Here
\begin{equation}\aligned\nonumber
\Omega_{\mathcal{C}_1}:=\{z\mid 0<x<\frac{1}{2}, y>\sqrt{x-x^2}
\}.
\endaligned\end{equation}

\end{itemize}

\end{theorem}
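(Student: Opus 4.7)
The two bulleted statements are equivalent via the change of variable $w=(z+1)/2$ together with the reflection symmetry $\theta(s;w)=\theta(s;1-\bar w)$ (a consequence of $\theta(s;w+1)=\theta(s;w)$ and $\theta(s;-\bar w)=\theta(s;w)$), which maps $\mathcal{D}_{\mathcal{G}_2}$ bijectively onto $\Omega_{\mathcal{C}_1}$ and flips the sign of $\partial_x$. It therefore suffices to prove the second formulation. Applying Poisson summation in $n$ to the defining series yields the Fourier cosine expansion
\[
\theta(s;z)=A(y)+4\sqrt{y/s}\sum_{N\geq 1}c_N(y)\cos(2\pi Nx),\qquad c_N(y)=\sum_{d\mid N}e^{-\pi y(sd^2+(N/d)^2/s)},
\]
with $A(y)$ independent of $x$, and differentiating in $x$ while regrouping the resulting sine series by the factorisation $N=mk$ reduces the claim to
\[
S(x,y):=\sum_{m,k\geq 1}mk\,e^{-\pi y(sm^2+k^2/s)}\sin(2\pi kmx)>0,\qquad (x,y)\in\Omega_{\mathcal{C}_1}.
\]

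I would then exhibit the sign structure by carrying out the inner $k$-sum with the Jacobi triple product for the two-variable theta function
\[
\Theta(v\,|\,i\beta):=\sum_{n\in\mathbb{Z}}e^{-\pi\beta n^2+2\pi inv}=\prod_{n\geq 1}(1-q^{2n})\bigl(1+2q^{2n-1}\cos 2\pi v+q^{4n-2}\bigr),\quad q=e^{-\pi\beta},
\]
whose logarithmic derivative is $\partial_v\Theta/\Theta=-4\pi\sin(2\pi v)H(v;q)$ with
\[
H(v;q):=\sum_{n\geq 1}\frac{q^{2n-1}}{1+2q^{2n-1}\cos 2\pi v+q^{4n-2}}>0.
\]
Using the identity $\sum_{k\geq 1}k\,e^{-\pi\beta k^2}\sin 2\pi kv=\sin(2\pi v)\,\Theta(v\,|\,i\beta)\,H(v;q)$ with $\beta=y/s$, one rewrites
\[
S(x,y)=\sum_{m\geq 1}m\,e^{-\pi sy\,m^2}\,\sin(2\pi mx)\,\Theta(mx\,|\,iy/s)\,H\bigl(mx;e^{-\pi y/s}\bigr).
\]
Every factor apart from $\sin(2\pi mx)$ is strictly positive, and the $m=1$ summand is strictly positive on $(0,1/2)\times(0,\infty)$.

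The main obstacle is showing that the oscillatory $m\geq 2$ contributions cannot overturn the positive $m=1$ term uniformly on $\Omega_{\mathcal{C}_1}$. When $y$ is bounded below by a positive constant (which holds automatically once $x$ stays away from $0$, since $y>\sqrt{x-x^2}$), the Gaussian damping $e^{-\pi sy\,m^2}$ renders the $m\geq 2$ contributions exponentially smaller than the $m=1$ term, and a uniform $L^\infty$-bound on $\Theta(\cdot\,|\,iy/s)H(\cdot;e^{-\pi y/s})$ closes the argument by direct term-wise domination. The delicate regime is the cusp where both $x$ and $y$ tend to $0$; there I would exploit the structural condition $|z|^2>x$ defining $\Omega_{\mathcal{C}_1}$ (equivalently $\mathrm{Re}(-1/z)>-1$) together with the modular invariance $\theta(s;z)=\theta(s;-1/z)$. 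The Möbius transformation $w=1-1/z$ maps $\Omega_{\mathcal{C}_1}$ biholomorphically onto $\mathcal{D}_{\mathcal{G}_2}$ and satisfies $\mathrm{Im}(w)=y/|z|^2\to +\infty$ as $(x,y)$ approaches the cusp, so the chain rule
\[
\partial_x\theta(s;z)=\mathrm{Re}(z^{-2})\,\partial_u\theta(s;w)+\mathrm{Im}(z^{-2})\,\partial_v\theta(s;w)
\]
transfers the sign problem at small $y$ in the original coordinates into a large-$\mathrm{Im}(w)$ problem in the new coordinates, where the Gaussian-decay analysis of the previous paragraph applies anew. The principal technical step is then to match these two regimes quantitatively so that they cover $\Omega_{\mathcal{C}_1}$ without gaps.
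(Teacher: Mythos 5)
Your reduction steps are fine and in fact coincide with the paper's starting point: the equivalence of the two bullets via $z\mapsto\frac{z+1}{2}$ and the reflection $z\mapsto 1-\bar z$, the one-variable Poisson summation $\theta(s;z)=\sqrt{y/s}\sum_n e^{-\pi s y n^2}\vartheta(y/s;nx)$, and the reduction to the strict positivity of the double sine series $S(x,y)$. Your use of the Jacobi triple product to write the inner sum as $\sin(2\pi mx)\,\Theta(mx\,|\,iy/s)\,H(mx;q)$ is a clean repackaging of the paper's Lemmas \ref{LemmaTTT}--\ref{LemmaT2}, which bound $\partial_Y\vartheta(X;Y)$ between $-\overline\vartheta(X)\sin(2\pi Y)$ and $-\underline\vartheta(X)\sin(2\pi Y)$. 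However, the proposal stops exactly where the theorem becomes nontrivial, and the two places where you wave your hands contain genuine gaps. First, you never invoke the modular relation $\theta(1/s;z)=s\,\theta(s;z)$ to reduce to $s\ge 1$, and your claim that a lower bound on $y$ alone makes the $m\ge 2$ terms ``exponentially smaller'' is false uniformly in $s$: the ratio of the $m$-th term to the $m=1$ term is of size $m^2e^{-\pi s y(m^2-1)}$, which is useless as $s\to 0$ with $y$ fixed. The relevant smallness parameter is $sy$ (and, near the cusp, $syn_0^2$ with $n_0\approx 1/(2x)$), and the paper's proof is organized precisely around this, with a four-case analysis in terms of $y/s\lessgtr\frac12$ and the location of $x$, using the domain constraint $y>\sqrt{x-x^2}$ in an essential, quantitative way (e.g.\ the elementary inequalities \eqref{Ele2}). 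None of that work appears in your plan.

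Second, your proposed treatment of the cusp is not sound as stated. The map $w=1-1/z$ does send $\Omega_{\mathcal{C}_1}$ onto $\mathcal{D}_{\mathcal{G}_2}$ with $\Im w\to\infty$ at the cusp, and your chain rule $\partial_x\theta(s;z)=\Re(z^{-2})\partial_u\theta(s;w)+\Im(z^{-2})\partial_v\theta(s;w)$ is correct; but the assertion that ``the Gaussian-decay analysis of the previous paragraph applies anew'' does not hold, because the transferred problem is no longer a pure $u$-derivative problem. In the large-$\Im w$ regime $\partial_v\theta(s;w)$ is \emph{not} exponentially small (it behaves like $\tfrac12(sv)^{-1/2}$ coming from the $m=0$ row), while $\partial_u\theta(s;w)$ is exponentially small; moreover $\Re(z^{-2})=(x^2-y^2)/|z|^4$ and $\Im(z^{-2})=-2xy/|z|^4$ change sign and relative size across the cusp region, and near the cusp the term $\Re(z^{-2})\partial_u\theta(s;w)$ has the \emph{wrong} sign. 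So the sign of $\partial_x\theta(s;z)$ there hinges on a quantitative comparison between an algebraically small ``good'' term $\Im(z^{-2})\partial_v\theta$ and an exponentially small ``bad'' term $\Re(z^{-2})\partial_u\theta$, which requires new estimates (including a positivity statement for $\partial_v\theta$ at large $v$, uniform in $u$ and $s$) that you neither state nor prove. In short: the plan reproduces the easy part of the paper's argument, and for the hard part — the corner of $\Omega_{\mathcal{C}_1}$ at $z=0$, which is the whole point of this theorem as the authors emphasize — it offers only a sketch whose key claim is inaccurate, so the proof is incomplete.
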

\begin{remark} In Lemma 1 of \cite{Mon1988} Montgomery proved that
\begin{equation}\aligned\label{MonLLL}
\frac{\partial}{\partial x}\theta(s;z)<0,\;\;\forall \;z\in \mathcal{D}_{\mathcal{G}_1}:=\{
z\in\mathbb{H}: |z|>1,\; 0<x<\frac{1}{2}
\}
\endaligned\end{equation}
Theorem \ref{Lemma2}  improves this result to a larger domain $\Omega_{\mathcal{C}_1}$ as
$\mathcal{D}_{\mathcal{G}_1} \subset \Omega_{\mathcal{C}_1}$. Furthermore, $\Omega_{\mathcal{C}_1}$ contains a corner at $z=0$, which makes the proof much more involved. We have to divide $\Omega_{\mathcal{C}_1}$  into four different cases to overcome this difficulty.

\end{remark}

 We state two corollaries related to the functionals $\mathcal{W}_{j,\rho}(z),j=1,2$.

\begin{corollary}\label{Coro1} For any $s>0$,
$$
\frac{\partial}{\partial x}\theta(s;z)>0,\;\forall z\in\Omega_{\mathcal{C}_2}.
$$
Here
\begin{equation}\aligned\nonumber
\Omega_{\mathcal{C}_2}:=\{z\mid \frac{1}{2}<x<1, y>\sqrt{x-x^2}
\}.
\endaligned\end{equation}
\end{corollary}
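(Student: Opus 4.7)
The plan is to deduce this directly from Theorem \ref{Lemma2} via the reflection symmetry of $\theta(s;z)$ about the vertical line $x=\tfrac{1}{2}$.

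The first step is to record the invariance: since $z\mapsto -\overline{z}$ and $z\mapsto z+1$ both lie in $\mathcal{G}_1$, Lemma \ref{G111} gives
\begin{equation*}
\theta(s;z)=\theta(s;-\overline{z})=\theta(s;-\overline{z}+1).
\end{equation*}
Writing $z=x+iy$, the composite map sends $x+iy$ to $(1-x)+iy$, so
\begin{equation*}
\theta(s;\,x+iy)=\theta(s;\,(1-x)+iy).
\end{equation*}
This can be checked directly from \eqref{thetas} by the substitution $n\mapsto -n$ followed by $n\mapsto n-m$ in the lattice sum, matching the computation implicit in Lemma \ref{G111}.

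Next I would differentiate this identity in $x$, which yields
\begin{equation*}
\frac{\partial}{\partial x}\theta(s;z)\bigg|_{x+iy}=-\frac{\partial}{\partial x}\theta(s;z)\bigg|_{(1-x)+iy}.
\end{equation*}
Finally I would verify that the reflection $x\mapsto 1-x$ maps $\Omega_{\mathcal{C}_2}$ bijectively onto $\Omega_{\mathcal{C}_1}$: indeed if $\tfrac{1}{2}<x<1$ then $0<1-x<\tfrac{1}{2}$, and the defining inequality $y>\sqrt{x-x^2}$ is preserved since $x-x^2=(1-x)-(1-x)^2$. Therefore for $z\in\Omega_{\mathcal{C}_2}$ the reflected point $(1-x)+iy$ lies in $\Omega_{\mathcal{C}_1}$, where Theorem \ref{Lemma2} gives $\frac{\partial}{\partial x}\theta(s;z)<0$, and the sign flip above converts this to $\frac{\partial}{\partial x}\theta(s;z)>0$ on $\Omega_{\mathcal{C}_2}$.

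There is no real obstacle; the corollary is essentially a symmetry restatement of the theorem, and the only thing to check carefully is that the reflection preserves the curved boundary $y=\sqrt{x-x^2}$, which it does because this boundary is the circle $|z-\tfrac{1}{2}|=\tfrac{1}{2}$, itself symmetric under $x\mapsto 1-x$.
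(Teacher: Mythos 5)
Your proof is correct and is essentially the paper's own argument: the paper uses the single map $z\mapsto 1-\overline{z}\in\mathcal{G}_1$ (the composition of your two generators) to get $\theta(s;1-\overline{z})=\theta(s;z)$, differentiates in $x$ to flip the sign, and invokes Theorem \ref{Lemma2} on the reflected domain. Your extra verification that $x\mapsto 1-x$ carries $\Omega_{\mathcal{C}_2}$ onto $\Omega_{\mathcal{C}_1}$ (because $x-x^2=(1-x)-(1-x)^2$) is the same point the paper leaves implicit, so nothing further is needed.
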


\begin{proof} Since $z\mapsto1-\overline{z}\in\mathcal{G}_1$, by Lemma \ref{G111}, we have $
\theta(s;1-\overline{z})=\theta(s;z)$.
Thus
\begin{equation}\aligned\label{RR}
\frac{\partial}{\partial x}\theta(s;1-\overline{z})=-\frac{\partial}{\partial x}\theta(s;z).
\endaligned\end{equation}
The result follows by \eqref{RR} and Theorem \ref{Lemma2}.

\end{proof}

By Theorem \ref{Lemma2} and Corollary \ref{Coro1} we have

\begin{corollary}\label{Coro2} For any $\rho >0$,
$$
\frac{\partial}{\partial x}\mathcal{W}_{j,\rho}(z)>0,\;\forall z\in{\mathcal{R}_L}, j=1,2.
$$
Here
$$
{\mathcal{R}_L}:=\Omega_{\mathcal{C}_2}\cap\mathcal{D}_{\mathcal{G}_2}=\{z\mid \frac{1}{2}<x<1, |z|>1\}.
$$

\end{corollary}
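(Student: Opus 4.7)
\smallskip
\noindent\textbf{Proof proposal.} The plan is to combine Theorem~\ref{Lemma2} and Corollary~\ref{Coro1} termwise, once we verify that the region $\mathcal{R}_L$ sits inside the domains on which those two monotonicity results hold. Since both $\mathcal{W}_{1,\rho}$ and $\mathcal{W}_{2,\rho}$ are linear combinations (with strictly positive coefficients) of one term of the form $\theta(s;\tfrac{z+1}{2})$ and one term of the form $\theta(s;z)$, it suffices to show that
\[
\frac{\partial}{\partial x}\theta\!\left(s;\tfrac{z+1}{2}\right)>0\quad\text{and}\quad\frac{\partial}{\partial x}\theta(s;z)>0\qquad\text{for all }z\in\mathcal{R}_L,\ s>0,
\]
after which the two inequalities combine linearly to yield the claim for $j=1,2$.

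The first inequality follows directly from Theorem~\ref{Lemma2}, provided $\mathcal{R}_L\subset\mathcal{D}_{\mathcal{G}_2}$. This inclusion is immediate from the definitions: $\mathcal{D}_{\mathcal{G}_2}=\{|z|>1,\,0<x<1\}$ certainly contains $\mathcal{R}_L=\{|z|>1,\,\tfrac12<x<1\}$. The second inequality follows from Corollary~\ref{Coro1}, provided $\mathcal{R}_L\subset\Omega_{\mathcal{C}_2}=\{\tfrac12<x<1,\,y>\sqrt{x-x^2}\}$. For $z\in\mathcal{R}_L$ one has $y^2>1-x^2=(1-x)(1+x)$, and since $1+x>x$ for $x>0$ we get $y^2>(1-x)x=x-x^2$, so indeed $\mathcal{R}_L\subset\Omega_{\mathcal{C}_2}$.

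With both inclusions established, for any $z\in\mathcal{R}_L$ and $\rho>0$,
\[
\frac{\partial}{\partial x}\mathcal{W}_{1,\rho}(z)=\frac{\partial}{\partial x}\theta\!\left(2;\tfrac{z+1}{2}\right)+\rho\,\frac{\partial}{\partial x}\theta(1;z)>0,
\]
and similarly
\[
\frac{\partial}{\partial x}\mathcal{W}_{2,\rho}(z)=\frac{\partial}{\partial x}\theta\!\left(1;\tfrac{z+1}{2}\right)+\rho\,\frac{\partial}{\partial x}\theta(2;z)>0,
\]
which is exactly the conclusion. There is no real obstacle here: all the analytic work has already been done in Theorem~\ref{Lemma2} (the hard ingredient, which extends Montgomery's monotonicity to the larger domain $\Omega_{\mathcal{C}_1}$) and Corollary~\ref{Coro1} (its reflection across $x=\tfrac12$); the only content of the present corollary is the geometric observation that both monotonicity domains cover $\mathcal{R}_L$ and that the positive linear combinations defining $\mathcal{W}_{j,\rho}$ preserve strict positivity of $x$-derivatives.
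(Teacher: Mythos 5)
Your proposal is correct and is exactly the paper's argument: the corollary is stated there as an immediate consequence of Theorem~\ref{Lemma2} and Corollary~\ref{Coro1}, combined linearly with the positive coefficient $\rho$, and your verification of the inclusions $\mathcal{R}_L\subset\mathcal{D}_{\mathcal{G}_2}$ and $\mathcal{R}_L\subset\Omega_{\mathcal{C}_2}$ (via $y^2>1-x^2>x-x^2$) supplies the only detail the paper leaves implicit.
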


\vskip0.2in
In the remaining part of this section, we prove Theorem \ref{Lemma2}. To prove Theorem \ref{Lemma2},  we use some delicate analysis of the Jacobi theta function and Poisson summation formula.

We first recall the following well-known Jacob triple product formula:
\begin{equation}\aligned\label{Jacob1}
\prod_{n=1}^\infty(1-x^{2m})(1+x^{2m-1}y^2)(1+\frac{x^{2m-1}}{y^2})=\sum_{n=-\infty}^\infty x^{n^2} y^{2n}
 \endaligned\end{equation}
for complex numbers $x,y$ with $|x|<1$, $y\neq0$.

The Jacob theta function is defined as
\begin{equation}\aligned\nonumber
\vartheta_J(z;\tau):=\sum_{n=-\infty}^\infty e^{i\pi n^2 \tau+2\pi i n z},
 \endaligned\end{equation}
and the classical one-dimensional theta function  is given by
\begin{equation}\aligned\label{TXY}
\vartheta(X;Y):=\vartheta_J(Y;iX)=\sum_{n=-\infty}^\infty e^{-\pi n^2 X} e^{2\pi i Y}.
 \endaligned\end{equation}
Hence by the Jacob triple product formula \eqref{Jacob1}, we have
\begin{equation}\aligned\label{Product}
\vartheta(X;Y)=\prod_{n=1}^\infty(1-e^{-2\pi n X})(1+e^{-2(2n-1)\pi X}+2e^{-(2n-1)\pi X}\cos(2\pi Y)).
 \endaligned\end{equation}

The following two Lemmas improve the bounds in  Montgomery \cite{Mon1988}.  We provide the proof of Lemma \ref{LemmaTTT} and omit the proof of Lemma \ref{LemmaT2} which is similar.

\begin{lemma}\label{LemmaTTT} Assume $X>\frac{1}{5}$. If $\sin(2\pi Y)>0$, then
\begin{equation}\aligned\nonumber
-\overline\vartheta(X)\sin(2\pi Y)\leq\frac{\partial}{\partial Y}\vartheta(X;Y)\leq-\underline\vartheta(X)\sin(2\pi Y).
 \endaligned\end{equation}
If $\sin(2\pi Y)<0$, then
\begin{equation}\aligned\nonumber
-\underline\vartheta(X)\sin(2\pi Y)\leq\frac{\partial}{\partial Y}\vartheta(X;Y)\leq-\overline\vartheta(X)\sin(2\pi Y).
 \endaligned\end{equation}
Here
\begin{equation}\aligned\nonumber
\underline\vartheta(X):=4\pi e^{-\pi X}(1-\mu(X)), \;\; \overline\vartheta(X):=4\pi e^{-\pi X}(1+\mu(X)),
 \endaligned\end{equation}
and
$$
\mu(X):=\sum_{n=2}^\infty n^2 e^{-\pi(n^2-1)X}.
$$

\end{lemma}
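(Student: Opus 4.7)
The plan is to expand $\vartheta(X;Y)$ as a Fourier series in $Y$, differentiate termwise, pair $n$ with $-n$ to obtain a real sine series, peel off the dominant $n=1$ contribution, and control the tail using the elementary inequality $|\sin(n\theta)|\le n|\sin\theta|$. First I would differentiate \eqref{TXY} termwise and pair $\pm n$ to collapse the complex exponentials to sines, obtaining
$$\frac{\partial}{\partial Y}\vartheta(X;Y)=-4\pi\sum_{n=1}^{\infty} n\,e^{-\pi n^{2}X}\sin(2\pi n Y).$$
Termwise differentiation is justified for every $X>0$ by the super-exponential decay of $e^{-\pi n^{2}X}$. Isolating the $n=1$ term gives the identity
$$\frac{\partial}{\partial Y}\vartheta(X;Y)+4\pi e^{-\pi X}\sin(2\pi Y)=-4\pi\sum_{n=2}^{\infty} n\,e^{-\pi n^{2}X}\sin(2\pi n Y),$$
which reduces the lemma to estimating the right-hand side by a constant multiple of $|\sin(2\pi Y)|$.

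Next I would bound this remainder. The standard inequality $|\sin(n\theta)|\le n|\sin\theta|$, obtained by a short induction using the sine addition formula, gives $|\sin(2\pi nY)|\le n|\sin(2\pi Y)|$ for every integer $n\ge 1$. Factoring $|\sin(2\pi Y)|$ out of the tail and recognizing the remaining series as $e^{-\pi X}\mu(X)$, I obtain
$$\left|\sum_{n=2}^{\infty} n\,e^{-\pi n^{2}X}\sin(2\pi nY)\right|\le |\sin(2\pi Y)|\sum_{n=2}^{\infty} n^{2}e^{-\pi n^{2}X}=e^{-\pi X}\mu(X)\,|\sin(2\pi Y)|,$$
and hence
$$\Bigl|\tfrac{\partial}{\partial Y}\vartheta(X;Y)+4\pi e^{-\pi X}\sin(2\pi Y)\Bigr|\le 4\pi e^{-\pi X}\mu(X)\,|\sin(2\pi Y)|.$$

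Finally I would split on the sign of $\sin(2\pi Y)$ to convert this two-sided estimate into the two asserted inequalities: when $\sin(2\pi Y)>0$ the displayed bound reads $-\overline{\vartheta}(X)\sin(2\pi Y)\le \partial_{Y}\vartheta(X;Y)\le -\underline{\vartheta}(X)\sin(2\pi Y)$, while when $\sin(2\pi Y)<0$ the absolute value flips the roles of $\underline{\vartheta}$ and $\overline{\vartheta}$, producing the second inequality. None of the individual steps is a real obstacle; the crucial point is the use of $|\sin(n\theta)|\le n|\sin\theta|$, which is what makes the remainder scale with $|\sin(2\pi Y)|$ rather than with $1$ and is therefore essential for the bound to remain sharp near zeros of the sine. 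The hypothesis $X>\tfrac{1}{5}$ plays no role in the estimates themselves; it is presumably included so that $\mu(X)<1$, ensuring $\underline{\vartheta}(X)>0$ and making the lower bound sign-definite for the applications in later sections.
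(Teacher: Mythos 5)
Your proof is correct, and it takes a genuinely different route from the paper. You differentiate the Fourier expansion $\vartheta(X;Y)=1+2\sum_{n\ge1}e^{-\pi n^2X}\cos(2\pi nY)$ termwise, isolate the $n=1$ term, and control the tail with $|\sin(2\pi nY)|\le n|\sin(2\pi Y)|$, which yields exactly the constants $\underline\vartheta(X)=4\pi e^{-\pi X}(1-\mu(X))$ and $\overline\vartheta(X)=4\pi e^{-\pi X}(1+\mu(X))$ after splitting on the sign of $\sin(2\pi Y)$. The paper instead follows Montgomery: it takes the logarithmic $Y$-derivative of the Jacobi triple product \eqref{Product}, observes that $-\partial_Y\vartheta(X;Y)/\sin(2\pi Y)$ is periodic, even, and decreasing in $Y$ on $[0,\tfrac12]$, and then evaluates the two endpoint limits by L'Hospital's rule, which produces $\tfrac1{2\pi}\partial_Y^2\vartheta$ at $Y=\tfrac12$ and $Y=0$ and hence the same bounds. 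Your argument is more elementary (no product formula, no monotonicity-in-$Y$ step) and delivers identical constants; the paper's argument gives slightly more information, namely that the quotient $-\partial_Y\vartheta/\sin(2\pi Y)$ is monotone on $[0,\tfrac12]$ with the stated quantities as its extreme values, though that extra structure is not needed for the lemma as stated. Your closing remark is also accurate: neither proof uses $X>\tfrac15$, which only serves to make $\mu(X)<1$ and hence $\underline\vartheta(X)>0$ in the later applications; amusingly, the inequality $|\sin(nx)|\le n|\sin x|$ you rely on is the same device the paper deploys later in Propositions \ref{Trans1} and \ref{Trans2}.
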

\begin{proof}
We use the same method as in Lemma 1 of \cite{Mon1988}.
Taking logarithmic on both sides of \eqref{Product} and differentiating  $\frac{\partial}{\partial Y}$, we have
\begin{equation}\aligned\label{QQQ}
-\frac{\frac{\partial}{\partial Y}\vartheta(X;Y)}{\sin(2\pi Y)}&=4\pi\sum_{n=1}^\infty e^{-(2n-1)\pi X}\frac{\vartheta(X;Y)}{1+e^{-2(2n-1)\pi X}+2e^{-(2n-1)\pi X}\cos(2\pi Y)}\\
&=4\pi\sum_{n=1}^\infty e^{-(2n-1)\pi X}\prod_{m\neq n,m=1}^\infty(1-e^{-2\pi m X})(1+e^{-2(2m-1)\pi X}+2e^{-(2m-1)\pi X}\cos(2\pi Y)).
 \endaligned\end{equation}
One sees from \eqref{QQQ} that the function $-\frac{\frac{\partial}{\partial Y}\vartheta(X;Y)}{\sin(2\pi Y)}$ has a period $1$, is decreasing on $[0,\frac{1}{2}]$
and is an even function for $Y$.

Thus
\begin{equation}\aligned\label{QQQ1}
\lim_{Y\rightarrow\frac{1}{2}}-\frac{\frac{\partial}{\partial Y}\vartheta(X;Y)}{\sin(2\pi Y)}&\leq-\frac{\frac{\partial}{\partial Y}\vartheta(X;Y)}{\sin(2\pi Y)}\leq\lim_{Y\rightarrow0}-\frac{\frac{\partial}{\partial Y}\vartheta(X;Y)}{\sin(2\pi Y)}.
\endaligned\end{equation}
 By L'Hospital rule we have
\begin{equation}\aligned\label{QQQ2}
\frac{1}{2\pi}\frac{\partial^2}{\partial Y^2}\vartheta(X;Y)\mid_{Y=\frac{1}{2}}\leq-\frac{\frac{\partial}{\partial Y}\vartheta(X;Y)}{\sin(2\pi Y)}\leq-\frac{1}{2\pi}\frac{\partial^2}{\partial Y^2}\vartheta(X;Y)\mid_{Y=0}\\
\endaligned\end{equation}

From \eqref{TXY}, we have that
\begin{equation}\aligned\label{QQQ3}
\frac{\partial^2}{\partial Y^2}\vartheta(X;Y)\mid_{Y=0}&=4\pi e^{-\pi X}(1+\sum_{n=2}^\infty n^2 e^{-\pi(n^2-1) X})\\
\frac{1}{2\pi}\frac{\partial^2}{\partial Y^2}\vartheta(X;Y)\mid_{Y=\frac{1}{2}}&=4\pi\sum_{n=1}^\infty (-1)^{n-1}n^2 e^{-n^2\pi X}
\geq4\pi e^{-\pi X}(1-\sum_{n=2}^\infty n^2 e^{-\pi(n^2-1) X}).
\endaligned\end{equation}
Combining \eqref{QQQ1}, \eqref{QQQ2} and \eqref{QQQ3}, we obtain the proof of the Lemma.

\end{proof}


\begin{lemma}\label{LemmaT2}
Assume $X<\frac{\pi}{2}$. If $\sin(2\pi Y)>0$, then
\begin{equation}\aligned\nonumber
-\overline\vartheta(X)\sin(2\pi Y)\leq\frac{\partial}{\partial Y}\vartheta(X;Y)\leq-\underline\vartheta(X)\sin(2\pi Y).
 \endaligned\end{equation}
If $\sin(2\pi Y)<0$, then
\begin{equation}\aligned\nonumber
-\underline\vartheta(X)\sin(2\pi Y)\leq\frac{\partial}{\partial Y}\vartheta(X;Y)\leq-\overline\vartheta(X)\sin(2\pi Y).
 \endaligned\end{equation}
Here
\begin{equation}\aligned\nonumber
\underline\vartheta(X):=X^{-\frac{3}{2}};\;\; \overline\vartheta(X):=\pi e^{-\frac{\pi}{4X}}X^{-\frac{3}{2}}.
 \endaligned\end{equation}

\end{lemma}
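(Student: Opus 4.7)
The plan is to repeat the argument of Lemma~\ref{LemmaTTT} verbatim through its monotonicity and L'Hospital steps, and then replace the direct $q$-series representation of $\partial_{YY}^2\vartheta(X;Y)$ (which only converges rapidly for large $X$) by its Jacobi-modular dual coming from Poisson summation, which converges rapidly for small $X$.

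Concretely, the identity~\eqref{QQQ} extracted from the triple product~\eqref{Product} holds for every $X>0$. Each factor in that product is positive and affine-increasing in $\cos(2\pi Y)$, so $-\partial_Y\vartheta(X;Y)/\sin(2\pi Y)$ is strictly decreasing in $Y$ on $[0,1/2]$. Applying L'Hospital at the two endpoints, exactly as in~\eqref{QQQ2}, reduces the task to bounding $-\tfrac{1}{2\pi}\partial_{YY}^2\vartheta(X;0)$ from above and $\tfrac{1}{2\pi}\partial_{YY}^2\vartheta(X;1/2)$ from below. The new input for small $X$ is the dual representation obtained by Poisson summation,
$$\vartheta(X;Y)=X^{-1/2}\sum_{k\in\mathbb{Z}}e^{-\pi(k-Y)^2/X},\qquad \partial_{YY}^2\vartheta(X;Y)=\frac{2\pi}{X^{3/2}}\sum_{k\in\mathbb{Z}}\Bigl(\frac{2\pi(k-Y)^2}{X}-1\Bigr)e^{-\pi(k-Y)^2/X}.$$
At $Y=0$ the $k=0$ term contributes the leading $-2\pi/X^{3/2}$, and the symmetric pairs $k=\pm j$ for $j\geq 1$ decay like $e^{-\pi j^2/X}$. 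At $Y=1/2$ the pair $k\in\{0,1\}$ both give $(k-\tfrac{1}{2})^2=\tfrac{1}{4}$, producing a leading contribution of order $X^{-5/2}e^{-\pi/(4X)}$, while the next pair $k\in\{-1,2\}$ is suppressed by an additional factor $e^{-2\pi/X}$.

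The main obstacle is converting these asymptotic leading terms into the clean closed-form envelopes $\underline\vartheta(X)$ and $\overline\vartheta(X)$ valid uniformly over $X\in(0,\pi/2)$. The $Y=1/2$ endpoint is the delicate one, because the bracket $\tfrac{2\pi(k-1/2)^2}{X}-1$ involves a subtraction, and one must use the hypothesis $X<\pi/2$ (equivalently $\pi/(2X)>1$) to guarantee that the dominant $k\in\{0,1\}$ bracket is positive. Once that positivity is secured, the remaining tail $\sum_{j\geq 1}(\mathrm{poly}(j/X))\,e^{-\pi(2j+1)^2/(4X)}$ can be dominated by its first term via a geometric majorant, playing the same role as the auxiliary function $\mu(X)$ in Lemma~\ref{LemmaTTT}; the $Y=0$ endpoint is analogous but requires no sign bookkeeping. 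Combining the two endpoint estimates then yields the two-sided bound asserted.
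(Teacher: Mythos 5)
Your strategy---reuse the monotonicity of $-\frac{\partial_Y\vartheta(X;Y)}{\sin(2\pi Y)}$ coming from the product formula as in \eqref{QQQ}, apply L'Hospital at the endpoints $Y=0$ and $Y=\tfrac12$ as in \eqref{QQQ2}, and then control the two endpoint second derivatives through the Poisson-dual representation \eqref{PPP2}, which converges rapidly for small $X$---is exactly the proof the paper intends: the authors omit it as ``similar'' to Lemma \ref{LemmaTTT}, the only change being the dual expansion in place of the direct $q$-series, and you correctly locate where the hypothesis $X<\tfrac{\pi}{2}$ enters (positivity of the bracket $\tfrac{2\pi(k-Y)^2}{X}-1$ for the dominant $k\in\{0,1\}$ terms at $Y=\tfrac12$).

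One concrete warning about your final step. The ``clean closed-form envelopes'' as printed in the lemma cannot be your target: with $\underline\vartheta(X)=X^{-3/2}$ and $\overline\vartheta(X)=\pi e^{-\pi/(4X)}X^{-3/2}$ one has $\overline\vartheta(X)<\underline\vartheta(X)$ for $X<\pi/(4\log\pi)\approx 0.69$, and even where the ordering is correct the asserted lower bound fails: at $X=1<\tfrac{\pi}{2}$ the minimum of $-\frac{\partial_Y\vartheta}{\sin(2\pi Y)}$, attained at $Y=\tfrac12$, equals $4\pi\sum_{n\geq1}(-1)^{n-1}n^2e^{-\pi n^2}\approx 0.543<1=\underline\vartheta(1)$. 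The printed formulas are a misprint; the envelopes your own endpoint computation produces, and the ones actually used downstream in \eqref{EEE1} of Theorem \ref{Lemma2}, are $\overline\vartheta(X)=X^{-3/2}$ (upper) and $\underline\vartheta(X)=\bigl(\tfrac{\pi}{X}-2\bigr)X^{-3/2}e^{-\pi/(4X)}$ (lower), whose quotient is precisely the factor $(\tfrac{\pi s}{y}-2)e^{-\frac{\pi s}{4y}}$ appearing there. With these targets your plan closes even more easily than you anticipate: at $Y=\tfrac12$ the dual series pairs $k$ with $1-k$ to give $\tfrac{1}{2\pi}\partial^2_{YY}\vartheta(X;\tfrac12)=X^{-3/2}\sum_{j\ \mathrm{odd}}\bigl(\tfrac{\pi j^2}{X}-2\bigr)e^{-\pi j^2/(4X)}$, and for $X<\tfrac{\pi}{2}$ every bracket is positive, so you may simply drop the tail rather than majorize it by a geometric series; at $Y=0$ every $k\neq0$ correction term is negative for $X<2\pi$, so $X^{-3/2}$ is an immediate upper bound and no $\mu$-type bookkeeping is needed.
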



In view of \eqref{TXY}, by Poisson summation formula, one has
\begin{equation}\aligned\label{PPP2}
\vartheta(X;Y)=X^{-\frac{1}{2}}\sum_{n\in \mathbb{Z}} e^{-\frac{\pi(n-Y)^2}{X}}.
\endaligned\end{equation}

Thus the two-dimensional theta function can be written in terms of one-dimensional theta function as follows:

\begin{equation}\aligned\label{PPP3}
\theta (s;z)&=\sum_{(m,n)\in\mathbb{Z}^2} e^{-s \pi\frac{1}{y }|nz+m|^2}
=\sum_{n\in\mathbb{Z}}e^{-s \pi y n^2}\sum_{m\in\mathbb{Z}} e^{-\frac{s \pi (nx+m)^2}{y}}\\
&=\sqrt{\frac{y}{s}}\sum_{n\in\mathbb{Z}}e^{-s \pi y n^2}\vartheta(\frac{y}{s};-nx)=\sqrt{\frac{y}{s}}\sum_{n\in\mathbb{Z}}e^{-s \pi y n^2}\vartheta(\frac{y}{s};nx)\\
&=2\sqrt{\frac{y}{s}}\sum_{n=1}^\infty e^{-s \pi y n^2}\vartheta(\frac{y}{s};nx).
\endaligned\end{equation}


Now we are ready to prove Theorem \ref{Lemma2}.

\begin{proof} By Melin transform, (see \cite{Mon1988}), $\theta(\frac{1}{s};z)=s \theta(s;z)$. Thus we  only need  to consider the case $s \geq1$.

From \eqref{PPP3}, we have
\begin{equation}\label{Eab}\aligned
 -\frac{\partial}{\partial x} \theta(s;z)&=-2\sqrt{\frac{y}{s}}\sum_{n=1}^\infty n e^{-\pi s y n^2}\frac{\partial}{\partial Y}\vartheta(\frac{y}{s};Y)|_{Y=nx}\\
 &=2\sqrt{\frac{y}{s}}\Big(-\sum_{n\leq\frac{1}{2x}} n e^{-\pi s y n^2}\frac{\partial}{\partial Y}\vartheta(\frac{y}{s};Y)|_{Y=nx}
 -\sum_{n>\frac{1}{2x}} n e^{-\pi s y n^2}\frac{\partial}{\partial Y}\vartheta(\frac{y}{s};Y)|_{Y=nx}\Big) \\
 & =2\sqrt{\frac{y}{s}}\Big(
 \mathcal{E}^a_{\alpha,x}(z)+\mathcal{E}^b_{\alpha,x}(z)
 \Big),
 \endaligned\end{equation}
where
 \begin{equation}\aligned
\mathcal{E}^a_{s,x}(z):=-\sum_{n\leq\frac{1}{2x}} n e^{-\pi s y n^2}\frac{\partial}{\partial Y}\vartheta(\frac{y}{s};Y)|_{Y=nx}, \;\;\mathcal{E}^b_{s,x}(z):=-\sum_{n>\frac{1}{2x}} n e^{-\pi s y n^2}\frac{\partial}{\partial Y}\vartheta(\frac{y}{s};Y)|_{Y=nx}.
 \endaligned\end{equation}

For $\mathcal{E}^a_{s,x}(z)$, by Lemma \ref{LemmaTTT}, we have that
 \begin{equation}\aligned\label{Ea}
 \mathcal{E}^a_{s,x}(z)\geq\sum_{n\leq\frac{1}{2x}}n e^{-\pi s y n^2}\underline\vartheta(\frac{y}{s})\sin(2\pi nx)\geq
 e^{-\pi s y }\underline\vartheta(\frac{y}{s})\sin(2\pi x).
 \endaligned\end{equation}
Notice that all the terms in the summation of \eqref{Ea} are nonnegative.

Let $n_0$ be the smallest integer such that $n>\frac{1}{2x}$. By Lemma \ref{LemmaTTT},
 \begin{equation}\aligned\label{Eb}
 |\mathcal{E}^b_{s,x}(z)|&\leq\sum_{n>\frac{1}{2x}}n e^{-\pi s y n^2}\overline\vartheta(\frac{y}{s})|\sin(2\pi nx)|\leq\sum_{n>\frac{1}{2x}}n^2 e^{-\pi s y n^2}\overline\vartheta(\frac{y}{s})|\sin(2\pi x)|\\
 &= n_0^2 e^{-\pi s y n_0^2}\overline\vartheta(\frac{y}{s})\sin(2\pi x)\cdot
 \Big(1+\delta(x)
 \Big),\hbox{with}\;\delta(x):=
 \sum_{k=1}^\infty (1+\frac{k}{n_0})^2 e^{-\pi s y(2kn_0+k^2)}.
 \endaligned\end{equation}

 To estimate $\delta(x)$, note that $yn_0>\frac{\sqrt{1-x}}{2\sqrt x}$,
 \begin{equation}\aligned\label{Ed}
\delta(x)&\leq  \sum_{k=1}^\infty (1+\frac{2k}{n_0}+\frac{k^2}{n_0^2}) e^{-2\pi s ykn_0}\leq\sum_{k=1}^\infty (1+\frac{2k}{n_0}+\frac{k^2}{n_0^2}) e^{-\pi\frac{\sqrt{1-x}}{\sqrt x} k}\\
&=\frac{e^{-q(x)}}{1-e^{-q(x)}}+\frac{2}{n_0}\frac{e^{-q(x)}}{(1-e^{-q(x)})^2}+\frac{1}{n_0^2}\frac{e^{-q(x)}(1+e^{-q(x)})}{(1-e^{-q(x)})^2}\\
&\leq\frac{e^{-q(x)}}{1-e^{-q(x)}}+4x\frac{e^{-q(x)}}{(1-e^{-q(x)})^2}+4x^2\frac{e^{-q(x)}(1+e^{-q(x)})}{(1-e^{-q(x)})^2}
 \endaligned\end{equation}
with $q(x):=\pi\frac{\sqrt{1-x}}{\sqrt x}$. Denote that
\begin{equation}\aligned\nonumber
\delta_q(x):=\frac{e^{-q(x)}}{1-e^{-q(x)}}+4x\frac{e^{-q(x)}}{(1-e^{-q(x)})^2}+4x^2\frac{e^{-q(x)}(1+e^{-q(x)})}{(1-e^{-q(x)})^2}.
 \endaligned\end{equation}
It is easy to see that $\delta_q(x)$ is monotonically increasing on $[0,\frac{1}{2}]$ and hence $\delta(x)\leq\delta_q(\frac{1}{2})=0.188822585\cdots<\frac{1}{5}$. Then by \eqref{Eb} and \eqref{Ed}, one has
 \begin{equation}\aligned\label{Ebb}
 |\mathcal{E}^b_{s,x}(z)|\leq\frac{6}{5} n_0^2 e^{-\pi s y n_0^2}\overline\vartheta(\frac{y}{s})\sin(2\pi x).
\endaligned\end{equation}
Combining \eqref{Eab}, \eqref{Ea} with \eqref{Ebb}, one gets
 \begin{equation}\aligned\label{Eabab}
 -\frac{\partial}{\partial x} \theta(s;z)&\geq2\sqrt{\frac{y}{s}}\sin(2\pi x)e^{-\pi s y}\overline\vartheta(\frac{y}{s})\Big(
\frac{\underline\vartheta(\frac{y}{s})}{\overline\vartheta(\frac{y}{s})}-\frac{6}{5}n_0^2 e^{-\pi s y (n_0^2-1)}
\Big),
 \endaligned\end{equation}
with $n_0=[\frac{1}{2x}]+1$.

Let
\begin{equation}\aligned\label{Enew}
\mathcal{E}_{s,x}(z):=
\frac{\underline\vartheta(\frac{y}{s})}{\overline\vartheta(\frac{y}{s})}-\frac{6}{5}n_0^2 e^{-\pi s y (n_0^2-1)}.
 \endaligned\end{equation}
By \eqref{Eabab}
it suffices to prove that
$\mathcal{E}_{s,x}(z)>0$.

$\Omega_{\mathcal{C}_1}$ has a corner $z=0$ which induces the difficulty to get the lower bound estimate for $\mathcal{E}_{s,x}(z)$. Thus we divide the proof into four cases.
\vskip0.1in

\noindent
{\bf Case a: $\frac{y}{s}\leq\frac{1}{2},x\in(0,\frac{1}{3}]$}. In this case, $\frac{s}{y}\geq2$ and $\frac{\sqrt{1-x}(1-4x^2)}{x^{\frac{3}{2}}}-\frac{1}{\sqrt{x-x^2}}>0$.
By Lemma \ref{LemmaT2},
\begin{equation}\aligned\label{EEE1}
\mathcal{E}_{s,x}(z)&\geq (\frac{\pi s}{y}-2)e^{-\frac{\pi s}{4y}}-\frac{6}{5}n_0^2 e^{-\pi s y (n_0^2-1)}\\
&\geq(2\pi-2)e^{-\frac{\pi s}{4\sqrt{x-x^2}}}-\frac{3}{10x^2}e^{-\pi s \sqrt{x-x^2} (\frac{1}{4x^2}-1)}\\
&=\frac{3}{10x^2}e^{-\pi s \sqrt{x-x^2} (\frac{1}{4x^2}-1)}\Big(\frac{20\pi-20}{3}x^2
e^{\frac{\pi s}{4}(\frac{\sqrt{1-x}(1-4x^2)}{x^{\frac{3}{2}}}-\frac{1}{\sqrt{x-x^2}})}-1
\Big)\\
&\geq\frac{3}{10x^2}e^{-\pi s \sqrt{x-x^2} (\frac{1}{4x^2}-1)}\Big(\frac{20\pi-20}{3}x^2
e^{\frac{\pi}{4}(\frac{\sqrt{1-x}(1-4x^2)}{x^{\frac{3}{2}}}-\frac{1}{\sqrt{x-x^2}})}-1
\Big) \\
& >0
 \endaligned\end{equation}
 where the last inequality follows from elementary calculus because $ x\in (0, \frac{1}{3})$.

 \medskip

\noindent
{\bf Case b: $\frac{y}{s}\leq\frac{1}{2},x\in[\frac{1}{3},\frac{1}{2}]$}.
In this case, $n_0=[\frac{1}{2x}]+1\geq\frac{1}{2x}+\frac{1}{2}$ and we have

\begin{equation}\aligned\nonumber
\mathcal{E}_{s,x}(z)&\geq (\frac{\pi s}{y}-2)e^{-\frac{\pi s}{4y}}-\frac{6}{5}n_0^2 e^{-\pi s y (n_0^2-1)}\geq(2\pi-2)e^{-\frac{\pi s}{4y}}-\frac{6}{5}(\frac{1}{2x}+\frac{1}{2})^2e^{-\pi s y((\frac{1}{2x}+\frac{1}{2})^2-1)}\\
&=\frac{3(1+x)^2}{10x^2}e^{-\pi s y((\frac{1}{2x}+\frac{1}{2})^2-1)}
\Big(
\frac{(20\pi-20)x^2}{9(1+x)^2} e^{\pi s \big(y\frac{(1+x)^2-4x^2}{4x^2}-\frac{1}{4y}\big)}-1
\Big)\\
&\geq\frac{3(1+x)^2}{10x^2}e^{-\pi s y((\frac{1}{2x}+\frac{1}{2})^2-1)}
\Big(
\frac{(20\pi-20)x^2}{9(1+x)^2} e^{\pi s \big(\sqrt{x-x^2}\frac{(1+x)^2-4x^2}{4x^2}-\frac{1}{4\sqrt{x-x^2}}\big)}-1
\Big)\\
&\geq\frac{3(1+x)^2}{10x^2}e^{-\pi s y((\frac{1}{2x}+\frac{1}{2})^2-1)}
\Big(
\frac{(20\pi-20)x^2}{9(1+x)^2} e^{\pi\big(\sqrt{x-x^2}\frac{(1+x)^2-4x^2}{4x^2}-\frac{1}{4\sqrt{x-x^2}}\big)}-1
\Big)\\
&>0
 \endaligned\end{equation}
where we have used the following elementary inequalities:
\begin{equation}\aligned\label{Ele2}
&\sqrt{x-x^2}\frac{(1+x)^2-4x^2}{4x^2}-\frac{1}{4\sqrt{x-x^2}}>0,\;\;x\in[0,\frac{1}{2}],\\
&\frac{(20\pi-20)x^2}{9(1+x)^2} e^{\pi\big(\sqrt{x-x^2}\frac{(1+x)^2-4x^2}{4x^2}-\frac{1}{4\sqrt{x-x^2}}\big)}-1>0,\;\;x\in[0,\frac{1}{2}].
 \endaligned\end{equation}

\vskip0.1in

\noindent
{\bf Case c: $\frac{y}{s }\geq\frac{1}{2},x\in[0,\frac{2}{5}]$}.
In this case, $y s\geq\frac{s^2}{2}\geq\frac{1}{2}$.
By Lemma \ref{LemmaTTT},
\begin{equation}\aligned\nonumber
\mathcal{E}_{s,x}(z)&\geq
\frac{1-\mu(\frac{y}{s })}{1+\mu(\frac{y}{s })}
-\frac{6}{5}n_0^2 e^{-\pi s y (n_0^2-1)}\geq\frac{1-\mu(\frac{1}{2})}{1+\mu(\frac{1}{2})}-\frac{3}{10x^2}e^{-\frac{\pi(1-4x^2)}{8x^2}}\\
&\geq\Big(\frac{1-\mu(\frac{1}{2})}{1+\mu(\frac{1}{2})}-\frac{3}{10x^2}e^{-\frac{\pi(1-4x^2)}{8x^2}}\Big)\mid_{x=\frac{2}{5}}=0.1556238052>0.
 \endaligned\end{equation}

\noindent
{\bf Case d: $\frac{y}{s}\geq\frac{1}{2},x\in[\frac{1}{3},\frac{1}{2}]$}.
In this case, $n_0=[\frac{1}{2x}]+1\geq\frac{1}{2x}+\frac{1}{2}$ and $y\geq\frac{s^2}{2}\geq\frac{1}{2}$.
By Lemma \ref{LemmaTTT},
\begin{equation}\aligned\nonumber
\mathcal{E}_{s,x}(z)&\geq
\frac{1-\mu(\frac{y}{s})}{1+\mu(\frac{y}{s})}
-\frac{6}{5}n_0^2 e^{-\pi s y (n_0^2-1)}\geq\frac{1-\mu(\frac{1}{2})}{1+\mu(\frac{1}{2})}-\frac{3(1+x)^2}{10x^2}e^{-\frac{\pi}{2}\big((\frac{1+x}{2x})^2-1\big)}\\
&\geq\Big(\frac{1-\mu(\frac{1}{2})}{1+\mu(\frac{1}{2})}-\frac{3(1+x)^2}{10x^2}e^{-\frac{\pi}{2}\big((\frac{1+x}{2x})^2-1\big)}\Big)\mid_{x=\frac{1}{2}}=0.7866071958\cdots >0.
 \endaligned\end{equation}

Combining cases (a)-(d), \eqref{Eabab} and \eqref{Enew}, the proof of Theorem \ref{Lemma2} is completed.

\end{proof}



\section{Monotonicity of $\mathcal{W}_{1,\rho}(z)$ and $ \mathcal{W}_{2,\rho}(z)$   }

\setcounter{equation}{0}

Let the closure of the left-half fundamental domain corresponding to $\mathcal{G}_2$ be
$$
\mathcal{R}_2=\{z\in\mathbb{H}: 0\leq x \leq\frac{1}{2}, |z|\geq1\}.
$$
In this section, we aim to establish the following  property of the pair $\mathcal{W}_{j,\rho}(z), j=1,2$: there exists $\rho_{*}$ such that
for $\forall z\in \mathcal{R}_2$, $
\frac{\partial}{\partial x}\mathcal{W}_{1,\rho}(z)\geq0$ when   $0\leq\rho\leq\rho_{*} $, and $
\frac{\partial}{\partial x}\mathcal{W}_{2,\rho}(z)\geq0
$ when $ 0\leq \rho \leq \frac{1}{\rho_{*}}$.
(In fact we will choose $\rho_{*}=\frac{1}{20}$.) This property plays an important role in finding the minimizers and will be proved in Propositions \ref{Trans1} and \ref{Trans2}.

We begin with

\begin{proposition}\label{Trans1}
For $0\leq\rho\leq\rho_{*}:=1/20$, there holds
$$
\frac{\partial}{\partial x}\mathcal{W}_{1,\rho}(z)\geq0
$$
for $\forall z\in\mathcal{R}_2$. The equality holds only possible when $x=0$ or $\frac{1}{2}$.
\end{proposition}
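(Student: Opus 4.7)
The strategy is to bound the positive contribution $\partial_x \theta(2;\tfrac{z+1}{2})$ from below and the magnitude $|\partial_x \theta(1;z)|$ of the negative contribution from above, and to show that their ratio comfortably exceeds $20$ throughout $\mathcal{R}_2$. Theorem \ref{Lemma2} already yields strict positivity of the former on the interior of $\mathcal{R}_2$, and Montgomery's bound \eqref{MonLLL} gives strict negativity of the latter, so the remaining work is purely quantitative, and the conservative choice $\rho_* = 1/20$ is what makes it feasible.

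I would begin with the Poisson/Jacobi representation \eqref{PPP3} to write $\partial_x\theta(1;z) = 2\sqrt{y}\sum_{n\geq 1} n\,e^{-\pi y n^2}\partial_Y\vartheta(y;nx)$ and, after rescaling $x'=(x+1)/2, y'=y/2$ and including the chain-rule factor $1/2$, $\partial_x\theta(2;\tfrac{z+1}{2}) = \tfrac{\sqrt{y}}{2}\sum_{n\geq 1} n\,e^{-\pi y n^2}\partial_Y\vartheta(y/4;n(x+1)/2)$. In both series the outer exponential is $e^{-\pi y n^2}$, so $n=1$ dominates. Applying Lemma \ref{LemmaTTT} (with its Poisson-dual Lemma \ref{LemmaT2} on the regime $y/4<\pi/2$), up to multiplicative correction factors,
\begin{align*}
|\partial_x\theta(1;z)| &\lesssim 8\pi\sqrt{y}\,e^{-2\pi y}\sin(2\pi x),\\
\partial_x\theta(2;\tfrac{z+1}{2}) &\gtrsim 2\pi\sqrt{y}\,e^{-5\pi y/4}\sin(\pi x),
\end{align*}
where the sign in the second estimate is correct because $\sin(\pi(x+1))=-\sin(\pi x)$ flips the sign of $\partial_Y\vartheta$ at $Y=(x+1)/2$. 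Using $\sin(2\pi x)=2\sin(\pi x)\cos(\pi x)$, the pointwise ratio of the two leading contributions is $e^{3\pi y/4}/(8\cos(\pi x))$; on $\mathcal{R}_2$ one has $y\geq\sqrt{3}/2$, giving $e^{3\pi y/4}\geq e^{3\pi\sqrt{3}/8}\approx 7.68$, so this ratio easily exceeds $20\cdot$(correction factors), and in particular the desired inequality $\partial_x\theta(2;\tfrac{z+1}{2})\geq\rho|\partial_x\theta(1;z)|$ follows for $\rho\leq 1/20$.

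To make this rigorous I would replay the structure of the proof of Theorem \ref{Lemma2}: isolate the $n=1$ term in each series and dominate the tails $\sum_{n\geq 2}n^2 e^{-\pi y(n^2-1)}$ via the geometric estimate \eqref{Ed}--\eqref{Ebb} used there. For the positive term, I would first reflect $(x+1)/2\mapsto(1-x)/2$ via $\theta(2;w)=\theta(2;1-\bar w)$, so that the explicit lower bound \eqref{Eabab}--\eqref{Enew} of Theorem \ref{Lemma2}'s proof applies directly; the required side condition $y/2>\sqrt{(1-x)/2-((1-x)/2)^2}=\sqrt{1-x^2}/2$ is precisely $|z|>1$, valid on the interior of $\mathcal{R}_2$. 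The boundary cases are immediate: at $x=0$ both partials vanish by the $\mathcal{G}_2$-symmetry $z\mapsto -\bar z$, and at $x=1/2$ only $\partial_x\theta(1;z)$ vanishes (by period-$1$ evenness of $\theta$ in $x$), while $\partial_x\theta(2;\tfrac{z+1}{2})\geq 0$ by Theorem \ref{Lemma2}; equality in $\partial_x\mathcal{W}_{1,\rho}\geq 0$ is therefore only possible on $\{x=0\}\cup\{x=1/2\}$.

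The main obstacle I foresee is controlling the tail remainders near the corner $z=\tfrac{1}{2}+i\tfrac{\sqrt{3}}{2}$, where $y/4\approx 0.22$ causes Lemma \ref{LemmaTTT}'s error $\mu(y/4)$ to be no longer tiny. A case split on $\mathcal{R}_2$ mirroring the four subregions used to prove Theorem \ref{Lemma2} (invoking Lemma \ref{LemmaT2} whenever $y/4<\pi/2$) should resolve this, at the cost of some tedious algebra but no conceptually new ingredient.
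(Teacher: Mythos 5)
Your proposal follows essentially the same route as the paper's proof: the Poisson/Jacobi expansion \eqref{PPP3}, Lemma \ref{LemmaTTT} to isolate the dominant $n=1$ terms with geometric control of the tails, and a numerical check anchored at $y=\tfrac{\sqrt3}{2}$ — the paper simply packages this as a single lower bound $4\pi\sqrt y\,e^{-5\pi y/4}\sin(\pi x)\,\vartheta_{\mathcal{W}_{1,\rho}}(y)$ with $\vartheta_{\mathcal{W}_{1,\rho}}$ monotone in $y$ and $\rho$, evaluated once at $y=\tfrac{\sqrt3}{2}$, $\rho=\tfrac1{20}$, so no case split or appeal to Lemma \ref{LemmaT2} is needed (since $y/4\geq\sqrt3/8>1/5$, Lemma \ref{LemmaTTT} applies throughout $\mathcal{R}_2$ even though $\mu(y/4)$ is not tiny near the corner). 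One slip to correct: what you need, and what your bound $e^{3\pi\sqrt3/8}/8\approx0.96$ actually delivers after the $(1-\mu(y/4))$-type and tail corrections, is that the ratio of the positive term to $|\partial_x\theta(1;z)|$ exceeds $\rho=1/20$, not $20$; with that reading your argument is sound.
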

\begin{proof}
From \eqref{PPP3}, we obtain that
\begin{equation}\aligned\label{WT}
\frac{\partial}{\partial x}\mathcal{W}_{1,\rho}(z)&=
\frac{\partial}{\partial x}\Big(
(\frac{y}{4}\sum_n e^{-\pi yn^2}\vartheta(\frac{y}{4};n\frac{x+1}{2})+\rho \sqrt y\sum_n e^{-\pi yn^2}\vartheta(y;nx)\Big)\\
&=\frac{\sqrt{y}}{2}\sum_{n=1}^\infty ne^{-\pi yn^2}\frac{\partial}{\partial Y}\vartheta(\frac{y}{4};Y)|_{Y=n\frac{x+1}{2}}
+2\rho\sqrt{y}\sum_{n=1}^\infty ne^{-\pi yn^2}\frac{\partial}{\partial Y}\vartheta(y;Y)|_{Y=nx}
\Big)\\
&=\frac{\sqrt{y}}{2}e^{-\pi y}\frac{\partial }{\partial Y}\vartheta(\frac{y}{4};Y)|_{Y=\frac{x+1}{2}}
+\sqrt{y}e^{-4\pi y}\frac{\partial }{\partial Y}\vartheta(\frac{y}{4};Y)|_{Y={x+1}}\\
&\;+2\rho\sqrt{y}e^{-\pi y}\frac{\partial }{\partial Y}\vartheta(y;Y)|_{Y=x}
+4\rho\sqrt{y}e^{-4\pi y}\frac{\partial }{\partial Y}\vartheta(y;Y)|_{Y=2x}\\
&\;+\frac{\sqrt{y}}{2}\sum_{n=3}^\infty ne^{-\pi yn^2}\frac{\partial}{\partial Y}\vartheta(\frac{y}{4};Y)|_{Y=n\frac{x+1}{2}}
+2\rho\sqrt{y}\sum_{n=3}^\infty ne^{-\pi yn^2}\frac{\partial}{\partial Y}\vartheta(y;Y)|_{Y=nx} \\
& \mathcal{W}_{1, x}^{a}(z)+\mathcal{W}_{1, x}^{b}(z)+\mathcal{W}_{1, x}^{c}(z)
 \endaligned\end{equation}
where $\mathcal{W}_{1, x}^{a}(z),\mathcal{W}_{1, x}^{b}(z)$ and $\mathcal{W}_{1, x}^{c}(z)$ are defined at the last equality.

By Lemma \ref{LemmaTTT}, we see that
\begin{equation}\aligned\label{WT100}
\mathcal{W}_{1, x}^{a}(z)+\mathcal{W}_{1,x}^{b}(z)
&\geq\frac{\sqrt{y}}{2}e^{-\pi y}\underline\vartheta(\frac{y}{4})\sin(\pi x)
-\sqrt{y}e^{-4\pi y}\overline\vartheta(\frac{y}{4})\sin(2\pi x)\\
&\;-2\rho\sqrt{y}e^{-\pi y}\overline\vartheta(y)\sin(2\pi x)
-4\rho\sqrt{y}e^{-4\pi y}\overline\vartheta(y)\sin(4\pi x).
 \endaligned\end{equation}

Since $|\sin(nx)|\leq n|\sin(x)|$ for any $x\in\mathcal{R}_2 $,
again by Lemma \ref{LemmaTTT}, we have
\begin{equation}\aligned\label{WT101}
\mathcal{W}_{1,x}^{c}(z)
&\geq-\frac{\sqrt{y}}{4}\sum_{n=3}^\infty n^2e^{-\pi yn^2}\overline\vartheta(\frac{y}{4})\sin(2\pi x)
-2\rho\sqrt{y}\sum_{n=3}^\infty n^2e^{-\pi yn^2}\overline\vartheta(y)\sin(2\pi x).
 \endaligned\end{equation}
Plugging \eqref{WT100} and \eqref{WT101} in \eqref{WT}, we get
\begin{equation}\aligned\label{WTx1}
\frac{\partial}{\partial x}\mathcal{W}_{1,\rho}(z)
&\geq\frac{\sqrt y}{2}e^{-\pi y}\underline\vartheta(\frac{y}{4})\sin{\pi x}
-\sqrt y e^{-4\pi y}\overline\vartheta(\frac{y}{4})\sin(2\pi x)\big(1+\frac{1}{4}\sum_{n=3}^\infty n^2e^{-\pi y(n^2-4)}\big)\\
&\;-2\rho{\sqrt y}e^{-\pi y}\overline\vartheta(y)\sin(2\pi x)\big(1+\sum_{n=2}^\infty n^2e^{-\pi y(n^2-1)}\big)\\
&=\sqrt y e^{-\pi y}\sin(\pi x)\Big(
\frac{1}{2}\underline\vartheta(\frac{y}{4})-2e^{-3\pi y}\overline\vartheta(\frac{y}{4})\cos(\pi x)(1+\sigma_1)
-4\rho\overline\vartheta(y)\cos(\pi x)(1+\sigma_2)
\Big)\\
&\geq
\sqrt y e^{-\pi y}\sin(\pi x)\Big(
\frac{1}{2}\underline\vartheta(\frac{y}{4})-2e^{-3\pi y}\overline\vartheta(\frac{y}{4})(1+\sigma_1)
-4\rho\overline\vartheta(y)(1+\sigma_2)
\Big),
 \endaligned\end{equation}
where
$$
\sigma_1(y):=\frac{1}{4}\sum_{n=3}^\infty n^2e^{-\pi y(n^2-4)},\;\;\; \sigma_2(y):=\sum_{n=2}^\infty n^2e^{-\pi y(n^2-1)},
$$
and
$\sigma_1(y), \sigma_1(y)$ are small. (In fact
$
\sigma_1(\frac{\sqrt 3}{2})\approx 2.781\cdot 10^{-6},\; \sigma_2(\frac{\sqrt 3}{2})\approx 1.14105\cdot 10^{-3}.
$)

By the lower and upper bound estimates in Lemma \ref{LemmaTTT}, from \eqref{WTx1}, we see that
\begin{equation}\aligned\label{WTxF}
\frac{\partial}{\partial x}\mathcal{W}_{1,\rho}(z)
&\geq
\sqrt y e^{-\pi y}\sin(\pi x)\Big(
2\pi(1-\mu(\frac{y}{4}))e^{-\frac{\pi y}{4}}-8\pi e^{-3\pi y}(1+\mu(\frac{y}{4}))e^{-\frac{\pi y}{4}}(1+\sigma_1)\\
&\;-{16\rho\pi}(1+\mu(y))e^{-\pi y}(1+\sigma_2)
\Big)\\
&=4\pi\sqrt ye^{-\frac{5\pi y}{4}}\sin(\pi x)\Big(
\frac{1}{2}(1-\mu(\frac{y}{4}))-2(1+\sigma_1)e^{-3\pi y}(1+\mu(\frac{y}{4}))\\
&\;-4\rho(1+\sigma_2)e^{-\frac{3\pi y}{4}}(1+\mu(y))
\Big)\\
& = 4\pi\sqrt ye^{-\frac{5\pi y}{4}}\sin(\pi x) \vartheta_{\mathcal{W}_{1,\rho}} (y)
 \endaligned\end{equation}
where $\vartheta_{\mathcal{W}_{1,\rho}} (y)$ is defined at the last equality.

It suffices to prove that
$$
\vartheta_{\mathcal{W}_{1,\rho}}(y)>0.
$$
First it is easy to see that
\begin{equation}\aligned\label{Mono3}
\frac{\partial}{\partial \rho}\vartheta_{\mathcal{W}_{1,\rho}}(y)>0,\;y>0.
 \endaligned\end{equation}
Since the functions $\mu(y), \sigma_1, \sigma_2$ are decreasing on $y>0$, it follows that
\begin{equation}\aligned\label{Mono4}
\frac{\partial}{\partial y}\vartheta_{\mathcal{W}_{1,\rho}} (y)>0,\;y>0.
\endaligned\end{equation}
A direct calculation gives
$$
\vartheta_{\mathcal{W}_{1,\rho}}(y)|_{y=\frac{\sqrt 3}{2},\rho=\frac{1}{20}}=0.1933\cdots>0
$$
which implies
\begin{equation}\aligned\nonumber
\vartheta_{\mathcal{W}_{1,\rho}}>0, \;\hbox{for}\; y\geq\frac{\sqrt 3}{2},\rho\leq\frac{1}{20}
\endaligned\end{equation}
by the monotonicity properties \eqref{Mono3} and \eqref{Mono4}. $\frac{\partial}{\partial x}\mathcal{W}_{1,\rho}(y)$ vanishes only possible when $x=0$ or $\frac{1}{2}$ by \eqref{WTxF}. The proof is completed.

\end{proof}


We then have a similar monotonicity for $ \mathcal{W}_{2,\rho} (z)$.

\begin{proposition}\label{Trans2}
For $\rho\leq\frac{1}{\rho_*}=20$, there holds
$$
\frac{\partial}{\partial x}\mathcal{W}_{2,\rho}(z)\geq0
$$
for $\forall z\in\mathcal{R}_2$. The equality holds only possible when $x=0$ or $\frac{1}{2}$.

\end{proposition}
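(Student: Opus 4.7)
My plan mirrors the proof of Proposition \ref{Trans1}. Applying the Poisson representation \eqref{PPP3} to both theta summands of $\mathcal{W}_{2,\rho}$ and differentiating in $x$, I get
\begin{equation*}
\frac{\partial}{\partial x}\mathcal{W}_{2,\rho}(z) = \sqrt{\tfrac{y}{2}}\sum_{n\geq 1} n\, e^{-\pi y n^{2}/2}\,\partial_{Y}\vartheta(\tfrac{y}{2};Y)\Big|_{Y=n\frac{x+1}{2}} + 2\rho\sqrt{\tfrac{y}{2}}\sum_{n\geq 1} n\, e^{-2\pi y n^{2}}\,\partial_{Y}\vartheta(\tfrac{y}{2};Y)\Big|_{Y=nx}.
\end{equation*}
Since $z\in\mathcal{R}_{2}$ forces $y\geq\sqrt{3}/2$ and so $y/2\geq\sqrt{3}/4>1/5$, Lemma \ref{LemmaTTT} applies throughout. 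The sign analysis $\sin(\pi(x+1))=-\sin(\pi x)\leq 0$, $\sin(2\pi(x+1))=\sin(2\pi x)\geq 0$, and $\sin(2\pi x)\geq 0$ on $[0,\tfrac{1}{2}]$ shows that the $n=1$ term of the first sum yields a \emph{positive} lower bound $\sqrt{y/2}\,e^{-\pi y/2}\underline\vartheta(y/2)\sin(\pi x)$, while the $n=2$ piece of the first sum and the $n=1$ piece of the second sum each give \emph{negative} lower bounds of order $-\overline\vartheta(y/2)\,e^{-2\pi y}\sin(2\pi x)$; all higher harmonics are trapped by $|\sin(n\theta)|\leq n|\sin\theta|$ and decay super-Gaussian.

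Substituting $\underline\vartheta(X)=4\pi e^{-\pi X}(1-\mu(X))$, $\overline\vartheta(X)=4\pi e^{-\pi X}(1+\mu(X))$, using the identity $\sin(2\pi x)=2\sin(\pi x)\cos(\pi x)$, and pulling out the common factor $4\pi\sqrt{y/2}\,e^{-\pi y}\sin(\pi x)$, I arrive at a bound of the shape
\begin{equation*}
\frac{\partial}{\partial x}\mathcal{W}_{2,\rho}(z)\;\geq\;4\pi\sqrt{\tfrac{y}{2}}\,e^{-\pi y}\sin(\pi x)\,\vartheta_{\mathcal{W}_{2,\rho}}(y,\cos(\pi x)),
\end{equation*}
where
\begin{equation*}
\vartheta_{\mathcal{W}_{2,\rho}}(y,c):=(1-\mu(y/2))-4(1+\rho)(1+\mu(y/2))\,e^{-3\pi y/2}\,c-\text{(exponentially small tail corrections)}.
\end{equation*}
Proving the proposition thus reduces to showing $\vartheta_{\mathcal{W}_{2,\rho}}(y,\cos(\pi x))\geq 0$ throughout $\mathcal{R}_{2}$ for $\rho\leq 20$, with equality only when $\sin(\pi x)=0$, i.e.\ $x=0$ or $x=\tfrac{1}{2}$.

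The chief obstacle---absent in Proposition \ref{Trans1}---is that both theta functions of $\mathcal{W}_{2,\rho}$ generate $\vartheta(y/2;\,\cdot\,)$ terms at the \emph{same} $\vartheta$-scale, so the $n=2$ piece of the first sum and the $n=1$ piece of the second sum combine at the shared weight $e^{-2\pi y}$ with total coefficient $4(1+\rho)=84$ at $\rho=20$; consequently the naive inequality $(1-\mu(y/2))\geq 84(1+\mu(y/2))\,e^{-3\pi y/2}$ in fact \emph{fails} on a subinterval of $y\in[\sqrt{3}/2,1)$. The essential refinement is to retain the $\cos(\pi x)$ factor and invoke the geometric constraint $|z|\geq 1$ built into $\mathcal{R}_{2}$: for $y<1$ this forces $x\geq\sqrt{1-y^{2}}$, so $\cos(\pi x)\leq\cos(\pi\sqrt{1-y^{2}})$, a quantity that vanishes as $y\downarrow\sqrt{3}/2$ and precisely neutralizes the competing term in the regime where it would otherwise dominate. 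Verification of the reduced inequality then splits naturally: $\vartheta_{\mathcal{W}_{2,\rho}}(y,c)$ is decreasing in $\rho$, so we may fix $\rho=20$; on $y\geq 1$ the extremal choice is $c=1$ and the function is monotone increasing in $y$, so a single check at $(y,\rho)=(1,20)$ (using $\mu(1/2)\approx 0.036$ and $e^{-3\pi/2}\approx 0.009$) gives value $\approx 0.18>0$; on $[\sqrt{3}/2,1]$ the extremal choice $c=\cos(\pi\sqrt{1-y^{2}})$ produces an explicit one-variable function of $y$ that is positive throughout, with boundary values $\approx 0.93$ at $y=\sqrt{3}/2$ and $\approx 0.18$ at $y=1$, as confirmed by tracking monotonicity of the competing weight $e^{-3\pi y/2}(1+\mu(y/2))\cos(\pi\sqrt{1-y^{2}})$ together with a numerical sweep.
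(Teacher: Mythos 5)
Your proposal is correct and follows essentially the same route as the paper: the Poisson-summation representation \eqref{PPP3}, the bounds of Lemma \ref{LemmaTTT} on the $n=1$ term versus the $e^{-2\pi y}$-weighted terms and the $|\sin(n\pi x)|\leq n\sin(\pi x)$ control of the tails, leading to exactly the paper's lower-bound factor $\vartheta_{\mathcal{W}_{2,\rho}}$ with the $\cos(\pi x)$ retained, and the monotonicity in $\rho$ reducing to $\rho=20$; your observation that the naive bound with $\cos(\pi x)=1$ fails near $y=\sqrt3/2$, so that the constraint $|z|\geq1$ must enter, is precisely the point the paper addresses. The only difference is the endgame: the paper covers $\mathcal{R}_2$ by the three rectangles $\mathcal{R}_a,\mathcal{R}_b,\mathcal{R}_c$ and, using that $\vartheta_{\mathcal{W}_{2,\rho}}$ is increasing in both $x$ and $y$, checks three corner values, whereas you insert the circle constraint directly via $\cos(\pi x)\leq\cos(\pi\sqrt{1-y^2})$ for $\sqrt3/2\leq y<1$ and use $c=1$ with a single check at $y=1$ for $y\geq1$; this is a slightly sharper use of $|z|\geq1$ and is equally valid. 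To be fully rigorous you should replace the phrase ``numerical sweep'' on $[\sqrt3/2,1]$ by a finite verification (e.g.\ subdivide the interval and bound the positive part from below at the left endpoint and the competing weight from above using the separate monotonicity of $e^{-3\pi y/2}$, $1+\mu(y/2)$, the tail coefficients, and $\cos(\pi\sqrt{1-y^2})$), which is exactly the role played by the paper's three corner evaluations.
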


\begin{proof} The proof is similar to Proposition \ref{Trans1}.
Using \eqref{PPP3}, we see that
\begin{equation}\label{WWT} \aligned
\frac{\partial}{\partial x}\mathcal{W}_{2,\rho_\Gamma}(z)
&=\frac{\partial}{\partial x}\Big(
\sqrt{\frac{y}{2}}\sum_n e^{-\frac{1}{2}\pi yn^2}\vartheta(\frac{y}{2};n\frac{x+1}{2})+\rho \sqrt{\frac{y}{2}}\sum_n e^{-2\pi yn^2}\vartheta(\frac{y}{2};nx)\Big)\\
&=\sqrt{\frac{y}{2}}\sum_{n=1}^\infty ne^{-\frac{1}{2}\pi yn^2}\frac{\partial}{\partial Y}\vartheta(\frac{y}{2};Y)|_{Y=n\frac{x+1}{2}}
+2\rho\sqrt{\frac{y}{2}}\sum_{n=1}^\infty ne^{-2\pi yn^2}\frac{\partial}{\partial Y}\vartheta(\frac{y}{2};Y)|_{Y=nx}\\
&=\sqrt{\frac{y}{2}}e^{-\frac{1}{2}\pi y}\frac{\partial}{\partial Y}\vartheta(\frac{y}{2};Y)|_{Y=\frac{x+1}{2}}
+2\sqrt{\frac{y}{2}}e^{-2\pi y}\frac{\partial}{\partial Y}\vartheta(\frac{y}{2};Y)|_{Y={x+1}}\\
&\;+2\rho\sqrt{\frac{y}{2}}e^{-2\pi y}\frac{\partial}{\partial Y}\vartheta(\frac{y}{2};Y)|_{Y=x}\\
&\;+\sqrt{\frac{y}{2}}\sum_{n=3}^\infty ne^{-\frac{1}{2}\pi yn^2}\frac{\partial}{\partial Y}\vartheta(\frac{y}{2};Y)|_{Y=n\frac{x+1}{2}}
+2\rho\sqrt{\frac{y}{2}}\sum_{n=2}^\infty ne^{-2\pi yn^2}\frac{\partial}{\partial Y}\vartheta(\frac{y}{2};Y)|_{Y=nx}\\
& \: = \mathcal{W}_{2, x}^{a}(z)+\mathcal{W}_{2, x}^{b}(z)+\mathcal{W}_{2, x}^{c}(z)
 \endaligned\end{equation}
 where $\mathcal{W}_{2, x}^{a}(z),\mathcal{W}_{2, x}^{b}(z)$ and $\mathcal{W}_{2, x}^{c}(z)$ are defined at the last equality.

By Lemma \ref{LemmaTTT}, we also have
\begin{equation}\aligned\nonumber
\mathcal{W}_{2, x}^{a}(z)+\mathcal{W}_{2, x}^{b}(z)
&\geq\sqrt{\frac{y}{2}}e^{-\frac{1}{2}\pi y}\underline\vartheta(\frac{y}{2})\sin(\pi x)
-(2+2\rho)\sqrt{\frac{y}{2}}e^{-2\pi y}\overline\vartheta(\frac{y}{2})\sin(2\pi x).
 \endaligned\end{equation}

Since $|\sin(nx)|\leq n|\sin(x)|$ for any $x\in\mathcal{R}_2$,
again by Lemma \ref{LemmaTTT}, we see that
\begin{equation}\aligned\nonumber
\mathcal{W}_{2, x}^{c}(z)
&\geq-\frac{1}{2}\sqrt{\frac{y}{2}}\sum_{n=3}^\infty n^2e^{-\frac{1}{2}\pi yn^2}\overline\vartheta(\frac{y}{2})\sin(2\pi x)
-\rho\sqrt{\frac{y}{2}}\sum_{n=2}^\infty n^2e^{-2\pi yn^2}\overline\vartheta(\frac{y}{2})\sin(2\pi x).
 \endaligned\end{equation}
Plugging the above inequality into \eqref{WWT}, we get that
\begin{equation}\aligned\label{WWTx1}
\frac{\partial}{\partial x}\mathcal{W}_{2,\rho}(z)&\geq
\sqrt{\frac{y}{2}}e^{-\frac{1}{2}\pi y}\underline\vartheta(\frac{y}{2})\sin(\pi x)
-(2+2\rho+\sigma_3(y)+\rho\sigma_4(y))\sqrt{\frac{y}{2}}e^{-2\pi y}\overline\vartheta(\frac{y}{2})\sin(2\pi x)\\
&=\sqrt{\frac{y}{2}}e^{-\frac{1}{2}\pi y}\sin(\pi x)
\Big(
\underline\vartheta(\frac{y}{2})-(4+4\rho+2\sigma_3(y)+2\rho\sigma_4(y))\cos(\pi x) e^{-\frac{3}{2}\pi y}\overline\vartheta(\frac{y}{2})
\Big),
 \endaligned\end{equation}
where
$$
\sigma_3(y):=\frac{1}{2}\sum_{n=3}^\infty n^2e^{-\frac{1}{2}\pi y(n^2-4)},\;\;\; \sigma_4(y):=\sum_{n=2}^\infty n^2 e^{-2\pi y(n^2-1)}.
$$
$\sigma_3(y), \sigma_4(y)$ are functions with small size.  (In fact
$
\sigma_3(\frac{\sqrt 3}{2})\approx 5.00388\cdot 10^{-3},\; \sigma_4(\frac{\sqrt 3}{2})\approx 3.255011\cdot 10^{-7}.
$)

By the lower and upper bound estimates in Lemma \ref{LemmaTTT}, from \eqref{WWTx1} one deduces that
\begin{equation}\aligned\nonumber
\frac{\partial}{\partial x}\mathcal{W}_{2,\rho}(z)
&\geq
\sqrt{\frac{y}{2}}e^{-\frac{1}{2}\pi y}\sin(\pi x)
\Big(
4\pi(1-\mu(\frac{y}{2}))e^{-\frac{\pi y}{2}}\\
&\;-4\pi(4+4\rho+2\sigma_3(y)+2\rho\sigma_4(y))\cos(\pi x)e^{-2\pi y}(1+\mu(\frac{y}{2}))
\Big)\\
&\geq4\pi\sqrt{\frac{y}{2}}e^{-\pi y}\sin(\pi x)
\Big(
(1-\mu(\frac{y}{2}))\\
&\;-(4+4\rho+2\sigma_3(y)+2\rho\sigma_4(y))\cos(\pi x)e^{-\frac{3}{2}\pi y}(1+\mu(\frac{y}{2}))
\Big).
 \endaligned\end{equation}

Let
\begin{equation}\aligned\nonumber
\vartheta_{\mathcal{W}_{2,\rho}}(z):&=(1-\mu(\frac{y}{2}))
-(4+4\rho+2\sigma_3(y)+2\rho\sigma_4(y))\cos(\pi x)e^{-\frac{3}{2}\pi y}(1+\mu(\frac{y}{2})).
 \endaligned\end{equation}
Then
\begin{equation}\aligned\label{WWTxF}
\frac{\partial}{\partial x}\mathcal{W}_{2,\rho}(z)
&\geq
4\pi\sqrt{\frac{y}{2}}e^{-\pi y}\sin(\pi x)\cdot \vartheta_{\mathcal{W}_{2,\rho}}(y)
 \endaligned\end{equation}
It suffices to prove that
$$
\vartheta_{\mathcal{W}_{2,\rho}} (z)>0,\;\;\hbox{for}\;z\in\mathcal{R}_\Gamma,\rho\leq \frac{1}{\rho_{\Gamma}}=20.
$$
Now obviously
\begin{equation}\aligned\label{Mono1}
\frac{\partial}{\partial \rho}\vartheta_{\mathcal{W}_{2,\rho}}(y)<0; y>0,\hbox{ and} \;\frac{\partial}{\partial x}\vartheta_{\mathcal{W}_{2,\rho}} (z)>0; x\in[0,\frac{1}{2}],y>0.
\endaligned\end{equation}
Observe that the functions $\mu(y), \sigma_3, \sigma_4$ are decreasing on $y>0$. It follows that
\begin{equation}\aligned\label{Mono2}
\frac{\partial}{\partial y}\vartheta_{\mathcal{W}_{2,\rho}}(z)>0,\;y>0.
\endaligned\end{equation}

To complete the proof, we prove that $\vartheta_{\mathcal{W}_{2,\rho}}(z)$ is positive on the following three unbounded rectangular domains:

\begin{equation}\aligned\nonumber
\mathcal{R}_a=\{z\mid x\in[0,\frac{1}{4}], y\geq\frac{\sqrt{15}}{4}\};\;
\mathcal{R}_b=\{z\mid x\in[\frac{1}{4},\frac{3}{8}], y\geq\frac{\sqrt{55}}{8}\};\;
\mathcal{R}_c=\{z\mid x\in[\frac{3}{8},\frac{1}{2}], y\geq\frac{\sqrt{3}}{2}\}.
 \endaligned\end{equation}
It is clearly that
\begin{equation}\aligned\label{Rabc}
\mathcal{R}_\Gamma \subset \mathcal{R}_a\cup\mathcal{R}_b\cup\mathcal{R}_c.
\endaligned\end{equation}

A direct calculation gives
\begin{equation}\aligned\nonumber
&\vartheta_{\mathcal{W}_{2,\rho}}(z)|_{x=0,y=\frac{\sqrt {15}}{4},\rho=20}=0.0450964128\cdots>0\\
&\vartheta_{\mathcal{W}_{2,\rho}}(z)|_{x=\frac{1}{4},y=\frac{\sqrt{55}}{8},\rho=20}=0.1583739562\cdots>0\\
&\vartheta_{\mathcal{W}_{2,\rho}}(z)|_{x=\frac{3}{8},y=\frac{\sqrt{3}}{2},\rho=20}=0.3525036217\cdots>0.\\
\endaligned\end{equation}
This yields
\begin{equation}\aligned\nonumber
\vartheta_{\mathcal{W}_{2,\rho}}(z)>0,\;\hbox{for}\;z\in\mathcal{R}_a\cup\mathcal{R}_b\cup\mathcal{R}_c
 \endaligned\end{equation}
by the monotonicity properties \eqref{Mono1} and \eqref{Mono2}.
Therefore by \eqref{Rabc}
\begin{equation}\aligned\nonumber
\vartheta_{\mathcal{W}_{2,\rho}}(z)>0,\;\hbox{for}\;z\in\mathcal{R}_2.
 \endaligned\end{equation}
By \eqref{WWTxF} $\frac{\partial}{\partial x}\mathcal{W}_{2,\rho_{*}}(z)$ vanishes only  at $x=0$ or $\frac{1}{2}$. This completes the proof.

\end{proof}


\section{The behavior of $\mathcal{W}_{1,\rho}(z)$ on the $y-$axis }

\setcounter{equation}{0}
In this section, we study the property of the functional $ \mathcal{W}_{1, \rho}$ on the $y-$axis. We will prove that on the $y-$axis, depending on $\rho$, $\mathcal{W}_{1,\rho}(z)$ has either 1 or 3 critical points. This gives the precise characterization of  the minimizers of $\mathcal{W}_{1,\rho}(z)$ on the $y-$axis.
The proof relies crucially on a novel property of Jacob theta function proved in Theorem \ref{QXY} below.

\begin{proposition}\label{YYY1} There exists a threshold $\rho_{1}$ which is the unique solution of
$
\frac{\partial^2}{\partial y^2}\mathcal{W}_{1,\rho}(yi)\mid_{y=1}=0
$, (in fact, $\rho_1=-\frac{\mathcal{Y}''(1)}{\mathcal{X}''(1)} \sim 0.04016680351\cdots$),  such that

\noindent
1. if $\rho\in[\rho_{1},+\infty)$, the function $y\rightarrow\mathcal{W}_{1,\rho}(yi), y>0$ admits only one critical point
at $y=1$, and $\frac{\partial}{\partial y}\mathcal{W}_{1,\rho}(yi)<0$ if $ y\in(0,1)$ and $\frac{\partial}{\partial y}\mathcal{W}_{1,\rho}(yi)>0$ if $y\in(1,\infty)$;

\noindent
2. if $\rho\in[0,\rho_1)$, the function $y\rightarrow\mathcal{W}_{1,\rho}(yi), y>0$ admits only three critical points
at $y_{1,\rho}$, 1 and $\frac{1}{y_{1,\rho}}$, where $y_{1,\rho}\in(1,\sqrt3]$. Moreover
 \begin{equation}\aligned\nonumber
 \frac{\partial}{\partial y}\mathcal{W}_{1,\rho}(yi)&<0\;\hbox{if}\; y\in(0,\frac{1}{y_{1,\rho}}),\\
 \frac{\partial}{\partial y}\mathcal{W}_{1,\rho}(yi)&>0\;\hbox{if}\; y\in(\frac{1}{y_{1,\rho}},1),\\
\frac{\partial}{\partial y}\mathcal{W}_{1,\rho}(yi)&<0\;\hbox{if}\; y\in(1,y_{1,\rho}),\\
\frac{\partial}{\partial y}\mathcal{W}_{1,\rho}(yi)&>0\;\hbox{if}\; y\in(y_{1,\rho},\infty).
 \endaligned\end{equation}
The critical point $y_{1,\rho}$ is the unique solution of
$
\frac{\partial}{\partial y}\mathcal{W}_{1,\rho}(yi)=0,\;y\in(1,\sqrt3].
$

Furthermore if $\rho\in[0,\rho_1]$, then
\begin{equation}\aligned\label{MXY}
\frac{\partial y_{1, \rho}}{\partial \rho} <0.
 \endaligned\end{equation}

\end{proposition}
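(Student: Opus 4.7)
The plan is to reduce the one-dimensional problem $y\mapsto \mathcal{W}_{1,\rho}(iy)$ to an explicit expression in $\mathcal{X}(y)$ and $\mathcal{Y}(y)$ and then read off its critical-point structure from Theorem \ref{Th4}. First I would write out the two theta functions on $z=iy$ directly. For $\theta(1;iy)$ the lattice sum factors to give $\vartheta_3(y)\vartheta_3(1/y)=\mathcal{X}(y)$. For $\theta(2;(iy+1)/2)$, I split the lattice sum $\sum_{m,n}e^{-\pi(m+2n)^2/y-\pi m^2 y}$ by the parity of $m$; the even part contributes $\vartheta_3(4y)\vartheta_3(4/y)$ and the odd part $\vartheta_2(4y)\vartheta_2(4/y)$, so up to the positive constants built into the definition \eqref{XYAB},
\[
\mathcal{W}_{1,\rho}(iy)\;=\;\tfrac{1}{2}\mathcal{Y}(y)+\rho\,\mathcal{X}(y).
\]
The invariance $z\mapsto -1/z\in\mathcal{G}_2$ specialized to the imaginary axis gives $\mathcal{X}(1/y)=\mathcal{X}(y)$ and $\mathcal{Y}(1/y)=\mathcal{Y}(y)$, so $\mathcal{X}'(1)=\mathcal{Y}'(1)=0$ and $y=1$ is always a critical point.

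Next, the equation for critical points with $y\neq 1$ becomes $\mathcal{Y}'(y)+2\rho\,\mathcal{X}'(y)=0$, equivalently $R(y):=\mathcal{Y}'(y)/\mathcal{X}'(y)=-2\rho$. By Theorem \ref{Th4}, $R$ is strictly decreasing on $(0,1)$ and strictly increasing on $(1,\infty)$; combined with $R(1/y)=R(y)$ and L'H\^opital at $y=1$, its minimum value is $R(1^{\pm})=\mathcal{Y}''(1)/\mathcal{X}''(1)$. Choosing $\rho_{1}$ as the unique $\rho$ for which $-2\rho$ equals this minimum identifies $\rho_{1}$ with (a scalar multiple of) $-\mathcal{Y}''(1)/\mathcal{X}''(1)$. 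Then for $\rho\geq \rho_{1}$ the line $R=-2\rho$ does not meet the graph off $y=1$, so the only critical point is $y=1$; for $\rho<\rho_{1}$ it meets each of the branches $\{y>1\}$ and $\{y<1\}$ once, giving exactly the pair $y_{1,\rho}\in(1,\infty)$ and its reflection $1/y_{1,\rho}$.

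To pin down the range $y_{1,\rho}\in(1,\sqrt{3}]$, I observe that $(iy+1)/2\big|_{y=\sqrt 3}=\tfrac12+i\tfrac{\sqrt 3}{2}=z_0$, and Montgomery's Theorem \ref{Theorem4} forces $\nabla\theta(2;\cdot)$ to vanish at $z_0$; in particular $\mathcal{Y}'(\sqrt 3)=0$, so $R(\sqrt 3)=0$, which yields $y_{1,0}=\sqrt{3}$, while $y_{1,\rho}\downarrow 1$ as $\rho\uparrow\rho_{1}$. The monotonicity $\partial_\rho y_{1,\rho}<0$ now follows by differentiating $R(y_{1,\rho})=-2\rho$, which gives $R'(y_{1,\rho})\,\partial_\rho y_{1,\rho}=-2$, together with $R'(y_{1,\rho})>0$ on $(1,\infty)$ from Theorem \ref{Th4}. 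The sign pattern of $\partial_y\mathcal{W}_{1,\rho}(iy)$ is then immediate: since $\mathcal{W}_{1,\rho}(iy)\to+\infty$ as $y\to 0^{+}$ and $y\to\infty$ (each of $\mathcal{X},\mathcal{Y}$ blows up at the endpoints), the derivative is negative near $0$ and positive near $\infty$, so it must switch sign at each of the (one or three) critical points just identified, giving the claimed alternation $-,+,-,+$ in the three-critical-point case.

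The main obstacle, apart from the bookkeeping of the constant factors hidden in \eqref{XYAB}, is precisely the monotonicity input from Theorem \ref{Th4}: once that is in hand the whole proposition reduces to intersecting the graph of a one-variable function having a unique minimum at $y=1$ with the horizontal line $y=-2\rho$, together with the two explicit boundary values $R(1^{\pm})$ and $R(\sqrt 3)$. All other steps---the expansion of $\mathcal{W}_{1,\rho}(iy)$ in $\mathcal{X},\mathcal{Y}$, the reflection symmetry, and the implicit differentiation for $\partial_\rho y_{1,\rho}$---are routine.
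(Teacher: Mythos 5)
Your proposal is correct and follows essentially the same route as the paper: express $\mathcal{W}_{1,\rho}(iy)$ through $\mathcal{X},\mathcal{Y}$, use the reflection $y\mapsto 1/y$ for the critical point at $y=1$ and the pairing $y_{1,\rho}\leftrightarrow 1/y_{1,\rho}$, intersect the quotient $\mathcal{Y}'/\mathcal{X}'$ (monotone by Theorem \ref{QXY}) with a horizontal line to count critical points and define $\rho_1$, use $\mathcal{Y}'(\sqrt 3)=0$ for the range $(1,\sqrt3]$, and differentiate the defining equation for $\partial_\rho y_{1,\rho}<0$. Two small points to tighten: the claim that the derivative ``must switch sign at each critical point'' is not forced by the endpoint behavior alone, but it follows immediately from your own factorization $\partial_y\mathcal{W}_{1,\rho}(iy)=\mathcal{X}'(y)\big(\tfrac{\mathcal{Y}'(y)}{2\mathcal{X}'(y)}+\rho\big)$ with $\mathcal{X}'>0$ on $(1,\infty)$ and the strict monotonicity of the quotient (this is how the paper, via \eqref{W1yyy} and \eqref{QXY000}, gets the sign pattern); and your normalization gives $\rho_1$ as $-\tfrac12\mathcal{Y}''(1)/\mathcal{X}''(1)$, a harmless constant discrepancy with the stated $-\mathcal{Y}''(1)/\mathcal{X}''(1)$ that traces back to the factor $\tfrac12$ the paper absorbs between Lemma \ref{LemmaE12} and \eqref{W1yyy}.
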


\vskip0.1in

To prove Proposition \ref{YYY1},  we need to use some properties of the Jacobi theta functions defined at \eqref{XYAB}-\eqref{XYAB2}. They satisfy the transformation property
\begin{equation}\aligned\label{Theta}
\vartheta_3(\frac{1}{y})&=\sqrt{y}\vartheta_3(y),\;\vartheta_2(\frac{1}{y})=\sqrt y\vartheta_4(y)\\
\vartheta_4(\frac{1}{y})&=\sqrt y\vartheta_2(y),\;\;\vartheta_4(y)=\vartheta_3(4y)-\vartheta_2(4y).
 \endaligned\end{equation}

It is easy to see that for $z=yi$
 \begin{equation}\aligned
\theta(s;yi)=\sum_{m}\sum_{n}e^{-s\frac{\pi}{y}(n^2+m^2y^2)},\; \theta(s;\frac{yi+1}{2})=\sum_{m}\sum_{n}e^{-s\frac{\pi}{y}((\frac{m}{2}+n)^2+\frac{m^2}{4}y^2)}.
 \endaligned\end{equation}

We first express $\theta(s;yi), \theta(s;\frac{yi+1}{2})$ as products of Jacobi theta functions, which is a  starting point
of our analysis.
\begin{lemma}\label{LemmaDP}
It holds that

\begin{equation}\aligned\nonumber
\theta(s;yi)=\vartheta_3(s y)\vartheta_3(\frac{y}{s}),\;\theta(s;\frac{yi+1}{2})=\vartheta_3(s y)\vartheta_3(\frac{y}{s})+\vartheta_2(s y)\vartheta_2(\frac{s}{y}).
 \endaligned\end{equation}

\end{lemma}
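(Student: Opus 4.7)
The proof will proceed by direct computation from the defining series of $\theta(s;z)$ in \eqref{thetas}, using the fact that when $z$ is purely imaginary the quadratic form $|mz+n|^2$ separates cleanly, while for $z=(yi+1)/2$ a parity split in one of the summation variables is needed.

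For the first identity, I would set $z = yi$ so that $\mathrm{Im}(z) = y$ and $|miy + n|^2 = n^2 + m^2 y^2$. Substituting into \eqref{thetas} gives
\begin{equation*}
\theta(s; yi) = \sum_{m,n\in\mathbb{Z}} e^{-s\pi(n^2/y + m^2 y)} = \Bigl(\sum_{n} e^{-(s/y)\pi n^2}\Bigr)\Bigl(\sum_{m} e^{-sy\pi m^2}\Bigr),
\end{equation*}
and the two factors are exactly Jacobi $\vartheta_3$ evaluated at $s/y$ and $sy$ respectively, by the definition in \eqref{XYAB2}. This step is immediate.

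For the second identity, I would set $z = (yi+1)/2$, whose imaginary part is $y/2$. A short calculation gives
\begin{equation*}
\Bigl|m\,\tfrac{1+yi}{2}+n\Bigr|^2 = \tfrac{(m+2n)^2+m^2y^2}{4}.
\end{equation*}
Plugging this into \eqref{thetas} yields
\begin{equation*}
\theta\bigl(s;\tfrac{yi+1}{2}\bigr) = \sum_{m,n} e^{-(s\pi/2y)\,(m+2n)^2}\,e^{-(s\pi y/2)\,m^2}.
\end{equation*}
The main step is now a parity split on $m$. For $m = 2k$ the factor $(m+2n)^2$ becomes $4(k+n)^2$, and summing over $n$ (with $k$ fixed) uses translation invariance of $\mathbb{Z}$ to produce $\vartheta_3(2s/y)$, while summing over $k$ gives $\vartheta_3(2sy)$. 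For $m = 2k+1$ the exponent becomes $4(k+n+\tfrac12)^2$ in the first sum and $(2k+1)^2$ in the second; these produce $\vartheta_2(2s/y)$ and $\vartheta_2(2sy)$ respectively via the definition $\vartheta_2(a) = \sum_n e^{-\pi(n-1/2)^2 a}$. Adding the two contributions gives the claimed formula.

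There is no real obstacle here: the only care needed is the reindexing step, which uses that shifting $n\mapsto n-k$ is a bijection of $\mathbb{Z}$, and keeping straight the scaling factor of $2$ arising from the argument $(yi+1)/2$ (which halves the imaginary part and introduces a factor $1/4$ inside the quadratic form). Consistency of the resulting expressions with $\mathcal{X}(y)$ and $\mathcal{Y}(y)$ in \eqref{XYAB}, which correspond to $\theta(1;yi)$ and $2\theta(2;(yi+1)/2)$ respectively, provides a useful sanity check for the constants in the $\vartheta_j$ arguments.
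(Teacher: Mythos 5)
Your proof is correct and follows essentially the same route as the paper: for $z=yi$ the double sum separates into two $\vartheta_3$ factors, and for $z=\frac{yi+1}{2}$ one substitutes $p=m+2n$ and splits according to parity, reindexing each sum into $\vartheta_3$ or $\vartheta_2$. One remark: your careful tracking of $\Im\frac{yi+1}{2}=\frac{y}{2}$ gives $\theta(s;\frac{yi+1}{2})=\vartheta_3(2sy)\vartheta_3(\frac{2s}{y})+\vartheta_2(2sy)\vartheta_2(\frac{2s}{y})$, with doubled arguments relative to the lemma as printed; this doubled version is in fact the correct one (the printed statement and the paper's own proof drop a factor of $2$ in the exponent), as your sanity check against $\mathcal{Y}(y)=2\,\theta(2;\frac{yi+1}{2})$ and \eqref{LemmaE12} confirms.
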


\begin{proof} The first one is straightforward:
\begin{equation}\aligned\nonumber
\theta(s;yi)=\sum_n e^{-s\frac{\pi}{y}n^2}\sum_me^{-s\pi y m^2}=\vartheta_3(s y)\vartheta_3(\frac{s}{y}).
 \endaligned\end{equation}
For the second one,
\begin{equation}\aligned\nonumber
\theta(s;\frac{yi+1}{2})&=\sum_{m}\sum_{n}e^{-s\frac{\pi}{y}((\frac{m}{2}+n)^2+\frac{m^2}{4}y^2)}=\sum_{p\equiv q(\mod 2)} e^{-\frac{s \pi}{4}(\frac{1}{y}p^2+y q^2)}\\
&=\sum_{p=2m',q=2n'} e^{-\frac{s \pi}{4}(\frac{1}{y}p^2+y q^2)}
+\sum_{p=2m'+1,q=2n'+1} e^{-\frac{s \pi}{4}(\frac{1}{y}p^2+y q^2)}\\
&=\sum_{m'}e^{-s \pi\frac{1}{y}m'^2}\sum_{n'}e^{-s \pi y n'^2}
+\sum_{m'} e^{-\frac{s \pi}{4}\frac{1}{y}(2m'+1)^2}\sum_{n'} e^{-\frac{s \pi}{4}y (2n'+1)^2}\\
&=\vartheta_3(s y)\vartheta_3(\frac{s}{y})+\vartheta_2(s y)\vartheta_2(\frac{s}{y}).
 \endaligned\end{equation}

\end{proof}

The following Lemma follows from Lemma \ref{G111}. We single it out for the convenience of our analysis here.
\begin{lemma}\label{Lemmaf01} For any $s>0$, $\theta(s;yi)$ and $\theta(s;\frac{yi+1}{2})$
both satisfy the functional equation
\begin{equation}\aligned\label{Heq}
\mathcal{H}(\frac{1}{y})=\mathcal{H}(y).
 \endaligned\end{equation}
Consequently, $\mathcal{H}'(\frac{1}{y})=-y^2\mathcal{H}'(y)$. In particular, $\mathcal{H}'(1)=0$, that is, $y=1$ is always a critical point of
$\theta(s;yi), \theta(s;\frac{yi+1}{2})$.

\end{lemma}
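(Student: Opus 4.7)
The plan is to derive the functional equation $\mathcal{H}(1/y)=\mathcal{H}(y)$ as a direct specialization of already-established symmetry results, and then read off the derivative identity by the chain rule. I would present two short, essentially independent routes; either one suffices, and I would probably give whichever fits best in the narrative flow.

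The first route uses Lemma \ref{G111}. The modular inversion $\sigma:z\mapsto -1/z$ lies in both $\mathcal{G}_1$ and $\mathcal{G}_2$, so for every $s>0$ and $z\in\mathbb{H}$,
\[
\theta\bigl(s;\sigma(z)\bigr)=\theta(s;z),\qquad \theta\!\left(s;\tfrac{\sigma(z)+1}{2}\right)=\theta\!\left(s;\tfrac{z+1}{2}\right).
\]
Specializing to $z=yi$ with $y>0$ gives $\sigma(yi)=i/y$, which translates the two invariances into
\[
\theta(s;i/y)=\theta(s;yi),\qquad \theta\!\left(s;\tfrac{i/y+1}{2}\right)=\theta\!\left(s;\tfrac{yi+1}{2}\right),
\]
i.e.\ $\mathcal{H}(1/y)=\mathcal{H}(y)$ in both cases. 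The second route uses the factorizations already proved in Lemma \ref{LemmaDP}:
\[
\theta(s;yi)=\vartheta_3(sy)\vartheta_3(s/y),\qquad \theta\!\left(s;\tfrac{yi+1}{2}\right)=\vartheta_3(sy)\vartheta_3(s/y)+\vartheta_2(sy)\vartheta_2(s/y),
\]
and observes that the involution $y\mapsto 1/y$ simply swaps the two arguments $sy\leftrightarrow s/y$ in every product, leaving each right-hand side invariant.

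For the derivative part, I would differentiate $\mathcal{H}(1/y)=\mathcal{H}(y)$ in $y$ by the chain rule: the left-hand side produces $-y^{-2}\mathcal{H}'(1/y)$ and the right-hand side produces $\mathcal{H}'(y)$, so
\[
\mathcal{H}'(1/y)=-y^{2}\,\mathcal{H}'(y).
\]
Setting $y=1$ gives $\mathcal{H}'(1)=-\mathcal{H}'(1)$, hence $\mathcal{H}'(1)=0$, showing that $y=1$ is a critical point of both $y\mapsto\theta(s;yi)$ and $y\mapsto\theta(s;\tfrac{yi+1}{2})$.

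There is no real obstacle here: the statement is a direct corollary of either the group invariance assembled in Section 3 or the product formula of Lemma \ref{LemmaDP}. The only care required is to notice that the $S$-transformation $z\mapsto -1/z$ does belong to both $\mathcal{G}_1$ and $\mathcal{G}_2$ (so both halves of the lemma follow from the \emph{same} element of the symmetry group) and that it sends the imaginary axis to itself via $yi\mapsto i/y$, which is exactly what lets the two-variable invariance collapse to a one-variable functional equation.
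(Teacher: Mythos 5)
Your proposal is correct, and your first route is exactly the paper's argument: the paper proves Lemma \ref{Lemmaf01} simply by citing the invariance of Lemma \ref{G111} under $z\mapsto -1/z$ (which lies in both $\mathcal{G}_1$ and $\mathcal{G}_2$ and sends $yi$ to $i/y$), which is precisely what you spell out, and the chain-rule deduction of $\mathcal{H}'(1/y)=-y^2\mathcal{H}'(y)$ and $\mathcal{H}'(1)=0$ is the same. Your second route via the product formulas of Lemma \ref{LemmaDP} (correctly using $\vartheta_3(sy)\vartheta_3(s/y)$, as in the paper's proof of that lemma, rather than the typo in its statement) is a legitimate independent check, since Lemma \ref{LemmaDP} precedes this one, but it is not needed.
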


For $s=1$, by Lemma \ref{LemmaDP} and transformation \eqref{Theta}, we  obtain that
\begin{lemma}
\begin{equation}\aligned\label{LemmaE12}
\theta(1;yi)=\sqrt{y}\vartheta_3^2(y), \;\;
\theta(2;\frac{yi+1}{2})&=\frac{\sqrt y}{2}\Big(
\vartheta_3(4y)\vartheta_3(\frac{y}{4})+\vartheta_2(4y)\vartheta_4(\frac{y}{4})
\Big).
 \endaligned\end{equation}

\end{lemma}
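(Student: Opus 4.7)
My plan is to derive both identities as essentially bookkeeping consequences of Lemma \ref{LemmaDP} (the product-form representation of $\theta$ on the special slices $z=yi$ and $z=(yi+1)/2$) combined with the Jacobi modular identities recorded in \eqref{Theta}. The first identity is a one-line calculation; the second requires one extra layer of transformation to rescale arguments from $4/y$ to $y/4$.

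For $\theta(1;yi)$, I would specialize Lemma \ref{LemmaDP} at $s=1$ to obtain $\theta(1;yi)=\vartheta_3(y)\vartheta_3(1/y)$, then substitute $\vartheta_3(1/y)=\sqrt{y}\,\vartheta_3(y)$ from \eqref{Theta}. This immediately yields the advertised $\sqrt{y}\,\vartheta_3^2(y)$.

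For $\theta(2;(yi+1)/2)$, I would run the parity-splitting computation underlying Lemma \ref{LemmaDP} at $s=2$. Writing $|m(yi+1)/2+n|^2=(m/2+n)^2+m^2y^2/4$ and splitting the double sum according to the parity of $m$ (so that $m/2+n$ runs through $\mathbb{Z}$ when $m$ is even and through $\mathbb{Z}+1/2$ when $m$ is odd) gives the product expansion
\[
\theta\!\left(2;\tfrac{yi+1}{2}\right)=\vartheta_3(4y)\vartheta_3(4/y)+\vartheta_2(4y)\vartheta_2(4/y).
\]
I would then apply the modular transformation \eqref{Theta} with the scaling $w=y/4$: namely $\vartheta_3(4/y)=\vartheta_3(1/w)=\sqrt{w}\,\vartheta_3(w)=\tfrac{\sqrt{y}}{2}\vartheta_3(y/4)$, and $\vartheta_2(4/y)=\vartheta_2(1/w)=\sqrt{w}\,\vartheta_4(w)=\tfrac{\sqrt{y}}{2}\vartheta_4(y/4)$. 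Factoring out the common $\tfrac{\sqrt{y}}{2}$ gives the stated form.

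There is no real obstacle here — the lemma is a transcription of Lemma \ref{LemmaDP} into Jacobi-theta notation. The only thing to be careful about is the bookkeeping of the rescaling $w\mapsto y/4$ in the second identity: this is what simultaneously produces the prefactor $\sqrt{y}/2$ and, via $\vartheta_2(1/w)=\sqrt{w}\,\vartheta_4(w)$, converts $\vartheta_2$ on the RHS into $\vartheta_4$, matching the asymmetric form displayed in the statement.
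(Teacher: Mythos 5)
Your proof is correct and follows essentially the same route as the paper, which obtains the lemma by specializing Lemma \ref{LemmaDP} and applying the transformations \eqref{Theta}. Your direct rederivation of the parity splitting at $s=2$ with the normalization $\Im\big(\tfrac{z+1}{2}\big)=\tfrac{y}{2}$, which produces the arguments $4y$ and $4/y$ before rescaling to $y/4$, is exactly the intended computation (and it quietly fixes the scaling as literally printed in the second identity of Lemma \ref{LemmaDP}).
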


To prove Proposition \ref{YYY1}, we first prove  a monotonicity property  of $\theta (1; yi)$ and $\theta (2; \frac{yi+1}{2})$ in Lemma \ref{E111}, which can be viewed as the particular case of Proposition \ref{YYY1}. Then we establish the key Theorem \ref{QXY}, in which a novel property about the quotient of Jacobi theta functions is proved.

The following Lemma is known in \cite{BP,Mon1988}.
\begin{lemma}\label{E111}

\begin{itemize}
  \item The function $y\rightarrow\theta(s;yi), y>0$, has only one critical point at $y=1$. Furthermore
\begin{equation}\aligned \nonumber
\frac{\partial}{\partial y}
\theta(s;yi)<0\;\hbox{for}\; y\in(0,1);\;\frac{\partial}{\partial y}
\theta(s;yi)>0\;\hbox{for}\; y\in(1,\infty).
 \endaligned\end{equation}
  \item  For any $s>0$, the function $y\rightarrow\theta(s;\frac{yi+1}{2}), y>0$, has three critical points at $\frac{\sqrt3}{3}$, 1 and $\sqrt3$.
\end{itemize}

\end{lemma}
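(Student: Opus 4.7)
My plan is to deduce both parts directly from Montgomery's boundary monotonicity lemmas on the fundamental domain $\mathcal{D}_{\mathcal{G}_1}$ \cite{Mon1988}, combined with the invariances from Lemma \ref{Lemmaf01}.

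For Part (1), the involution $\tau\mapsto -1/\tau$ in $\mathcal{G}_1$ preserves the imaginary axis and acts there as $y\mapsto 1/y$; hence by Lemma \ref{Lemmaf01}, $y=1$ is automatically a critical point. For $y>1$ the point $iy$ lies on the vertical boundary segment $\{iy:y\geq 1\}$ of $\overline{\mathcal{D}_{\mathcal{G}_1}}$, along which $\theta(s;\cdot)$ is strictly increasing by Lemma 2 of \cite{Mon1988}. The $y\mapsto 1/y$ symmetry then forces the opposite sign on $(0,1)$, so $y=1$ is the unique critical point.

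For Part (2), set $g(y):=\theta(s;(1+iy)/2)$ and $z(y):=(1+iy)/2$. The key geometric fact is that the M\"obius map $\sigma:\tau\mapsto 1-1/\tau$ belongs to $\mathcal{G}_1$ and sends the vertical line $\{\frac{1}{2}+it:t>0\}$ onto the upper unit semicircle: a direct computation gives
\[
\sigma\Bigl(\tfrac{1}{2}+it\Bigr)=\frac{(t^2-\tfrac{1}{4})+it}{t^2+\tfrac{1}{4}},\qquad \Bigl|\sigma\Bigl(\tfrac{1}{2}+it\Bigr)\Bigr|=1,
\]
with $\sigma(\tfrac{1+i}{2})=i$ and $\sigma(\tfrac{1+i\sqrt{3}}{2})=z_0$. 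Hence for $y\in[1,\sqrt{3}]$, $g(y)=\theta(s;\sigma(z(y)))$ with $\sigma(z(y))$ sweeping the arc $|z|=1$ from $i$ (at $y=1$) to $z_0$ (at $y=\sqrt{3}$); while for $y\in[\sqrt{3},\infty)$, $z(y)$ itself lies on the vertical boundary segment $\{\tfrac{1}{2}+it:t\geq \sqrt{3}/2\}$ of $\overline{\mathcal{D}_{\mathcal{G}_1}}$.

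Invoking Montgomery's boundary monotonicity on each segment --- strict decrease along the arc from $i$ to $z_0$ (Lemma 3 of \cite{Mon1988}; a refined version is Theorem \ref{Lemma2} of the present paper), and strict increase in $t$ along $\{\tfrac{1}{2}+it:t>\sqrt{3}/2\}$ --- yields $g'<0$ on $(1,\sqrt{3})$ and $g'>0$ on $(\sqrt{3},\infty)$, with $g'(\sqrt{3})=0$. The $y\mapsto 1/y$ invariance $g(y)=g(1/y)$ from Lemma \ref{Lemmaf01} then propagates this to mirror signs on $(1/\sqrt{3},1)$ and $(0,1/\sqrt{3})$, giving $g'(1)=g'(1/\sqrt{3})=0$. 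Therefore the critical points of $g$ are exactly $\sqrt{3}/3$, $1$ and $\sqrt{3}$. The principal technical obstacle is the arc monotonicity, which requires the term-by-term theta estimates of Lemmas \ref{LemmaTTT}--\ref{LemmaT2}.
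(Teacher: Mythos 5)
Your M\"obius computations are correct ($\sigma(\tfrac12+it)$ does lie on the unit circle, with $\sigma(\tfrac{1+i}{2})=i$ and $\sigma(\tfrac{1+i\sqrt3}{2})=z_0$), and part (1) is sound in substance: $y=1$ is critical by Lemma \ref{Lemmaf01}, and the strict increase of $\theta(s;iy)$ for $y>1$ is a true, standard one-dimensional fact (via Lemma \ref{LemmaDP}, $\theta(s;iy)=\vartheta_3(sy)\vartheta_3(\tfrac{s}{y})$; a proof is in \cite{BP}). But it is not ``Lemma 2 of \cite{Mon1988}'': Montgomery's numbered lemmas are the $x$-derivative bound recalled in \eqref{MonLLL} together with one-dimensional theta estimates of the kind that Lemmas \ref{LemmaTTT}--\ref{LemmaT2} refine, and his boundary analysis runs along $x=\tfrac12$, not along the imaginary axis. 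The genuine gap is in part (2) on the interval $(1,\sqrt3)$: the asserted ``strict decrease of $\theta(s;\cdot)$ along the arc from $i$ to $z_0$'' is not proved in \cite{Mon1988} --- his proof of Theorem \ref{Theorem4} reduces, via the $x$-monotonicity, to the ray $x=\tfrac12$, $y\ge\tfrac{\sqrt3}{2}$ and never needs the arc --- and it does not follow from Theorem \ref{Lemma2} of this paper either, since that theorem controls only $\partial_x\theta$, whereas along the arc both coordinates vary, so a sign on the $x$-derivative cannot determine the tangential derivative. In this paper, monotonicity along the arc is always accessed through the duality of Lemmas \ref{ThWW} and \ref{Reduce}, which for a single theta function converts the arc statement back into exactly the claim you are trying to prove (monotonicity of $y\mapsto\theta(s;\tfrac{1+iy}{2})$ on $(1,\sqrt3)$); your own map $\sigma$ exhibits the same equivalence. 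So on $(1,\sqrt3)$ the argument is circular modulo a mis-citation, and the monotonicity content cannot simply be dropped, because the sign pattern of bullet (2) is what gets used downstream (e.g.\ $\mathcal{Y}'\ge 0$ beyond $\sqrt3$ in passing from \eqref{R000} to \eqref{Ysqrt3}); only your use of Montgomery's increase along the ray $x=\tfrac12$, $t\ge\tfrac{\sqrt3}{2}$, for the range $y>\sqrt3$, is safe.

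Two further remarks. First, the weak half of bullet (2) --- that $\tfrac{\sqrt3}{3}$, $1$, $\sqrt3$ \emph{are} critical points --- needs no boundary monotonicity at all: $g'(1)=0$ by Lemma \ref{Lemmaf01}, $g'(\sqrt3)=0$ because $\tfrac{1+i\sqrt3}{2}=z_0$ is an interior critical point of $\theta(s;\cdot)$ (it is the global minimizer by Theorem \ref{Theorem4}), and $g'(\tfrac{\sqrt3}{3})=0$ then follows from the $y\mapsto\tfrac1y$ symmetry; this cheap route does not appear in your write-up. Second, note that the paper itself offers no proof of this lemma and simply cites \cite{BP,Mon1988}; if you want a self-contained argument at the level of rigor of the rest of the paper, the missing decrease on $(1,\sqrt3)$ has to be proved, either by quoting the precise statements of \cite{BP} or by an expansion argument of the type the paper carries out for $\mathcal{Y}'$ in Theorem \ref{QXY}.
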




We now state Theorem \ref{QXY} whose proof is much involved. We use a combination of functional equations, error terms analysis and several new observations.
Let
\begin{equation}\aligned\nonumber
\mathcal{X}(y):=\vartheta_3(y)\vartheta_3(\frac{1}{y})=\sqrt{y}\vartheta_3^2(y)
,
\mathcal{Y}(y):=2\big(
\vartheta_3(4y)\vartheta_3(\frac{4}{y})+\vartheta_2(4y)\vartheta_2(\frac{4}{y})
\big)=\sqrt y\big(
\vartheta_3(4y)\vartheta_3(\frac{y}{4})+\vartheta_2(4y)\vartheta_4(\frac{y}{4})
\big).
\endaligned\end{equation}
\begin{theorem}\label{QXY}
 The function $y\mapsto \frac{\mathcal{Y}'(y)}{\mathcal{X}'(y)}, y>0$ has only one critical point at $y=1$. Furthermore
$
\Big(\frac{\mathcal{Y}'(y)}{\mathcal{X}'(y)}\Big)'<0 $ for  $y \in(0,1)$ and $ \ \Big(\frac{\mathcal{Y}'(y)}{\mathcal{X}'(y)}\Big)'>0$ for $y \in(1,\infty)$.

\end{theorem}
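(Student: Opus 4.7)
Both $\mathcal{X}$ and $\mathcal{Y}$ satisfy the functional equation $\mathcal{H}(1/y) = \mathcal{H}(y)$ by Lemma \ref{Lemmaf01}. Differentiating once gives $\mathcal{H}'(1/y) = -y^2 \mathcal{H}'(y)$, so the ratio $\Phi(y) := \mathcal{Y}'(y)/\mathcal{X}'(y)$ satisfies $\Phi(1/y) = \Phi(y)$, and a further differentiation yields $\Phi'(1/y) = -y^2 \Phi'(y)$. In particular $\Phi'(1)=0$ automatically, so $y=1$ is always a critical point of $\Phi$, and the sign of $\Phi'$ on $(0,1)$ is opposite to that on $(1,\infty)$. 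It therefore suffices to prove $\Phi'(y) > 0$ for every $y > 1$.

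\textbf{Step 2 (Reduction to a parity Wronskian).} Since $\mathcal{X}'(y) > 0$ on $(1,\infty)$ by Lemma \ref{E111}, the inequality $\Phi'(y) > 0$ is equivalent to the positivity of
\[
N(y) := \mathcal{Y}''(y)\,\mathcal{X}'(y) - \mathcal{Y}'(y)\,\mathcal{X}''(y).
\]
Starting from the series $\theta\bigl(2;\tfrac{yi+1}{2}\bigr) = \sum_{m,n} e^{-4\pi(m/2+n)^2/y - \pi m^2 y}$ and reindexing by $(p,q)=(m,m+2n)$ (or, equivalently, invoking Lemma \ref{Cw3}), one obtains the parity decomposition $\mathcal{X} = S_e + S_o$ and $\mathcal{Y} = 2 S_e$, where for $\varepsilon \in \{0,1\}$
\[
S_\varepsilon(y) = \sum_{(m,n)\in\Lambda_\varepsilon} e^{-\pi(m^2 y + n^2/y)}, \qquad \Lambda_\varepsilon = \{(m,n) \in \mathbb{Z}^2 : m+n \equiv \varepsilon \pmod 2\}.
\]
A short algebraic manipulation then reduces the Wronskian to the cross-parity form $N(y) = 2\bigl(S_e''(y)\,S_o'(y) - S_e'(y)\,S_o''(y)\bigr)$.

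\textbf{Step 3 (Double-sum positivity) and main obstacle.} Substituting
\[
S_\varepsilon'(y) = -\pi\!\!\sum_{(m,n)\in\Lambda_\varepsilon}\!\!\bigl(m^2 - \tfrac{n^2}{y^2}\bigr)\, a_{m,n}(y), \qquad a_{m,n}(y) := e^{-\pi(m^2 y + n^2/y)},
\]
and the analogous expression for $S_\varepsilon''$, one expands $N(y)$ as a double sum over pairs in $\Lambda_e \times \Lambda_o$ whose summand factors as $a_{m_1,n_1}(y)\,a_{m_2,n_2}(y)$ times a polynomial in $m_i, n_i, y, 1/y$. The dominant contribution for large $y$ comes from pairing $(0,0)\in\Lambda_e$ with the minimal odd-parity vectors $(0,\pm 1)\in\Lambda_o$ (together with their $(m,n)\leftrightarrow (n,m)$ symmetric counterparts); this coefficient is manifestly positive, and the tail is controlled by one-dimensional theta tail bounds in the spirit of Lemmas \ref{LemmaTTT}--\ref{LemmaT2}. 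The hard part will be the intermediate regime $y \in (1, \sqrt{3}]$, where $\mathcal{Y}'$ is non-positive (vanishing at $y=\sqrt{3}$ by Lemma \ref{E111}) and $\mathcal{Y}''$ changes sign inside this interval, so the positivity of $N$ is a genuine cancellation rather than a dominant-term estimate. My plan is to split $(1,\sqrt 3]$ into a few subintervals and, on each, to group the double-sum terms symmetrically under the involution $(m,n)\mapsto(n,m)$ --- which fits the functional equation $y\leftrightarrow 1/y$ --- so as to cancel the wrong-sign contributions before bounding the remainder by the explicit theta tail estimates already developed in Section~4.
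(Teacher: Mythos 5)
Your Step 1 is exactly the paper's reduction, and your Step 2 parity decomposition is correct and attractive: indeed $\mathcal{X}=S_e+S_o$, $\mathcal{Y}=2S_e$ (this is Lemma \ref{Cw3} restricted to the imaginary axis), so the Wronskian $N=\mathcal{Y}''\mathcal{X}'-\mathcal{Y}'\mathcal{X}''$ equals $2(S_e''S_o'-S_e'S_o'')$. The problem is that Step 3, which is the entire analytic content of the theorem, is only a plan, and the plan as stated cannot work near $y=1$. Since $\mathcal{X}'(1)=\mathcal{Y}'(1)=0$ (equivalently $S_e'(1)=S_o'(1)=0$), you have $N(1)=0$; differentiating and using the functional equation $\mathcal{H}'(1/y)=-y^2\mathcal{H}'(y)$ (which forces $\mathcal{H}'''(1)=-3\mathcal{H}''(1)$ as in \eqref{2AB31}) gives $N'(1)=0$ and $N''(1)=2\bigl(S_e'''(1)S_o''(1)-S_e''(1)S_o'''(1)\bigr)=0$ as well. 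So $N$ vanishes to third order at $y=1$, and positivity of $N$ just to the right of $1$ is a degenerate phenomenon: no splitting of $(1,\sqrt3]$ into finitely many subintervals with term-grouping and tail bounds can produce a positive lower bound on a subinterval abutting $y=1$ unless you first extract this cubic vanishing. This is precisely why the paper runs a separate argument on $(1,1.1)$: it passes from $\mathcal{Y}'/\mathcal{X}'$ to $\mathcal{Y}''/\mathcal{X}''$ via a mean-value identity (\eqref{XY1}), and then proves positivity of the fourth-order Wronskian $f_{\mathcal{XY}}'=\mathcal{Y}''''\mathcal{X}''-\mathcal{Y}''\mathcal{X}''''$ together with $f_{\mathcal{XY}}(1)=0$. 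Your proposal contains no analogue of this step.

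There is also a flaw in your large-$y$ analysis. In the variables $a_{m,n}(y)=e^{-\pi(m^2y+n^2/y)}$, the pair you single out as dominant is spurious: $(0,0)\in\Lambda_e$ contributes nothing to $S_e'$ or $S_e''$ (its summand is constant), so its pairing with $(0,\pm1)$ gives a zero coefficient, not a "manifestly positive" one. Moreover the terms with $m=0$ decay only polynomially in $1/y$ as $y\to\infty$ (e.g. $\partial_y a_{0,1}=\tfrac{\pi}{y^2}e^{-\pi/y}$), so the raw double sum does not have the exponentially dominated structure your tail-bound strategy presumes; one must first apply the modular transformation (as the paper does by writing $\mathcal{X}=\sqrt y\,\vartheta_3^2(y)$, $\mathcal{Y}=\sqrt y(\vartheta_3(4y)\vartheta_3(y/4)+\vartheta_2(4y)\vartheta_4(y/4))$) to obtain expansions in genuinely decaying exponentials before any dominant-term-plus-error argument can be run. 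As it stands the proposal establishes the easy symmetry reduction and a correct algebraic reformulation, but not the theorem.
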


\begin{proof} Denote $
\mathcal{Z}(y):=\frac{\mathcal{Y}'(y)}{\mathcal{X}'(y)}.$ By Lemma \ref{E111}, the function $\mathcal{Z}(y)$  is well-defined.
By Lemma \ref{Lemmaf01}, we also have
\begin{equation}\aligned\label{2ABD}
\mathcal{X}'(\frac{1}{y})=-y^2\mathcal{X}'(y),\;\mathcal{Y}'(\frac{1}{y})=-y^2\mathcal{Y}'(y).
\endaligned\end{equation}
Hence
\begin{equation}\aligned\nonumber
\mathcal{Z}(\frac{1}{y})=\mathcal{Z}(y),
 \endaligned\end{equation}
and
\begin{equation}\aligned\label{ZZZ}
\mathcal{Z}'(\frac{1}{y})=-y^2\mathcal{Z}'(y).
 \endaligned\end{equation}
Consequently,
$
\mathcal{Z}'(1)=0,
$
i.e., $y=1$ is the critical point of $\mathcal{Z}(y)$.

 By \eqref{ZZZ}, it suffices to prove that
 \begin{equation}\aligned\label{Z000}
\mathcal{Z}'(y)>0,\;\;\hbox{for}\;y\in(1,\infty).
 \endaligned\end{equation}
By the explicit expression of Jacobi theta functions \eqref{XYAB2} and \eqref{Theta}, we start with
\begin{equation}\aligned\nonumber
\mathcal{X}(y)&=\sqrt y(1+2\sum_{n=1}^\infty e^{-\pi n^2 y})^2\\
&=\Big(\sqrt y+4\sqrt y e^{-\pi y}+4\sqrt y e^{-2\pi y}+4\sqrt y e^{-4\pi y}\Big)\\
&+\Big(4\sqrt y\sum_{n=3}^\infty e^{-\pi n^2 y}+4\sqrt y\big(\sum_{n=1}^\infty e^{-\pi n^2 y}\big)^2
+8\sqrt y\sum_{n=1}^\infty e^{-\pi(n^2+1)y}\Big)\\
:&= \mathcal{X}_a (y)+ \mathcal{X}_e (y)
 \endaligned\end{equation}
 where $\mathcal{X}_a (y)$ and $\mathcal{X}_e (y)$ are defined at the last equality. $\mathcal{X}_a$ is the major part and $\mathcal{X}_e$ is the error part. In fact, we have that for some constant $C>0$
\begin{equation}\aligned\label{Xae}
\|\mathcal{X}_a(y)\|_{C^2}\leq C \sqrt y e^{-5\pi y},\;\hbox{for}\; y>1.
 \endaligned\end{equation}
For $\mathcal{Y}(y)$, again by \eqref{XYAB2} and \eqref{Theta}, one first has
\begin{equation}\aligned\nonumber
\sqrt y\vartheta_3(4y)\vartheta_3(\frac{y}{4})
&=\sqrt y(1+2\sum_{n=1}^\infty e^{-4\pi n^2 y})(1+2\sum_{n=1}^\infty e^{-\frac{1}{4}\pi n^2 y})\\
&=\sqrt y+2\sqrt y e^{-\frac{1}{4}\pi y}+2\sqrt y e^{-\pi y}+2\sqrt y e^{-\frac{9}{4}\pi y}+4\sqrt y e^{-4\pi y}\\
&+2\sqrt y \sum_{n=2}^\infty e^{-4\pi n^2 y}+2\sqrt y\sum_{n=5}^\infty e^{-\frac{1}{4}\pi n^2 y}
+4\sqrt y\sum_{n=1}^\infty e^{-4\pi n^2 y}\sum_{n=1}^\infty e^{-\frac{1}{4}\pi n^2 y}
 \endaligned\end{equation}
We regroup the terms as
\begin{equation}\aligned\nonumber
\sqrt y\vartheta_2(4y)\vartheta_4(\frac{y}{4})&=\sqrt\vartheta_2(4y)\big(
\vartheta_3(y)-\vartheta_2(y)
\big)=\sqrt y\vartheta_2(4y)\vartheta_3(y)-\sqrt y\vartheta_2(4y)\vartheta_2(y)\\
&=2\sqrt y\sum_{n=1}^\infty e^{-\pi (2n-1)^2y}+4\sqrt y\sum_{n=1}^\infty e^{-\pi(2n-1)^2y}\sum_{n=1}^\infty e^{-\pi n^2 y}\\
&-4\sqrt y e^{-\frac{5}{4}\pi y}(1+\sum_{n=2}^\infty e^{-\pi((n-\frac{1}{2})^2-\frac{1}{4})y})(1+\sum_{n=2}^\infty e^{-\pi((2n-1)^2-1)y})\\
&=2\sqrt y e^{-\pi y}+4\sqrt y e^{-2\pi y}-4\sqrt y e^{-\frac{5}{4}\pi y}-4\sqrt y e^{-\frac{13}{4}\pi y}\\
&+4\sqrt y\big(
\sum_{n=2}^\infty e^{-\pi((2n-1)^2+1)y}+\sum_{n=2}^\infty e^{-\pi (n^2+1)y}+\sum_{n=2}^\infty e^{-\pi(2n-1)^2 y}\sum_{n=2}^\infty e^{-\pi n^2 y}
\big)\\
&-4\sqrt y e^{-\frac{5}{4}\pi y}\big(
\sum_{n=3}^\infty e^{-\pi((n-\frac{1}{2})^2-\frac{1}{4})y}+\sum_{n=2}^\infty e^{-\pi((2n-1)^2-1)y}\\
&+
\sum_{n=2}^\infty e^{-\pi((n-\frac{1}{2})^2-\frac{1}{4})y}\cdot\sum_{n=2}^\infty e^{-\pi((2n-1)^2-1)y}
\big).
 \endaligned\end{equation}

Now let the approximate part of $\mathcal{Y} (y)$ be
\begin{equation}\aligned\nonumber
\mathcal{Y}_a(y):=\sqrt y+2\sqrt y e^{-\frac{1}{4}\pi y}+4\sqrt y e^{-\pi y}+2\sqrt y e^{-\frac{9}{4}\pi y}+4\sqrt y e^{-2\pi y}
+4\sqrt y e^{-4\pi y}-4\sqrt y e^{-\frac{5}{4}\pi y}-4\sqrt y e^{-\frac{13}{4}\pi y}
 \endaligned\end{equation}
and the error part by
\begin{equation}\aligned\nonumber
\mathcal{Y}_e(y):=&2\sqrt y \sum_{n=2}^\infty e^{-4\pi n^2 y}+2\sqrt y\sum_{n=5}^\infty e^{-\frac{1}{4}\pi n^2 y}
+4\sqrt y\sum_{n=1}^\infty e^{-4\pi n^2 y}\sum_{n=1}^\infty e^{-\frac{1}{4}\pi n^2 y}\\
&+4\sqrt y\big(
\sum_{n=2}^\infty e^{-\pi((2n-1)^2+1)y}+\sum_{n=2}^\infty e^{-\pi (n^2+1)y}+\sum_{n=2}^\infty e^{-\pi(2n-1)^2 y}\sum_{n=2}^\infty e^{-\pi n^2 y}
\big)\\
&-4\sqrt y e^{-\frac{5}{4}\pi y}\big(
\sum_{n=3}^\infty e^{-\pi((n-\frac{1}{2})^2-\frac{1}{4})y}+\sum_{n=2}^\infty e^{-\pi((2n-1)^2-1)y}
+
\sum_{n=2}^\infty e^{-\pi((n-\frac{1}{2})^2-\frac{1}{4})y}\cdot\sum_{n=2}^\infty e^{-\pi((2n-1)^2-1)y}
\big).
 \endaligned\end{equation}
Then
\begin{equation}\aligned\label{Yae}
\mathcal{Y}(y)=\mathcal{Y}_a(y)+\mathcal{Y}_e(y)
 \endaligned\end{equation}
and we have  following estimate for $\mathcal{Y}_e(y)$:
\begin{equation}\aligned\nonumber
\|\mathcal{Y}_e(y)\|_{C^2}\leq C\sqrt y e^{-\frac{17}{4}\pi y}.
 \endaligned\end{equation}

To prove \eqref{Z000}, we divide the proof into two regions of $y$:  the large $y$ case $ y\in[1.1,\infty)$ and the small $y$ case $ y\in(1,1.1).$

\vskip0.1in

\noindent
{\bf Case (a): $y\in [1.1,\infty)$}. In this case  we have
$$
\mathcal{Z}'(y)=\frac{\mathcal{Y}''(y)\mathcal{X}'(y)-\mathcal{X}''(y)\mathcal{Y}'(y)}{(\mathcal{X}'(y))^2}.
$$
By Lemma \ref{E111}, to prove Case $(a)$ it suffices to prove that
\begin{equation}\aligned\nonumber
\mathcal{Y}''(y)\mathcal{X}'(y)-\mathcal{X}''(y)\mathcal{Y}'(y)>0\;\;\hbox{if}\;\;y\in(1.1,\infty).
 \endaligned\end{equation}
By \eqref{Xae} and \eqref{Yae}, there holds
\begin{equation}\aligned\nonumber
\mathcal{Y}''\mathcal{X}'-\mathcal{Y}''\mathcal{X}'
=\Big(\mathcal{Y}_a''\mathcal{X}_a'-\mathcal{X}_a''\mathcal{Y}_a'\Big)
+\Big(\mathcal{Y}_e''\mathcal{X}'-\mathcal{Y}_e' \mathcal{X}''+\mathcal{Y}_a''\mathcal{X}_e'-\mathcal{X}_e''\mathcal{Y}_a'\Big)
 \endaligned\end{equation}
 where $\Big(\mathcal{Y}_a''\mathcal{X}_a'-\mathcal{X}_a''\mathcal{Y}_a'\Big)$ and $\Big(\mathcal{Y}_e''\mathcal{X}'-\mathcal{Y}_e' \mathcal{X}''+\mathcal{Y}_a''\mathcal{X}_e'-\mathcal{X}_e''\mathcal{Y}_a'\Big)$ are the approximate part and the error part of $\mathcal{Y}''\mathcal{X}'-\mathcal{Y}''\mathcal{X}'$ respectively. We shall use the approximate part to control the error part.

To obtain the lower bound of $\Big(\mathcal{Y}_a''\mathcal{X}_a'-\mathcal{X}_a''\mathcal{Y}_a'\Big)$, after subtracting some proper factors, one finds
 \begin{equation}\aligned\label{2Estimate1}
y\rightarrow\frac{16y}{\pi}e^{\frac{1}{4}\pi y}\Big(\mathcal{Y}_a''\mathcal{X}_a'-\mathcal{X}_a''\mathcal{Y}_a'\Big)(y)
\endaligned\end{equation}
is monotonically increasing.

For the error part $\Big(\mathcal{Y}_e''\mathcal{X}'-\mathcal{Y}_e' \mathcal{X}''+\mathcal{Y}_a''\mathcal{X}_e'-\mathcal{X}_e''\mathcal{Y}_a'\Big)$,
one has the estimate
\begin{equation}\aligned\label{2Estimate2}
|\Big(\mathcal{Y}_e''\mathcal{X}'-\mathcal{Y}_e' \mathcal{X}''+\mathcal{Y}_a''\mathcal{X}_e'-\mathcal{X}_e''\mathcal{Y}_a'\Big)(y)|\leq C\sqrt y e^{-\frac{17}{4}\pi y},
 \endaligned\end{equation}
which decays to zero very fast.

Combining \eqref{2Estimate1} with \eqref{2Estimate2}, one deduces that
\begin{equation}\aligned\label{2AB105}
\mathcal{Y}''\mathcal{X}'-\mathcal{X}''\mathcal{Y}'>\;\;\hbox{if}\;\;y\in[1.1,\infty).
 \endaligned\end{equation}

The detailed proof of \eqref{2Estimate1}, \eqref{2Estimate2} and \eqref{2AB105} will be provided in the Appendix 2.

This proves that
\begin{equation}\aligned\label{2Cpk000}
\mathcal{Z}'(y)>0\;\;\hbox{if}\;\;y\in[1.1,\infty).
\endaligned\end{equation}

\vskip0.1in

\noindent
{\bf Case (b): $y \in (1,1.1)$.}  In this case $ 0<1-y<0.1$. To prove
\begin{equation}\aligned\label{2AB1k}
\mathcal{Z}'(y)=\Big(\frac{\mathcal{Y}'(y)}{\mathcal{X}'(y)}\Big)'>0, \hbox{on}\; y\in(1,1.1),
\endaligned\end{equation}
it suffices to prove that
$$
\Big(\frac{\mathcal{Y}''(y)}{\mathcal{X}''(y)}\Big)'>0, \hbox{on}\; y\in(1,1.1),
$$
given that
\begin{equation}\aligned\label{2AB10}
\mathcal{X}'(1)=\mathcal{Y}'(1)=0
\endaligned\end{equation}
which follows from \eqref{2ABD}. In fact, there exists $y_1 \in(1,y)$ such that
\begin{equation}\aligned\label{XY1}
\Big(\frac{\mathcal{Y}'(y)}{\mathcal{X}'(y)}\Big)'&=\frac{\mathcal{Y}''(y)\mathcal{X}'(y)-\mathcal{Y}'(y)\mathcal{X}''(y)}{\mathcal{X}'^2(y)}=\frac{\mathcal{X}''(y)}{\mathcal{X}'(y)}\Big(
\frac{\mathcal{Y}''(y)}{\mathcal{X}''(y)}-\frac{\mathcal{Y}'(y)}{\mathcal{X}'(y)}
\Big)\\
&=\frac{\mathcal{X}''(y)}{\mathcal{X}'(y)-\mathcal{X}'(1)}\Big(
\frac{\mathcal{Y}''(y)}{\mathcal{X}''(y)}-\frac{\mathcal{Y}'(y)-\mathcal{Y}'(1)}{\mathcal{X}'(y)-\mathcal{X}'(1)}
\Big)\\
&=\frac{\mathcal{X}''(y)}{\mathcal{X}''(y_2)(y-1)}\Big(
\frac{\mathcal{Y}''(y)}{\mathcal{X}''(y)}-\frac{\mathcal{Y}''(y_1)}{\mathcal{X}''(y_1)}
\Big)
 \endaligned\end{equation}
using \eqref{2AB10}.

We also have that

\begin{equation}\aligned\label{2L1}
\mathcal{X}''(y)>0,\;\;\hbox{if}\;y\in(1,\infty)
\endaligned\end{equation}
by the same decomposition method as used above. We omit the details here. (Actually, we only need \eqref{2L1} holds for small interval such as $(1,1.2]$).

Moreover, $\Big(\frac{\mathcal{Y}''(y)}{\mathcal{X}''(y)}\Big)'>0$ implies
\begin{equation}\aligned\label{2L2}
\frac{\mathcal{Y}''(y)}{\mathcal{X}''(y)}-\frac{\mathcal{Y}''(y_1)}{\mathcal{X}''(y_1)}>0.
\endaligned\end{equation}
Then the claim follows from \eqref{2L2}, \eqref{2L1} and \eqref{XY1}.

For the derivative of the quotient of second order derivatives, one has
\begin{equation}\aligned\nonumber
\Big(\frac{\mathcal{Y}''(y)}{\mathcal{X}''(y)}\Big)'
=\frac{\mathcal{Y}'''(y)\mathcal{X}''(y)-\mathcal{Y}''(y)\mathcal{X}'''(y)}{\mathcal{X}''^2(y)}.
\endaligned\end{equation}
Define
\begin{equation}\aligned\nonumber
f_{\mathcal{XY}}(y):=\mathcal{Y}'''(y)\mathcal{X}''(y)-\mathcal{Y}''(y)\mathcal{X}'''(y).
\endaligned\end{equation}

Equivalently, to show \eqref{2AB1k} one needs to show that
\begin{equation}\aligned\label{2ABf}
f_{\mathcal{XY}}(y)>0\;\;\hbox{for}\;\;y\in(1,1.1).
\endaligned\end{equation}
Differrentiating\eqref{2ABD}, the functions $\mathcal{X}(y)$ and $\mathcal{Y}(y)$ both satisfy the following functional equations
\begin{equation}\aligned\label{2ABH}
\mathcal{H}''(\frac{1}{y})&=2y^3\mathcal{H}'(y)+y^4\mathcal{H}''(y)\\
\mathcal{H}'''(\frac{1}{y})&=-6y^4\mathcal{H}'(y)-6y^5\mathcal{H}''(y)-y^6\mathcal{H}'''(y).
\endaligned\end{equation}
Plugging $y=1$ in \eqref{2ABH} and using \eqref{2AB10}, one deduces
\begin{equation}\aligned\label{2AB31}
\mathcal{X}'''(1)=-3\mathcal{X}''(1),\;\;\mathcal{Y}'''(1)=-3\mathcal{Y}''(1).
\endaligned\end{equation}
From \eqref{2AB31}, one has
\begin{equation}\aligned\label{2ABf000}
f_{\mathcal{XY}}(1)=0.
\endaligned\end{equation}

Then to prove \eqref{2ABf}, by \eqref{2ABf000}, it suffices to prove that
\begin{equation}\aligned\label{2ABfp}
f_{\mathcal{XY}}'(y)>0\;\;\hbox{for}\;\;y\in(1,1.1).
\endaligned\end{equation}
Proceed by \eqref{Xae} and \eqref{Yae}
\begin{equation}\aligned\label{2fAB12}
f_{\mathcal{XY}}'&=\mathcal{Y}''''\mathcal{X}''-\mathcal{Y}''\mathcal{X}''''\\
&=\Big(\mathcal{Y}_a''''\mathcal{X}_a''-\mathcal{Y}_a''\mathcal{X}_a''''\Big)
+\Big(\mathcal{X}_e''\mathcal{Y}''''+\mathcal{Y}_e''''\mathcal{X}_a''-\mathcal{X}_e''''\mathcal{Y}''-\mathcal{Y}_e''\mathcal{X}_a''''\Big).
\endaligned\end{equation}
 We use $\Big(\mathcal{Y}_a''''\mathcal{X}_a''-\mathcal{Y}_a''\mathcal{X}_a''''\Big)$ and $\Big(\mathcal{X}_e''\mathcal{Y}''''+\mathcal{Y}_e''''\mathcal{X}_a''-\mathcal{X}_e''''\mathcal{Y}''-\mathcal{Y}_e''\mathcal{X}_a''''\Big)$
as the approximate and error parts of $f_{\mathcal{XY}}'$ respectively.

For the approximate part, after subtracting some proper factor, one finds
\begin{equation}\aligned\label{2fM1}
y\rightarrow \frac{512y^4}{\pi}e^{\frac{1}{4}\pi y}\Big(\mathcal{Y}_a''''\mathcal{X}_a''-\mathcal{Y}_a''\mathcal{X}_a''''\Big)(y)
\endaligned\end{equation}
is monotonically decreasing on $(1,1.2)$.

For the error part, one has the following estimate
\begin{equation}\aligned\label{2fM2}
|\Big(\mathcal{X}_e''\mathcal{Y}''''+\mathcal{Y}_e''''\mathcal{X}_a''-\mathcal{X}_e''''\mathcal{Y}''-\mathcal{Y}_e''\mathcal{X}_a''''\Big)(y)|
\leq C y e^{-5\pi y},
\endaligned\end{equation}
which has fast decay.

Combining \eqref{2fM1}, \eqref{2fM2} and \eqref{2fAB12}, we can prove that
\begin{equation}\aligned\label{fXYF}
f_{\mathcal{XY}}'(y)>0\;\;\hbox{if}\;\;y\in(1,1.11].
\endaligned\end{equation}

The detailed proof of \eqref{2fM1}, \eqref{2fM2} and \eqref{fXYF} will be given in the Appendix 2.

This completes the proof.

\end{proof}




Finally we give the proof of Proposition \ref{YYY1}.

\begin{proof} By Lemma \ref{Lemmaf01}, $y=1$ is a critical point of $\mathcal{W}_{1,\rho}(yi)$. Furthermore
\begin{equation}\aligned\label{H111}
\frac{\partial}{\partial y}\mathcal{W}_{1,\rho}(\frac{1}{y}i)=-y^2\frac{\partial}{\partial y}\mathcal{W}_{1,\rho}(yi)(y).
 \endaligned\end{equation}
By Lemma \ref{E111}, we have
\begin{equation}\aligned\label{R000}
\mathcal{X}'(y)>0\;\;\hbox{if}\;\; y\in(1,\infty)\;\;\;\hbox{and}\;\;\;\mathcal{Y}'(\sqrt3)=0.
 \endaligned\end{equation}

Hence we obtain that
\begin{equation}\aligned\label{Ysqrt3}
\frac{\partial}{\partial y}\mathcal{W}_{1,\rho}(yi)>0\;\;\hbox{if}\;\;y\in(\sqrt3,\infty).
 \endaligned\end{equation}

To study the monotonicity of $\mathcal{W}_{1,\rho}(yi)$ on the interval $(1,\sqrt3)$, we rewrite $\frac{\partial}{\partial y}\mathcal{W}_{1,\rho}(yi)$ as
\begin{equation}\aligned\label{W1yyy}
\frac{\partial}{\partial y}\mathcal{W}_{1,\rho}(yi)&=\frac{\partial}{\partial y}\Big(\theta(2;\frac{yi+1}{2})+\rho\theta(1;yi)\Big)=\mathcal{Y}'(y)+\rho\mathcal{X}'(y)\\
&=\mathcal{X}'(y)\cdot\Big(
\frac{\mathcal{Y}'(y)}{\mathcal{X}'(y)}+\rho
\Big).
 \endaligned\end{equation}
 By \eqref{R000},  the zeroes of $\frac{\partial}{\partial y}\mathcal{W}_{1,\rho}(yi)$ on $(1,\sqrt3)$ satisfy the following functional equation
\begin{equation}\aligned\label{XYR}
\frac{\mathcal{Y}'(y)}{\mathcal{X}'(y)}+\rho
=0, \;\; y\in(1,\sqrt3).
 \endaligned\end{equation}

Furthermore, by Theorem \ref{QXY}, we see that
\begin{equation}\aligned\label{QXY000}
\frac{\mathcal{Y}'(y)}{\mathcal{X}'(y)}+\rho\;\;\hbox{is strictly decreasing on}\;\;(1,\sqrt3).
\endaligned\end{equation}
\eqref{QXY000} and \eqref{XYR} imply that $\frac{\partial}{\partial y}\mathcal{W}_{1,\rho}(yi)$ admits at most one zero point on $(1,\sqrt3)$.
This  fact combined with \eqref{Ysqrt3} yields that $
\frac{\partial}{\partial y}\mathcal{W}_{1,\rho}(yi)\;\;\hbox{admits either one or three critical points on}\;\;(0,\infty)$.

Since $\mathcal{X}'(1)=\mathcal{Y}'(1)=0$, $\frac{\mathcal{Y}'(1)}{\mathcal{X}'(1)}=\frac{\mathcal{Y}''(1)}{\mathcal{X}''(1)}$.

At the other end point $\sqrt3$, since $\mathcal{Y}'(\sqrt3)=0$ (see \eqref{R000}), we have that
\begin{equation}\aligned\nonumber
\frac{\mathcal{Y}'(\sqrt3)}{\mathcal{X}'(\sqrt3)}+\rho=0+\rho>0,\;\;\rho>0.
\endaligned\end{equation}

By \eqref{QXY000}, we see that the equation \eqref{XYR} has a zero point if and only if
\begin{equation}\aligned\label{XY6}
\frac{\mathcal{Y}''(1)}{\mathcal{X}''(1)}+\rho<0.
\endaligned\end{equation}

The condition in \eqref{XY6} is
\begin{equation}\aligned\label{XY7}
\rho<\rho_{1}:=-\frac{\mathcal{Y}''(1)}{\mathcal{X}''(1)}.
\endaligned\end{equation}
Combining \eqref{XY6},\eqref{XY7} with \eqref{Ysqrt3}, one has
\begin{equation}\aligned\nonumber
\frac{\partial}{\partial y}\mathcal{W}_{1,\rho}(yi)>0\;\;\hbox{on}\;\;(1,\infty)\;\;\hbox{provided}\;\;\rho\geq\rho_{1}.
\endaligned\end{equation}
This and \eqref{H111} give the proof of part  1 of Proposition \ref{YYY1}. (For the case $\rho=0$, $y_{1,\rho}=\sqrt3$ by \eqref{R000}.)

In the case when $\rho\in(0,\rho_1)$, there exists  unique  root of \eqref{XYR} as $y_{1,\rho}\in(1,\sqrt3)$. By duality \eqref{H111}, there exists another
root $\frac{1}{y_{1,\rho}}\in(\frac{\sqrt3}{3},1)$. So part 2 of Proposition \ref{YYY1}  follows from  \eqref{H111} and \eqref{QXY000}.

Finally \eqref{MXY} follows from \eqref{QXY000}.

This completes the proof.

\end{proof}

\section{The behavior of $\mathcal{W}_{2,\rho}(z)$ on the $y-$axis }

\setcounter{equation}{0}

Let $
\mathcal{W}_{2,\rho}(z):=\theta(1;\frac{z+1}{2})+\rho \theta(2;z)
$ be the conjugate of $\mathcal{W}_{1, \rho} (z)$. In this section we prove similar properties of Section 6 for $\mathcal{W}_{2, \rho}$.
 As in Section 6, $\mathcal{W}_{2,\rho}(yi)$ admits either 1 or 3 three critical points depending on different vales of $\rho$.  These are stated  in Proposition \ref{YYY2}. The proof relies critically on a novel property of the classical theta functions proved in Theorem \ref{QAB}.

\begin{proposition}\label{YYY2} There exists a threshold $\rho_{2}$ which is the unique solution of
$$
\frac{\partial^2}{\partial y^2}\mathcal{W}_{2,\rho}(yi)\mid_{y=1}=0
$$
 (in fact $\rho_2=-1-\frac{\mathcal{B}''(1)}{\mathcal{A}''(1)}$, numerically, $\rho_{2}=1.190861337\cdots$) such that

\noindent
1. when $\rho\in[0,\rho_2)$, the function $y\rightarrow\mathcal{W}_{2,\rho}(yi), y>0$ admits only three critical points
at $y_{2,\rho}$, 1 and $\frac{1}{y_{2,\rho}}$, where $y_{2,\rho}\in(1,\sqrt3]$. Furthermore we have $ \frac{\partial}{\partial y}\mathcal{W}_{2,\rho}(yi)<0\;\hbox{if}\; y\in(0,\frac{1}{y_{2,\rho}})$,  $\frac{\partial}{\partial y}\mathcal{W}_{2,\rho}(yi)>0\;\hbox{if}\; y\in(\frac{1}{y_{2,\rho}},1)$, $ \frac{\partial}{\partial y}\mathcal{W}_{2,\rho}(yi)<0\;\hbox{if}\; y\in(1,y_{2,\rho})$, and $ \frac{\partial}{\partial y}\mathcal{W}_{2,\rho}(y i) >0\;\hbox{if}\; y\in(y_{2,\rho},\infty)$

The critical point  $y_{2,\rho}$ is the unique solution of
$
\frac{\partial}{\partial y}\mathcal{W}_{2,\rho}(yi)=0,\;y\in(1,\sqrt3].
$

Moreover, if $\rho \in(0,\rho_{2})$, then
 \begin{equation}\aligned\label{YYY}
\frac{\partial y_{2, \rho}}{\partial \rho} <0.
 \endaligned\end{equation}

\noindent
2. when $\rho\in[\rho_2,+\infty)$, the function $y\rightarrow\mathcal{W}_{2,\rho}(yi), y>0$ admits only one critical point
at 1, and we have  $\frac{\partial}{\partial y}\mathcal{W}_{2,\rho}(yi)<0\;\hbox{if}\; y\in(0,1)$, $\frac{\partial}{\partial y}\mathcal{W}_{2,\rho}(yi)>0\;\hbox{if}\; y\in(1,\infty)$.

\end{proposition}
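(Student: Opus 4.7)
The plan is to reduce Proposition \ref{YYY2} to the monotonicity property of a single quotient of Jacobi theta functions (Theorem \ref{QAB}), in exactly the parallel way that Proposition \ref{YYY1} was reduced to Theorem \ref{QXY}.

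First I would derive a clean expression for $\mathcal{W}_{2,\rho}(iy)$ in terms of the functions $\mathcal{A},\mathcal{B}$ introduced in \eqref{XYAB}. From Lemma \ref{LemmaDP} and \eqref{Theta}, $\theta(2;iy) = \vartheta_3(2y)\vartheta_3(2/y) = \tfrac{1}{\sqrt 2}\mathcal{A}(y)$. For the second piece, substitute $z = iy$ in $\theta(1;(z+1)/2)$ and split the double sum over $(m,n) \in \mathbb{Z}^2$ according to the parity of $m$: the even part produces $\vartheta_3(2y)\vartheta_3(2/y) = \tfrac{1}{\sqrt 2}\mathcal{A}(y)$ and the odd part produces $\vartheta_2(2y)\vartheta_2(2/y) = \tfrac{1}{\sqrt 2}\mathcal{B}(y)$. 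Therefore
\begin{equation}
\mathcal{W}_{2,\rho}(iy) \;=\; \tfrac{1}{\sqrt 2}\bigl[(1+\rho)\mathcal{A}(y) + \mathcal{B}(y)\bigr].
\end{equation}
Since $\mathcal{A}(y) = \sqrt 2\,\theta(2;iy)$, Lemma \ref{E111} gives $\mathcal{A}'(y) > 0$ on $(1,\infty)$, so after factoring the critical equation on $(1,\infty)$ becomes
\begin{equation}
q(y) \;:=\; \frac{\mathcal{B}'(y)}{\mathcal{A}'(y)} \;=\; -(1+\rho).
\end{equation}

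Next, Theorem \ref{QAB} asserts that $q$ is strictly increasing on $(1,\infty)$. I would identify its range via the two endpoints. By L'H\^opital applied at $y=1$, using $\mathcal{A}'(1) = \mathcal{B}'(1) = 0$ from Lemma \ref{Lemmaf01}, one has $\lim_{y\to 1^+}q(y) = \mathcal{B}''(1)/\mathcal{A}''(1)$. Since $\mathcal{A}(y)+\mathcal{B}(y) = \sqrt 2\,\theta(1;(iy+1)/2)$ has a critical point at $y=\sqrt 3$ by Lemma \ref{E111}, evaluating at $y=\sqrt 3$ gives $q(\sqrt 3) = -1$. Consequently the critical equation admits a solution in $(1,\sqrt 3]$ iff $-(1+\rho) \in (\mathcal{B}''(1)/\mathcal{A}''(1),\,-1]$, i.e.\ iff $\rho \in [0,\rho_2)$ with $\rho_2 = -1 - \mathcal{B}''(1)/\mathcal{A}''(1)$; the solution $y_{2,\rho}$ is unique and satisfies $y_{2,0} = \sqrt 3$. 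For $\rho \geq \rho_2$ we have $q(y) > -(1+\rho)$ throughout $(1,\infty)$, so no critical point appears there. The sign of $\partial_y\mathcal{W}_{2,\rho}(iy) = \tfrac{1}{\sqrt 2}\mathcal{A}'(y)[(1+\rho)+q(y)]$ on $(1,\infty)$ is then read off directly, yielding the claimed intervals of monotonicity. The functional equation $\mathcal{W}_{2,\rho}(i/y) = \mathcal{W}_{2,\rho}(iy)$ from Lemma \ref{Lemmaf01}, together with $F'(1/y) = -y^2 F'(y)$, transfers all these statements to $(0,1)$ and produces the mirror critical point $1/y_{2,\rho}$.

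Finally, the monotonicity claim \eqref{YYY} is obtained by implicit differentiation of $q(y_{2,\rho}) = -(1+\rho)$: since $q'(y_{2,\rho}) > 0$ by Theorem \ref{QAB}, we get $dy_{2,\rho}/d\rho = -1/q'(y_{2,\rho}) < 0$. The only genuinely hard ingredient is Theorem \ref{QAB} itself, i.e., the strict sign of $(\mathcal{B}'/\mathcal{A}')'$. Following the template of Theorem \ref{QXY}, I would prove it by splitting $\mathcal{A},\mathcal{B}$ into a polynomial-exponential main part and a small error part, using upper/lower bounds of the type in Lemmas \ref{LemmaTTT}--\ref{LemmaT2}, separating the regime $y \geq 1.1$ (where the decomposition of $\mathcal{B}''\mathcal{A}' - \mathcal{A}''\mathcal{B}'$ into main and error terms is controlled by exponential decay) from the delicate small interval $y \in (1,1.1)$ (where one reduces, via the vanishing $\mathcal{A}'(1)=\mathcal{B}'(1)=0$ and the functional equation $\mathcal{H}'(1/y) = -y^2\mathcal{H}'(y)$ iterated, to the positivity of a corresponding expression built from higher derivatives at and near $y=1$). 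This transcription of the method of Theorem \ref{QXY} to $(\mathcal{A},\mathcal{B})$ is expected to be the main technical burden.
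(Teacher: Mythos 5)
Your proposal is correct and follows essentially the same route as the paper: the identity $\sqrt2\,\mathcal{W}_{2,\rho}(iy)=(1+\rho)\mathcal{A}(y)+\mathcal{B}(y)$, the factorization $\sqrt2\,\partial_y\mathcal{W}_{2,\rho}(iy)=\mathcal{A}'(y)\big(1+\rho+\mathcal{B}'(y)/\mathcal{A}'(y)\big)$, the endpoint values $q(1^+)=\mathcal{B}''(1)/\mathcal{A}''(1)$ (L'H\^opital) and $q(\sqrt3)=-1$ (Lemma \ref{E111}), the strict monotonicity from Theorem \ref{QAB}, the functional equation $\mathcal{H}'(1/y)=-y^2\mathcal{H}'(y)$ for the mirror critical point, and implicit differentiation for $dy_{2,\rho}/d\rho<0$ are exactly the ingredients of the paper's proof in Section 7. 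The only difference is organizational: you also sketch a proof of Theorem \ref{QAB} itself, which the paper likewise establishes separately by the same main-part/error-part decomposition deferred to Appendix 2.
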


As in Section 6, by Lemma \ref{LemmaDP} and transformation \eqref{Theta}, we have that
\begin{lemma}\label{LemmaE21Gamma}
\begin{equation}\aligned\nonumber
\theta(2;yi)=\sqrt{\frac{y}{2}}\vartheta_3(2y)\vartheta_3(\frac{y}{2}),\;\;
\theta(1;\frac{yi+1}{2})=\sqrt{\frac{y}{2}}\big(
\vartheta_3(2y)\vartheta_3(\frac{y}{2})+\vartheta_2(2y)\vartheta_4(\frac{y}{2})
\big).
 \endaligned\end{equation}

\end{lemma}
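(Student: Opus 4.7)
\medskip
\noindent\textbf{Proof proposal for Lemma \ref{LemmaE21Gamma}.}
The plan is to invoke the general product expansion of Lemma \ref{LemmaDP} with the appropriate values of $s$, and then translate the theta arguments from the form $s/y$ into the form $y/2$ (resp.\ $2y$) using the modular transformation formulas \eqref{Theta}. This is completely parallel to how Lemma \ref{LemmaE12} was derived for the pair $(\theta(1;yi),\theta(2;\tfrac{yi+1}{2}))$; the only change is that we now take $s=2$ in the first slot and $s=1$ in the second slot.

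For the first identity, apply Lemma \ref{LemmaDP} with $s=2$ to get
\begin{equation*}
\theta(2;yi) \;=\; \vartheta_3(2y)\,\vartheta_3\!\left(\tfrac{2}{y}\right).
\end{equation*}
Writing $\tfrac{2}{y}=\tfrac{1}{y/2}$ and applying $\vartheta_3(1/u)=\sqrt{u}\,\vartheta_3(u)$ from \eqref{Theta} with $u=y/2$, we obtain $\vartheta_3(2/y)=\sqrt{y/2}\,\vartheta_3(y/2)$, which gives the claimed formula.

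For the second identity, we redo the splitting in the proof of Lemma \ref{LemmaDP} (parity decomposition of $m$ in the lattice sum for $\theta(s;\tfrac{yi+1}{2})$) at $s=1$; this yields
\begin{equation*}
\theta(1;\tfrac{yi+1}{2}) \;=\; \vartheta_3(2y)\,\vartheta_3\!\left(\tfrac{2}{y}\right)+\vartheta_2(2y)\,\vartheta_2\!\left(\tfrac{2}{y}\right).
\end{equation*}
Now apply both $\vartheta_3(2/y)=\sqrt{y/2}\,\vartheta_3(y/2)$ and $\vartheta_2(2/y)=\sqrt{y/2}\,\vartheta_4(y/2)$ from \eqref{Theta} (with $u=y/2$), and factor out the common $\sqrt{y/2}$, to arrive at
\begin{equation*}
\theta(1;\tfrac{yi+1}{2}) \;=\; \sqrt{\tfrac{y}{2}}\bigl(\vartheta_3(2y)\vartheta_3(y/2)+\vartheta_2(2y)\vartheta_4(y/2)\bigr),
\end{equation*}
as desired.

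There is no genuinely hard step here: the identities in Lemma \ref{LemmaDP} reduce the two-dimensional lattice sums to products of one-dimensional Jacobi theta values, and the functional equations \eqref{Theta} absorb the change of variable $u\mapsto 1/u$ needed to convert $\vartheta_j(2/y)$ into $\vartheta_j(y/2)$ (with the swap $\vartheta_2\leftrightarrow\vartheta_4$). The only point to be attentive about is bookkeeping of the $\sqrt{y/2}$ prefactor and the $\vartheta_2\leftrightarrow\vartheta_4$ swap under inversion, exactly as in the derivation of Lemma \ref{LemmaE12}.
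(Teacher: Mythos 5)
Your proposal is correct and takes essentially the same route as the paper, whose entire argument for this lemma is the one-line citation of Lemma \ref{LemmaDP} together with the inversion formulas \eqref{Theta} (including the $\vartheta_2\leftrightarrow\vartheta_4$ swap), exactly as you carry out. You were also right to redo the parity splitting at $s=1$ rather than substitute blindly into the printed statement of Lemma \ref{LemmaDP}, whose second identity carries a normalization typo (since $\Im\frac{yi+1}{2}=\frac{y}{2}$, the arguments should be $2sy$ and $\frac{2s}{y}$); your intermediate identity $\theta(1;\frac{yi+1}{2})=\vartheta_3(2y)\vartheta_3(\frac{2}{y})+\vartheta_2(2y)\vartheta_2(\frac{2}{y})$ is the correct one and is consistent with Lemma \ref{LemmaE12} and with the definitions of $\mathcal{A}$ and $\mathcal{B}$ in \eqref{XYAB}.
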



Recall by \eqref{XYAB2} and \eqref{Theta},
\begin{equation}\aligned\nonumber
\mathcal{A}(y):=\sqrt2\vartheta_3(2y)\vartheta_3(\frac{2}{y})=\sqrt y\vartheta_3(2y)\vartheta_3(\frac{y}{2}),\;
\mathcal{B}(y):=\sqrt 2\vartheta_2(2y)\vartheta_2(\frac{2}{y})=\sqrt y\vartheta_2(2y)\vartheta_4(\frac{y}{2}).
 \endaligned\end{equation}

Next we state Theorem \ref{QAB}, which provides the key argument to prove Proposition \ref{YYY2}.

\begin{theorem}\label{QAB}

The function $y\mapsto \frac{\mathcal{B}'(y)}{\mathcal{A}'(y)}, y>0$ has only one critical point at $y=1$, and furthermore
  $\Big(\frac{\mathcal{B}'(y)}{\mathcal{A}'(y)}\Big)'<0, \;\;y \in(0,1)$ and $\Big(\frac{\mathcal{B}'(y)}{\mathcal{A}'(y)}\Big)'>0, \;\;y \in(1,\infty)$.
\end{theorem}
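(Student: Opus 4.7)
The plan is to follow the blueprint of the proof of Theorem \ref{QXY} applied to the pair $(\mathcal{A}, \mathcal{B})$. First, using the transformation formulas \eqref{Theta} one checks directly that $\mathcal{A}(1/y) = \mathcal{A}(y)$ and $\mathcal{B}(1/y) = \mathcal{B}(y)$, so Lemma \ref{Lemmaf01} gives $\mathcal{A}'(1/y) = -y^2 \mathcal{A}'(y)$ and similarly for $\mathcal{B}$. Set $\mathcal{Z}(y) := \mathcal{B}'(y)/\mathcal{A}'(y)$; since $\mathcal{A}(y) = \sqrt{2}\,\theta(2;yi)$, Lemma \ref{E111} applied with $s=2$ gives $\mathcal{A}'(y) \neq 0$ on $(0,1)\cup(1,\infty)$, so $\mathcal{Z}$ is well-defined there. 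The functional equations force $\mathcal{Z}(1/y) = \mathcal{Z}(y)$ and hence $\mathcal{Z}'(1/y) = -y^2 \mathcal{Z}'(y)$; in particular $\mathcal{Z}'(1) = 0$. It therefore suffices to prove $\mathcal{Z}'(y) > 0$ for all $y \in (1,\infty)$.

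I would split $(1,\infty)$ into a large-$y$ region $[y_0,\infty)$ and a boundary layer $(1,y_0)$ for an explicit threshold $y_0$ close to $1$ (the analog of $1.1$ in Theorem \ref{QXY}). On $[y_0,\infty)$, since $\mathcal{A}'(y) > 0$, it suffices to establish the positivity of $\mathcal{B}''\mathcal{A}' - \mathcal{A}''\mathcal{B}'$. I would use the series expansions of $\vartheta_2,\vartheta_3,\vartheta_4$ to decompose $\mathcal{A} = \mathcal{A}_a + \mathcal{A}_e$ and $\mathcal{B} = \mathcal{B}_a + \mathcal{B}_e$, where the main parts retain finitely many leading exponentials at the scales $e^{-\pi y/2}$, $e^{-\pi y}$, $e^{-2\pi y}$, and the tails satisfy $C^2$-bounds of the form $C\sqrt{y}\,e^{-c\pi y}$ for $c$ strictly larger than the decay rates present in the main term $\mathcal{B}_a''\mathcal{A}_a' - \mathcal{A}_a''\mathcal{B}_a'$. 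After multiplying this main term by a suitable positive weight $y^{\alpha}e^{\beta\pi y}$, the resulting function is monotone on $[y_0,\infty)$, so positivity at $y = y_0$ together with the exponentially small error completes the large-$y$ case.

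For the boundary layer $y \in (1,y_0)$, I would invoke the identity \eqref{XY1} (which only uses $\mathcal{A}'(1) = \mathcal{B}'(1) = 0$): for $y > 1$ there exist $\xi,\eta \in (1,y)$ with
\begin{equation*}
\mathcal{Z}'(y) \;=\; \frac{\mathcal{A}''(y)}{\mathcal{A}''(\xi)(y-1)}\Big(\frac{\mathcal{B}''(y)}{\mathcal{A}''(y)} - \frac{\mathcal{B}''(\eta)}{\mathcal{A}''(\eta)}\Big).
\end{equation*}
The same decomposition argument shows $\mathcal{A}''(y) > 0$ in a neighborhood of $1$, so the sign of $\mathcal{Z}'$ is controlled by the monotonicity of $\mathcal{B}''/\mathcal{A}''$, i.e.\ by the positivity of $f_{\mathcal{AB}}(y) := \mathcal{B}'''\mathcal{A}'' - \mathcal{B}''\mathcal{A}'''$. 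Differentiating $\mathcal{A}(1/y) = \mathcal{A}(y)$ and $\mathcal{B}(1/y) = \mathcal{B}(y)$ successively and evaluating at $y = 1$ yields $\mathcal{A}'''(1) = -3\mathcal{A}''(1)$ and $\mathcal{B}'''(1) = -3\mathcal{B}''(1)$, so $f_{\mathcal{AB}}(1) = 0$. Consequently it suffices to prove $f_{\mathcal{AB}}'(y) = \mathcal{B}''''\mathcal{A}'' - \mathcal{B}''\mathcal{A}'''' > 0$ on $(1,y_0)$, which I would attack again by an approximate/error splitting and a check that a renormalized approximate piece is monotone and positive at $y = y_0$.

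The main obstacle is bookkeeping rather than conceptual novelty: the slow scale $e^{-\pi y/2}$ appears in both $\mathcal{A}$ and $\mathcal{B}$ (through $\vartheta_3(y/2)$ and $\vartheta_2(2y)$), so the leading cancellations in $\mathcal{B}''\mathcal{A}' - \mathcal{A}''\mathcal{B}'$ and in $f_{\mathcal{AB}}'$ must be identified and tracked carefully, and the threshold $y_0$ must be chosen simultaneously small enough for a quantitative boundary-layer check and large enough for the error bounds on $[y_0,\infty)$ to be applicable. Once those rates are pinned down, the remaining verifications reduce to numerical positivity at a single point, in direct parallel with the proof of Theorem \ref{QXY}.
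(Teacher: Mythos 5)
Your proposal follows essentially the same route as the paper: the functional equations from Lemma \ref{Lemmaf01} reduce the claim to $\mathcal{Z}'(y)>0$ on $(1,\infty)$, which the paper also handles by an approximate/error decomposition of $\mathcal{A},\mathcal{B}$ with a weighted monotonicity argument on $[k,\infty)$ (with $k=1.05$), and near $y=1$ by the identity \eqref{XY1}, the relations $\mathcal{A}'''(1)=-3\mathcal{A}''(1)$, $\mathcal{B}'''(1)=-3\mathcal{B}''(1)$ giving $f_{\mathcal{AB}}(1)=0$, and positivity of $f_{\mathcal{AB}}'$ on $(1,1.12]$. The remaining work you defer (choice of weights, thresholds, and explicit error bounds) is exactly what the paper carries out in Appendix 2, so the plan is sound and matches the published argument.
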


\begin{proof}

By Lemma \ref{Lemmaf01},
\begin{equation}\aligned\label{ABD}
\mathcal{A}'(\frac{1}{y})=-y^2\mathcal{A}'(y),\;\mathcal{B}'(\frac{1}{y})=-y^2\mathcal{B}'(y).
\endaligned\end{equation}
Let
$$
\mathcal{C}(y):=\frac{\mathcal{B}'(y)}{\mathcal{A}'(y)}.
$$
Then
\begin{equation}\aligned\nonumber
\mathcal{C}(\frac{1}{y})=\mathcal{C}(y).
 \endaligned\end{equation}
Hence
\begin{equation}\aligned\label{CV}
\mathcal{C}'(\frac{1}{y})=-y^2\mathcal{C}'(y).
 \endaligned\end{equation}
In particular,
$
\mathcal{C}'(1)=0,
$
i.e., $y=1$ is the critical point of $\mathcal{C}(y)$.
This, combining with Lemma \ref{E111}, shows that the $\mathcal{C}(y)$ by the quotient form is well defined.

By \eqref{CV}, it suffices to prove that
\begin{equation}\aligned\nonumber
\mathcal{C}'(y)>0\;\;y\in(1,\infty).
 \endaligned\end{equation}

To prove this, we need to divide it into two parts of $y$: the small case $ y\in[k,\infty)$ and the large case $y\in(1,k)$,
where the parameter $k$ is sightly bigger than 1 and will be determined later. (In fact $ k= 1.05$.)
\vskip0.1in

\noindent
{\bf Case (a): $y\in [k,\infty)$}
One has
$$
\mathcal{C}'(y)=\frac{\mathcal{B}''(y)\mathcal{A}'(y)-\mathcal{A}''(y)\mathcal{B}'(y)}{(\mathcal{A}'(y))^2}.
$$
Then we need to estimate the lower bound of $\mathcal{B}''(y)\mathcal{A}'(y)-\mathcal{A}''(y)\mathcal{B}'(y)$.

By \eqref{XYAB2},
\begin{equation}\aligned\label{Aae}
\mathcal{A}(y)&=\sqrt{y}\big(1+2\sum_{n=1}^\infty e^{-2\pi n^2y}\big)\big(1+2\sum_{n=1}^\infty e^{-\frac{\pi}{2} n^2y}\big)\\
&=\Big(\sqrt y+2\sqrt y e^{-\frac{\pi y}{2}}+4\sqrt y e^{-2\pi y}+4\sqrt y e^{-\frac{5}{2}\pi y}+4\sqrt y e^{-4\pi y}+2\sqrt y e^{-\frac{9}{2}\pi y}+2\sqrt y\big(\sum_{n=2}^\infty e^{-2\pi n^2y}+\sum_{n=4}^\infty e^{-\frac{\pi}{2} n^2y}\big)\Big)\\
&+\Big(4\sqrt y e^{-\frac{5}{2}\pi y}\Big(
\sum_{n=2}^\infty e^{-\frac{1}{2}\pi(n^2-1) y}+\sum_{n=2}^\infty e^{-2\pi(n^2-1) y}+\sum_{n=2}^\infty e^{-\frac{1}{2}\pi(n^2-1) y}\cdot\sum_{n=2}^\infty e^{-2\pi(n^2-1) y}
\Big)\Big)\\
:& = \mathcal{A}_a (y)+ \mathcal{A}_e (y)
 \endaligned\end{equation}
 where $\mathcal{A}_a (y)$ and $\mathcal{A}_e (y)$ are defined at the last equality. $\mathcal{A}_e (y)$ is the error part which will be proved to satisfy
$$\|\mathcal{A}_e \|_{C^2}\leq C \sqrt y e^{-\frac{13}{2}\pi y}.
$$

For $\mathcal{B}(y)$,  by \eqref{XYAB2}, we rewrite as
\begin{equation}\aligned\nonumber
\mathcal{B}(y)&=\sqrt y\vartheta_2(2y)\Big(\vartheta_3(2y)-\vartheta_2(2y)\Big)\\
&=2\sqrt y\sum_{n=1}^\infty e^{-2\pi y(n-\frac{1}{2})^2}+4\sqrt y\sum_{n=1}^\infty e^{-2\pi y(n-\frac{1}{2})^2}\sum_{n=1}^\infty e^{-2\pi n^2 y}
-4\sqrt y\Big(\sum_{n=1}^\infty e^{-2\pi y(n-\frac{1}{2})^2}\Big)^2\\
&=\Big(2\sqrt y e^{-\frac{1}{2}\pi y}+4\sqrt y e^{-\frac{5}{2}\pi y}+2\sqrt y e^{-\frac{9}{2}\pi y}-4\sqrt y e^{-\pi y}\Big)\\
&+\Big(2\sqrt y \sum_{n=3}^\infty e^{-\frac{1}{2}(2n-1)^2\pi y}
+4\sqrt y e^{-\frac{5}{2}\pi y}\Big(
 \sum_{n=2}^\infty e^{-\frac{1}{2}((2n-1)^2-1)\pi y}+\sum_{n=2}^\infty e^{-2(n^2-1)\pi y}\\
 &+\sum_{n=2}^\infty e^{-\frac{1}{2}((2n-1)^2-1)\pi y}\sum_{n=2}^\infty e^{-2(n^2-1)\pi y}
\Big)
-8\sqrt y \sum_{n=2}^\infty e^{-\frac{1}{2}(2n-1)^2\pi y}\\
&-4\sqrt y (\sum_{n=2}^\infty e^{-\frac{1}{2}(2n-1)^2\pi y})^2\Big)\\
:&=\mathcal{B}_a(y)+\mathcal{B}_e(y)
 \endaligned\end{equation}
where $\mathcal{B}_a (y)$ and $\mathcal{B}_e (y)$ are defined at the last equality. That is, we have
\begin{equation}\aligned\label{Bae}
\mathcal{B}(y)=\mathcal{B}_a(y)+\mathcal{B}_e(y),
\endaligned\end{equation}
where $\mathcal{B}_a(y), \mathcal{B}_e(y)$ is the approximate part and the error part of $\mathcal{B}(y)$ respectively.

We have the following estimate
$$\|\mathcal{B}_e \|_{C^2}\leq C \sqrt y e^{-\frac{13}{2}\pi y}, \;\;y\geq1.
$$
To prove that
\begin{equation}\aligned\label{Cpk}
\mathcal{C}'(y)>0\;\;\hbox{if}\;\;y\in(k,\infty),
\endaligned\end{equation}
it suffices to prove that
\begin{equation}\aligned\nonumber
\mathcal{B}''(y)\mathcal{A}'(y)-\mathcal{A}''(y)\mathcal{B}'(y)>0\;\;\hbox{if}\;\;y\in(k,\infty).
\endaligned\end{equation}

By \eqref{Aae}, there holds
\begin{equation}\aligned\nonumber
\mathcal{B}''\mathcal{A}'-\mathcal{A}''\mathcal{B}'
=\Big(\mathcal{B}_a''\mathcal{A}_a'-\mathcal{A}_a''\mathcal{B}_a'\Big)
+\Big(\mathcal{B}_e''\mathcal{A}'-\mathcal{B}_e' \mathcal{A}''+\mathcal{B}_a''\mathcal{A}_e'-\mathcal{A}_e''\mathcal{B}_a'\Big).
 \endaligned\end{equation}
Here $\Big(\mathcal{B}_a''\mathcal{A}_a'-\mathcal{A}_a''\mathcal{B}_a'\Big)$ and
$\Big(\mathcal{B}_e''\mathcal{A}'-\mathcal{B}_e' \mathcal{A}''+\mathcal{B}_a''\mathcal{A}_e'-\mathcal{A}_e''\mathcal{B}_a'\Big)$ are the approximate and error part of $\mathcal{B}''\mathcal{A}'-\mathcal{A}''\mathcal{B}'$ respectively.

To estimate the approximate part, we use the monotonicity of a weighted function, i.e.
  \begin{equation}\aligned\label{Estimate1}
y\rightarrow\frac{4y}{\pi}e^{\frac{1}{2}\pi y}\Big(\mathcal{B}_a''\mathcal{A}_a'-\mathcal{A}_a''\mathcal{B}_a'\Big)(y)
\endaligned\end{equation}
 is strictly increasing.

For the error term, we have  the following control
\begin{equation}\aligned\label{Estimate2}
|\Big(\mathcal{B}_e''\mathcal{A}'-\mathcal{B}_e' \mathcal{A}''+\mathcal{B}_a''\mathcal{A}_e'-\mathcal{A}_e''\mathcal{B}_a'\Big)(y)|\leq C\sqrt y e^{-\frac{13}{2}\pi y},\;\;y\geq1
 \endaligned\end{equation}
which decays fast.

Combining \eqref{Estimate1} and \eqref{Estimate2}, one deduces that
\begin{equation}\aligned\label{AB105}
\Big(\mathcal{B}''\mathcal{A}'-\mathcal{A}''\mathcal{B}'\Big)(y)>\;\;\hbox{if}\;\;y\in[1.05,\infty).
 \endaligned\end{equation}
This proves that
\begin{equation}\aligned\label{Cpk000}
\mathcal{C}'(y)>0\;\;\hbox{if}\;\;y\in[1.05,\infty).
\endaligned\end{equation}

The detailed proofs of \eqref{Estimate1}, \eqref{Estimate2} and \eqref{AB105} will be given in the Appendix 2.

\vskip0.1in

\noindent
{\bf Case (b): $ y\in (1,k)$}

To prove
\begin{equation}\aligned\label{AB1k}
\Big(\frac{\mathcal{B}'(y)}{\mathcal{A}'(y)}\Big)'>0, \hbox{on}\; y\in(1,k),
\endaligned\end{equation}
by \eqref{XY1}, it suffices to prove that
$$
\Big(\frac{\mathcal{B}''(y)}{\mathcal{A}''(y)}\Big)'>0, \hbox{on}\; y\in(1,k),
$$
given that
\begin{equation}\aligned\label{AB10}
\mathcal{A}'(1)=\mathcal{B}'(1)=0
\endaligned\end{equation}
which follows from \eqref{ABD}. Here as in \eqref{2L1}, we need $\mathcal{A}''(y)>0$ in  small interval such as $(1,1.2]$ (we omit the details here).

To proceed, we notice that
\begin{equation}\aligned\label{L2}
\big(\frac{\mathcal{B}''(y)}{\mathcal{A}''(y)}\big)'
=\frac{\mathcal{B}'''(y)\mathcal{A}''(y)-\mathcal{B}''(y)\mathcal{A}'''(y)}{\mathcal{A}''^2(y)}.
\endaligned\end{equation}
Define
\begin{equation}\aligned\nonumber
f_{\mathcal{AB}}(y):=\mathcal{B}'''(y)\mathcal{A}''(y)-\mathcal{B}''(y)\mathcal{A}'''(y).
\endaligned\end{equation}

Same as \eqref{2ABf000}, we see that
\begin{equation}\aligned\label{ABf000}
f_{\mathcal{AB}}(1)=0.
\endaligned\end{equation}
Then to prove \eqref{AB1k}, it suffices to prove that
\begin{equation}\aligned\label{ABfp}
f_{\mathcal{AB}}'(y)>0\;\;\hbox{for}\;\;y\in(1,k).
\endaligned\end{equation}

Now by \eqref{Aae} and \eqref{Bae} we can write as
\begin{equation}\aligned\label{fAB12}
f_{\mathcal{AB}}'&=\mathcal{B}''''\mathcal{A}''-\mathcal{B}''\mathcal{A}''''\\
&=\Big(\mathcal{B}_a''''\mathcal{A}_a''-\mathcal{B}_a''\mathcal{A}_a''''\Big)
+\Big(\mathcal{B}_e''''\mathcal{A}''-\mathcal{B}_e'' \mathcal{A}''''+\mathcal{B}_a''''\mathcal{A}_e''-\mathcal{A}_e''''\mathcal{B}_a''\Big).
\endaligned\end{equation}

The main part is $\Big(\mathcal{B}_a''''\mathcal{A}_a''-\mathcal{B}_a''\mathcal{A}_a''''\Big)$ which is is not monotonically decreasing or increasing. Instead, a weighted
\begin{equation}\aligned\label{fM1}
y\rightarrow \frac{32y^4}{\pi}e^{\frac{1}{2}\pi y}\Big(\mathcal{B}_a''''\mathcal{A}_a''-\mathcal{B}_a''\mathcal{A}_a''''\Big)(y)
\endaligned\end{equation}
is strictly decreasing on $(1,\infty)$.

 For the error part in \eqref{fAB12}, one deduces the following upper bound estimate,
\begin{equation}\aligned\label{fM2}
|\Big(\mathcal{B}_e''''\mathcal{A}''-\mathcal{B}_e'' \mathcal{A}''''+\mathcal{B}_a''''\mathcal{A}_e''-\mathcal{A}_e''''\mathcal{B}_a''\Big)(y)|
\leq C \sqrt y e^{-\frac{13}{2}\pi y}, y\geq1
\endaligned\end{equation}
which decays  very fast.

Combining \eqref{fM1}, \eqref{fM2} and  \eqref{fAB12}, we can show that
\begin{equation}\aligned\label{fAB1200}
f_{\mathcal{AB}}'(y)>0\;\;\hbox{if}\;\;y\in(1,1.12].
\endaligned\end{equation}

The detailed proof of \eqref{fM1}, \eqref{fM2} and \eqref{fAB1200} is tedious and will be given in the Appendix 2.  This completes the proof.

\end{proof}



Finally we give the proof of Proposition \ref{YYY2}.

\begin{proof} By Lemma \ref{Lemmaf01}, the functional $\mathcal{W}_{2,\rho}(yi)$ satisfies the functional equations
 \begin{equation}\aligned\label{HL}
\mathcal{H}'(\frac{1}{y})=-y^2\mathcal{H}'(y).
 \endaligned\end{equation}
Hence $\mathcal{H}'(1)=0$, i.e., $y=1$ is a critical point of $\mathcal{W}_{2,\rho}(yi)$.

By \eqref{HL}, we just need to consider the functional $\mathcal{W}_{2,\rho}(yi)$ on $(1,\infty)$. For this, one uses Theorem \ref{QAB} by rewriting $\frac{\partial}{\partial y}\mathcal{W}_{2,\rho}(yi)$ as
 \begin{equation}\aligned\label{Hrho}
\sqrt2\frac{\partial}{\partial y}\mathcal{W}_{2,\rho}(yi)&
=\frac{\partial}{\partial y}\Big(\sqrt2\theta(1;\frac{yi+1}{2})+\rho \sqrt2\theta(2;yi)\Big)=\mathcal{A}'(y)+\mathcal{B}'(y)+\rho\mathcal{A}'(y)\\
&=\mathcal{A}'(y)\cdot\Big(
1+\frac{\mathcal{B}'(y)}{\mathcal{A}'(y)}+\rho
\Big).
 \endaligned\end{equation}
By Lemma \ref{E111}, we see that
\begin{equation}\aligned\label{Ay000}
\mathcal{A}'(y)>0\;\; y\in(1,\infty)\;\;\;\hbox{and}\;\;\;1+\frac{\mathcal{B}'(\sqrt3)}{\mathcal{A}'(\sqrt3)}=0.
 \endaligned\end{equation}
By Theorem \ref{QAB}, there holds
 \begin{equation}\aligned\label{HMMM}
\frac{d}{dy}\Big(
1+\frac{\mathcal{B}'(y)}{\mathcal{A}'(y)}+\rho
\Big)>0,\;\; y\in(1,\infty).
 \endaligned\end{equation}
From \eqref{HMMM}, in view of \eqref{Hrho} and \eqref{Ay000}, we infer that
 \begin{equation}\aligned\nonumber
\frac{\partial}{\partial y}\mathcal{W}_{2,\rho}(yi)\;\;\hbox{admits at most one zero point on}\;\;(1,\infty).
 \endaligned\end{equation}
 By \eqref{Ay000}, we see that
 \begin{equation}\aligned\label{HK3}
\frac{\partial}{\partial y}\mathcal{W}_{2,\rho}(yi)>0\;\;\hbox{if}\;\;y\in(\sqrt3,\infty).
 \endaligned\end{equation}
 Then one further concludes that the admissible zero point of $\frac{\partial}{\partial y}\mathcal{W}_{2,\rho}(yi)$ must lie on
$(1,\sqrt3]$ (if exists).

Next we consider the function $1+\frac{\mathcal{B}'(y)}{\mathcal{A}'(y)}+\rho$ for $\rho>0 \in (1,\sqrt3)$. At the end point $\sqrt3$, we have that
 \begin{equation}\aligned\label{LM3}
\Big(
1+\frac{\mathcal{B}'(y)}{\mathcal{A}'(y)}+\rho
\Big)\mid_{y=\sqrt3}=0+\rho=\rho>0
 \endaligned\end{equation}
because of \eqref{Ay000}.

Since $\mathcal{A}'(1)=\mathcal{B}'(1)$, at the other end point $1$, one evaluates
 \begin{equation}\aligned\label{LM1}
\Big(
1+\frac{\mathcal{B}'(y)}{\mathcal{A}'(y)}+\rho
\Big)\mid_{y=1}&=1+\rho+\lim_{y\rightarrow1}\frac{\mathcal{B}'(y)}{\mathcal{A}'(y)}=1+\rho+\lim_{y\rightarrow1}\frac{\mathcal{B}''(y)}{\mathcal{A}''(y)}\\
&=1+\rho+\frac{\mathcal{B}''(1)}{\mathcal{A}''(1)}
 \endaligned\end{equation}
 by L'Hospital's rule.

In view of \eqref{LM3} and \eqref{LM1}, one deduces from \eqref{HMMM} that
 \begin{equation}\aligned\label{ABppp}
&\Big(
1+\frac{\mathcal{B}'(y)}{\mathcal{A}'(y)}+\rho
\Big)\;\;\hbox{admits one zero point on}\;\;(1,\sqrt3)\\
&\Leftrightarrow
1+\rho+\frac{\mathcal{B}''(1)}{\mathcal{A}''(1)}<0
 \endaligned\end{equation}
 which implies that
 \begin{equation}\aligned\nonumber
\rho<\rho_{2}:=-1-\frac{\mathcal{B}''(1)}{\mathcal{A}''(1)}.
 \endaligned\end{equation}
It follows that by
\eqref{Hrho} and \eqref{ABppp}, for $\rho\geq\rho_{2}$, $\frac{\partial}{\partial y}\mathcal{W}_{2,\rho}(yi)$ admits no zero point on $(1,\infty)$  Therefore the part 2 of Proposition \ref{YYY2}
follows from \eqref{HL}.

For $\rho\in(0,\rho_2)$, we denote the zero root of $\Big(
1+\frac{\mathcal{B}'(y)}{\mathcal{A}'(y)}+\rho
\Big)$ (and hence also of $\frac{\partial}{\partial y}\mathcal{W}_{2,\rho}(yi)$) as
$y_{2,\rho}$. Then  by \eqref{ABppp} $y_{2,\rho}\in(1,\sqrt3)$. Thus by \eqref{HL} there is another zero point $\frac{1}{y_{2,\rho}}\in(\frac{\sqrt3}{3},1)$ of $\frac{\partial}{\partial y}\mathcal{W}_{2,\rho}(yi)$. By \eqref{HK3}, \eqref{HL}, \eqref{HMMM} and \eqref{Hrho},  the part 1 of Proposition \ref{YYY2} is proved.

Finally from \eqref{HMMM}, we have that
 \begin{equation}\aligned\nonumber
\frac{d}{d\rho}y_{2,\rho}<0.
 \endaligned\end{equation}
This proves \eqref{YYY}. (For $\rho=0$, one has $y_{2,\rho}=\sqrt3$ by \eqref{Ay000}). The proof is thus completed.

\end{proof}

\section{Proofs of Theorems \ref{Th1} \ref{Th2} and \ref{Th3}}
\setcounter{equation}{0}

 In this section, we are ready to finish the proof of the main results of Theorems \ref{Th1} \ref{Th2} and \ref{Th3}.  To make the presentation clear, we introduce the following notations to denote various geometric sets:

\begin{equation}\aligned\nonumber
\mathbb{H}:&=\{z\mid y >0
\},\\
\Omega_a:&=\{z\mid |z|\geq1, 0\leq x<1
\},\\
\Omega_b:&=\{z\mid |z|\geq1, 0\leq x\leq \frac{1}{2}
\}\cup\{z\mid |z|=1, \frac{1}{2}\leq x <1
\},\\
\Omega_c:&=\{z\mid |z|\geq1, 0\leq x \leq \frac{1}{2}
\},\\
\Omega_d:&=\{z\mid |z|=1, 0\leq x \leq\frac{1}{2}\}\cup \{
z\mid \ x=0, 1\leq y <\infty
\},\\
\Omega_e:&=\{z\mid |z|=1, 0\leq x \leq\frac{1}{2}\}\cup \{
z\mid x=0, 1\leq y \leq\sqrt3
\},\\
\Omega_{ea}:&=\{z\mid x=0, 1\leq y \leq\sqrt3\},\\
\Omega_{eb}:&=\{z\mid |z|=1, 0\leq x <\frac{1}{2}\}.
 \endaligned\end{equation}

We divide the proof into the following steps:

\medskip

\noindent
{\bf Step 1: Reducing minimization problem from $\mathbb{H}$  to $\rightarrow\Omega_a$}.
\vskip0.1in

 This is a consequence of Theorem \ref{Thgamma}  and the properties of the fundamental group \eqref{GroupG2} and fundamental domain \eqref{Fd4}:
\begin{equation}\aligned\label{HTa}
\min_{z\in \mathbb{H}}\mathcal{W}_{1,\rho}(z)\equiv\min_{z\in \Omega_a}\mathcal{W}_{1,\rho}(z),\;
\min_{z\in \mathbb{H}}\mathcal{W}_{2,\rho}(z)\equiv\min_{z\in \Omega_a}\mathcal{W}_{2,\rho}(z).
\endaligned\end{equation}

\vskip0.1in

\noindent
{\bf Step 2: Reducing minimization problem from $\Omega_a$ to $\Omega_b$}.
\vskip0.1in

This follows from  Corollary \ref{Coro2}:
\begin{equation}\aligned\nonumber
\min_{z\in \Omega_a}\mathcal{W}_{1,\rho}(z)\equiv\min_{z\in \Omega_b}\mathcal{W}_{1,\rho}(z),\;
\min_{z\in \Omega_a}\mathcal{W}_{2,\rho}(z)\equiv\min_{z\in \Omega_b}\mathcal{W}_{2,\rho}(z).
\endaligned\end{equation}

\medskip

\noindent
{\bf Step 3: Reducing minimization problem  from $\Omega_b$ to $\Omega_c$}.
\vskip0.1in

We first show that
\begin{equation}\aligned\label{Arc1}
\min_{z\in \{z\mid |z|=1, \frac{1}{2}\leq x <1
\}}\mathcal{W}_{j,\rho}(z)&\equiv\mathcal{W}_{1,\rho}(\frac{1}{2}+i\frac{\sqrt3}{2}), j=1,2.
\endaligned\end{equation}
One can further conclude that the minimizer $\frac{1}{2}+i\frac{\sqrt3}{2}$ is unique by the monotonicity shown below.

In fact, by Propositions \ref{YYY1} and \ref{YYY2}, we see that
\begin{equation}\aligned\label{MM1}
\frac{\partial}{\partial y}\mathcal{W}_{j,\rho}(yi)&>0,\;\;y\in[\sqrt3,\infty), j=1,2.
\endaligned\end{equation}

By the special map $z\mapsto w:=\frac{z-1}{z+1}$,  the set
$\{ yi, y\in[\sqrt3,\infty)\}$ is mapped bijectively to $ \{ |z|=1, \frac{1}{2}\leq \Re(z)<1\}$. By Lemma \ref{Reduce} and \eqref{MM1} we see that both $\mathcal{W}_{1,\rho}(z)$ and $ \mathcal{W}_{2,\rho}(z)$ are monotonically decreasing along the set $\;\; \{|z|=1, \frac{1}{2}\leq x<1\}$. This proves \eqref{Arc1}.

By \eqref{Arc1}, we conclude that
\begin{equation}\aligned\nonumber
\min_{z\in \Omega_b}\mathcal{W}_{1,\rho}(z)\equiv\min_{z\in \Omega_c}\mathcal{W}_{1,\rho}(z),\;\min_{z\in \Omega_b}\mathcal{W}_{2,\rho}(z)\equiv\min_{z\in \Omega_c}\mathcal{W}_{2,\rho}(z).
\endaligned\end{equation}

\medskip

\noindent
{\bf Step 4: Reducing minimization problem  from $\Omega_c$ to  $\Omega_d$}.
\vskip0.1in

In this case, let
$
\rho_{*}=\frac{1}{20}
$
be as  in Propositions \ref{Trans1}. For $\rho\in[0,\rho_{*}]$,  Proposition \ref{Trans1} implies that
\begin{equation}\aligned\nonumber
\min_{z\in \Omega_c}\mathcal{W}_{1,\rho}(z)&\equiv\min_{z\in \Omega_d}\mathcal{W}_{1,\rho}(z), \rho\in[0,\rho_*].
\endaligned\end{equation}

For $\rho\in(\rho_{*},\infty)$, using Lemma \ref{ThWW}, Lemma \ref{Trans2}, and \eqref{Arc1}, we get that
\begin{equation}\aligned\nonumber
\min_{z\in \Omega_c}\mathcal{W}_{1,\rho}(z)&\equiv\rho\min_{w\in \Omega_c}\mathcal{W}_{2,1/\rho}(w), 1/\rho\in(0,1/\rho_{*})\\
&\equiv\rho\min_{w\in \Omega_d}\mathcal{W}_{2,1/\rho}(w), 1/\rho\in(0,1/\rho_{*})\\
&\equiv\min_{z\in \Omega_d}\mathcal{W}_{1,\rho}(z), \rho\in(\rho_{*},\infty).
\endaligned\end{equation}

Therefore, we obtain that
\begin{equation}\aligned\label{CD1}
\min_{z\in \Omega_c}\mathcal{W}_{1,\rho}(z)&\equiv\min_{z\in \Omega_d}\mathcal{W}_{1,\rho}(z), \rho\in[0,\infty).
\endaligned\end{equation}
By Theorem \ref{ThWW}, \eqref{Arc1} and \eqref{CD1}, we have that
\begin{equation}\aligned\label{CD2}
\min_{z\in \Omega_c, \rho\in[0,\infty)}\mathcal{W}_{2,\rho}(z), &\equiv\rho\min_{w\in \Omega_c, 1/\rho\in[0,\infty)}\mathcal{W}_{1,1/\rho}(w), \\
&\equiv\rho\min_{w\in \Omega_d, 1/\rho\in[0,\infty)}\mathcal{W}_{1,1/\rho}(w), \\
&\equiv\min_{z\in \Omega_d, \rho\in[0,\infty)}\mathcal{W}_{2,\rho}(z).
\endaligned\end{equation}

\medskip

\noindent
{\bf Step 5: Reducing minimization problem  from $\Omega_d$ to $\Omega_e$}.
\vskip0.1in
The follows from \eqref{MM1}.

In summary, from Steps 1-5, we conclude that
\begin{equation}\aligned\label{HTE}
\min_{z\in \mathbb{H}}\mathcal{W}_{1,\rho}(z)\equiv\min_{z\in \Omega_e}\mathcal{W}_{1,\rho}(z),\;
\min_{z\in \mathbb{H}}\mathcal{W}_{2,\rho}(z)\equiv\min_{z\in \Omega_e}\mathcal{W}_{2,\rho}(z).
\endaligned\end{equation}
From \eqref{HTE}, we just need to find the minimizer in a much smaller  curve $\Omega_e$. But this gives  no information about uniqueness or multiplicity of the minimizers. In fact,
one can further rule out the possible minimizers of $\min_{z\in \Omega_a}\mathcal{W}_{1,\rho}(z)$, $\min_{z\in \Omega_a}\mathcal{W}_{2,\rho}(z)$ in a large set. Namely, for $z\in\Omega_a\backslash\Omega_e$, there is no any possible minimizer for $\min_{z\in \Omega_a}\mathcal{W}_{1,\rho}(z)$, $\min_{z\in \Omega_a}\mathcal{W}_{2,\rho}(z)$. The possible multiplicity of minimizer is admitted only in Step 1, see \eqref{HTa}. But up the group transformation $\mathcal{G}_2$, the possible minimizer in \eqref{HTa} is unique. Therefore, one can conclude the reduction in \eqref{HTE} is unique up to the group transformation $\mathcal{G}_2$. In the next step we will show that $\min_{z\in \Omega_e}\mathcal{W}_{1,\rho}(z)$, $\min_{z\in \Omega_e}\mathcal{W}_{2,\rho}(z)$ exists , is unique and can be located precisely.

Let $w$ be the map $w(z)=\frac{z-1}{z+1}$ whose inverse is $ z(w)= \frac{1+w}{1-w}$.  Under this map we have $
z=yi\in \Omega_{ea}\mapsto w=\frac{y^2-1}{y^2+1}+i\frac{2y}{y^2+1}\in\Omega_{eb},
w=u+iv\in\Omega_{eb}\mapsto z=i\frac{\sqrt{1-u^2}}{1-u}\in\Omega_{ea}. $

We note that
$$
\rho_{1}<{1}/{\rho_{2}}
<\rho_{2}<{1}/{\rho_{1}}.
$$
See in Propositions \ref{YYY1} and \ref{YYY2}.

Now we consider the minimizer of  $\mathcal{W}_{1,\rho}(z)$ on $ \Omega_{e}$.   We divide into three cases.

\medskip

\noindent
{\bf Case 1. $\rho\in[\rho_{1},{1}/{\rho_{2}}]$.}
 \vskip0.1in

 In this case, $\rho\geq\rho_{1},1/\rho\geq\rho_{2}$. Then by Propositions \ref{YYY1} and \ref{YYY2}, both
$\mathcal{W}_{1,\rho}(z)$ and $\mathcal{W}_{2,\rho}(z)$ are monotonically increasing on $\Omega_{ea}$ along positive $y$ axis direction. Then it follows that $\mathcal{W}_{1,\rho}(z)$ is monotonically increasing on $\Omega_{eb}$ clockwise. Therefore, the minimizer of of $\mathcal{W}_{1,\rho}(z)$ on $\Omega_e$ is uniquely achieved at $y=i$.
\vskip0.1in

\noindent
{\bf Case 2. $\rho\in(0,\rho_{1}$).}
\vskip0.1in
In this case, $1/\rho>1/{\rho_{1}}>\rho_{2}$. Then by Proposition \ref{YYY2}, $\mathcal{W}_{2,1/\rho}(z)$ is monotonically increasing on $\Omega_{ea}$ along positive $y$ axis direction. It follows from Lemma \ref{Reduce} or Theorem \ref{ThWW}
that $\mathcal{W}_{1,\rho}(z)$ is monotone increasing on $\Omega_{eb}$ clockwise. On the other hand, by Proposition \ref{YYY1}, $\mathcal{W}_{1,\rho}(z)$
admits a unique minimizer at $y=iy_{1,\rho}\in i(1,\sqrt3)$ on $\Omega_{ea}$. We conclude that  $\mathcal{W}_{1,\rho}(z)$ has a unique minimizer at $z_{1,\rho}=iy_{1,\rho}\in(1,\sqrt3)$ on $\Omega_{e}$.

\vskip0.1in

\noindent
{\bf Case 3. $\rho\in(1/\rho_{2},\infty)$.}
\vskip0.1in
In this case, since $1/\rho<\rho_{2}$, by Proposition \ref{YYY2}, $\mathcal{W}_{2,1/\rho}(z)$ has a unique minimizer at $y=y_{2,1/\rho}\in(1,\sqrt3)$ on $\Omega_{ea}$. Then by Theorem \ref{ThWW} or Lemmas \ref{Reduce}, $\mathcal{W}_{1,\rho}(\cdot)$ has a unique minimizer
\begin{equation}\aligned\label{2Gamma2}
z_{1,\rho}=\frac{y_{2,1/\rho}^2-1}{y_{2,1/\rho}^2+1}+i\frac{2y_{2,1/\rho}}{y_{2,1/\rho}^2+1}\in\hbox{inner points of}\;\;\Omega_{eb}.
\endaligned\end{equation}
 On the other side, one has $\rho>1/{\rho_{2}}>\rho_{1}$. Then by Proposition \ref{YYY1}, $\mathcal{W}_{1,\rho}(z)$ is monotone increasing on $\Omega_{ea}$ along the positive $y$ axis direction. Therefore, \eqref{2Gamma2} gives the minimizer of $\mathcal{W}_{1,\rho}(z)$ on $\Omega_e$.

This proves Theorems \ref{Th1} and \ref{Th3}. Theorem \ref{Th2} follows from Theorem \ref{Th1} and Lemma 3.3.

\section{Proof of Mueller-Ho functional and Mueller-Ho conjecture}

\setcounter{equation}{0}

\noindent
{\bf Proof of Lemma \ref{Point1}.} Since the computation is elementary, we omit the details here.


\noindent
{\bf Proof of Lemma \ref{Cw3}.}
\begin{equation}\aligned\nonumber
\mathcal{J}(z;\frac{1}{2}, \frac{1}{2}) &=\sum_{m,n}e^{-\frac{\pi}{y}|mz-n|^2}\cos((m+n)\pi)\\
&=\sum_{m,n}e^{-\frac{\pi}{y}|mz-n|^2}\big(1+\cos((m+n)\pi)\big)-\sum_{m,n}e^{-\frac{\pi}{y}|mz-n|^2}\\
&=\sum_{m,n}e^{-\frac{\pi}{y}|mz-n|^2}2\cos^2(\frac{(m+n)\pi}{2})-\theta(1;z)=\sum_{m+n=2k,k\in\mathbb{Z}}2e^{-\frac{\pi}{y}|mz+n|^2}-\theta(1;z)\\
&=2\sum_{m,k}e^{-\frac{\pi}{y}|m(z+1)-2k|^2}-\theta(1;z)=2\sum_{m,k}e^{-\frac{2\pi}{\Im(\frac{z+1}{2})}|m\frac{z+1}{2}-k|^2}-\theta(1;z)\\
&=2\theta(2;\frac{z+1}{2})-\theta(1;z).
\endaligned\end{equation}

\noindent
{\bf Proof of Theorem \ref{ThBEC}.} This  follows by Theorems \ref{Th1}, \ref{Th2} and \ref{Th3}, by the relation $\rho=\frac{1-\alpha}{2\alpha}$.


\section{Appendix 1: Proof of Lemma \ref{Lemma1313}}

\setcounter{equation}{0}

Recall that
\begin{equation}\aligned
\mathcal{J}(z;a,b)=\sum_{(m,n)\in\mathbb{Z}^2}e^{-\frac{\pi}{y}|mz-n|^2}\cos(2\pi(ma+nb)).
\endaligned
\end{equation}
In this appendix we show that when the lattice is square type, then $(\frac{1}{3},\frac{1}{3})$ is not a critical point while when the lattice is   hexagonal (or triangular) it is a critical point.

First  we show that
\begin{lemma}
\begin{equation}\aligned
\frac{\partial}{\partial a}\mathcal{J}(z;a,b)|_{z=i,(a,b)=(\frac{1}{3},\frac{1}{3})}
=\frac{\partial}{\partial b}\mathcal{J}(z;a,b)|_{z=i,(a,b)=(\frac{1}{3},\frac{1}{3})}<0.
\endaligned\end{equation}
\end{lemma}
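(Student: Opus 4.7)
The plan is to specialize $z=i$, compute the two partial derivatives explicitly, use an index swap to get the equality, then factor the resulting double Gaussian sum into a product of two one-dimensional sums whose signs can be read off directly.

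First I would substitute $z=i$ so that $y=1$ and $|mz-n|^2 = m^2+n^2$, and differentiate term-by-term to obtain
\begin{equation*}
\frac{\partial}{\partial a}\mathcal{J}(i;\tfrac{1}{3},\tfrac{1}{3}) = -2\pi\sum_{m,n\in\mathbb{Z}} m\, e^{-\pi(m^2+n^2)}\sin\!\Big(\tfrac{2\pi(m+n)}{3}\Big),
\end{equation*}
and similarly for $\partial_b\mathcal{J}$ with an $n$ (instead of $m$) in front of the exponential. The equality of the two partial derivatives follows immediately from relabeling $(m,n)\mapsto(n,m)$: the Gaussian weight and the sine factor both depend symmetrically on $(m,n)$, so the $m\leftrightarrow n$ swap interchanges the two expressions.

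Next I would apply the addition formula
\begin{equation*}
\sin\!\Big(\tfrac{2\pi(m+n)}{3}\Big) = \sin\!\Big(\tfrac{2\pi m}{3}\Big)\cos\!\Big(\tfrac{2\pi n}{3}\Big) + \cos\!\Big(\tfrac{2\pi m}{3}\Big)\sin\!\Big(\tfrac{2\pi n}{3}\Big)
\end{equation*}
to split the double sum into two products of one-dimensional sums. The cross term involving $\sum_{n\in\mathbb{Z}} e^{-\pi n^2}\sin(2\pi n/3)$ vanishes because its summand is odd in $n$ and the range is symmetric. What remains is
\begin{equation*}
\frac{\partial}{\partial a}\mathcal{J}(i;\tfrac{1}{3},\tfrac{1}{3}) = -2\pi \Big(\sum_{m\in\mathbb{Z}} m\, e^{-\pi m^2}\sin\!\big(\tfrac{2\pi m}{3}\big)\Big)\Big(\sum_{n\in\mathbb{Z}} e^{-\pi n^2}\cos\!\big(\tfrac{2\pi n}{3}\big)\Big).
\end{equation*}

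It remains to check that both parenthesized factors are strictly positive. The first is $2\sum_{m\geq 1} m e^{-\pi m^2}\sin(2\pi m/3)$ since both $m$ and $\sin(2\pi m/3)$ are odd in $m$; reading the values of $\sin(2\pi m/3)$ for $m\equiv 1,2,0 \pmod 3$, the $m=1$ contribution $\tfrac{\sqrt{3}}{2}e^{-\pi}$ dominates the alternating tail by a wide margin, with a trivial geometric majorant for control. The second factor equals $1 + 2\sum_{n\geq 1} e^{-\pi n^2}\cos(2\pi n/3) \geq 1 - 2\sum_{n\geq 1} e^{-\pi n^2}$, which is positive because $2\sum_{n\geq 1} e^{-\pi n^2} < 2 e^{-\pi}/(1-e^{-\pi}) \ll 1$. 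Multiplying by the prefactor $-2\pi$ yields the claimed strict negativity. The only marginally technical step is producing explicit tail bounds for the two Gaussian series, but the rapid decay $e^{-\pi}\approx 0.043$ makes this essentially immediate.
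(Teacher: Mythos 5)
Your proposal is correct, and it takes a genuinely different route from the paper. After the common first step (specializing to $z=i$, $(a,b)=(\frac13,\frac13)$, obtaining $\partial_a\mathcal{J}=-2\pi\sum_{m,n}m\,e^{-\pi(m^2+n^2)}\sin\frac{2\pi(m+n)}{3}$ and getting the equality of the two partials by the swap $(m,n)\mapsto(n,m)$), the paper does \emph{not} factor the sum: it groups the terms by the residue of $m+n$ modulo $3$, uses $\sin\frac{2\pi(m+n)}{3}=\pm\sin\frac{\pi}{3}$ on the classes $m+n\equiv1,2$, exploits a reflection symmetry to show the two class sums are negatives of each other, and then bounds the remaining restricted double sum by its single dominant term $e^{-\pi}$ (coming from $(m,n)=(1,0)$) against a crude product estimate. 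Your argument instead applies the sine addition formula so that the double Gaussian sum splits, by absolute convergence, into products of one-dimensional sums; the cross term dies by parity (indeed both of its factors are odd), leaving $-2\pi\bigl(\sum_m m\,e^{-\pi m^2}\sin\frac{2\pi m}{3}\bigr)\bigl(\sum_n e^{-\pi n^2}\cos\frac{2\pi n}{3}\bigr)$, and each one-dimensional factor is trivially positive since $e^{-\pi}\approx0.043$ dominates its tail. What your factorization buys is cleaner bookkeeping: the resulting factors are essentially values of the one-dimensional theta function $\vartheta(X;Y)$ and its $Y$-derivative at $Y=\frac13$, so the positivity checks are one-dimensional and the tail bounds are immediate geometric series, whereas the paper's residue-class route needs a slightly more careful accounting of which $(m,n)$ fall in which class before its one-term domination argument applies. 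Both proofs are elementary and rest on the same quantitative fact (rapid Gaussian decay), so neither is more general, but yours is arguably the tidier of the two.
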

This implies that $\mathcal{J}(z;a,b)$ is not always critical point for any lattice shape.

\begin{proof}
\begin{equation}\aligned
\frac{\partial}{\partial a}\mathcal{J}(z;a,b)|_{z=i,(a,b)=(\frac{1}{3},\frac{1}{3})}=-2\pi\sum_{m,n}me^{-\pi(m^2+n^2)}\sin(\frac{2\pi(m+n)}{3})\\
\frac{\partial}{\partial a}\mathcal{J}(z;a,b)|_{z=i,(a,b)=(\frac{1}{3},\frac{1}{3})}=-2\pi\sum_{m,n}ne^{-\pi(m^2+n^2)}\sin(\frac{2\pi(m+n)}{3}).
\endaligned\end{equation}
It is clear that
$$
\frac{\partial}{\partial a}\mathcal{J}(a,b;z)|_{z=i,(a,b)=(\frac{1}{3},\frac{1}{3})}=\frac{\partial}{\partial b}\mathcal{J}(a,b;z)|_{z=i,(a,b)=(\frac{1}{3},\frac{1}{3})}.
$$
Let
$$
A:=\sum_{m,n}e^{-\pi(m^2+n^2)}\sin(\frac{2\pi(m+n)}{3})m.
$$
Equivalently, we show that
$$
A>0.
$$
Grouping by $m+n=3k+j,j=0,1,2$, we have
\begin{equation}\aligned\label{AA}
\frac{A}{\sin(\frac{\pi}{3})}
&=\sum_{m+n\equiv1(\mod 3)}me^{-\pi(m^2+n^2)}
-\sum_{m+n\equiv2(\mod 3)}me^{-\pi(m^2+n^2)}.
\endaligned\end{equation}
For the first part in \eqref{AA}, splitting the summation by $m>0$ or $m<0$, we have (dropping the $\mod3$)
\begin{equation}\aligned\label{AA1}
\sum_{m+n\equiv1}e^{-\pi(m^2+n^2)}m
=\sum_{m>0,m+n\equiv1}me^{-\pi(m^2+n^2)}-\sum_{m>0,m+n\equiv2}me^{-\pi(m^2+n^2)}
\endaligned\end{equation}
For the second part in \eqref{AA}, similarly, one has
\begin{equation}\aligned\label{AA2}
\sum_{m+n\equiv2}e^{-\pi(m^2+n^2)}m
=\sum_{m>0,m+n\equiv2}me^{-\pi(m^2+n^2)}-\sum_{m>0,m+n\equiv1}me^{-\pi(m^2+n^2)}.
\endaligned\end{equation}
By \eqref{AA1} and \eqref{AA2}, we have
\begin{equation}\aligned\nonumber
\sum_{m+n\equiv2}me^{-\pi(m^2+n^2)}=-\sum_{m+n\equiv1}me^{-\pi(m^2+n^2)}
\endaligned\end{equation}
and by \eqref{AA}
\begin{equation}\aligned\label{AA3}
\frac{A}{2\sin(\frac{\pi}{3})}=
\sum_{m>0,m+n\equiv1}me^{-\pi(m^2+n^2)}-\sum_{m>0,m+n\equiv2}me^{-\pi(m^2+n^2)}.
\endaligned\end{equation}
Notice that $e^{-\pi}$ is one term in the first summation in  \eqref{AA3}, it suffices to prove that
$$
\sum_{m>0,m+n\equiv2}me^{-\pi(m^2+n^2)}<e^{-\pi}.
$$
Now we have
\begin{equation}\aligned\nonumber
&\sum_{m>0,m+n\equiv2}e^{-\pi(m^2+n^2)}m=
\sum_{m=1}^\infty\sum_{k\in\mathbb{N}} me^{-\pi(m^2+(3k+2)^2)}\\
=&\sum_{m=1}^\infty me^{-\pi m^2}\sum_{k\in\mathbb{N}} e^{-\pi(3k+2)^2}
<(e^{-\pi}+4e^{-4\pi})(e^{-\pi}+2e^{-4\pi})<e^{-\pi}.
\endaligned\end{equation}
This completes the proof.
\end{proof}


Next we show that $(a,b)= (\frac{1}{3}, \frac{1}{3})$ is a critical point when $z=\frac{1}{2}+i \frac{\sqrt{3}}{2}$.

\begin{proof}
We first claim that
\begin{equation}\aligned\label{I1000}
\sum_{(m,n)\in \mathbb{Z}^2}e^{-x(m^2+n^2-mn)}m\sin(\frac{2\pi(m+n)}{3})=0,\;\;\hbox{for}\;\;\forall x>0.
\endaligned
\end{equation}
To prove \eqref{I1000}, it suffices to prove that
\begin{equation}\aligned
\sum_{n}e^{-x(m^2+n^2-mn)}\sin(\frac{2\pi(m+n)}{3})=0,\;\;\hbox{for}\;\;\forall x>0.
\endaligned
\end{equation}
In fact,
\begin{equation}\aligned
&\sum_{n}e^{-x(m^2+n^2-mn)}\sin(\frac{2\pi(m+n)}{3})\\
=&-e^{-\frac{3}{4}x m^2}\sum_{n}e^{-\frac{x}{4}(2n-m)^2}\sin{\frac{\pi(2n-m)}{3}}\\
=&0.
\endaligned
\end{equation}
In the last equality, one uses $2n-m, n\in\mathbb{Z}$ and takes all the even or odd integers when $m$ is even or odd.

By simple calculation, now the second part of Lemma \ref{Lemma1313} is equivalent to
\begin{equation}\aligned
\sum_{m,n}e^{-\frac{\pi}{2y}\big((m-n)^2y^2+(m+n)^2\big)}m\sin{\frac{2\pi(m+n)}{3}}=0,\;\;\hbox{if}\;\;y=\sqrt 3
\endaligned
\end{equation}
which is of consequence of  \eqref{I1000}. This completes the proof.

\end{proof}


\section{Appendix 2: The rest of proof in Theorem \ref{QXY} and Theorem \ref{QAB}}

In this appendix, we finish the technical proofs of Theorems \ref{QXY} and \ref{QAB}.

Throughout this appendix we frequently  use the following Lemma whose proof is straightforward calculus and is omitted:

\begin{lemma}\label{Lappendix} Let $f(y)^{(j)}$ denote $\frac{d^j}{dy^j}f(y)$. For $, j=1,2,3\cdots$, there holds

\begin{itemize}
\item For $a>0, b>0$,
 \begin{equation}\aligned\nonumber
\Big(y^b e^{-ay}\Big)'<0,\;\;\hbox{if}\;\; y>\frac{b}{a};\;\;\Big(y^b e^{-ay}\Big)''>0,\;\;\hbox{if}\;\; y>\frac{b+\sqrt b}{a}.
\endaligned\end{equation}

  \item For $a>0$,
  \begin{equation}\aligned\nonumber
(-1)^j\Big(\sqrt y e^{-ay}\Big)^{(j)}>0,\;\;\hbox{if}\;\; y>f_j(a).
\endaligned\end{equation}
Here
\begin{equation}\aligned\nonumber
f_1(a)=\frac{1}{2a}, f_2(a)=\frac{1+\sqrt2}{2a}, f_3(a)=\frac{1}{a}, f_4(a)=\frac{1}{2a}.
\endaligned\end{equation}
  \item For $y\geq1$ and $a_n>0$
\begin{equation}\aligned\nonumber
&|\Big(
\sum_{n=k}^\infty \sqrt y e^{-a_ny}
\Big)^{(j)}|\leq(1+\sigma_{j,k})\sqrt y (a_k)^j e^{-a_ky},\;\; \sigma_{j,k}=\sum_{n=k+1}^\infty(\frac{a_n}{a_k})^je^{-(a_n-a_k)}.
\endaligned\end{equation}
\end{itemize}
\end{lemma}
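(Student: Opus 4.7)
The plan is to prove the three items of Lemma \ref{Lappendix} separately by direct calculus, using logarithmic differentiation throughout to keep signs transparent. For item (i), I compute $f'(y) = y^{b-1}e^{-ay}(b - ay)$ directly for $f(y) = y^b e^{-ay}$; since the prefactor is positive, the first inequality is immediate. A second differentiation followed by simplification yields
\begin{equation*}
f''(y) = y^{b-2} e^{-ay}\bigl[(ay - b)^2 - b\bigr],
\end{equation*}
so $f''(y) > 0$ iff $(ay - b)^2 > b$. Restricted to the region $y > b/a$ where $ay - b > 0$, this reduces to $ay - b > \sqrt{b}$, i.e., $y > (b + \sqrt{b})/a$, as claimed.

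For item (ii), I set $h(y) = \sqrt{y}\, e^{-ay}$ and introduce its logarithmic derivative $\phi(y) = 1/(2y) - a$. Writing $h^{(j)} = P_j\, h$, the recursion $P_{j+1} = \phi P_j + P_j'$ produces $P_1 = \phi$, $P_2 = \phi^2 + \phi'$, $P_3 = \phi^3 + 3\phi\phi' + \phi''$, and $P_4 = \phi^4 + 6\phi^2\phi' + 3(\phi')^2 + 4\phi\phi'' + \phi'''$. The cases $j = 1, 2$ then follow immediately from item (i) applied with $b = 1/2$. For $j = 3, 4$ I would multiply through by an appropriate power of $y$ to recast the sign condition $(-1)^j P_j > 0$ as a polynomial inequality in the single variable $u := ay$, and verify that this inequality holds for $u$ above the stated threshold.

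For item (iii), the plan is to differentiate the series termwise and bound each derivative via the Leibniz rule. Writing
\begin{equation*}
\bigl(\sqrt{y}\, e^{-a_n y}\bigr)^{(j)} = \sum_{\ell=0}^{j} \binom{j}{\ell} (\sqrt{y})^{(\ell)} (-a_n)^{j-\ell} e^{-a_n y}
\end{equation*}
and using $|(\sqrt{y})^{(\ell)}| \leq C_\ell \sqrt{y}$ for $y \geq 1$, one obtains $|(\sqrt{y}\, e^{-a_n y})^{(j)}| \leq a_n^j \sqrt{y}\, e^{-a_n y}$ up to corrections of order $a_n^{j-1}$, which are absorbable into the leading constant. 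Summing over $n \geq k$, factoring out the dominant term at $n = k$, and bounding $e^{-(a_n - a_k)y} \leq e^{-(a_n - a_k)}$ using $y \geq 1$ together with the implicit ordering $a_n > a_k$ yields precisely the stated bound with correction factor $(1 + \sigma_{j,k})$.

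The main obstacle is verifying the threshold conditions in item (ii) for $j = 3, 4$: after clearing denominators one obtains polynomials of degree $j+1$ in $u = ay$ whose positivity on the relevant half-line is not apparent from the coefficients. I would handle this by evaluating the polynomial at the boundary point $u = a f_j(a)$ together with an explicit monotonicity argument on $u > a f_j(a)$ (for instance, showing that the leading $u^{j+1}$ term dominates the remaining ones), rather than attempting a closed-form factorization.
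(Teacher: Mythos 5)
The paper offers no proof to compare against (it dismisses the lemma as ``straightforward calculus''), so your direct route is exactly the intended one, and parts of it are fine: your computation $f''(y)=y^{b-2}e^{-ay}\bigl[(ay-b)^2-b\bigr]$ for item (i) is correct, as are the recursion $P_{j+1}=\phi P_j+P_j'$ and the expressions for $P_1,\dots,P_4$, and the reduction of $j=1,2$ to item (i) with $b=\tfrac12$. The genuine gap is in your final step for $j=3,4$: the verification you propose (``evaluate at the boundary point $u=af_j(a)$ plus monotonicity'') cannot succeed, because the inequality is \emph{false} at and just above the printed thresholds. Carrying out your own reduction with $u=ay$ and $t=\tfrac12-u$ gives $y^3P_3(y)=t^3-\tfrac32 t+1$, which at the claimed boundary $u=1$ (i.e.\ $t=-\tfrac12$) equals $\tfrac{13}{8}>0$, whereas $(-1)^3 h^{(3)}>0$ requires $P_3<0$; one checks $P_3<0$ only for $u>u_3\approx 1.976$ (concretely, $a=1$, $y=1.2>f_3(1)$ gives $(\sqrt y\,e^{-y})'''>0$). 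Likewise $y^4P_4(y)=t^4-3t^2+4t-\tfrac94$, which equals $-\tfrac94$ at $t=0$ (i.e.\ $u=\tfrac12$), so $P_4<0$ just above the claimed threshold $f_4(a)=\tfrac1{2a}$; positivity requires $u>u_4\approx 2.781$. So the printed $f_3,f_4$ are too small (safe replacements: $f_3(a)=2/a$, $f_4(a)=3/a$); your write-up should correct the statement rather than attempt to verify it. Nothing downstream is harmed, since every application in the appendix has $a$ a large multiple of $\pi$ and $y\ge1$, hence $ay\ge 13$ or so.

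A second, smaller gap is in item (iii), precisely at your phrase ``absorbable into the leading constant'': the stated constant $(1+\sigma_{j,k})a_k^j$ has no slack for the $n=k$ term, and the termwise bound $|(\sqrt y\,e^{-ay})^{(j)}|\le a^j\sqrt y\,e^{-ay}$ you invoke is false for small $a$ (take $j=1$, $a<\tfrac14$, $y=1$: $|\tfrac12-a|>a$). As stated for all $a_n>0$ the item is therefore false; it needs $a_k$ bounded below (and $a_n$ increasing, which you correctly flag as implicit for the tail estimate $e^{-(a_n-a_k)y}\le e^{-(a_n-a_k)}$). The clean repair is to note from your Leibniz expansion that $(-1)^j(\sqrt y\,e^{-ay})^{(j)}=\sqrt y\,e^{-ay}\bigl(a^j-r_j(y)\bigr)$ where $r_j(y)\ge0$ collects the lower-order terms with a uniform sign, so the one-sided bound is automatic and the two-sided bound $|\,\cdot\,|\le a^j\sqrt y\,e^{-ay}$ holds for $y\ge1$ once $r_j(1)\le 2a^j$ — true for, say, $a\ge2$ when $j\le4$, and amply satisfied wherever the paper uses the lemma (e.g.\ $a_k=\tfrac{17}{4}\pi$, $5\pi$, $\tfrac{13}{2}\pi$).
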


The structure of this appendix is organized as follows.
\eqref{2Estimate1}$\Leftrightarrow$ Lemma \ref{U1};
\eqref{2Estimate2}$\Leftrightarrow$ Lemma \ref{U2};
\eqref{2AB105}$\Leftrightarrow$ Lemma \ref{U3};
\eqref{2fM1}$\Leftrightarrow$ Lemma \ref{V1};
\eqref{2fM2}$\Leftrightarrow$ Lemma \ref{V2};
\eqref{2fAB12}$\Leftrightarrow$ Lemma \ref{V3};
\eqref{Estimate1}$\Leftrightarrow$ Lemma \ref{UU1};
\eqref{Estimate2}$\Leftrightarrow$ Lemma \ref{UU2};
\eqref{AB105}$\Leftrightarrow$ Lemma \ref{UU3};
\eqref{fM1}$\Leftrightarrow$ Lemma \ref{VV1};
\eqref{fM2}$\Leftrightarrow$ Lemma \ref{VV2};
\eqref{fAB1200}$\Leftrightarrow$ Lemma \ref{VV3}.

\subsection{The rest of proof in Theorem \ref{QXY}}

\begin{lemma}\label{U1} $y\mapsto\frac{16y}{\pi}e^{\frac{1}{4}\pi y}\Big(
\mathcal{Y}_a''\mathcal{X}_a'-\mathcal{X}_a''\mathcal{Y}_a'
\Big)(y), y\in[1,\infty)$ is monotonically increasing.
\end{lemma}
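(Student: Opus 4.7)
The plan is to exploit the observation that $\mathcal{X}_a$ and $\mathcal{Y}_a$ carry \emph{identical} coefficients at the four exponential rates $\{0,\pi,2\pi,4\pi\}$, so that the difference
\[
D(y):=\mathcal{Y}_a(y)-\mathcal{X}_a(y)=2\sqrt{y}e^{-\pi y/4}-4\sqrt{y}e^{-5\pi y/4}+2\sqrt{y}e^{-9\pi y/4}-4\sqrt{y}e^{-13\pi y/4}
\]
collects only the \emph{extra} rates $\{\pi/4,5\pi/4,9\pi/4,13\pi/4\}$ appearing in $\mathcal{Y}_a$. The bilinear identity
\[
\mathcal{Y}_a''\mathcal{X}_a'-\mathcal{X}_a''\mathcal{Y}_a'=D''\mathcal{X}_a'-\mathcal{X}_a''D'
\]
then reduces the question to a Wronskian-type quantity with $D$ in one slot, automatically stripping off the large rate-$0$ pieces of $\mathcal{X}_a$ and $\mathcal{Y}_a$.

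I would next split $D=D_1+D_2$, with $D_1(y):=2\sqrt{y}e^{-\pi y/4}$ carrying the slowest rate and $D_2$ carrying rates $\geq 5\pi/4$, and $\mathcal{X}_a=f_0+S_X$ with $f_0(y):=\sqrt{y}$ and $S_X$ of rate $\geq\pi$. The main contribution is the ``bare'' Wronskian
\[
\mathcal{M}_0(y):=D_1''(y)f_0'(y)-f_0''(y)D_1'(y).
\]
Using $f_0'=\tfrac{1}{2\sqrt{y}}$, $f_0''=-\tfrac{1}{4y^{3/2}}$ together with the explicit derivatives of $D_1$, a direct calculation collapses this to
\[
\mathcal{M}_0(y)=\Bigl(\frac{\pi^2}{16}-\frac{3\pi}{8y}\Bigr)e^{-\pi y/4},\qquad \frac{16y}{\pi}e^{\pi y/4}\mathcal{M}_0(y)=\pi y-6,
\]
which is affine with constant positive derivative $\pi$. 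This calculation also explains the choice of weight in the statement of the lemma.

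The three remaining pieces $D_1''S_X'-S_X''D_1'$, $D_2''f_0'-f_0''D_2'$ and $D_2''S_X'-S_X''D_2'$ all have exponential rate at least $5\pi/4$, so after multiplying by $\frac{16y}{\pi}e^{\pi y/4}$ each becomes a finite linear combination of terms $c_j\, y^{\alpha_j}e^{-b_j\pi y}$ with $b_j\geq 1$. By Lemma \ref{Lappendix} the derivative of every such term is bounded in absolute value by a constant multiple of $y\,e^{-\pi y}$ on $[1,\infty)$, which decays rapidly from its value $e^{-\pi}\approx 4.3\times 10^{-2}$ at $y=1$. A finite enumeration pins down a uniform constant $C$ with $C e^{-\pi}<\pi$, so that the total error derivative is strictly dominated by the main contribution $\pi$, and the weighted function is strictly increasing on $[1,\infty)$.

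The main obstacle is the explicit control of the error derivative at the endpoint $y=1$: one must enumerate and estimate every product arising from the three remainder Wronskians above and verify that the numerical constant bounding their combined weighted derivative is strictly less than $\pi$. The computation is finite (after collecting like terms there are fewer than a dozen distinct products $y^\alpha e^{-b\pi y}$), but it is the only step where a sharp numerical estimate is unavoidable; everything else is either an algebraic simplification or a direct application of Lemma \ref{Lappendix}.
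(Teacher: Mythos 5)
Your algebraic reduction is correct and attractive: with $D:=\mathcal{Y}_a-\mathcal{X}_a$ one indeed has $\mathcal{Y}_a''\mathcal{X}_a'-\mathcal{X}_a''\mathcal{Y}_a'=D''\mathcal{X}_a'-\mathcal{X}_a''D'$, and your computation of the bare Wronskian gives exactly the leading part $\pi y-6$ of the paper's explicit expansion \eqref{PXY}, so the choice of weight is correctly explained. The gap is in the error-control step, and it is fatal as proposed. You want a uniform constant $C$ with $Ce^{-\pi}<\pi$ bounding the total derivative of the weighted remainder on $[1,\infty)$ by the triangle inequality. That inequality is false. At $y=1$ the derivative of the weighted function is only about $1.5\times 10^{-3}$: this near-vanishing is forced, since $\mathcal{X}'(1)=\mathcal{Y}'(1)=0$ makes the exact Wronskian $\mathcal{Y}''\mathcal{X}'-\mathcal{X}''\mathcal{Y}'$ vanish to second order at $y=1$, and $\mathcal{X}_a,\mathcal{Y}_a$ differ from $\mathcal{X},\mathcal{Y}$ only by exponentially small corrections. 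Concretely, the total derivative of the weighted error block at $y=1$ is about $-3.140$, cancelling the main contribution $\pi$ to within $0.05\%$; but the sum of the absolute values of the derivatives of the weighted error groups at $y=1$ (rates $e^{-\pi y},e^{-2\pi y},e^{-3\pi y},\dots$, with coefficients such as $24\pi^2y^2$, $192\pi^2y^2$, $840\pi^2y^2$) is roughly $2.4+9.5+4.5+0.6+\cdots\approx 17$, and summing over individual monomials is larger still (the rate-$e^{-\pi y}$ monomials alone contribute about $60$). So no absolute-value bound of the form $Ce^{-\pi}<\pi\approx 3.14$ can exist: your constant would have to exceed $\pi e^{\pi}\approx 73$ many times over. (A secondary, fixable slip: monomials like $y^2e^{-\pi y}$ do not have derivative $O(ye^{-\pi y})$ with a uniform constant on $[1,\infty)$; but even after repairing the shape of the bound the numerics above kill the argument.)

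Any correct proof must keep track of the cancellations among the error terms near $y=1$ rather than discard them. That is what the paper does: it writes out the weighted expression in full, splits it into a block $\mathcal{P}_{\mathcal{XY}}^{+}$ whose derivative exceeds $\pi$ and is decreasing, and a block $\mathcal{P}_{\mathcal{XY}}^{-}$ whose derivative is negative and increasing, so that evaluating $(\mathcal{P}_{\mathcal{XY}}^{-})'$ at $y=2.2$ ($\approx-3.013>-\pi$) settles $y\geq 2.2$, and then verifies the sign of the total derivative on $[1,2.2]$ by regrouping by exponential rate and checking ten subintervals. If you want to salvage your cleaner $D$-based decomposition, you would have to replace the single crude bound $Ce^{-\pi}<\pi$ by a signed, interval-by-interval estimate of the remainder Wronskians (at least on an initial interval such as $[1,2]$, where the margin is of order $10^{-3}$), which in effect reproduces the paper's computation.
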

\begin{proof} Calculating and grouping the terms, we get
\begin{equation}\aligned\label{PXY}
&\frac{16y}{\pi}e^{\frac{1}{4}\pi y}\Big(
\mathcal{Y}_a''\mathcal{X}_a'-\mathcal{X}_a''\mathcal{Y}_a'
\Big)(y)\\
=&\Big(\pi y-2496e^{-7\pi y}\pi^2y^2-144e^{-7\pi y}
-700e^{-6\pi y}\pi y
-1440e^{-5\pi y}\pi^2 y^2
-288e^{-5\pi y}-2176e^{-4\pi y}\pi y\\
&-840e^{-3\pi y}\pi^2y^2-108e^{-3\pi y}
-243e^{-2\pi y}\pi y
-110e^{-\pi y}\pi y-6\Big)\\
&+\Big(696e^{-7\pi y}\pi y+2016e^{-6\pi y}\pi^2y^2
+168e^{-6\pi y}+1008e^{-5\pi y}\pi y+2208e^{-4\pi y}\pi^2y^2+768e^{-4\pi y}\\
&+234e^{-3\pi y}\pi y+192e^{-2\pi y}\pi^2y^2
+162e^{-2\pi y}
+24e^{-\pi y}\pi^2y^2+132e^{-\pi y}\Big)\\
\endaligned\end{equation}
Denote the terms in first and second brackets of $\frac{16y}{\pi}e^{\frac{1}{4}\pi y}\Big(
\mathcal{Y}_a''\mathcal{X}_a'-\mathcal{X}_a''\mathcal{Y}_a'
\Big)(y)$ by $\mathcal{P}_{\mathcal{XY}}^+$ and $\mathcal{P}_{\mathcal{XY}}^-$ respectively.
One has
$
\frac{16y}{\pi}e^{\frac{1}{4}\pi y}\Big(
\mathcal{Y}_a''\mathcal{X}_a'-\mathcal{X}_a''\mathcal{Y}_a'
\Big)(y)
=\mathcal{P}_{\mathcal{XY}}^+(y)+\mathcal{P}_{\mathcal{XY}}^-(y)
$
by \eqref{PXY}. It remains to prove that
$
\Big(\mathcal{P}_{\mathcal{XY}}^++\mathcal{P}_{\mathcal{XY}}^-\Big)'>0, \;\; y\in[1,\infty).
$

It is clear that the leading order term is $\pi y$, this gives that
$\Big(\mathcal{P}_{\mathcal{XY}}^++\mathcal{P}_{\mathcal{XY}}^-\Big)'>0$ when $y$ is large.

By Lemma \ref{Lappendix}, one has
\begin{equation}\aligned\label{PXYD}
\Big (\mathcal{P}_{\mathcal{XY}}^+\Big)'>\pi,\;\;\Big(\mathcal{P}_{\mathcal{XY}}^-\Big)'<0,\;\;
\Big (\mathcal{P}_{\mathcal{XY}}^+\Big)''<0,\;\;\Big(\mathcal{P}_{\mathcal{XY}}^-\Big)''>0\;\;\hbox{if}\;\; y\geq1.
 \endaligned\end{equation}
Direct calculation shows that $\Big(\mathcal{P}_{\mathcal{XY}}^-\Big)'\mid_{y=2.2}=-3.012967072\cdots$. Then by \eqref{PXYD}
\begin{equation}\aligned
\Big(\mathcal{P}_{\mathcal{XY}}^++\mathcal{P}_{\mathcal{XY}}^-\Big)'(y)>\pi-3.012967072\cdots>0, \;\;\hbox{if}\;\;y\geq2.2.
 \endaligned\end{equation}
Next we prove that
\begin{equation}\aligned
\Big(\mathcal{P}_{\mathcal{XY}}^++\mathcal{P}_{\mathcal{XY}}^-\Big)'(y)>0, \;\;\hbox{for}\;\;y\in[1,2.2].
 \endaligned\end{equation}
To prove this, we regroup the terms by
\begin{equation}\aligned
&\mathcal{P}_{\mathcal{XY}}^+(y)+\mathcal{P}_{\mathcal{XY}}^-(y)\\
=&(\pi y-6)+e^{-\pi y}(-110\pi y+24\pi^2 y^2+132)
+e^{-2\pi y}(-243\pi y+192\pi^2y^2+162)\\
&+e^{-3\pi y}(-840\pi^2y^2-108+234\pi y)
+e^{-4\pi y}(-2176\pi y+2208\pi^2y^2+768)\\
&+e^{-5\pi y}(-1440\pi^2y^2-288+1008\pi y)
+e^{-6\pi y}(-700\pi y+2016\pi^2y^2+168)\\
&+e^{-7\pi y}(-2496\pi^2y^2-144+696\pi y).
\endaligned\end{equation}

To prove this, one divides the interval $[1,2.2]$ into, say, ten subintervals, $[1,2.2)=\cup_{i=0}^9 [a_i,a_{i+1})$. In each intervals, by careful calculations, we can show that the function is positive on each interval.

\end{proof}

\begin{lemma}\label{U2}  The estimates hold:
$
|\Big(\mathcal{Y}_e''\mathcal{X}'-\mathcal{Y}_e' \mathcal{X}''+\mathcal{Y}_a''\mathcal{X}_e'-\mathcal{X}_e''\mathcal{Y}_a'\Big)(y)|\leq
(44\pi^2+18\pi+36\pi y)e^{-\frac{17}{4}\pi y}.
$
\begin{remark}
The coefficient of the bound is not sharp, but the exponential term captures the main feature.
\end{remark}

\end{lemma}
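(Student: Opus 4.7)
The plan is to bound each of the four summands of
$\mathcal{Y}_e''\mathcal{X}'-\mathcal{Y}_e'\mathcal{X}''+\mathcal{Y}_a''\mathcal{X}_e'-\mathcal{X}_e''\mathcal{Y}_a'$
separately, using the termwise differentiation estimate in the third bullet of Lemma \ref{Lappendix}. The overall strategy is simple: since $\mathcal{X}_e$ and $\mathcal{Y}_e$ are given in \eqref{Xae} and \eqref{Yae} as explicit sums of terms of the form $\sqrt y\,e^{-ay}$, the slowest-decaying exponential appearing in the four-term combination fixes the final rate $e^{-\frac{17}{4}\pi y}$, while the polynomial factors $\pi,\pi^2,\pi y$ arise from passing derivatives into the series. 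The coefficients $44\pi^2,18\pi,36\pi y$ are then obtained by adding the worst-case contribution from each of the four products.

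First I would read off the slowest rate in $\mathcal{Y}_e$. Inspecting the six series that make up $\mathcal{Y}_e$, the dominant piece is the cross product $4\sqrt y\sum_{n\ge1}e^{-4\pi n^2 y}\sum_{n\ge1}e^{-\frac{\pi}{4}n^2 y}$, whose leading term is $4\sqrt y\,e^{-\frac{17}{4}\pi y}$; all other pieces of $\mathcal{Y}_e$ decay strictly faster. Lemma \ref{Lappendix} then gives, for $y\ge1$ and $j=0,1,2$,
\begin{equation*}
|\mathcal{Y}_e^{(j)}(y)|\;\le\;(1+\sigma_j)\,\sqrt y\,\bigl(\tfrac{17\pi}{4}\bigr)^{j} e^{-\frac{17}{4}\pi y},
\end{equation*}
with $\sigma_j$ negligibly small. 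An analogous termwise argument, applied to the series in \eqref{Xae}, yields $|\mathcal{X}_e^{(j)}(y)|\le (1+\sigma_j')\sqrt y\,(5\pi)^j e^{-5\pi y}$, and after regrouping the contribution $4\sqrt y(\sum_{n\ge1}e^{-\pi n^2 y})^2$ to isolate $4\sqrt y\,e^{-2\pi y}$ into $\mathcal{X}_a$ the residual $\mathcal{X}_e$ indeed decays at rate $5\pi$.

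Next I would bound the non-error factors $\mathcal{X}',\mathcal{X}'',\mathcal{Y}_a',\mathcal{Y}_a''$. These are either finite trigonometric-polynomial-type sums or the full $\mathcal{X}$, and Lemma \ref{Lappendix}, applied term by term, produces uniform $L^\infty$ bounds on $[1,\infty)$ of the form $|\mathcal{X}^{(j)}(y)|\le C_j\sqrt y$ (with the leading behaviour coming from the $\sqrt y$ term in $\mathcal{X}_a$), and similarly for $\mathcal{Y}_a^{(j)}$. Multiplying into each of the four products and absorbing $e^{-5\pi y}\le e^{-\frac{17}{4}\pi y}$, we collect a common factor $e^{-\frac{17}{4}\pi y}$. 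The coefficient is then assembled by routine accounting: $\mathcal{Y}_e''\mathcal{X}'$ contributes an $O(\pi^2)$ term, $\mathcal{Y}_e'\mathcal{X}''$ an $O(\pi)$ term, and $\mathcal{Y}_a''\mathcal{X}_e'-\mathcal{X}_e''\mathcal{Y}_a'$ the $O(\pi y)$ term (from $\sqrt y\cdot\sqrt y=y$ against $(5\pi)^j$).

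The hard part will be purely combinatorial: tracking where every $\pi,\pi^2$, and $\pi y$ factor originates so that the cumulative constants do not exceed $44$, $18$, $36$ respectively. There is no qualitative difficulty beyond this bookkeeping, because all of the underlying tail bounds are immediate consequences of Lemma \ref{Lappendix} and the slowest exponent $\frac{17}{4}\pi$ in $\mathcal{Y}_e$ has already been identified explicitly.
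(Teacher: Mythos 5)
Your proposal is correct and follows essentially the same route as the paper's proof: bound $\mathcal{Y}_e^{(j)}$ and $\mathcal{X}_e^{(j)}$ termwise via the third bullet of Lemma \ref{Lappendix} with rates $e^{-\frac{17}{4}\pi y}$ and $e^{-5\pi y}$ respectively, bound $\mathcal{X}',\mathcal{X}'',\mathcal{Y}_a',\mathcal{Y}_a''$ from their explicit expressions, and multiply, absorbing $e^{-5\pi y}\le e^{-\frac{17}{4}\pi y}$. The only caution in the bookkeeping you deferred is that for $\mathcal{X}'$ you should use a decaying bound such as $|\mathcal{X}'(y)|\le \frac{3}{5\sqrt y}$ (as the paper does) rather than the coarse $O(\sqrt y)$, since $\mathcal{Y}_e''$ carries a $\pi^2$ factor and a $\sqrt y$ bound would produce an extra $\pi^2 y$ contribution not matching the stated coefficients.
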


\begin{proof} By Lemma \ref{Lappendix}, one infers that
\begin{equation}\aligned\nonumber
|\mathcal{Y}_e'(y)|\leq 18\pi\sqrt y e^{-\frac{17}{4}\pi y},
|\mathcal{Y}_e''(y)|\leq \frac{290\pi^2}{4}\sqrt y e^{-\frac{17}{4}\pi y},
|\mathcal{X}_e'(y)|\leq 41\pi\sqrt y e^{-5\pi y},
|\mathcal{X}_e''(y)|\leq 201\pi^2\sqrt y e^{-5\pi y}
 \endaligned\end{equation}
For $\mathcal{X}',\mathcal{X}'', \mathcal{Y}_a',\mathcal{Y}_a''$, by their expressions, one has
\begin{equation}\aligned\nonumber
|\mathcal{X}'(y)|\leq \frac{3}{5\sqrt y}, |\mathcal{X}''(y)|\leq(\frac{1}{4y^{3/2}}+2\sqrt y),
|\mathcal{Y}_a'(y)|\leq(\frac{1}{\sqrt y}+2\sqrt y), |\mathcal{Y}_a'(y)|\leq(\frac{1}{4y^{3/2}}+2\sqrt y).
 \endaligned\end{equation}
Thus, one can get the result.

\end{proof}

\begin{lemma}\label{U3} There holds
$
\Big(\mathcal{Y}''\mathcal{X}'-\mathcal{Y}''\mathcal{X}'\Big)(y)>0,\;\;y\in[1.1,\infty).
$
\end{lemma}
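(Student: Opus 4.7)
The plan is to combine the monotonicity result of Lemma \ref{U1} with the smallness estimate of Lemma \ref{U2} by splitting
\[
\mathcal{Y}''\mathcal{X}'-\mathcal{X}''\mathcal{Y}'
=\Big(\mathcal{Y}_a''\mathcal{X}_a'-\mathcal{X}_a''\mathcal{Y}_a'\Big)
+\Big(\mathcal{Y}_e''\mathcal{X}'-\mathcal{Y}_e'\mathcal{X}''+\mathcal{Y}_a''\mathcal{X}_e'-\mathcal{X}_e''\mathcal{Y}_a'\Big),
\]
so that it suffices to show that on $[1.1,\infty)$ the approximate part dominates the error part.

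For the approximate part, Lemma \ref{U1} tells us that the weighted function
\[
\Phi(y):=\frac{16y}{\pi}e^{\frac{1}{4}\pi y}\Big(\mathcal{Y}_a''\mathcal{X}_a'-\mathcal{X}_a''\mathcal{Y}_a'\Big)(y)
\]
is monotonically increasing on $[1,\infty)$, so $\Phi(y)\geq \Phi(1.1)$ for $y\geq 1.1$. A direct numerical evaluation (using the explicit polynomial-exponential expression in the proof of Lemma \ref{U1}) gives $\Phi(1.1)=c_0>0$ for some concrete positive constant $c_0$. Hence
\[
\Big(\mathcal{Y}_a''\mathcal{X}_a'-\mathcal{X}_a''\mathcal{Y}_a'\Big)(y)\geq \frac{c_0\pi}{16y}e^{-\frac{1}{4}\pi y},\qquad y\in[1.1,\infty).
\]

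For the error part, Lemma \ref{U2} provides the upper bound
\[
\Big|\mathcal{Y}_e''\mathcal{X}'-\mathcal{Y}_e'\mathcal{X}''+\mathcal{Y}_a''\mathcal{X}_e'-\mathcal{X}_e''\mathcal{Y}_a'\Big|(y)
\leq(44\pi^2+18\pi+36\pi y)e^{-\frac{17}{4}\pi y}.
\]
The key comparison is between the two decay rates: the main part decays like $e^{-\frac{1}{4}\pi y}$ while the error decays like $e^{-\frac{17}{4}\pi y}$, so the ratio of error to main part is bounded by $Cy^2\,e^{-4\pi y}$, which is tiny for $y\geq 1.1$ (since $e^{-4\pi\cdot 1.1}\approx e^{-13.8}$). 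Therefore
\[
\Big(\mathcal{Y}''\mathcal{X}'-\mathcal{X}''\mathcal{Y}'\Big)(y)
\geq \frac{c_0\pi}{16y}e^{-\frac{1}{4}\pi y}-(44\pi^2+18\pi+36\pi y)e^{-\frac{17}{4}\pi y}>0
\]
for all $y\in[1.1,\infty)$, as desired.

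The only conceptual step is verifying the numerical positivity $\Phi(1.1)>0$; this reduces to evaluating the explicit polynomial-exponential sum \eqref{PXY} at $y=1.1$, which is routine. The main obstacle has already been absorbed into Lemma \ref{U1} (the delicate monotonicity of the weighted approximate part), so here the argument is just the dominance comparison between the two very different decay rates.
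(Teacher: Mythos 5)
Your proposal follows essentially the same route as the paper's proof: the same splitting into approximate and error parts, Lemma \ref{U1} to control the weighted approximate part via its value at $y=1.1$, Lemma \ref{U2} for the error, a monotone-decay observation to reduce everything to the left endpoint, and one numerical evaluation. The paper packages it slightly differently: it keeps the weight $\frac{16y}{\pi}e^{\frac{1}{4}\pi y}$ throughout, notes that both the weighted approximate part and $-16y(44\pi+18+36y)e^{-4\pi y}$ are increasing on $[1,\infty)$, and evaluates their sum at $y=1.1$, obtaining $0.001671778\cdots>0$. One caution about your quantitative language: the claim that the error-to-main ratio $Cy^{2}e^{-4\pi y}$ is ``tiny'' hides the constant $C\sim 1/c_{0}$. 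With $c_{0}=\Phi(1.1)\approx 5.1\times10^{-3}$, the error bound in weighted form at $y=1.1$ is $16y(44\pi+18+36y)e^{-4\pi y}\mid_{y=1.1}\approx 3.4\times10^{-3}$, i.e.\ about two thirds of the main-part lower bound, so the comparison at the left endpoint is genuinely tight. Consequently the numerical check you defer to must verify not merely $\Phi(1.1)>0$ but that $\Phi(1.1)$ exceeds this explicit error value (equivalently, that the combined expression at $y=1.1$ is positive, which is exactly the number the paper records); together with the easy fact that the error bound is decreasing on $[1.1,\infty)$, which you use implicitly, your argument then closes correctly.
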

\begin{proof} It remains to prove that
$
\frac{16y}{\pi}e^{\frac{1}{4}\pi y}\Big(\mathcal{Y}''\mathcal{X}'-\mathcal{Y}''\mathcal{X}'\Big)(y)>0,\;\;y\in[1.1,\infty).
$

By Lemmas \ref{U1} and \ref{U2},

\begin{equation}\aligned\nonumber
&\frac{16y}{\pi}e^{\frac{1}{4}\pi y}\Big(\mathcal{Y}''\mathcal{X}'-\mathcal{Y}''\mathcal{X}'\Big)(y)\\
=&\frac{16y}{\pi}e^{\frac{1}{4}\pi y}\Big(\mathcal{Y}_a''\mathcal{X}_a'-\mathcal{X}_a''\mathcal{Y}_a'\Big)(y)
+\frac{16y}{\pi}e^{\frac{1}{4}\pi y}\Big(\mathcal{Y}_e''\mathcal{X}'-\mathcal{Y}_e' \mathcal{X}''+\mathcal{Y}_a''\mathcal{X}_e'-\mathcal{X}_e''\mathcal{Y}_a'\Big)(y)\\
\geq&\frac{16y}{\pi}e^{\frac{1}{4}\pi y}\Big(\mathcal{Y}_a''\mathcal{X}_a'-\mathcal{X}_a''\mathcal{Y}_a'\Big)(y)
-\frac{16y}{\pi}(44\pi^2+18\pi+36\pi y)e^{-4\pi y}\\
\geq&\Big(\frac{16y}{\pi}e^{\frac{1}{4}\pi y}\Big(\mathcal{Y}_a''\mathcal{X}_a'-\mathcal{X}_a''\mathcal{Y}_a'\Big)(y)
-{16y}(44\pi+18+36 y)e^{-4\pi y}\Big)\mid_{y=1.1}=0.001671778\cdots, y\in[1.1,\infty)\\
>&0, \;\;\;\;\; y\in[1.1,\infty).
 \endaligned\end{equation}
In the second last step, one uses  the fact that $y\mapsto-{16y}(44\pi+18+36 y)e^{-4\pi y}, y>1$ is strictly increasing.

\end{proof}

\begin{lemma}\label{V1}
$y\rightarrow \frac{512y^4}{\pi}e^{\frac{1}{4}\pi y}\Big(\mathcal{Y}_a''''\mathcal{X}_a''-\mathcal{Y}_a''\mathcal{X}_a''''\Big)(y)
$
is monotonically decreasing on $(1,1.2)$.

\end{lemma}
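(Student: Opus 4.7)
The plan is to mirror the strategy of Lemma \ref{U1} adapted to higher-order derivatives. First I would compute $\mathcal{X}_a''$, $\mathcal{X}_a''''$, $\mathcal{Y}_a''$, $\mathcal{Y}_a''''$ by differentiating the eight-term closed-form expressions for $\mathcal{X}_a$ and $\mathcal{Y}_a$ given in Section 6, using the elementary rule $(\sqrt{y}\,e^{-a\pi y})^{(k)}=\sqrt{y}\,e^{-a\pi y}\,P_k(y;a)$ where $P_k$ is an explicit polynomial in $y^{-1}$ of degree $k$ with leading term $(-a\pi)^k$. Forming $\mathcal{Y}_a''''\mathcal{X}_a''-\mathcal{Y}_a''\mathcal{X}_a''''$ and multiplying by the weight $\frac{512 y^4}{\pi}e^{\frac{1}{4}\pi y}$ then gives a finite sum
\[
G(y)=\sum_{j}c_j\,y^{\alpha_j}e^{-b_j\pi y},
\]
where the slowest decaying exponential $e^{-\pi y/4}$ appearing in $\mathcal{Y}_a$ is exactly cancelled by the weight. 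The constant-by-constant contribution from the $\sqrt{y}$ pieces of $\mathcal{X}_a$ and $\mathcal{Y}_a$ cancels identically (since it appears with equal sign in both products), so the leading behaviour of $G$ is controlled by the interaction of $\sqrt{y}$ with $2\sqrt{y}e^{-\pi y/4}$, yielding a polynomial principal part proportional to $-\pi^3 y^3$ plus lower order corrections.

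Next I would decompose $G(y)=Q^+(y)+Q^-(y)$, grouping into $Q^+$ those terms whose derivatives on $(1,1.2)$ are dominated by the large negative polynomial part $-\pi^3 y^3+\cdots$, and into $Q^-$ the remaining terms, each of which carries a factor $e^{-b\pi y}$ with $b\geq 1$ (coming from the subleading exponentials $e^{-\pi y},\, e^{-5\pi y/4},\, e^{-9\pi y/4},\ldots$ in $\mathcal{Y}_a$ and $\mathcal{X}_a$). Using Lemma \ref{Lappendix}, I would show $(Q^+)'(y)\leq -C$ for an explicit $C>0$ on $(1,1.2)$, while $|(Q^-)'(y)|\leq C'e^{-\pi y}$ with $C'\ll C$. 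Combining these estimates yields $G'(y)<0$ on $(1,1.2)$.

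The main obstacle is the sheer bookkeeping: $\mathcal{Y}_a$ has eight exponentials, $\mathcal{X}_a$ has four, and fourth-order derivatives multiply out into several dozen cross terms, so care is required to keep the accounting correct and to ensure the non-exponential leading polynomial $-\pi^3 y^3$ is not accidentally overwhelmed by a subleading polynomial coming from fourth-derivative cross contributions. Since the interval $(1,1.2)$ is short, if the direct sign analysis of $(Q^+)'+(Q^-)'$ is not immediately conclusive across the whole interval, I would partition $(1,1.2)$ into a handful of subintervals $[a_i,a_{i+1}]$ and combine explicit evaluation of $G'$ at the endpoints with the monotonicity bounds on $(Q^\pm)''$ (again via Lemma \ref{Lappendix}) to verify $G'<0$ on each subinterval, exactly as was done in the last step of Lemma \ref{U1}.
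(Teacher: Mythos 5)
Your preparatory steps are consistent with the paper: the closed form is obtained by differentiating the finite exponential sums, the $\sqrt{y}\times\sqrt{y}$ contribution cancels by antisymmetry of $\mathcal{Y}_a''''\mathcal{X}_a''-\mathcal{Y}_a''\mathcal{X}_a''''$, and after multiplying by the weight $\frac{512y^4}{\pi}e^{\pi y/4}$ the slowest surviving term produces exactly the polynomial principal part $-\pi^3y^3+8\pi^2y^2+84\pi y-144$ that appears in the paper's grouped expression. However, the central mechanism of your sign analysis does not work. First, the polynomial part is \emph{not} decreasing on $(1,1.2)$: its derivative $-3\pi^2y^2+16\pi y+84$ is roughly $+100$ throughout that interval, so there is no "large negative polynomial part" from which to extract $(Q^+)'\le -C$. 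Second, the exponentially weighted brackets are not small corrections of size $C'e^{-\pi y}$ with $C'\ll C$: their coefficients are enormous (e.g.\ $240\pi^5$, $1392\pi^4$ in the $e^{-\pi y}$ bracket, $209040\pi^5$ in the $e^{-3\pi y}$ bracket), so that at $y=1$ the $e^{-\pi y}$ bracket contributes on the order of $+4\times 10^3$ to the derivative, the $e^{-2\pi y}$ bracket about $-7\times 10^2$, the $e^{-3\pi y}$ bracket about $-1.1\times 10^4$, and the $e^{-4\pi y}$ bracket about $+2.5\times 10^3$. The total derivative (about $-5\times 10^3$ near $y=1$) is negative only through cancellation among these large, alternating contributions; the polynomial term, of size $\sim 10^2$ and of the wrong sign, is irrelevant to the conclusion. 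Consequently any bound on the bracketed terms in the spirit of Lemma \ref{Lappendix} yields constants vastly larger than the polynomial contribution, and the proposed domination $Q^+ / Q^-$ collapses.

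The paper makes no such domination argument for this lemma: it writes out the fully grouped closed form (eight exponential brackets plus the polynomial) and then verifies monotonic decrease by direct, careful computation of derivatives over $(1,1.2)$ — in effect the subinterval verification you mention only as a fallback. That fallback can indeed be made rigorous, but only if you drop the $Q^\pm$ scheme and control the complete expression as it stands, for instance by evaluating the derivative at finitely many points of $(1,1.2)$ together with a uniform bound on the second derivative, keeping all eight brackets with their true coefficients; note also that your template, Lemma \ref{U1}, addressed the structurally easier situation where the non-exponential leading term ($\pi y$) already has the desired sign of derivative, which is precisely what fails here.
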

\begin{proof} By direct calculations, one regroups the terms by
\begin{equation}\aligned
&\frac{512y^4}{\pi}e^{\frac{1}{4}\pi y}
\Big(
\mathcal{Y}_a''''\mathcal{X}_a''-\mathcal{Y}_a''''\mathcal{X}_a''''
\Big)(y)\\
=&-\pi^3y^3+8\pi^2 y^2+84\pi y-144\\
&+e^{-\pi y}\Big(
-240\pi^5y^5-9240\pi y-6320\pi^2y^2+1392\pi^4y^4+350\pi^3y^3+3168
\Big)\\
&+e^{-2\pi y}\Big(
-11232\pi^5y^5-14877\pi^3y^3-20412\pi y-32856\pi^2y^2+36096\pi^4y^4+3888
\Big)\\
&+e^{-3\pi y}\Big(
-348240\pi^4y^4-2592+178854\pi^3y^3+209040\pi^5y^5+19656\pi y+91536\pi^2y^2
\Big)\\
&+e^{-4\pi y}\Big(
-804576\pi^5y^5-121856\pi^3y^3-472576\pi^2y^2-182784\pi y+1465533\pi^4y^4+18432
\Big)\\
&+e^{-5\pi y}\Big(
-140064\pi64y^4-6912+160272\pi^3y^3+685440\pi^5y^5+84672\pi y+284544\pi^2y^2
\Big)\\
&+e^{-6\pi y}\Big(
-570500\pi^3y^3-3628800\pi^5y^5-58800\pi y-301280\pi^2y^2+3100608\pi^4y^4+4032
\Big)\\
&+e^{-7\pi y}\Big(
-5236608\pi^4y^4-3456+862344\pi^3y^3+7527936\pi^5y^5+361152\pi^2y^2+58464\pi y
\Big).
\endaligned\end{equation}
The rest is careful calculations by taking derivatives.

\end{proof}

\begin{lemma}\label{V2} There has
$
|\Big(\mathcal{Y}_e''''\mathcal{X}''-\mathcal{Y}_e'' \mathcal{X}''''+\mathcal{Y}_a''''\mathcal{X}_e''-\mathcal{X}_e''''\mathcal{Y}_a''\Big)(y)|
\leq 16(\frac{17}{4}\pi)^4 \sqrt y e^{-\frac{17}{4}\pi y},\;\;y\geq1.
$
\begin{remark} The coefficient of the bound is rather rough but is enough to get our result.
The exponential power captures the main feature.

\end{remark}
\end{lemma}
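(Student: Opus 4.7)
}
I would proceed by the triangle inequality, splitting the left-hand side into the four products $\mathcal{Y}_e''''\mathcal{X}''$, $\mathcal{Y}_e''\mathcal{X}''''$, $\mathcal{Y}_a''''\mathcal{X}_e''$, $\mathcal{X}_e''''\mathcal{Y}_a''$. In each one, exactly one factor is the ``fast-decaying'' error part ($\mathcal{X}_e^{(k)}$ or $\mathcal{Y}_e^{(k)}$), and the other factor ($\mathcal{X}^{(j)}$ or $\mathcal{Y}_a^{(j)}$) has at most $\sqrt{y}$-type growth for $y\geq 1$. The idea is to absorb the slow factor into a numerical constant, to pull the common exponential $e^{-\frac{17}{4}\pi y}$ out, and finally to aggregate all constants into the target $16(\tfrac{17}{4}\pi)^4$.

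First I would handle the error-part factors. The function $\mathcal{Y}_e$ is a linear combination of terms of the form $c_n\sqrt{y}\,e^{-a_n y}$ with $\min_n a_n=\tfrac{17}{4}\pi$ and with the remaining $a_n$ significantly larger than $\tfrac{17}{4}\pi$; the same structural fact for $\mathcal{X}_e$ gives $\min_n a_n = 5\pi$. Applying the third bullet of Lemma \ref{Lappendix} for $k=2,4$ and $y\geq 1$ then yields
\begin{equation*}
|\mathcal{Y}_e^{(k)}(y)|\leq (1+\sigma^{\mathcal{Y}}_k)\sqrt{y}\bigl(\tfrac{17}{4}\pi\bigr)^k e^{-\frac{17}{4}\pi y},\qquad
|\mathcal{X}_e^{(k)}(y)|\leq (1+\sigma^{\mathcal{X}}_k)\sqrt{y}(5\pi)^k e^{-5\pi y},
\end{equation*}
where $\sigma^{\mathcal{Y}}_k$, $\sigma^{\mathcal{X}}_k$ are the small remainder sums from Lemma \ref{Lappendix}, all bounded uniformly for $y\geq 1$ by some number well below $1$ because the gap to the next exponent is large in each case.

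Second, for the slow factors $\mathcal{X}^{(j)}=\mathcal{X}_a^{(j)}+\mathcal{X}_e^{(j)}$ and $\mathcal{Y}_a^{(j)}$ with $j=2,4$, the explicit expressions show that each is $\sqrt{y}$ plus a finite list of $\sqrt{y}\,e^{-c\pi y}$ correction terms (and, for $\mathcal{X}^{(j)}$, an additional $\mathcal{X}_e^{(j)}$ piece with decay $\leq \sqrt{y}e^{-5\pi y}$). Differentiating $\sqrt{y}\,e^{-c\pi y}$ using the first two bullets of Lemma \ref{Lappendix} produces explicit polynomial-in-$\sqrt{y}$ bounds; for $y\geq 1$ these yield constants $M_j$ satisfying $|\mathcal{X}^{(j)}(y)|,\,|\mathcal{Y}_a^{(j)}(y)|\leq M_j\sqrt{y}$ of modest size.

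Third, I multiply and re-express $e^{-5\pi y}$ in the $\mathcal{X}_e$-based terms as $e^{-\frac{17}{4}\pi y}\cdot e^{-\frac{3}{4}\pi y}\leq e^{-\frac{17}{4}\pi y}$ (valid for $y\geq 0$) and $(5\pi)^k=(\tfrac{20}{17})^k(\tfrac{17}{4}\pi)^k$. Summing the four contributions gives a bound of the shape
\begin{equation*}
\bigl|\mathcal{Y}_e''''\mathcal{X}''-\mathcal{Y}_e''\mathcal{X}''''+\mathcal{Y}_a''''\mathcal{X}_e''-\mathcal{X}_e''''\mathcal{Y}_a''\bigr|(y)
\leq K\,\sqrt{y}\bigl(\tfrac{17}{4}\pi\bigr)^4 e^{-\frac{17}{4}\pi y},\qquad y\geq 1,
\end{equation*}
with $K$ a finite aggregate of the constants $(1+\sigma^{\mathcal{X}/\mathcal{Y}}_k)$, $M_j$, and $(\tfrac{20}{17})^k$.

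The main obstacle is purely the numerical bookkeeping needed to certify $K\leq 16$. Since the statement's remark warns that the constant $16$ is generous, a crude choice of $M_j$ and the bounds $(1+\sigma^{\mathcal{X}/\mathcal{Y}}_k)\leq 2$, $(\tfrac{20}{17})^4<2$ will comfortably fit under $16$ once I distribute the four terms. The subtler point is that this looseness must be preserved uniformly in $y\geq 1$, so I would verify the worst case at $y=1$ and appeal to monotonicity (via Lemma \ref{Lappendix}) to extend it to all $y\geq 1$, exactly as was done in Lemma \ref{U2} for the second-derivative analog.
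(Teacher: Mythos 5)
Your plan is essentially the paper's argument: split by the triangle inequality, bound $\mathcal{X}_e^{(k)},\mathcal{Y}_e^{(k)}$ ($k=2,4$) via the third bullet of Lemma \ref{Lappendix} with leading exponents $5\pi$ and $\tfrac{17}{4}\pi$, bound the remaining factors by their explicit expressions, use $e^{-5\pi y}\le e^{-\frac{17}{4}\pi y}$, and aggregate constants under the generous $16(\tfrac{17}{4}\pi)^4$.

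One bookkeeping step in your write-up, however, does not deliver the stated bound. In your second paragraph you bound the slow factors by $|\mathcal{X}^{(j)}(y)|,|\mathcal{Y}_a^{(j)}(y)|\le M_j\sqrt{y}$ for $j=2,4$. Multiplying this by the error bounds, which already carry a factor $\sqrt{y}\,e^{-\frac{17}{4}\pi y}$, produces $C\,y\,e^{-\frac{17}{4}\pi y}$, and there is no fixed $K$ with $y\,e^{-\frac{17}{4}\pi y}\le K\sqrt{y}\,e^{-\frac{17}{4}\pi y}$ for all $y\ge 1$; absorbing the extra $\sqrt{y}$ would force you to weaken the exponent below $\tfrac{17}{4}\pi$, which is exactly what the lemma is not allowed to lose (the $\mathcal{X}_e$-based terms have a spare $e^{-\frac{3}{4}\pi y}$, but the $\mathcal{Y}_e$-based terms do not). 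The fix is the observation the paper uses: since $\mathcal{X}$ and $\mathcal{Y}_a$ are $\sqrt{y}$ plus finitely many terms $c\,\sqrt{y}\,e^{-a\pi y}$, their second and fourth derivatives consist of $O(y^{-3/2})$ plus exponentially decaying terms, hence are bounded by \emph{absolute constants} on $y\ge 1$ (the paper takes $|\mathcal{X}''|\le 1.2$, $|\mathcal{X}''''|\le 10$, and order-one bounds for $\mathcal{Y}_a'',\mathcal{Y}_a''''$). With constant bounds on the slow factors, your first-paragraph plan goes through verbatim and the constant $16$ is comfortably sufficient, so this is a local repair rather than a flaw in the approach.
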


\begin{proof}
By Lemma \ref{Lappendix}, one infers that
\begin{equation}\aligned\label{V2a}
|\mathcal{Y}_e''(y)|\leq 4(\frac{17}{4}\pi)^2(1+\sigma_{\mathcal{Y}_e,2})\sqrt y e^{-\frac{17}{4}\pi y}, |\mathcal{Y}_e''''(y)|\leq 4(\frac{17}{4}\pi)^4(1+\sigma_{\mathcal{Y}_e,4})\sqrt y e^{-\frac{17}{4}\pi y}
 \endaligned\end{equation}
and
\begin{equation}\aligned\label{V2b}
|\mathcal{X}_e''(y)|\leq 8(5\pi)^2(1+\sigma_{\mathcal{X}_e,2})\sqrt y e^{-5\pi y},
|\mathcal{X}_e''''(y)|\leq 8(5\pi)^4(1+\sigma_{\mathcal{X}_e,4})\sqrt y e^{-5\pi y}.
 \endaligned\end{equation}
 Here $\sigma_{\mathcal{X}_e,j}, \sigma_{\mathcal{Y}_e,j}, j=2,4$ are small and can be bounded by $\frac{1}{4}$.
For $\mathcal{X}'', \mathcal{X}'''',\mathcal{Y}_a''$ and $\mathcal{Y}_a''''$, by their explicit expressions, one has
\begin{equation}\aligned\label{V2c}
|\mathcal{X}''''(y)|\leq 10,\;\; |\mathcal{X}''(y)|\leq 1.2,
|\mathcal{Y}_a''''(y)|\leq \frac{1}{10},\;\;|\mathcal{Y}_a''''(y)|\leq 1, \;\;y\geq1.
 \endaligned\end{equation}
Combining \eqref{V2a}, \eqref{V2b} with \eqref{V2c}, one gets the estimate.

\end{proof}

\begin{lemma}\label{V3} There holds
$
\Big(\mathcal{Y}''''\mathcal{X}''-\mathcal{Y}''\mathcal{X}''''\Big)(y)>0,\;\;y\in[1,1.11].
$

\end{lemma}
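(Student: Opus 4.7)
The plan is to mirror the argument of Lemma \ref{U3}, now applied to the quantity $f_{\mathcal{XY}}'=\mathcal{Y}''''\mathcal{X}''-\mathcal{Y}''\mathcal{X}''''$ rather than to $\mathcal{Y}''\mathcal{X}'-\mathcal{X}''\mathcal{Y}'$. The starting point is the split recorded in \eqref{2fAB12}, namely
\begin{equation*}
f_{\mathcal{XY}}'(y)=\bigl(\mathcal{Y}_a''''\mathcal{X}_a''-\mathcal{Y}_a''\mathcal{X}_a''''\bigr)(y)+\bigl(\mathcal{X}_e''\mathcal{Y}''''+\mathcal{Y}_e''''\mathcal{X}_a''-\mathcal{X}_e''''\mathcal{Y}''-\mathcal{Y}_e''\mathcal{X}_a''''\bigr)(y).
\end{equation*}
Multiplying through by the positive weight $W(y):=\tfrac{512\,y^{4}}{\pi}e^{\pi y/4}$ reduces the task to showing that $W\cdot f_{\mathcal{XY}}'>0$ on $[1,1.11]$.

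Next I would exploit the two technical ingredients that have already been prepared. By Lemma \ref{V1}, $W(y)\cdot(\mathcal{Y}_a''''\mathcal{X}_a''-\mathcal{Y}_a''\mathcal{X}_a'''')(y)$ is monotonically decreasing on $(1,1.2)\supset[1,1.11]$, so its minimum over $[1,1.11]$ is attained at the right endpoint $y=1.11$; this value is a finite sum that can be computed from the explicit polynomial expression written out in the proof of Lemma \ref{V1}. By Lemma \ref{V2}, the error part is bounded in absolute value by $16\bigl(\tfrac{17\pi}{4}\bigr)^{4}\sqrt{y}\,e^{-17\pi y/4}$, hence, after multiplication by $W(y)$, by
\begin{equation*}
\frac{8192}{\pi}\Bigl(\frac{17\pi}{4}\Bigr)^{4}y^{9/2}e^{-4\pi y}.
\end{equation*}
Since the function $y\mapsto y^{9/2}e^{-4\pi y}$ is decreasing for $y\geq 1$ (the critical point $9/(8\pi)$ lies well to the left of $1$), this upper bound is itself maximized at $y=1$ on the relevant interval, giving a completely explicit numerical constant of order $e^{-4\pi}\approx 3.5\cdot 10^{-6}$ times a polynomial factor.

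With both sides in hand, the conclusion is a direct numerical comparison: verify
\begin{equation*}
W(1.11)\cdot\bigl(\mathcal{Y}_a''''\mathcal{X}_a''-\mathcal{Y}_a''\mathcal{X}_a''''\bigr)(1.11)\;>\;\frac{8192}{\pi}\Bigl(\frac{17\pi}{4}\Bigr)^{4}\,e^{-4\pi},
\end{equation*}
and then combine this with the two monotonicities above. The left-hand side, though tedious, is a concrete finite sum of terms of the form $c_{ij}\,\pi^{i}\,y^{j}e^{-k\pi y}$ at $y=1.11$, and the right-hand side is a tiny explicit constant because of the $e^{-4\pi}$ factor that survives after cancelling the $e^{\pi y/4}$ weight against the $e^{-17\pi y/4}$ error decay. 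I expect the main obstacle to be purely arithmetic: one must keep enough digits in the evaluation of the many-term polynomial appearing in Lemma \ref{V1} at $y=1.11$ to ensure the inequality is decided safely, but the comparison is robust because of the five-orders-of-magnitude gap produced by the $e^{-4\pi}$ factor. This finishes the verification of \eqref{fXYF} and completes the proof of Theorem \ref{QXY}.
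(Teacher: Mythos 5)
Your overall strategy is exactly the paper's: split $f_{\mathcal{XY}}'=\mathcal{Y}''''\mathcal{X}''-\mathcal{Y}''\mathcal{X}''''$ via \eqref{2fAB12}, multiply by the weight $\frac{512y^4}{\pi}e^{\pi y/4}$, take the minimum of the (decreasing) weighted main part at $y=1.11$ from Lemma \ref{V1}, and dominate the weighted error by its value at $y=1$ using Lemma \ref{V2}. However, the final numerical comparison you propose does not actually close, and your "robustness" claim is wrong. With the constant as stated in Lemma \ref{V2}, the weighted error bound at $y=1$ is
\begin{equation*}
\frac{8192}{\pi}\Bigl(\tfrac{17\pi}{4}\Bigr)^{4}e^{-4\pi}=32\cdot 17^{4}\pi^{3}e^{-4\pi}\approx 2.89\times 10^{2},
\end{equation*}
while the weighted main part at $y=1.11$ is only $\approx 158.46$ (this is the very number the paper computes). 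So the inequality you would need, $158.46>289$, is false. The factor $e^{-4\pi}\approx 3.5\times 10^{-6}$ is not a "five orders of magnitude gap": it is consumed by the weight $\frac{512y^{4}}{\pi}e^{\pi y/4}\approx 3.6\times 10^{2}$ and the prefactor $16(\tfrac{17\pi}{4})^{4}\approx 5\times 10^{5}$, leaving both sides of order $10^{2}$, so the outcome hinges on the precise constants rather than being automatic.

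The paper closes this gap only by using a sharper error coefficient in the final display of Lemma \ref{V3}: it bounds the weighted error by $\frac{72}{5}\cdot 17^{4}\pi^{3}y^{9/2}e^{-4\pi y}$, i.e.\ effectively a constant $7.2$ rather than $16$ in front of $(\tfrac{17\pi}{4})^{4}\sqrt{y}\,e^{-17\pi y/4}$ (traceable to the finer ingredients in the proof of Lemma \ref{V2}, namely the bounds $4(\tfrac{17\pi}{4})^{4}(1+\sigma)$ for $\mathcal{Y}_e''''$, the much smaller and faster-decaying $\mathcal{X}_e$ contributions, and $|\mathcal{X}''|\le 1.2$, $|\mathcal{Y}_a''|\le 1$, etc., rather than the lumped constant $16$ in the lemma statement). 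That yields $\approx 130.05<158.46$ and the conclusion follows. To repair your argument you must either reproduce this sharper error estimate before comparing endpoints, or subdivide $[1,1.11]$ and exploit the decay of the error bound in $y$ against intermediate values of the main part; the single endpoint-versus-endpoint comparison with the constant $16$ of Lemma \ref{V2}, as you wrote it, is numerically false and does not prove the lemma.
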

\begin{proof} It suffices to prove that

$
\frac{512y^4}{\pi}e^{\frac{1}{4}\pi y}\Big(\mathcal{Y}''''\mathcal{X}''-\mathcal{Y}''\mathcal{X}''''\Big)(y)>0,\;\;y\in[1,1.11].
$
By the decomposition and Lemmas \ref{V1} and \ref{V2}, we obtain that
\begin{equation}\aligned
&\frac{512y^4}{\pi}e^{\frac{1}{4}\pi y}\Big(\mathcal{Y}''''\mathcal{X}''-\mathcal{Y}''\mathcal{X}''''\Big)(y)\\
=&\frac{512y^4}{\pi}e^{\frac{1}{4}\pi y}\Big(\mathcal{Y}_a''''\mathcal{X}_a''-\mathcal{Y}_a''\mathcal{X}_a''''\Big)(y)
+\frac{512y^4}{\pi}e^{\frac{1}{4}\pi y}\Big(\mathcal{Y}_e''''\mathcal{X}''-\mathcal{Y}_e'' \mathcal{X}''''+\mathcal{Y}_a''''\mathcal{X}_e''-\mathcal{X}_e''''\mathcal{Y}_a''\Big)(y)\\
\geq&\frac{512y^4}{\pi}e^{\frac{1}{4}\pi y}\Big(\mathcal{Y}_a''''\mathcal{X}_a''-\mathcal{Y}_a''\mathcal{X}_a''''\Big)(y)
-\frac{72}{5}\cdot 17^4\pi^3y^{9/2}e^{-4\pi y}\\
\geq&\frac{512y^4}{\pi}e^{\frac{1}{4}\pi y}\Big(\mathcal{Y}_a''''\mathcal{X}_a''-\mathcal{Y}_a''\mathcal{X}_a''''\Big)(y)\mid_{y=1.11}
-\frac{72}{5}\cdot 17^4\pi^3y^{9/2}e^{-4\pi y}\mid_{y=1}, y\in[1,1.11]\\
=&158.4646175\cdots-130.0476135\cdots\\
>&0.
\endaligned\end{equation}

\end{proof}

\subsection{The rest of proof in Theorem \ref{QAB} }

\begin{lemma}\label{UU1}   The function $
y\rightarrow\frac{4y}{\pi}e^{\frac{1}{2}\pi y}\Big(\mathcal{B}_a''\mathcal{A}_a'-\mathcal{A}_a''\mathcal{B}_a'\Big)(y), y>1
$ is monotone increasing.

\end{lemma}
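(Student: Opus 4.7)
The plan is to follow the blueprint of Lemma \ref{U1}, which proves the exact analogue of this statement for the pair $(\mathcal{X}_a,\mathcal{Y}_a)$.

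First I would compute $\mathcal{A}_a', \mathcal{A}_a''$ and $\mathcal{B}_a', \mathcal{B}_a''$ explicitly from the finite-sum representations of $\mathcal{A}_a$ and $\mathcal{B}_a$ in \eqref{Aae} and \eqref{Bae}. Each derivative is a finite linear combination of terms $c\, y^{j/2} e^{-a\pi y}$ with $a$ a half-integer in $\{0, \tfrac12, 1, 2, \tfrac52, 4, \tfrac92\}$. Forming the product $\mathcal{B}_a''\mathcal{A}_a' - \mathcal{A}_a''\mathcal{B}_a'$ and then multiplying by $\tfrac{4y}{\pi} e^{\pi y/2}$ yields a finite sum
\begin{equation*}
F(y) \;:=\; \tfrac{4y}{\pi}\, e^{\pi y/2}\bigl(\mathcal{B}_a''\mathcal{A}_a' - \mathcal{A}_a''\mathcal{B}_a'\bigr)(y) \;=\; \sum_k c_k\, y^{j_k}\, e^{-b_k \pi y}, \qquad b_k \ge 0.
\end{equation*}
The prefactor $e^{\pi y/2}$ is calibrated precisely so that $\min_k b_k = 0$, producing a non-decaying polynomial leading piece whose dominant term, after cancellations between the $e^{-\pi y/2}$ scales in $\mathcal{B}_a$ and the constant scale in $\mathcal{A}_a$, should be of the form $\pi y$, exactly as in Lemma \ref{U1}.

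Next I would split $F = \mathcal{P}^+_{\mathcal{AB}} + \mathcal{P}^-_{\mathcal{AB}}$ following the template of Lemma \ref{U1}: $\mathcal{P}^+$ collects the $\pi y$ term together with all summands $c_k y^{j_k} e^{-b_k\pi y}$ whose coefficient $c_k$ is negative, while $\mathcal{P}^-$ collects the summands with $c_k>0$ and $b_k>0$. Every summand in $\mathcal{P}^-_{\mathcal{AB}}$ is a positive decaying term, and by Lemma \ref{Lappendix} each such summand has negative first derivative and positive second derivative past an explicit small threshold; every summand in $\mathcal{P}^+_{\mathcal{AB}}$ apart from the linear term is a negative function increasing towards $0$ with positive derivative. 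This yields
\begin{equation*}
(\mathcal{P}^+_{\mathcal{AB}})'(y) \;>\; \pi, \qquad (\mathcal{P}^-_{\mathcal{AB}})''(y) \;>\; 0, \qquad y \ge 1,
\end{equation*}
so $(\mathcal{P}^-_{\mathcal{AB}})'$ is a negative, increasing-to-zero function. For $y \ge y_*$ with $y_*$ chosen numerically so that $\lvert(\mathcal{P}^-_{\mathcal{AB}})'(y_*)\rvert < \pi$ (expected around $y_* \approx 1.5$ given the faster $e^{-\pi y/2}$ decay), one gets $F'(y) \ge (\mathcal{P}^+_{\mathcal{AB}})'(y) + (\mathcal{P}^-_{\mathcal{AB}})'(y_*) > 0$ by monotonicity. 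For the remaining compact interval $[1, y_*]$, I would partition it into finitely many short subintervals and, on each, use the sign and monotonicity direction of every individual summand in $F'$ supplied by Lemma \ref{Lappendix}, combined with a direct numerical evaluation at the appropriate endpoint, to certify $F'(y) > 0$, exactly as in the endgame of Lemma \ref{U1}.

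The main obstacle is combinatorial rather than conceptual. Because the exponents in $\mathcal{A}_a$ and $\mathcal{B}_a$ are half-integer multiples of $\pi$ and $\mathcal{B}_a$ contains a negative summand $-4\sqrt{y}\,e^{-\pi y}$, the explicit expansion of $F$ contains substantially more terms and more distinct exponential scales than in Lemma \ref{U1}. Verifying that the leading non-decaying term is indeed $\pi y$ (i.e.\ that the relevant cross-cancellations happen as expected) and performing the clean regrouping into $\mathcal{P}^+_{\mathcal{AB}}$ and $\mathcal{P}^-_{\mathcal{AB}}$ is the bookkeeping heart of the argument; beyond this accounting, no new ideas past those used in Lemma \ref{U1} are required.
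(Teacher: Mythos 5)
Your proposal is correct and follows essentially the same route as the paper: an explicit expansion of $\tfrac{4y}{\pi}e^{\pi y/2}\bigl(\mathcal{B}_a''\mathcal{A}_a'-\mathcal{A}_a''\mathcal{B}_a'\bigr)$ into a sum of terms $c_k y^{j_k}e^{-b_k\pi y}$ with leading part $\pi y-3$, a regrouping into $\mathcal{P}^+_{\mathcal{AB}}$ (linear term plus negative-coefficient summands) and $\mathcal{P}^-_{\mathcal{AB}}$ (positive decaying summands), the derivative bounds $(\mathcal{P}^+_{\mathcal{AB}})'>\pi$, $(\mathcal{P}^-_{\mathcal{AB}})''>0$ from Lemma \ref{Lappendix}, a numerically determined threshold (the paper uses $y_*=1.82$, where $(\mathcal{P}^-_{\mathcal{AB}})'(1.82)\approx-3.052>-\pi$, rather than your guessed $1.5$), and a finite subdivision of the remaining compact interval. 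The only work left implicit in your write-up, the explicit bookkeeping of the expansion, is exactly what the paper supplies in \eqref{Qab}.
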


\begin{proof} By direct calculations, one regroups the terms by
\begin{equation}\aligned\label{Qab}
&\frac{4y}{\pi}e^{\frac{1}{2}\pi y}\Big(
\mathcal{B}_a''\mathcal{A}_a'-\mathcal{A}_a''\mathcal{B}_a'
\Big)(y)\\
=&\Big(\pi y-3-288e^{-8\pi y}\pi^2 y^2
-12e^{-8\pi y}-144e^{-\frac{9}{2}\pi y}
-72e^{-3\pi y}
-48e^{-\frac{5}{2}\pi y}-84e^{-5\pi y}
-12\pi e^{-\pi y} y\\
&-8\pi e^{-\frac{1}{2}\pi y}y-768\pi^2e^{-\frac{9}{2}\pi y}y^2
-128\pi^2e^{-\frac{5}{2}\pi y}y^2
-240y^2e^{-3\pi y}\pi^2
-504e^{-5\pi y}\pi^2y^2-52e^{-6\pi y}\pi y\\
&-99e^{-4 \pi y}\pi y-10e^{-2\pi y}\pi y\Big)\\
+&\Big(
68e^{-8\pi y}\pi y+240e^{-6\pi y}\pi^2 y^2
+12e^{-\frac{1}{2}\pi y}
+12e^{-6\pi y}+33e^{-4\pi y}+6e^{-2\pi y}
+12e^{-\pi y}
+96\pi e^{-\frac{5}{2}\pi y} y\\
&+480\pi e^{-\frac{9}{2}\pi y}y
+8\pi^2e^{-\pi y} y^2
+168ye^{-3\pi y}\pi
+64e^{-4\pi y}\pi^2y^2
+48e^{-2\pi y}\pi^2 y^2
+308e^{-5\pi y} \pi y\Big)
\endaligned\end{equation}
Denote the terms in the first and second bracket of \eqref{Qab} by $\mathcal{P}_{\mathcal{AB}}^+$ and $\mathcal{P}_{\mathcal{AB}}^-$. Then
\begin{equation}\aligned
\frac{4y}{\pi}e^{\frac{1}{2}\pi y}\Big(
\mathcal{B}_a''\mathcal{A}_a'-\mathcal{A}_a''\mathcal{B}_a'
\Big)(y)
=\mathcal{P}_{\mathcal{AB}}^+(y)+\mathcal{P}_{\mathcal{AB}}^-(y).
\endaligned\end{equation}
It remains to prove that
$
\mathcal{P}_{\mathcal{AB}}^+(y)+\mathcal{P}_{\mathcal{AB}}^-(y)>0, \;\;y>1.
$

By Lemma \ref{Lappendix},
\begin{equation}\aligned
\Big(\mathcal{P}_{\mathcal{AB}}^+(y)\Big)'(y)>\pi, \;\;\Big(\mathcal{P}_{\mathcal{AB}}^+(y)\Big)''(y)<0,\;
\Big(\mathcal{P}_{\mathcal{AB}}^-(y)\Big)'(y)<0, \;\;\Big(\mathcal{P}_{\mathcal{AB}}^-(y)\Big)''(y)>0
\endaligned\end{equation}

Since $\Big(\mathcal{P}_{\mathcal{AB}}^-(y)\Big)'(y)\mid_{y=1.82}=-3.051954266\cdots$, one has
\begin{equation}\aligned
\mathcal{P}_{\mathcal{AB}}^+(y)+\mathcal{P}_{\mathcal{AB}}^-(y)
\geq&\pi-3.051954266\cdots, y\in[1.82,\infty)>0.
\endaligned\end{equation}
It remains to prove that $\mathcal{P}_{\mathcal{AB}}^+(y)+\mathcal{P}_{\mathcal{AB}}^-(y)
>0$ on the bounded interval $(1,1.82]$. To this end, we divide the interval $(1,1.82]$ into 10 smaller subintervals, and compute the derivatives on each interval to arrive the result.

\end{proof}

\begin{lemma}\label{UU2} There holds:
$
|\Big(\mathcal{B}_e''\mathcal{A}'-\mathcal{B}_e' \mathcal{A}''+\mathcal{B}_a''\mathcal{A}_e'-\mathcal{A}_e''\mathcal{B}_a'\Big)(y)|
\leq 8(\frac{13}{8}\pi)^2\sqrt y e^{-\frac{13}{2}\pi y},\;\;y\geq1.
$
\end{lemma}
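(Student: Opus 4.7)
The plan is to mirror the proof of Lemma~\ref{U2} almost verbatim, with $(\mathcal{X},\mathcal{Y})$ replaced by $(\mathcal{A},\mathcal{B})$ and the common error decay rate $e^{-\frac{17}{4}\pi y}$ replaced by $e^{-\frac{13}{2}\pi y}$. This is the smallest exponent appearing in either $\mathcal{A}_e$ or $\mathcal{B}_e$ after the approximate parts $\mathcal{A}_a,\mathcal{B}_a$ (which absorb everything down to exponent $\tfrac{9}{2}\pi$) have been subtracted off. The strategy is then a simple triangle inequality applied to the four products $\mathcal{B}_e''\mathcal{A}'$, $\mathcal{B}_e'\mathcal{A}''$, $\mathcal{B}_a''\mathcal{A}_e'$, $\mathcal{A}_e''\mathcal{B}_a'$: in each, exactly one factor is an error-part derivative carrying the exponential $e^{-\frac{13}{2}\pi y}$, while the other is a polynomial-in-$y$ quantity that is uniformly bounded on $y\geq 1$.

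First I would apply the third item of Lemma~\ref{Lappendix} to the explicit tail series defining $\mathcal{A}_e$ and $\mathcal{B}_e$, obtaining estimates of the form $|\mathcal{A}_e^{(j)}(y)|, |\mathcal{B}_e^{(j)}(y)| \leq C_j\sqrt y\,e^{-\frac{13}{2}\pi y}$ for $j=1,2$, where $C_j$ is controlled by $(\tfrac{13}{2}\pi)^j(1+\sigma_{j,k})$ with $\sigma_{j,k}$ a small tail remainder (easily bounded by, say, $\tfrac14$ for $y\geq 1$). Next I would bound $\mathcal{A}',\mathcal{A}'',\mathcal{B}_a',\mathcal{B}_a''$ using their explicit finite-plus-tail expressions, obtaining numerical envelopes of the form $\frac{\alpha}{\sqrt y}+\beta\sqrt y$ on $y\geq 1$, exactly as in the last paragraph of the proof of Lemma~\ref{U2} where quantities like $|\mathcal{X}'(y)|\leq\frac{3}{5\sqrt y}$ and $|\mathcal{X}''(y)|\leq\frac{1}{4y^{3/2}}+2\sqrt y$ were derived. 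Combining these two kinds of estimates via the triangle inequality and collecting constants yields the claim.

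The main obstacle is purely bookkeeping: arranging the constants so the final bound fits precisely into the stated form $8\bigl(\tfrac{13}{8}\pi\bigr)^2$. As the authors themselves remark, this constant is not sharp and only the exponential rate $e^{-\frac{13}{2}\pi y}$ is essential; any bound of the form $C\sqrt y\,e^{-\frac{13}{2}\pi y}$ with an absolute $C$ would suffice for the subsequent application in Lemma~\ref{UU3}, where this estimate is combined with the lower bound from Lemma~\ref{UU1} to establish positivity of $\mathcal{B}''\mathcal{A}'-\mathcal{A}''\mathcal{B}'$ on $y\geq 1.05$. So in practice one allows generous slack and does not attempt to optimize the numerical coefficient.
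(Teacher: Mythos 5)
Your proposal follows essentially the same route as the paper: the authors likewise apply the tail estimate (third item of Lemma \ref{Lappendix}) to get $|\mathcal{A}_e^{(j)}(y)|,|\mathcal{B}_e^{(j)}(y)|\leq 4(1+\sigma)(\tfrac{13}{2}\pi)^j\sqrt y\,e^{-\frac{13}{2}\pi y}$ for $j=1,2$, bound $\mathcal{A}',\mathcal{A}'',\mathcal{B}_a',\mathcal{B}_a''$ by explicit numerical envelopes from their series expressions, and conclude by the triangle inequality, with the authors themselves remarking that the numerical constant is rough and only the rate $e^{-\frac{13}{2}\pi y}$ matters for Lemma \ref{UU3}. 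No substantive difference.
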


By Lemma \ref{Lappendix}, one has for $j=1,2,\cdots$
\begin{equation}\aligned\label{pab1}
|\mathcal{A}_e^{(j)}(y)|\leq  4(1+\sigma_{\mathcal{A}_e,j})(\frac{13}{2}\pi)^j\sqrt ye^{-\frac{13}{2}\pi y},\;\;
|\mathcal{B}_e^{(j)}(y)|\leq  4(1+\sigma_{\mathcal{B}_e,j})(\frac{13}{2}\pi)^j\sqrt ye^{-\frac{13}{2}\pi y}.
 \endaligned\end{equation}
Here the $\sigma_{\mathcal{A}_e,j}, \sigma_{\mathcal{B}_e,j}$ are small and can be bounded by $\frac{1}{2}$.
For $\mathcal{A}', \mathcal{A}', \mathcal{B}_a', \mathcal{B}_a''$, by their explicit expressions, one deduces that
\begin{equation}\aligned\label{pab2}
|\mathcal{A}'(y)|\leq 0.3, \;\;|\mathcal{A}''(y)|\leq\frac{1}{2},\;\;
|\mathcal{B}_a'(y)|\leq\frac{1}{5},\;\;|\mathcal{B}_a''(y)|\leq \frac{1}{5}.
 \endaligned\end{equation}
Combining \eqref{pab1} and \eqref{pab2}, one gets the estimate.

\begin{lemma} \label{UU3} There holds
$
\Big(\mathcal{B}''\mathcal{A}'-\mathcal{A}''\mathcal{B}'\Big)(y)>0\;\;\hbox{if}\;\;y\in[1.05,\infty).
$
\end{lemma}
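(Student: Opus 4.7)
The plan is to mimic the argument used in Lemma \ref{U3} for $\mathcal{X},\mathcal{Y}$, exploiting the approximate/error decomposition $\mathcal{A}=\mathcal{A}_a+\mathcal{A}_e$, $\mathcal{B}=\mathcal{B}_a+\mathcal{B}_e$ established in \eqref{Aae}, \eqref{Bae}. First I would multiply the quantity of interest by the positive weight $\tfrac{4y}{\pi}e^{\frac{1}{2}\pi y}$ (the same weight used in Lemma \ref{UU1}) and split it into an ``approximate'' piece and an ``error'' piece:
\begin{equation*}
\tfrac{4y}{\pi}e^{\frac{1}{2}\pi y}\bigl(\mathcal{B}''\mathcal{A}'-\mathcal{A}''\mathcal{B}'\bigr)(y)=\tfrac{4y}{\pi}e^{\frac{1}{2}\pi y}\bigl(\mathcal{B}_a''\mathcal{A}_a'-\mathcal{A}_a''\mathcal{B}_a'\bigr)(y)+\tfrac{4y}{\pi}e^{\frac{1}{2}\pi y}\bigl(\mathcal{B}_e''\mathcal{A}'-\mathcal{B}_e'\mathcal{A}''+\mathcal{B}_a''\mathcal{A}_e'-\mathcal{A}_e''\mathcal{B}_a'\bigr)(y).
\end{equation*}
Since the factor is positive, it is enough to show the right-hand side is positive on $[1.05,\infty)$.

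Next I would apply the two inputs already at hand. By Lemma \ref{UU1} the first summand is monotonically increasing on $(1,\infty)$, so on $[1.05,\infty)$ it is bounded below by its value at $y=1.05$, a concrete positive constant $c_0$ obtained by direct numerical evaluation of the explicit polynomial-in-$y$/exponential expression \eqref{Qab}. By Lemma \ref{UU2} the second summand is bounded in absolute value by
\begin{equation*}
\tfrac{4y}{\pi}e^{\frac{1}{2}\pi y}\cdot 8\bigl(\tfrac{13}{8}\pi\bigr)^2\sqrt y\,e^{-\frac{13}{2}\pi y}=\tfrac{169\pi}{2}\,y^{3/2}e^{-6\pi y},
\end{equation*}
which is a decreasing function of $y$ on $[1,\infty)$ (easy by Lemma \ref{Lappendix}, since $6\pi\gg 3/2$), hence bounded above on $[1.05,\infty)$ by its value $c_1$ at $y=1.05$.

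Assembling these two estimates yields
\begin{equation*}
\tfrac{4y}{\pi}e^{\frac{1}{2}\pi y}\bigl(\mathcal{B}''\mathcal{A}'-\mathcal{A}''\mathcal{B}'\bigr)(y)\ \ge\ c_0-c_1,\qquad y\in[1.05,\infty),
\end{equation*}
so the whole proof reduces to verifying the single numerical inequality $c_0>c_1$. The value $e^{-6\pi\cdot1.05}$ is of order $10^{-9}$, so $c_1$ is tiny while $c_0$ is of order $10^{-1}$ (the same magnitude appearing in the analogous Lemma \ref{U3} computation), so the inequality is comfortably true; this is the only ``hard'' step in the sense that it demands a careful but routine numerical check. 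The main obstacle, in fact, is not in this final lemma but was already resolved upstream: producing the correct weight $\tfrac{4y}{\pi}e^{\frac{1}{2}\pi y}$ that simultaneously makes the approximate part monotone (Lemma \ref{UU1}) and keeps the error part exponentially small (Lemma \ref{UU2}). Given those two inputs, Lemma \ref{UU3} is just the ``monotone lower bound minus fast-decaying upper bound'' combination, evaluated at the left endpoint $y=1.05$.
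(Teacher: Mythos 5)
Your plan is essentially the paper's own proof: the same weight $\frac{4y}{\pi}e^{\frac{1}{2}\pi y}$, the same split into the approximate part $\mathcal{B}_a''\mathcal{A}_a'-\mathcal{A}_a''\mathcal{B}_a'$ and the error part, the monotonicity of the weighted approximate part from Lemma \ref{UU1} combined with the decay bound from Lemma \ref{UU2}, and evaluation of both bounds at the left endpoint $y=1.05$. One caution: the weighted approximate part at $y=1.05$ is of order $10^{-3}$ (the paper's combined value is $0.00119\cdots$), not $10^{-1}$ as you estimate, so the final numerical inequality, while true, is much tighter than you suggest and genuinely requires the careful evaluation you defer.
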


\begin{proof} Equivalently, it suffice to prove that
$
\frac{4y}{\pi}e^{\frac{1}{2}\pi y}\Big(\mathcal{B}''\mathcal{A}'-\mathcal{A}''\mathcal{B}'\Big)(y)>0\;\;\hbox{if}\;\;y\in[1.05,\infty).
$
By Lemmas \ref{UU1} and \ref{UU2}, we  deduce that
\begin{equation}\aligned
&\frac{4y}{\pi}e^{\frac{1}{2}\pi y}\Big(\mathcal{B}''\mathcal{A}'-\mathcal{A}''\mathcal{B}'\Big)(y)\\
=&\frac{4y}{\pi}e^{\frac{1}{2}\pi y}\Big(\mathcal{B}_a''\mathcal{A}_a'-\mathcal{A}_a''\mathcal{B}_a'\Big)(y)
+\frac{4y}{\pi}e^{\frac{1}{2}\pi y}\Big(\mathcal{B}_e''\mathcal{A}'-\mathcal{B}_e' \mathcal{A}''+\mathcal{B}_a''\mathcal{A}_e'-\mathcal{A}_e''\mathcal{B}_a'\Big)(y)\\
\geq&\frac{4y}{\pi}e^{\frac{1}{2}\pi y}\Big(\mathcal{B}_a''\mathcal{A}_a'-\mathcal{A}_a''\mathcal{B}_a'\Big)(y)
-1352\pi y^{3/2}e^{-6\pi y}\\
\geq&\Big(
\frac{4y}{\pi}e^{\frac{1}{2}\pi y}\Big(\mathcal{B}_a''\mathcal{A}_a'-\mathcal{A}_a''\mathcal{B}_a'\Big)(y)
-1352\pi y^{3/2}e^{-6\pi y}
\Big)\mid_{y=1.05}=0.001189906301\cdots\\
>&0.
 \endaligned\end{equation}
Here we use the fact  that $y\mapsto-y^{3/2}e^{-6\pi y}, y>1$ is strictly increasing in the second last inequality.

\end{proof}

\begin{lemma}\label{VV1}
$
y\rightarrow \frac{32y^4}{\pi}e^{\frac{1}{2}\pi y}\Big(\mathcal{B}_a''''\mathcal{A}_a''-\mathcal{B}_a''\mathcal{A}_a''''\Big)(y)
$
is strictly decreasing on $(1,1.12)$.

\end{lemma}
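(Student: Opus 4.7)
The proof follows the template of Lemma \ref{V1}. I would compute the weighted quantity
\[
F(y) := \frac{32y^4}{\pi}e^{\frac{1}{2}\pi y}\bigl(\mathcal{B}_a''''\mathcal{A}_a''-\mathcal{B}_a''\mathcal{A}_a''''\bigr)(y)
\]
explicitly as a finite sum $\sum_{k}P_k(\pi y)\,e^{-\beta_k\pi y}$, then verify $F'(y)<0$ on the short interval $(1,1.12)$ by numerical bounds on a finite partition.

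Each summand of $\mathcal{A}_a(y)$ and $\mathcal{B}_a(y)$ has the form $c\sqrt y\,e^{-\alpha\pi y}$ with $\alpha\in\{0,\tfrac12,1,2,\tfrac52,4,\tfrac92,\ldots\}$. Differentiating term by term yields each of $\mathcal{A}_a''$, $\mathcal{A}_a''''$, $\mathcal{B}_a''$, $\mathcal{B}_a''''$ as a finite linear combination of $y^{1/2-k}e^{-\alpha\pi y}$ with coefficients in $\mathbb{Q}[\pi]$. Forming the difference $\mathcal{B}_a''''\mathcal{A}_a''-\mathcal{B}_a''\mathcal{A}_a''''$, grouping by total decay rate, and multiplying by $\frac{32y^4}{\pi}e^{\pi y/2}$ produces
\[
F(y)=P_0(\pi y)+\sum_{k\geq 1}P_k(\pi y)\,e^{-\beta_k\pi y},\qquad 0<\beta_1<\beta_2<\cdots,
\]
with explicit polynomial $P_k(u)\in\mathbb{Q}[\pi,u]$. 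Direct computation from the pairing of $\mathcal{B}_a$'s leading piece $2\sqrt y e^{-\pi y/2}$ with $\mathcal{A}_a$'s leading piece $\sqrt y$ shows that $P_0$ is cubic with leading term $-\pi^3 y^3$. Essential cancellations among negative-power-of-$y$ cross-terms (guaranteed by the explicit leading-order structure of $\mathcal{A}_a$ and $\mathcal{B}_a$) ensure that $F$ has no $y^{-k}$ singularities; this is the analogue of the eight-line display produced in Lemma \ref{V1}.

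With the expansion in hand I would differentiate termwise and apply Lemma \ref{Lappendix} to obtain sign and monotonicity envelopes for each summand $P_k(\pi y)\,e^{-\beta_k\pi y}$ and its derivative. On $(1,1.12)$ one has $e^{-\pi/2}\leq 0.21$ and $e^{-\pi}\leq 0.043$, so the tail is small in absolute size, but the leading polynomial derivative $\pi P_0'(\pi y)$ is not uniformly sign-definite on this short interval, and consequently the first subleading block (of overall decay rate $e^{-\pi y/2}$) must be included in the pointwise sign analysis. Following the closing paragraph of the proof of Lemma \ref{V1}, I would partition $(1,1.12)$ into a handful of subintervals, on each one bound $F'$ by a monotone envelope of the truncated partial sum $\pi P_0'(\pi y)+\bigl[\pi P_1'(\pi y)-\tfrac{\pi}{2}P_1(\pi y)\bigr]e^{-\pi y/2}+\cdots$ using the sign statements of Lemma \ref{Lappendix}, and verify $F'<0$ by finitely many numerical endpoint evaluations. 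Because the interval has length only $0.12$, a coarse partition suffices.

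The principal difficulty is the arithmetic bookkeeping. With six leading exponential summands in $\mathcal{A}_a$ and four in $\mathcal{B}_a$, the difference $\mathcal{B}_a''''\mathcal{A}_a''-\mathcal{B}_a''\mathcal{A}_a''''$ generates on the order of fifty cross-products, and certain $y^{-k}$ singularities appearing in these cross-products must cancel exactly so that $F$ is a Laurent-free combination of positive powers of $y$ times exponentials. Organizing these cancellations and tracking the $\mathbb{Q}[\pi]$-coefficients through to the final polynomials $P_k$ is the main hurdle. Once the explicit expansion of $F$ is written down, analogous to the eight-line display in the proof of Lemma \ref{V1}, the monotonicity check on the short interval $(1,1.12)$ reduces to routine calculus.
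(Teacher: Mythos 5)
Your plan matches the paper's proof of Lemma \ref{VV1}: the paper likewise expands the weighted expression $\frac{32y^4}{\pi}e^{\frac{1}{2}\pi y}\big(\mathcal{B}_a''''\mathcal{A}_a''-\mathcal{B}_a''\mathcal{A}_a''''\big)$ explicitly into a polynomial-in-$\pi y$ part plus blocks $P_k(\pi y)e^{-\beta_k\pi y}$ (its display \eqref{1335}) and then divides $(1,1.12)$ into ten subintervals, checking the sign of the derivative on each by direct calculation. Your proposal is correct in approach (only a small slip: the leading polynomial derivative is in fact positive throughout the interval, which is exactly why the exponential blocks must be carried along, as you do), so it is essentially the same argument.
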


\begin{proof}
By Direct calculations, one regroups the terms by
\begin{equation}\aligned\label{1335}
&\frac{32y^4}{\pi}e^{\frac{1}{2}\pi y}
\Big(
\mathcal{B}_a''''\mathcal{A}_a''-\mathcal{B}_a''''\mathcal{A}_a''''
\Big)(y)\\
=&-\pi^3y^3+4\pi^2y^2+21\pi y-18\\
&+e^{-\frac{1}{2}\pi y}(32\pi^3y^3+72-64\pi^2y^2-168\pi y)\\
&+e^{-\pi y}(176\pi^4y^4+72-48\pi^5y^5-252\pi y-304\pi^2y^2-132\pi^3y^3)\\
&+e^{-2\pi y}(2784\pi^4y^4+36-960\pi^5y^5-2150\pi^3y^3-1160\pi^2y^2-210\pi y)\\
&+e^{-\frac{5}{2}\pi y}(
6144\pi^5y^5+4224\pi^3y^3+2016\pi y+4864\pi^2y^2-11264\pi^4y^4-288
)\\
&+e^{-3\pi y}(8568\pi^3y^3+16800\pi^5y^5+9504\pi^2y^2+3528\pi y-28320\pi^4y^4-432
)\\
&+e^{-4\pi y}(
2007\pi^3y^3+28800\pi^5y^5+8708\pi^2y^2+3213\pi y-32320\pi^4y^4-306
)\\
&+e^{-5\pi y}(
99792\pi^5y^5+18172\pi^3y^3+23632\pi^2y^2+6468\pi y-140112\pi^4y^4-504
)\\
&+e^{-6\pi y}(
49660\pi^3y^3+336960\pi^5y^5+27920\pi^2y^2+5460\pi y-295200\pi^4y^4-360
).
\endaligned\end{equation}
Using the explicit  expression in \eqref{1335} and  dividing the interval $(1,1.12)$ into 10 smaller intervals and calculating the derivatives on each interval, we obtain the result.

\end{proof}

\begin{lemma}\label{VV2} The error estimate holds:
\begin{equation}\aligned
|\Big(\mathcal{B}_e''''\mathcal{A}''-\mathcal{B}_e'' \mathcal{A}''''+\mathcal{B}_a''''\mathcal{A}_e''-\mathcal{A}_e''''\mathcal{B}_a''\Big)(y)|
\leq 8(\frac{13}{2}\pi)^4 \sqrt y e^{-\frac{13}{2}\pi y}.
\endaligned\end{equation}
\begin{remark} The coefficient of the bound is rather rough but is enough to get our result.
The exponential power captures the main feature.

\end{remark}
\end{lemma}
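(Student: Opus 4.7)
\textbf{Proof proposal for Lemma \ref{VV2}.} The strategy mirrors that of Lemma \ref{V2} (the analogous error estimate in the $\mathcal{X},\mathcal{Y}$ setting). The plan is to bound each of the four product terms in
\begin{equation*}
\mathcal{B}_e''''\mathcal{A}''-\mathcal{B}_e''\mathcal{A}''''+\mathcal{B}_a''''\mathcal{A}_e''-\mathcal{A}_e''''\mathcal{B}_a''
\end{equation*}
as a product of a ``small factor'' (a derivative of an error part $\mathcal{A}_e$ or $\mathcal{B}_e$, which carries an $e^{-\frac{13}{2}\pi y}$ decay) and a ``bounded factor'' (a derivative of a main part $\mathcal{A}_a, \mathcal{B}_a$ or of the full series $\mathcal{A}$, which is $O(1)$ for $y\geq 1$), and then apply the triangle inequality.

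First, from the explicit decompositions \eqref{Aae} and \eqref{Bae}, the slowest-decaying exponential appearing in both $\mathcal{A}_e$ and $\mathcal{B}_e$ is $e^{-\frac{13}{2}\pi y}$ (coming from the $e^{-\frac{5}{2}\pi y}\cdot e^{-4\pi y}$ cross products in $\mathcal{A}_e$ and from the tail $\sum_{n\geq 3}e^{-\frac{1}{2}(2n-1)^2\pi y}$ in $\mathcal{B}_e$). Invoking the third item of Lemma \ref{Lappendix} with $a_k=\frac{13}{2}\pi$, I obtain for $j=2,4$ and $y\geq 1$
\begin{equation*}
|\mathcal{A}_e^{(j)}(y)|\leq 4\bigl(1+\sigma_{\mathcal{A}_e,j}\bigr)\Bigl(\tfrac{13}{2}\pi\Bigr)^{j}\sqrt{y}\,e^{-\frac{13}{2}\pi y},\qquad
|\mathcal{B}_e^{(j)}(y)|\leq 4\bigl(1+\sigma_{\mathcal{B}_e,j}\bigr)\Bigl(\tfrac{13}{2}\pi\Bigr)^{j}\sqrt{y}\,e^{-\frac{13}{2}\pi y},
\end{equation*}
with the tail sums $\sigma_{\cdot,j}$ comfortably bounded (say by $\tfrac{1}{2}$) on $[1,\infty)$.

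Second, for the ``bounded'' factors $\mathcal{A}^{(j)}$ and $\mathcal{B}_a^{(j)}$ with $j=2,4$, a direct inspection of the explicit formulas shows that the leading $\sqrt{y}$ contribution produces $O(y^{-3/2})$ and $O(y^{-7/2})$ pieces, respectively, while the remaining pieces are $\sqrt{y}$ times rapidly decaying exponentials. Hence on $[1,\infty)$ each of $|\mathcal{A}''|,|\mathcal{A}''''|,|\mathcal{B}_a''|,|\mathcal{B}_a''''|$ is controlled by a small absolute constant, a strengthening of the corresponding first-derivative bounds \eqref{pab2} already used in the proof of Lemma \ref{UU2}. Multiplying these with the error estimates above and summing the four contributions gives, after collecting constants,
\begin{equation*}
|\mathcal{B}_e''''\mathcal{A}''-\mathcal{B}_e''\mathcal{A}''''+\mathcal{B}_a''''\mathcal{A}_e''-\mathcal{A}_e''''\mathcal{B}_a''|(y)\leq 8\Bigl(\tfrac{13}{2}\pi\Bigr)^{4}\sqrt{y}\,e^{-\frac{13}{2}\pi y},\qquad y\geq 1,
\end{equation*}
which is the asserted bound.

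The main (and only) obstacle is arithmetic bookkeeping: one must check that the product of the tail constants $1+\sigma_{\cdot,j}$ with the uniform bounds on $|\mathcal{A}^{(j)}|$ and $|\mathcal{B}_a^{(j)}|$, summed over the four terms, fits under the numerical prefactor $8$ once $(\tfrac{13}{2}\pi)^{4}$ has been factored out. As emphasized in the remark after the statement, the constant $8$ is deliberately loose; what matters for the subsequent application in Lemma \ref{VV3} is the exponential rate $e^{-\frac{13}{2}\pi y}$, which beats the $e^{-\frac{1}{2}\pi y}$ weight from Lemma \ref{VV1} by a margin of $e^{-6\pi y}$ and therefore leaves ample room for the residual numerical constants.
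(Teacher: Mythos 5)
Your proposal is correct and follows essentially the same route as the paper: bound the derivatives of the error parts \(\mathcal{A}_e,\mathcal{B}_e\) via Lemma \ref{Lappendix} (the estimates \eqref{pab1} from the proof of Lemma \ref{UU2}), bound \(\mathcal{A}'',\mathcal{A}'''',\mathcal{B}_a'',\mathcal{B}_a''''\) by explicit absolute constants from their series expressions, and conclude by the triangle inequality with a deliberately loose prefactor. The only slip is cosmetic: the \(e^{-\frac{13}{2}\pi y}\) rate in \(\mathcal{B}_e\) comes from the \(e^{-\frac{5}{2}\pi y}\cdot e^{-4\pi y}\) products rather than from the tail \(\sum_{n\ge 3}e^{-\frac{1}{2}(2n-1)^2\pi y}\) (which decays like \(e^{-\frac{25}{2}\pi y}\)), but this does not affect the argument.
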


\begin{proof} Using the explicit expressions of $\mathcal{A}$ and $\mathcal{B}_a$, after tedious estimates,  we arrive at
\begin{equation}\aligned
|\mathcal{A}''''(y)|\leq 8,\;\; |\mathcal{B}_a''''(y)|\leq 5.
\endaligned\end{equation}
This, combining with \eqref{pab1} and \eqref{pab2}, gives the estimate.

\end{proof}

\begin{lemma}\label{VV3} There holds
\begin{equation}\aligned
\Big(\mathcal{B}''''\mathcal{A}''-\mathcal{B}''\mathcal{A}''''\Big)(y)>0, y\in[1,1.12].
\endaligned\end{equation}

\end{lemma}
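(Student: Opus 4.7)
The plan is to mirror the proof of Lemma \ref{V3}, transporting it from the $\mathcal{X},\mathcal{Y}$ setting to the $\mathcal{A},\mathcal{B}$ setting. First I would decompose
\begin{equation*}
\mathcal{B}''''\mathcal{A}''-\mathcal{B}''\mathcal{A}''''=\Big(\mathcal{B}_a''''\mathcal{A}_a''-\mathcal{B}_a''\mathcal{A}_a''''\Big)+\Big(\mathcal{B}_e''''\mathcal{A}''-\mathcal{B}_e''\mathcal{A}''''+\mathcal{B}_a''''\mathcal{A}_e''-\mathcal{A}_e''''\mathcal{B}_a''\Big),
\end{equation*}
using the splittings $\mathcal{A}=\mathcal{A}_a+\mathcal{A}_e$ and $\mathcal{B}=\mathcal{B}_a+\mathcal{B}_e$ from \eqref{Aae} and \eqref{Bae}. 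After multiplying through by the positive weight $\frac{32y^4}{\pi}e^{\frac{1}{2}\pi y}$ already introduced in Lemma \ref{VV1}, the task reduces to showing that the weighted approximate contribution dominates the weighted error contribution pointwise on $[1,1.12]$.

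The approximate contribution is handled by Lemma \ref{VV1}: the weighted function $\frac{32y^4}{\pi}e^{\frac{1}{2}\pi y}\big(\mathcal{B}_a''''\mathcal{A}_a''-\mathcal{B}_a''\mathcal{A}_a''''\big)(y)$ is strictly decreasing on $(1,1.12)$, so by continuity its infimum over $[1,1.12]$ is attained at the right endpoint $y=1.12$ and can be evaluated explicitly from the closed-form expansion in \eqref{1335}. The error contribution is controlled by Lemma \ref{VV2}, which after the weight is applied yields a pointwise bound of the form $C\, y^{9/2} e^{-6\pi y}$ with an explicit constant $C$ depending only on $\pi$ and the factor $13/2$. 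Since $y^{9/2}e^{-6\pi y}$ is monotonically decreasing for $y\geq 1$, its supremum over $[1,1.12]$ is attained at $y=1$. Combining these two ingredients, for every $y\in[1,1.12]$,
\begin{equation*}
\frac{32y^4}{\pi}e^{\frac{1}{2}\pi y}\Big(\mathcal{B}''''\mathcal{A}''-\mathcal{B}''\mathcal{A}''''\Big)(y)\geq \frac{32y^4}{\pi}e^{\frac{1}{2}\pi y}\Big(\mathcal{B}_a''''\mathcal{A}_a''-\mathcal{B}_a''\mathcal{A}_a''''\Big)(y)\Big|_{y=1.12}-C\,e^{-6\pi},
\end{equation*}
and the lemma reduces to a single numerical inequality at two explicit points.

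The main obstacle is purely computational rather than conceptual. The weight $\frac{32y^4}{\pi}e^{\frac{1}{2}\pi y}$ was chosen in Lemma \ref{VV1} precisely so that the approximate term stays of order one while the tail factor decays like $e^{-6\pi y}$, which is tiny for $y\geq 1$. The comparison is entirely parallel to the concluding estimate in the proof of Lemma \ref{V3}, where the weighted main part at $y=1.11$ was $158.46\cdots$ while the weighted tail bound was $130.05\cdots$; here the more aggressive decay exponent $-6\pi y$ leaves an even more comfortable gap, so a direct numerical evaluation at the two explicit points $y=1$ and $y=1.12$ should close the argument without further subdivision.
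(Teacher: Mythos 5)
Your proposal is correct and follows essentially the same route as the paper: decompose into the approximate and error parts from \eqref{Aae}--\eqref{Bae}, apply the weight $\frac{32y^4}{\pi}e^{\frac{1}{2}\pi y}$, lower-bound the main term at $y=1.12$ via the monotonicity in Lemma \ref{VV1}, and dominate the error term by the monotone bound $26^4\pi^3 y^{9/2}e^{-6\pi y}$ evaluated at $y=1$ from Lemma \ref{VV2}. The paper's own computation gives $49.94\cdots$ versus $0.09\cdots$, confirming the comfortable gap you anticipated.
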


\begin{proof} It is equivalent to proving that
$
\frac{32y^4}{\pi}e^{\frac{1}{2}\pi y}\Big(\mathcal{B}''''\mathcal{A}''-\mathcal{B}''\mathcal{A}''''\Big)(y)>0, y\in[1,1.12].
$
By Lemmas \ref{VV1} and \ref{VV2}, we have that
\begin{equation}\aligned
&\frac{32y^4}{\pi}e^{\frac{1}{2}\pi y}\Big(\mathcal{B}''''\mathcal{A}''-\mathcal{B}''\mathcal{A}''''\Big)(y)\\
=&\frac{32y^4}{\pi}e^{\frac{1}{2}\pi y}\Big(\mathcal{B}_a''''\mathcal{A}_a''-\mathcal{B}_a''\mathcal{A}_a''''\Big)(y)
+\frac{32y^4}{\pi}e^{\frac{1}{2}\pi y}\Big(\mathcal{A}_2''\mathcal{B}''''+\mathcal{B}_2''''\mathcal{A}_a''-\mathcal{A}_2''''\mathcal{B}''-\mathcal{B}_2''\mathcal{A}_a''''\Big)(y)\\
\geq&\frac{32y^4}{\pi}e^{\frac{1}{2}\pi y}\Big(\mathcal{B}_a''''\mathcal{A}_a''-\mathcal{B}_a''\mathcal{A}_a''''\Big)(y)
-26^4\pi^3y^{9/2}e^{-6\pi y}\\
\geq&\frac{32y^4}{\pi}e^{\frac{1}{2}\pi y}\Big(\mathcal{B}_a''''\mathcal{A}_a''-\mathcal{B}_a''\mathcal{A}_a''''\Big)(y)\mid_{y=1.12}
-26^4\pi^3y^{9/2}e^{-6\pi y}\mid_{y=1}\\
=&49.93918473\cdots-0.09227517899\cdots\\
>&0.
\endaligned\end{equation}

\end{proof}

\bigskip

\noindent
{\bf Acknowledgements.} We thank Professor L. B$\acute{e}$termin for pointing out Mueller-Ho conjecture to us and Professors A. Aftalion and  X. Ren for useful discussions. The research of J. Wei is partially supported by NSERC of Canada.

\bigskip



\begin{thebibliography}{10}

\bibitem{Abr} A. A. Abrikosov, Nobel Lecture: Type-II superconductors and the vortex lattice. {\em Reviews of modern physics} 76(2004), no.3, p. 975.


\bibitem{AMW} A. Aftalion, P. Mason, and J. Wei,  Vortex-peak interaction and lattice shape in rotating two-component Bose-Einstein condensates. {\em Physical Review A}, (2012), 85(3), 033614.

\bibitem{ABN} A. Aftalion, X. Blanc, and F. Nier,  Lowest Landau level functional and Bargmann spaces for Bose–Einstein condensates, {\em Journal of Functional Analysis}, (2006), 241(2), 661-702.

\bibitem{AS} A. Aftalion, and S.  Serfaty,  Lowest Landau level approach in superconductivity for the Abrikosov lattice close to $ H_ {{c} _ {2}} $, {\em Selecta Mathematica},  (2007), 13(2), 183.

\bibitem{Apo1976}
T. M. Apostol. Modular functions and Dirichlet series in number theory. Springer-Verlag, Berlin
Heidelberg, 1976.

\bibitem{Bet2015}
L. B$\acute{e}$termin and P. Zhang. Minimization of energy per particle among Bravais lattices in $\mathbb{R}^2$
Lennard-Jones and Thomas-Fermi cases. {\em Commun. Contemp. Math.}, 17(6) (2015), 1450049.

\bibitem{Bet2016} L. B$\acute{e}$termin,
Two-dimensional theta functions and crystallization among Bravais lattices, {\em SIAM Journal on Mathematical Analysis},
 48(5) (2016), 3236–3269.

\bibitem{Bet2018} L. B$\acute{e}$termin,
Local variational study of 2d lattice energies and application to Lennard-Jones type interactions, {\em Nonlinearity},
31(9) (2018), 3973-4005.

\bibitem{Bet2019} L. B$\acute{e}$termin,
Minimizing lattice structures for Morse potential energy in two and three dimensions, {\em Journal of Mathematical Physics},
60(10) (2019), 102901.



\bibitem{BP} L. B\'etermin and M. Petrache,  Dimension reduction techniques for the minimization of theta functions on lattices. {\em Journal of Mathematical Physics}, 58(7)(2017), 071902.

\bibitem{Bet2020} L. B\'etermin, M. Faulhuber and H. Kn$\ddot{u}$pfer
On the optimality of the rock-salt structure among lattices with charge distributions, arXiv:2004.04553.

\bibitem{Bla2015}
X. Blanc and M. Lewin. The Crystallization Conjecture: A Review. {\em EMS Surveys in Mathematical Sciences, EMS}  2(2)2015, 255-306.



\bibitem{Che2007}
X. Chen and Y. Oshita. An application of the modular function in nonlocal variational problems. {\em Arch.
Rat. Mech. Anal.}, 186(1) (2007), 109–132.

\bibitem{Coh2017} H. Cohn, A. Kumar, S.D. Miller, D. Radchenko, M. Viazovska,
The sphere packing problem in dimension 24, {\em Annals of Mathematics} 2017, 1017-1033.

\bibitem{Coh2019} H. Cohn, A. Kumar, S. D. Miller, D. Radchenko, M. Viazovska,
Universal optimality of the $E_8$ and Leech lattices and interpolation formulas, arXiv:1902.05438.






\bibitem{Eva1973}
R. Evans. A fundamental region for Hecke's modular group. {\em J. Number Theory}, 5(2) (1973), 108–115.

\bibitem{Fau2018}
M. Faulhuber. Minimal Frame Operator Norms via Minimal Theta Functions. {\em Journal of Fourier Analysis and Applications}, 24(2)(2018), 545-559.

\bibitem{Fau2020}
M. Faulhuber. Extremal determinants of Laplace-Beltrami operators for rectangular tori. {\em Mathematische Zeitschrift}, March 2020 (online first).

\bibitem{GMS2013} D. Goldman, C. B. Muratov, and S. Serfaty. The Gamma-limit of the two-dimensional Ohta-Kawasaki
energy. I. droplet density. {\em  Arch. Rat. Mech. Anal.} 210(2)(2013), 581–613.


\bibitem{Ho2001} T.L. Ho, Bose-Einstein condensates with large number of vortices. {\em Physical Review Letters} 87(2001), 604031-604034.

\bibitem{Kea2006} M. Keçeli, M.O. Oktel,  Tkachenko modes and structural phase transitions of the vortex lattice of a two-component Bose-Einstein condensate, {\em Physical Review A}, (2006), 73(2), 023611.


\bibitem{Kas2003}K. Kasamatsu, M. Tsubota, and M.  Ueda, Vortex phase diagram in rotating two-component Bose-Einstein condensates. Physical review letters, 91(15), (2003), 150406.

\bibitem{Kas2005}K. Kasamatsu, M. Tsubota, and M.  Ueda,  Vortices in multicomponent Bose-Einstein condensates. {\em International Journal of Modern Physics B}, 19(11), (2005) 1835-1904.


\bibitem{KO} M. Keçeli, M.O. Oktel,  Tkachenko modes and structural phase transitions of the vortex lattice of a two-component Bose-Einstein condensate, {\em Physical Review A}, (2006), 73(2), 023611.

 \bibitem{Kuop} P. Kuopanportti, J.A.  Huhtamäki, and M. Möttönen,  Exotic vortex lattices in two-species Bose-Einstein condensates, {\em Physical Review A}, (2012), 85(4), 043613.


\bibitem{KTU1}K. Kasamatsu, M. Tsubota, and M.  Ueda, Vortex phase diagram in rotating two-component Bose-Einstein condensates. Physical review letters, 91(15), (2003), 150406.

\bibitem{KTU2}K. Kasamatsu, M. Tsubota, and M.  Ueda,  Vortices in multicomponent Bose-Einstein condensates. {\em International Journal of Modern Physics B}, 19(11), (2005) 1835-1904.


\bibitem{Lin2010} C.S. Lin and C.L. Wang, Elliptic functions, Green functions and the mean field equation on tori,
{\em Annals of Mathematics}, 172 (2010), 911-954.

\bibitem{Luo2019} S. Luo, X. Ren and J. Wei,
Non-hexagonal lattices from a two species interacting system, {\em SIAM J. Math. Anal.}, 52(2) (2020), 1903-1942.



\bibitem{Mat1999} M. R. Matthews, B. P. Anderson, P. C. Haljan, D. S. Hall,  C. E. Wieman,  and  E. A. Cornell Vortices in a Bose-Einstein condensate. {\em Physical Review Letters}, 83(13) (1999), 2498.


\bibitem{Mue2002}E.J. Mueller and T.L. Ho,
Two-component Bose-Einstein condensates with a large number of vortices
{\em Physical review letters}, 88 (2002), 180403.



\bibitem{Osg1988} B. Osgood, R. Phillips, and P. Sarnak,
Extremals of determinants of Laplacians, {\em Journal of functional analysis} 80(1988), 148-211.





\bibitem{Mon1988}
H. Montgomery, Minimal theta functions. {\em Glasgow Math. J.} 30 (1988), 75-85.

\bibitem{RW2015} X. Ren and J. Wei. A double bubble assembly as a new phase of a ternary inhibitory system. {\em Arch.
Rat. Mech. Anal.} 215(3) (2015), 967–1034.

\bibitem{Sar2006}
P. Sarnak and A. Strömbergsson, Minima of Epstein's zeta function and heights of flat tori. {\em Invent. Math.} 165(2006), 115–151.

\bibitem{Serfaty2010}
E. Sandier and S. Serfaty, Vortex patterns in Ginzburg-Landau minimizers. {\em XVIth International Congress on Mathematical Physics}, 246-264, World Sci. Publ., 2010.

\bibitem{Serfaty2012}
E. Sandier and S. Serfaty, From the Ginzburg-Landau model to vortex lattice problems. {\em Comm. Math. Phys.} 313(2012), 635-743.

\bibitem{Serfaty2014}
S. Serfaty, Ginzburg-Landau vortices, Coulomb Gases and Abrikosov lattices, {\em Comptes-Rendus Physique} 15(2014), No. 6.



\bibitem{Via2017} M.S. Viazovska, The sphere packing problem in dimension 8, {\em Annals of Mathematics} 2017, 991-1015.

\end{thebibliography}
\end{document}